\newtheorem{thm}{Theorem}[section]
\newtheorem{prop}[thm]{Proposition}
\newtheorem{lem}[thm]{Lemma}
\newtheorem{cor}[thm]{Corollary}
\newtheorem{rem}[thm]{Remark}
\newtheorem{defn}{Definition}[section]
\numberwithin{equation}{section}
\title{Unconditional uniqueness for the derivative nonlinear Schr\"odinger equation by normal form approach}
\author{Nobu Kishimoto%
\footnote{Research Institute for Mathematical Sciences, Kyoto University, Kitashirakawa Oiwake-cho, Sakyo-ku, Kyoto 606-8502, Japan.\quad {\it E-mail address}. \texttt{nobu@kurims.kyoto-u.ac.jp}\\
2020 {\it Mathematics Subject Classification.} 35Q55, 35A02.\\
{\it Keywords and phrases.} Derivative nonlinear Schr\"odinger equation, Unconditional uniqueness, Normal form reduction.}%
}
\date{}
\begin{document}

\maketitle

\begin{abstract}
We prove uniqueness of solutions to the Cauchy problem for the derivative nonlinear Schr\"odinger equation in $L^\infty_tH^{1/2}_x$.
Our proof is based on the method of normal form reduction (NFR), which has been employed to obtain the uniqueness in $C_tH^s_x$, $s>1/2$.
To overcome logarithmic divergences at the $H^{1/2}$ regularity, we exploit the $B^{0+}_{\infty,1}$ control of solutions provided by a refined Strichartz estimate.
Our NFR argument consists of two stages: we first use NFR finitely many times to derive an intermediate equation in which the main cubic nonlinearity is restricted to a certain type of frequency interaction; we then apply the infinite NFR scheme to the intermediate equation.
Moreover, we modify the usual NFR argument relying on continuity in time of solutions so that the uniqueness in the class $L^\infty_tH^{1/2}_x$ can be obtained directly.
\end{abstract}

\tableofcontents


\section{Introduction}

We consider uniqueness of solutions to the Cauchy problem for the one-dimensional cubic derivative nonlinear Schr\"odinger equation (DNLS)
\begin{alignat}{2}
   i\partial_t u + \partial_x^2 u &= i\lambda \partial_x \big(|u|^2u\big),&\qquad &t\in (-T,T),\quad x\in \mathcal{M}, \label{dnls} \\
   u(0, x) &= u_0(x), &\quad &x \in \mathcal{M}, \label{ic}
\end{alignat}
where $\lambda \in \mathbb{R}$ is a non-zero real constant, the spatial domain $\mathcal{M}$ is either the real line $\mathbb{R}$ or the torus $\mathbb{T}=\mathbb{R}/2\pi \mathbb{Z}$, and the initial datum $u_0$ is given in $L^2$-based Sobolev space $H^s(\mathcal{M})$.

Well-posedness of the Cauchy problem \eqref{dnls}--\eqref{ic} has been extensively studied; we refer the reader to \cite{HGKV23,HGKNV24} for a detailed description of previous works.
It has been proved to be well-posed in the scaling-critical space $L^2(\mathbb{R})$ in the non-periodic case (\cite{HGKNV24}) and in $H^{\frac16}(\mathbb{T})$ in the periodic case (\cite{HGKV23}).
Currently, all the known well-posedness results below $H^{\frac12}$ rely on the complete integrability of \eqref{dnls}.
Note that the (local-in-time) well-posedness in $H^{\frac12}$ was obtained in \cite{T99,H06} by a gauge transformation and a fixed-point argument in Fourier restriction norm spaces (Bourgain spaces), without utilizing complete integrability. 

A standard definition of well-posedness in a space $X$ of initial data requires existence of a solution in $C_tX$, its uniqueness, and its continuous dependence on initial data in the topology of $X$.
Concerning the uniqueness, in many cases it is initially obtained in a space smaller than $C_tX$ or in some weaker sense (e.g., the unique limit of regular solutions starting from regularized initial data), and then the uniqueness in the entire class $C_tX$ is considered as a separate problem.
The latter property is called \emph{unconditional uniqueness}, and the Cauchy problem is said to be unconditionally well-posed if it is well-posed and the uniqueness holds unconditionally (cf. \cite{K95}).
Unconditional uniqueness is a concept that is independent of how solutions are constructed.

Regarding the well-posedness results on DNLS \eqref{dnls}, those below $H^{\frac32}$ were (originally) conditional ones.
For instance, the $H^{\frac12}$ results in \cite{T99,H06} provide uniqueness of solutions only in  the inverse gauge image of a suitable Bourgain space, which is smaller than $C_tH^{\frac12}$.
Later, the unconditional uniqueness in $H^1$ was proved in \cite{W08} and that in $H^s$ with $s>1/2$ was obtained in \cite{MY20,K-all} for both $\mathcal{M}=\mathbb{R}$ and $\mathbb{T}$.
On the other hand, one can argue for the uniqueness in $C_tH^s$ only if the nonlinearity is well-defined for a general function in $C_tH^s$.
This gives the lower bound of the Sobolev exponent $s\geq 1/6$ for unconditional uniqueness for \eqref{dnls} in view of the 1D Sobolev embedding $H^{\frac16}\subset L^3$.
In this article, we aim to fill the gap $1/2\geq s\geq 1/6$, and at the first onset we consider the endpoint regularity $s=1/2$, which is likely to be solved without utilizing complete integrability.

For $s\geq 1/6$, we can show (see Appendix~\ref{appendix:weak}) that any function $u\in L^\infty(-T,T;H^s(\mathcal{M}))$ solving \eqref{dnls} in the sense of distributions belongs to $C([-T,T];H^{s-\varepsilon}(\mathcal{M}))\cap C_w([-T,T];H^s(\mathcal{M}))$ for any $\varepsilon>0$.%
\footnote{More precisely, $u$ coincides for \textit{a.e.}~$t\in (-T,T)$ with a unique function which belongs to $C([-T,T];H^{s-\varepsilon}(\mathcal{M}))\cap C_w([-T,T];H^s(\mathcal{M}))$; we will always identify $u$ with this continuous representative.
Here, $C_w$ means the class of weakly continuous functions.}
In particular, the initial value $u(0)\in H^s(\mathcal{M})$ is well-defined.
Then, we say $u\in L^\infty(-T,T;H^s(\mathcal{M}))$ is a solution to the initial value problem \eqref{dnls}--\eqref{ic} with the initial datum $u_0\in H^s(\mathcal{M})$ if $u$ is a solution of \eqref{dnls} on $(-T,T)\times \mathcal{M}$ in the sense of distributions and $u$ satisfies the initial condition \eqref{ic} in the above sense.
Now, we state the main theorem.
\begin{thm} \label{thm:main}
For both $\mathcal{M}=\mathbb{R}$ and $\mathcal{M}=\mathbb{T}$, the solution to the Cauchy problem \eqref{dnls}--\eqref{ic} is unique in $L^\infty(-T,T;H^{\frac12}(\mathcal{M}))$.
\end{thm}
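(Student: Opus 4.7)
The plan is to combine the classical Hayashi--Ozawa gauge transformation with a two-stage normal form reduction (NFR) designed for the $L^\infty_tH^{1/2}_x$ regularity. First, for two solutions $u_1,u_2\in L^\infty(-T,T;H^{1/2}(\mathcal{M}))$ with $u_1(0)=u_2(0)$, I would apply the gauge $v_j=\mathcal{G}[u_j]$ (the spatial antiderivative of $|u_j|^2$ in the exponent, with the standard zero-mean adjustment when $\mathcal{M}=\mathbb{T}$). The Appendix provides $u_j\in C([-T,T];H^{1/2-\varepsilon})\cap C_w([-T,T];H^{1/2})$, and one expects in addition a refined Strichartz control embedding into $L^q_TB^{0+}_{\infty,1}$; the gauge is then well defined, $v_j$ inherits these properties, and in the gauged equation the worst resonant cubic self-interaction is removed in favor of a (less derivative) cubic plus a quintic term.

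Next I would work with $w=v_1-v_2$, whose equation is linear in $w$ with coefficients cubic in $(v_1,v_2)$. Writing $w$ in Duhamel form and decomposing the cubic term in frequency, I would perform NFR by integrating by parts in time against the oscillatory phase $\Phi$ attached to each frequency interaction. A single pass gains $|\Phi|^{-1}$ at the price of one derivative on the highest frequency, and at $s=\tfrac12$ the resulting multilinear estimate on generic interactions is only logarithmically divergent in the frequency-comparable regime. The first stage applies finitely many NFR iterations to the bulk of the cubic term and isolates an \emph{intermediate equation} whose principal cubic nonlinearity is supported on a restricted frequency configuration (small $|\Phi|$ relative to the top frequency). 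On this restricted set one can convert the logarithmic loss into a small positive power of the high frequency, which is absorbable by the $B^{0+}_{\infty,1}$ norm of $v_j$. The second stage then runs the infinite NFR on the intermediate equation, producing a telescoping series whose terms are genuinely contractive thanks to this gain.

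To handle the $L^\infty_t$ regularity without assuming strong continuity in $H^{1/2}$, I would replace endpoint time evaluations in the NFR boundary terms by time-averaged versions (convolution with a smooth kernel $\chi_\delta$), carry out all estimates at fixed $\delta>0$, and pass $\delta\to 0$ using the weak continuity $u_j\in C_wH^{1/2}$ together with the strong continuity in $H^{1/2-\varepsilon}$. Closing a Gronwall-type inequality for $\|w\|_{L^\infty_tH^{1/2-\varepsilon}}$ on a short interval determined by $\sup_j\|v_j\|_{L^\infty_tH^{1/2}}$ and $\sup_j\|v_j\|_{L^q_TB^{0+}_{\infty,1}}$, and then iterating, yields $w\equiv 0$ on $(-T,T)$ and hence $u_1\equiv u_2$.

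The main obstacle will be the infinite NFR on the intermediate equation. At the critical regularity $s=\tfrac12$, each boundary term in the telescoping sum sits essentially on the borderline of summability, and convergence hinges on a delicate interplay between the narrow frequency support inherited from the first-stage NFR, the $\varepsilon$-gain from the $B^{0+}_{\infty,1}$ control, and uniformity in the mollification parameter $\delta$. Verifying that the NFR remainder after $N$ iterations tends to zero in a topology compatible with the weak-time-continuity setup --- rather than in the usual $C_tH^{1/2}$ setup where one may freely evaluate at boundary times --- is the technical heart of the argument.
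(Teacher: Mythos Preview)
Your overall strategy --- gauge transformation, refined Strichartz producing $L^4_TB^{0+}_{\infty,1}$ control, and a two-stage NFR --- matches the paper's. However, you have the division of labor between the two stages inverted, and this inversion is fatal.

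In the paper, the finite NFR stage handles all frequency interactions \emph{except} the high$\times$high$\times$high$\to$high case (all four dyadic frequencies comparable, called Case~\texttt{A}). It is in this \emph{finite} stage that the $B^{0+}_{\infty,1}$ control is indispensable: the logarithmic divergences at $s=\tfrac12$ arise in Cases~\texttt{B}--\texttt{E} (especially Case~\texttt{C}, where the nonlinearity is controlled only in $H^{-1/2}+B^0_{1,\infty}$ rather than $H^{-1/2+}$), and they are removed by the extra $\varepsilon$-regularity together with Coifman-Meyer type multilinear estimates (Lemmas~\ref{lem:CM1}--\ref{lem:CM3}) that preserve the physical-side $L^\infty$ structure. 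The infinite NFR stage, applied only to Case~\texttt{A}, uses \emph{no} $B^{0+}_{\infty,1}$ input: the estimates (Lemmas~\ref{lem:NFR-NR}--\ref{lem:NFR-weak}) are pure $\hat{H}^{1/2}$ bounds, with convergence coming from an $M^{-1/2+}$ gain per iteration via Cauchy--Schwarz and the counting Lemma~\ref{lem:combi}. The set isolated for infinite NFR is not ``small $|\Phi|$ relative to the top frequency''; it is $N\sim N_1\sim N_2\sim N_3$, where $|\Phi|$ takes all sizes --- the point is rather that there is no derivative loss and no logarithmic divergence there, so the standard infinite NFR machinery runs cleanly in $\hat{H}^{1/2}$.

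Your plan to drive the infinite NFR by the $B^{0+}_{\infty,1}$ gain cannot close: that norm is only $L^4_T$, so at generation $J$ a $(2J{+}1)$-linear term can absorb at most four such factors, giving a bounded gain independent of $J$, whereas one needs geometric decay in $J$ to beat the $\prod_{j=1}^J(2j{-}1)$ trees in $\mathfrak{T}(J)$. Conversely, without $B^{0+}_{\infty,1}$ in the finite stage you will not control the Case~\texttt{C} terms in $\hat{H}^{1/2}$. Two further points: the paper closes the difference estimate directly in $\hat{H}^{1/2}$ (together with $L^4_TB^{0+}_{\infty,1}$ via Proposition~\ref{prop:RS-w}), not in $H^{1/2-\varepsilon}$; and the justification for $L^\infty_t$ solutions does not go through time mollification but through frequency localization, so that each integration by parts acts on $W^{1,4}_TL^2$ pieces where it is elementary (Lemma~\ref{lem:just}).
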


\begin{rem}\label{rem:inviscid}
(i) It may not seem to be very meaningful to focus on the particular regularity $H^{\frac12}$ for DNLS.
Indeed, we are motivated by the same problem for the modified Benjamin-Ono equation (mBO).
On one hand, many techniques can be applied to these two equations in a similar manner; e.g., the gauge transformation, multilinear estimates in Fourier restriction norms, the normal form reduction, and so on.
On the other hand, mBO is not known to be completely integrable.
Moreover, the space $H^{\frac12}$ has the special importance for mBO, as it is the energy space.
In particular, similarly to the case of the Benjamin-Ono equation, we expect that the unconditional uniqueness in the energy space could be used to justify the inviscid limit (i.e., considering the perturbed equation with the viscosity term $\varepsilon \partial_x^2u$ and taking the limit $\varepsilon \downarrow 0$; see \cite{M13}).
Since both the nonlinear interactions and the gauge transformation are much simpler for DNLS than for mBO, we intend to study DNLS first.

(ii) We aim to prove the uniqueness in a wider class $L^\infty_tH^{\frac12}$ rather than in $C_tH^{\frac12}$.
In fact, this stronger uniqueness assertion is required in application to the inviscid limit problem mentioned above.
Recall that the difference between $C_tH^s$ and $L^\infty_tH^s$ is not relevant when $s>1/2$, because a solution in $L^\infty_tH^s$ necessarily belongs to $C_tH^{s'}$ for any $s'\in (1/2,s)$ as seen above and we can apply the uniqueness in $C_tH^{s'}$.
However, at the limiting regularity the uniqueness in $C_tH^{\frac12}$ does not yield that in $L^\infty_tH^{\frac12}$ by the same argument, and we have to prove the latter directly.
\end{rem}

Here, we describe our strategy of the proof.
The unconditional uniqueness in $H^s$ for $s>1/2$ was proved (\cite{MY20,K-all}) by employing the gauge transformation used for local well-posedness in $H^{\frac12}$ (\cite{T99,H06}) and the \emph{normal form reduction} argument (NFR, for short).
The latter method is based on simple integration-by-parts operations and it has been frequently used for the unconditional uniqueness problem.
It was first used for the periodic KdV equation in \cite{BIT11}, and later for the periodic 1D cubic NLS in \cite{GKO13} where the infinite iteration scheme of NFR was first introduced, and then developed also for the non-periodic problem since \cite{KOY20}; see \cite{K-all} for a survey of the method.
In particular, some problems (such as the KdV equation, the modified KdV equation on $\mathbb{T}$ and the Benjamin-Ono equation) needed only finite-time applications of NFR, while others (such as NLS, DNLS, the modified KdV on $\mathbb{R}$ and the modified Benjamin-Ono equation) required infinite iteration scheme. 
To prove Theorem~\ref{thm:main}, we first apply the same gauge transformation as before and consider the transformed equation in which the nonlinearity has become essentially the cubic term $v^2\partial_x\bar{v}$.
For the case $s>1/2$ in \cite{MY20,K-all}, the infinite NFR scheme was successfully applied to this equation. 
However, it seems difficult to adapt the same argument to the case $s=1/2$ due to several reasons.

First of all, the nonlinear term $v^2\partial_x\bar{v}$ is not well-defined in the framework of distributions for a general function $v$ in $L^\infty_tH^{\frac12}$ (while it is well-defined for $v\in L^\infty_tH^s$, $s>1/2$, by the Sobolev inequalities).
This means that one cannot argue uniqueness of solutions in the entire class $L^\infty_tH^{\frac12}$ (or $C_tH^{\frac12}$) for the transformed equation.
The key ingredient to bypass this obstacle at $s=1/2$ is the so-called \emph{refined Strichartz estimate}, which was recently used in \cite{MP23} to improve the unconditional uniqueness result for the Benjamin-Ono equation via the NFR method.
We begin with an arbitrary solution $u\in L^\infty_tH^{\frac12}$ to \eqref{dnls}.
Then, $u$ satisfies the associated integral equation
\[ u(t)=e^{i(t-t_0)\partial_x^2}u(t_0)+\lambda \int _{t_0}^te^{i(t-t')\partial_x^2}\partial_x\big( |u(t')|^2u(t')\big) \,dt',\quad t,t_0\in [-T,T].\]
The refined Strichartz estimate (Proposition~\ref{prop:RS} below) is derived from this integral representation and the standard Strichartz estimate (for linear solutions) on short intervals.
It asserts that a solution $u$ has better $L^p$ property in $x$ (after averaging in $t$) than that predicted by the Sobolev embedding.
For instance, we will show $u\in L^4_tL^\infty$ for any solution $u\in L^\infty_tH^{\frac12}$, despite that $H^{\frac12}\not\subset L^\infty$.
Since the gauge transform $v$ of $u$ satisfies $|v|=|u|$, it also belongs to $L^4_tL^\infty$ in addition to $L^\infty_tH^{\frac12}$.
Hence, it suffices to prove the uniqueness for the transformed equation in the smaller class $L^\infty_tH^{\frac12}\cap L^4_tL^\infty$.
We can use this additional property and the Moser estimate
\[ \| fg\|_{H^{\frac12}}\lesssim \| f\|_{L^\infty}\| g\|_{H^{\frac12}}+\| f\|_{H^{\frac12}}\| g\|_{L^\infty} \]
to make sense of the nonlinearity $v^2\partial_x\bar{v}$ (see Lemma~\ref{lem:Sobolev-v} below).

Another (in fact, much more serious) difficulty is to overcome various logarithmic divergences arising in nonlinear estimates at the $H^{\frac12}$ level.
Now, the refined Strichartz estimate actually shows that $u\in L^4_tB^{\varepsilon}_{\infty,1}$ for any $\varepsilon\in(0,1/4)$.
We thus have an extra $\varepsilon$ regularity from the scaling viewpoint, and this is the key to control nonlinear terms in $H^{\frac12}$.
The cost of using $L^\infty_x$-type norms is that it is not allowed to freely take absolute values on the Fourier side.
Indeed, the $L^p_tL^q_x$-Strichartz estimates do not hold for $\mathcal{F}^{-1}\big[ \big| \mathcal{F}_x[e^{it\partial_x^2}u_0]\big| \big]$ unless $q=2$, and similarly the refined Strichartz estimate fails for $\mathcal{F}^{-1}[|\mathcal{F}_xu|]$.
Although this point was not so serious in the study of the Benjamin-Ono equation \cite{MP23}, but this is not the case for our problem, and we need to prepare Coifman-Meyer type multilinear estimates (see Subsection~\ref{subsec:Coifman-Meyer} for more details).

We have seen that whether the NFR procedure finishes in finite times or continues infinitely depends on the problem. 
In general, finiteness of the iteration allows us to make more precise evaluation and recover more derivatives at each step.
However, especially when the ``non-resonance'' property of the nonlinear interaction is not strong, a finite number of applications are not enough to recover sufficient derivatives and then an infinite iteration is required.
In this case, one will be forced to make somewhat rough evaluation at each step in order to deal with the increasingly complicated resonance structures (phase functions) in a unified manner.
This is also one of the reasons why the infinite NFR scheme does not work as well when $s=1/2$.
To address this issue, we use the finite and infinite iteration schemes for different parts of the nonlinearity $v^2\partial_x\bar{v}$.
More specifically, we use the infinite iteration only to the ``high$\times$high$\times$high$\to$high'' frequency interaction, for which the resonant function $2(\xi_1+\xi_3)(\xi_2+\xi_3)$ is possible to become much smaller than each frequency $|\xi_j|$, while for the other types of interactions we apply NFR up to twice and make a closer investigation of each interaction at each step. 
We also note that the $B^{\varepsilon}_{\infty,1}$ control of the solution will be used only in the finite iteration part, since logarithmic divergence does not occur in the ``high$\times$high$\times$high$\to$high'' interaction case.
Our hybrid NFR procedure will be described in more detail in Section~\ref{sec:NFR0}.

Finally, as mentioned in Remark~\ref{rem:inviscid} (ii), we need to prove the uniqueness in $L^\infty _tH^{\frac12}$ directly rather than in $C_tH^{\frac12}$.
The only issue is how to justify various formal computations for a general solution, and the existing results based on NFR are all assuming continuity in $t$ for this purpose.
We will confirm that the argument can be modified so that the computations are justified under the weaker assumption $u\in L^\infty_tH^{\frac12}$ (see Subsections~\ref{subsec:just}, \ref{subsec:just-infinite}).

This article is organized as follows.
In Section~\ref{sec:pre}, we transform the equation by the gauge transformation and prepare several lemmas (refined Strichartz estimates and Coifman-Meyer type estimates) used in our NFR argument.
In Section~\ref{sec:NFR0}, we classify frequency interactions of the main cubic nonlinear term and outline our two-stage NFR procedure.
The detailed analysis is provided in the following two sections.
Section~\ref{sec:NFR1} describes the first stage, where we apply the finite NFR iteration scheme to the most part of the nonlinearity.
As a conclusion of this stage, we obtain the intermediate equation (see \eqref{eq:varpi} and \eqref{eq:varpi-d}) in which the cubic interaction is restricted to the ``high$\times$high$\times$high$\to$high'' case and the other nonlinear parts have been transformed into the form that admits the $H^{\frac12}$ control.
The second stage consists of applying the infinite NFR scheme to the intermediate equation, and this is performed in Section~\ref{sec:NFR2}.
Finally, a proof of Theorem~\ref{thm:main} is given in Subsection~\ref{subsec:proof}.


\section{Preliminaries}
\label{sec:pre}


\subsection{Notation}

\begin{itemize}
\item We write $X\lesssim Y$ for the estimate $X\leq CY$ with a constant $C>0$.
$X\sim Y$ means $X\lesssim Y\lesssim X$.
By $X\gg Y$ we mean $X\geq CY$ with a suitably large constant $C>1$.
For $a\in \mathbb{R}$, we write $a+$, $a-$ to denote real numbers $a+\varepsilon$, $a-\varepsilon$ with a suitably small $\varepsilon>0$.
\item We will use the abbreviations $L^p_TX:=L^p_t(-T,T;X)$, $C_TX:=C_t([-T,T];X)$ for $T>0$ and a Banach space $X$ of functions in $x$.
Similarly, we will use $W_{T}^{1,p}X$, $C_{w,T}X$.
\item The Fourier and the inverse Fourier transforms (in the spatial variable) are denoted by $\mathcal{F}u$ or $\hat{u}$ and $\mathcal{F}^{-1}u$ or $\check{u}$, respectively.
We use the definition
\[ \hat{u}(\xi) :=\frac{1}{\sqrt{2\pi}}\int_{\mathcal{M}}u(x)e^{-i\xi x}\,dx,\qquad \xi\in \hat{\mathcal{M}}:=\left\{ \begin{alignedat}{2}
&\mathbb{R} &\quad &(\mathcal{M}=\mathbb{R}), \\ &\mathbb{Z} &\quad &(\mathcal{M}=\mathbb{T}),\end{alignedat} \right. \]
so that 
\[ u(x)=\mathcal{F}^{-1}[\hat{u}](x):=\frac{1}{\sqrt{2\pi}}\int _{\hat{\mathcal{M}}}\hat{u}(\xi) e^{i\xi x}\,d\xi :=\left\{ \begin{alignedat}{2}
&\frac{1}{\sqrt{2\pi}}\int _{\mathbb{R}}\hat{u}(\xi) e^{i\xi x}\,d\xi &\quad &(\mathcal{M}=\mathbb{R}), \\ 
&\frac{1}{\sqrt{2\pi}}\sum _{\xi \in \mathbb{Z}}\hat{u}(\xi) e^{i\xi x} &\quad &(\mathcal{M}=\mathbb{T}).\end{alignedat} \right. \]  
The linear Schr\"odinger propagator is denoted by 
\[ S(t):=e^{it\partial_x^2}=\mathcal{F}^{-1}e^{-it\xi^2}\mathcal{F}.\]
In the periodic setting, we also use the following projection operators:
\[ P_cu:=\frac{1}{\sqrt{2\pi}}\hat{u}(0)=\frac{1}{2\pi}\int_{\mathbb{T}}u(x)\,dx,\qquad P_{\neq c}u:=u-P_cu=\frac{1}{\sqrt{2\pi}}\sum _{\xi\in \mathbb{Z}\setminus \{ 0\}}\hat{u}(\xi) e^{i\xi x}.\]
\item The characteristic function of a set $\Omega$ is denoted by $\mathbf{1}_\Omega$. We often indicate only the conditions determining $\Omega$.
\item Related to the Littlewood-Paley decomposition, we denote dyadic integers $1,2,4,8,\dots $ by capital letters $N,L,K,\dots$.
We use only the inhomogeneous decomposition.
\item Let $\eta :\mathbb{R}\to \mathbb{R}$ be a smooth function satisfying $\mathbf{1}_{[-5/4,5/4]}\leq \eta \leq \mathbf{1}_{[-8/5,8/5]}$.%
\footnote{%
We impose this support condition rather than $\mathbf{1}_{[-1,1]}\leq \eta \leq \mathbf{1}_{[-2,2]}$ just in order to define the functions $\tilde{\psi}_N$ below.
The specific numbers $\frac54$, $\frac85$ do not play any role.%
}
Define the Littlewood-Paley decomposition $\{ P_N\}_{N\geq 1}$ by $P_N:=\mathcal{F}^{-1}\psi_N\mathcal{F}$, where
\[ \psi_N(\xi):=\left\{ \begin{alignedat}{2} &\eta (\frac{|\xi|}{N})-\eta (\frac{2|\xi|}{N}) &\quad &(N\geq 2),\\
&\eta (|\xi|) & &(N=1).\end{alignedat}\right. \]
We see that the function $\psi_N$ (smooth, even, defined on $\mathbb{R}$) is supported in $I_N$, where
\[ I_N:=\left\{ \begin{alignedat}{2} &\{ \xi :N/2\leq |\xi |\leq 2N\} &\quad &(N\geq 2),\\
&\{ \xi :|\xi |\leq 2\} & &(N=1).\end{alignedat}\right. \]
We sometimes use $\{ \tilde{\psi}_N\}_{N\geq 1}$, smooth functions satisfying $\textbf{1}_{\mathrm{supp}\,(\psi_N)}\leq \tilde{\psi}_N\leq \mathbf{1}_{I_N}$ and $\tilde{\psi}_N(\xi)=\tilde{\psi}_2(\xi/(N/2))$ ($N\geq 2$). 
\item The (inhomogeneous) Sobolev spaces $H^s(\mathcal{M})=B^s_{2,2}(\mathcal{M})$ and Besov spaces $B^s_{p,q}(\mathcal{M})$ are defined by
\[ \| u\|_{H^s}:=\| \langle \xi \rangle ^s\hat{u}(\xi )\|_{L^2_\xi}\sim \| u\|_{B^s_{2,2}},\quad \| u\|_{B^s_{p,q}}:=\big\| N^s\| P_Nu\|_{L^p_x}\big\|_{\ell^q_N},\]
where $\langle \xi \rangle :=(1+\xi^2)^{1/2}$.
We will estimate the solution in the space $L^\infty_TH^{\frac12}\cap L^4_TB^{0+}_{\infty,1}$.
For short, we write its norm as $\| \cdot \|_{S_T}$:
\[ \| w\|_{S_T}:=\| w\|_{L^\infty_TH^{\frac12}}+\| w\|_{L^4_TB^{0+}_{\infty,1}}.\]
\item $\hat{H}^s:=\langle \cdot \rangle ^{-s}L^2$ denotes the weighted $L^2$ space equipped with the norm 
\[ \| f\|_{\hat{H}^s}:=\| \langle \xi \rangle^s f(\xi)\|_{L^2_\xi}.\]
\item We will use the abbreviation $\xi_{ij\dots}$ for $\xi_i+\xi_j+\dots$.
For instance, $\xi_{13}:=\xi_1+\xi_3$ and $\eta_{123}:=\eta_1+\eta_2+\eta_3$.
\end{itemize}


\subsection{Refined Strichartz estimates}

Let $u\in L^\infty_TH^{\frac12}(\mathcal{M})$ satisfy the equation \eqref{dnls} in the sense of distributions.
We can verify (see Appendix~\ref{appendix:weak}) that $u\in C_TH^{\frac12-}(\mathcal{M})\cap C_{w,T}H^{\frac12}(\mathcal{M})$ and $u$ satisfies the integral equation 
\begin{equation}\label{ie}
\begin{aligned}
u(t)=S(t-t_0)u(t_0)+\lambda \int _{t_0}^tS(t-t')\partial_x\big( |u(t')|^2u(t')\big) \,dt'\qquad &\\
\text{in $H^{-\frac12-}(\mathcal{M})$},\qquad \forall t,t_0\in [-T,T].&
\end{aligned}
\end{equation}

The following a priori estimate, which we call a refined Strichartz estimate, holds equally for $\mathcal{M}=\mathbb{R}$ and $\mathbb{T}$.
\begin{prop}\label{prop:RS}
Let $T>0$, and suppose that $u\in L^\infty_TH^{\frac12}(\mathcal{M})$ is a solution of \eqref{dnls}.
Then, it holds that
\begin{gather}\label{est:RS1}
\| P_Nu\| _{L^4_TL^\infty}\lesssim \Big( N^{-\frac12+}+T^{\frac14}N^{-\frac14+}\Big) \Big( 1+\| u\|_{L^\infty_TH^{\frac12}}^2\Big) \| u\|_{L^\infty_TH^{\frac12}}\quad (N\geq 1).
\end{gather}
In particular, $u\in L^4_TB^s_{\infty,1}$ for $s<\frac14$.
Moreover, for $T\in (0,1]$ and $s\in [0,\frac14)$, 
\begin{gather}\label{est:RS2}
\| u\| _{L^4_TB^s_{\infty,1}}\lesssim \Big( T^{\frac14-s-}+T^{\frac12-s-}\| u\|_{L^\infty_TH^{\frac12}}^2\Big) \| u\|_{L^\infty_TH^{\frac12}}.
\end{gather}
\end{prop}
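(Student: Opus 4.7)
The plan is to exploit the Duhamel representation \eqref{ie} together with frequency-localized Strichartz-type estimates for the $1$D Schr\"odinger equation, in the spirit of the Koch--Tzvetkov / Molinet--Pilod refined Strichartz argument. Fix a dyadic $N\ge 1$, pick a base point $t_0\in[-T,T]$, and write
\[ P_Nu(t)=P_NS(t-t_0)u(t_0)+\lambda P_N\int_{t_0}^tS(t-t')\partial_x(|u|^2u)(t')\,dt', \]
then estimate the two terms separately in $L^4_TL^\infty_x$.

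For the linear piece, I would use a near-endpoint admissible Strichartz pair $(4+\varepsilon,r_0)$ with $r_0=2(4+\varepsilon)/\varepsilon$, since the endpoint $(4,\infty)$ fails in one spatial dimension. Combining the global estimate $\|S(t)P_Nf\|_{L^{4+\varepsilon}_tL^{r_0}_x}\lesssim \|P_Nf\|_{L^2}$ with Bernstein's inequality $\|P_Ng\|_{L^\infty_x}\lesssim N^{1/r_0}\|P_Ng\|_{L^{r_0}_x}$ and H\"older in time on $[-T,T]$ (to descend from $L^{4+\varepsilon}_T$ to $L^4_T$) yields a bound of the form $T^{0+}N^{-1/2+}\|u\|_{L^\infty_TH^{1/2}}$, giving the first term of \eqref{est:RS1}. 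Alternatively, partitioning $[-T,T]$ into $\sim TN$ subintervals of length $\sim N^{-1}$ and summing the same local estimate in $\ell^4_j$ yields $T^{1/4}N^{-1/4+}\|u\|_{L^\infty_TH^{1/2}}$; since each bound holds separately, their sum bounds the linear contribution uniformly.

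For the Duhamel piece, I would apply the Christ--Kiselev lemma followed by the dual of the above Strichartz to reduce matters to estimating $P_N\partial_x(|u|^2u)$ in a dual norm. The naive approach of pulling $\partial_x$ outside $P_N$ loses a full factor $N$. The crucial improvement is a Littlewood--Paley decomposition $u=\sum_KP_Ku$ of each factor of $|u|^2u$: the Fourier-support constraint forces $\max_jK_j\gtrsim N$, and placing the $L^2$-norm on the highest-frequency factor recovers a gain $K_{\max}^{-1/2}\le N^{-1/2}$, while the remaining factors are bounded in $L^\infty_x$ by $\|u\|_{H^{1/2}}$ via the uniform Bernstein bound $\|P_Ku\|_{L^\infty_x}\lesssim\|u\|_{H^{1/2}}$, together with the observation that, after expanding by the product rule, the $\partial_x$ contributes only the frequency of the single factor on which it acts. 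Summing the dyadic series in $(K_1,K_2,K_3)$ and absorbing the resulting logarithms into $N^{0+}$, the Duhamel contribution is bounded by $(N^{-1/2+}+T^{1/4}N^{-1/4+})\|u\|^3_{L^\infty_TH^{1/2}}$. Adding the two estimates proves \eqref{est:RS1}.

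The Besov estimate \eqref{est:RS2} then follows by multiplying \eqref{est:RS1} by $N^s$, summing over dyadic $N$, and tuning the ``$+$''s (equivalently $\varepsilon$ in the Strichartz pair) to shift powers between $T$ and $N$; the series $\sum_NN^{s-1/2+}$ and $\sum_NN^{s-1/4+}$ converge for $s<1/4$ and, together with the restriction $T\in(0,1]$, produce the claimed factors $T^{1/4-s-}$ and $T^{1/2-s-}$. The main difficulty I anticipate is the Duhamel estimate: the outer $\partial_x$ naively costs a power of $N$, and recovering it requires a careful multilinear bookkeeping across the high$\times$high$\times$high and high$\times$high$\times$low frequency regimes, possibly supplemented by a bilinear refinement of the Strichartz estimate.
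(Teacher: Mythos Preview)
Your overall strategy for \eqref{est:RS1}---Duhamel plus frequency-localized short-time Strichartz---is correct and is exactly what the paper does. Two points of comparison and one genuine gap:

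\textbf{Duhamel term.} Your multilinear bookkeeping works, but is unnecessarily elaborate. The paper dispenses with it in one line: since $H^{1/2}\cdot H^{1/2}\cdot H^{1/2}\hookrightarrow H^{1/2-}$ by Sobolev multiplication, one has $\|\partial_x(|u|^2u)\|_{H^{-1/2-}}\lesssim\|u\|_{H^{1/2}}^3$, hence $\|P_N\partial_x(|u|^2u)\|_{L^2}\lesssim N^{1/2+}\|u\|_{H^{1/2}}^3$. Feeding this into the inhomogeneous short-time Strichartz (with $L^1(I_j;L^2)$ on the right) gives the Duhamel contribution directly, with no case analysis.

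\textbf{Periodic case.} The global near-endpoint pair $(4+\varepsilon,r_0)$ is an $\mathbb{R}$-argument. The paper instead uses the short-time endpoint estimate $\|S(t)P_Nf\|_{L^4(I;L^\infty)}\lesssim\|P_Nf\|_{L^2}$ on intervals $|I|\lesssim N^{-1}$, which holds on both $\mathbb{R}$ and $\mathbb{T}$; this is also what makes the partition into $\sim TN$ subintervals (which you mention) the natural route.

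\textbf{Gap in \eqref{est:RS2}.} Direct summation of \eqref{est:RS1} does \emph{not} produce the claimed $T$-powers. The sums $\sum_N N^{s-1/2+}$ and $\sum_N N^{s-1/4+}$ are just constants for $s<1/4$, so you obtain at best $(C_1+T^{1/4}C_2)(1+\|u\|^2)\|u\|$, with no small-$T$ factor on the first piece; ``tuning the $+$'s'' cannot manufacture $T^{1/4-s-}$ or $T^{1/2-s-}$ from a convergent series. The paper instead splits $u=P_{\le M}u+P_{>M}u$: for the high part one sums \eqref{est:RS1} over $N>M$ to get $(M^{s-1/2+}+T^{1/4}M^{s-1/4+})(1+\|u\|^2)\|u\|$, while for the low part one uses only H\"older in $t$ and Cauchy--Schwarz in $N$ to get $\|P_{\le M}u\|_{L^4_TL^\infty}\lesssim T^{1/4}\langle\log M\rangle^{1/2}\|u\|_{H^{1/2}}$ (or $T^{1/4}M^s\|u\|_{H^{1/2}}$ for the $B^s_{\infty,1}$ norm). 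Choosing $M\sim T^{-1}$ then yields exactly the stated factors.
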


\begin{proof}
{\bf Proof of \eqref{est:RS1}:}
If $T\geq N^{-1}$, we divide the time interval $[-T,T]$ into $\sim TN$ sub-intervals $I_j:=[t_j,t_{j+1}]$ of size $\sim N^{-1}$.
In the periodic setting, it is known that the same Strichartz estimates as those on $\mathbb{R}$ hold for short time length $\lesssim N^{-1}$ if the frequency is restricted to $\{ |\xi |\leq N\}$; see \cite{BGT04}.
In particular, it holds that
\begin{align*}
\| S(t-t_j)P_Nu(t_j)\|_{L^4(I_j;L^\infty (\mathcal{M}))}&\lesssim \| P_Nu(t_j)\|_{L^2(\mathcal{M})},\\
\Big\| \int _{t_j}^tS(t-t')P_NF(t')\,dt'\Big\|_{L^4(I_j;L^\infty (\mathcal{M}))}&\lesssim \| P_NF\|_{L^1(I_j;L^2(\mathcal{M}))}
\end{align*}
for $\mathcal{M}=\mathbb{R}$, $\mathbb{T}$.
Using \eqref{ie}, we have
\begin{align*}
\| P_Nu\|_{L^4_TL^\infty}^4&\lesssim \sum _{j}\Big( \| P_Nu(t_j)\|_{L^2}^4+\| P_N\partial_x(|u|^2u)\|_{L^1(I_j;L^2)}^4\Big) \\
&\lesssim TN\big( N^{-\frac12}\| u\|_{L^\infty _TH^{\frac12}}\big) ^4+\sum _{j}\Big( \big( N^{-1}\big) ^{\frac34}\| P_N\partial_x(|u|^2u)\|_{L^4(I_j;L^2)}\Big) ^4 \\
&\lesssim TN^{-1}\| u\|_{L^\infty _TH^{\frac12}}^4+N^{-3}\| P_N\partial_x(|u|^2u)\|_{L^4_TL^2}^4\\
&\lesssim TN^{-1}\| u\|_{L^\infty _TH^{\frac12}}^4+TN^{-1+}\| \partial_x(|u|^2u)\|_{L^\infty_TH^{-\frac12-}}^4.
\end{align*}
By the Sobolev inequalities, we have
\[ \| \partial_x(|u|^2u)\|_{H^{-\frac12-}}\lesssim \| u\|_{H^{\frac12}}^3.\]
Combining these estimates, we have
\[ \| P_Nu\| _{L^4_TL^\infty}\lesssim T^{\frac14}N^{-\frac14+}\big( 1+\| u\|_{L^\infty_TH^{\frac12}}^2\big) \| u\|_{L^\infty_TH^{\frac12}}.\]
If $T\leq N^{-1}$, we apply the above short-time Strichartz estimates on the whole interval $[-T,T]$ and Bernstein's inequalities to obtain
\begin{align*}
\| P_Nu\|_{L^4_TL^\infty}&\lesssim \| P_Nu(0)\|_{L^2}+\| P_N\partial_x(|u|^2u)\|_{L^1_TL^2}\\
&\lesssim N^{-\frac12}\| u(0)\|_{H^{\frac12}}+TN^{\frac12+}\| \partial_x(|u|^2u)\| _{L^\infty_TH^{-\frac12-}}\\
&\lesssim N^{-\frac12+}\big( 1+\| u\|_{L^\infty_TH^{\frac12}}^2\big) \| u\|_{L^\infty_TH^{\frac12}}.
\end{align*}
We have thus proved \eqref{est:RS1}. 

{\bf Proof of \eqref{est:RS2}:}
From \eqref{est:RS1}, we have 
\[ \| P_{>M}u\| _{L^4_TL^\infty}\lesssim \Big( M^{-\frac12+}+T^{\frac14}M^{-\frac14+}\Big) \Big( 1+\| u\|_{L^\infty_TH^{\frac12}}^2\Big) \| u\|_{L^\infty_TH^{\frac12}}.\]
On the other hand, from H\"older and Schwarz, we have
\begin{align*}
\| P_{\leq M}u\|_{L^4_TL^\infty}&\lesssim T^{\frac14}\langle \log M\rangle^{\frac12}\| u\|_{L^\infty_TH^{\frac12}}.
\end{align*}
For $0<T\leq 1$, we take $M\sim T^{-1}$ and obtain \eqref{est:RS2} with $s=0$.
The case $s\in (0,\frac14)$ is proved in a similar way.
\end{proof}


\subsection{Gauge transformation}

Next, we recall the gauge transformation used in \cite{T99,H06}.
\begin{defn}[Gauge transformation]\label{defn:gauge}
The spatial antiderivative $\partial_x^{-1}$ is defined by
\[ \partial_x^{-1}f(x):=\left\{ \begin{aligned} &\int_{-\infty}^x f(y)\,dy \qquad \text{(for $f\in L^1(\mathbb{R})$, if $\mathcal{M}=\mathbb{R}$)}, \\
&\frac{1}{2\pi}\int_{\mathbb{T}}\Big( \int_z^xf(y)\,dy\Big) \,dz=\mathcal{F}^{-1}\big[ (i\xi)^{-1}\hat{f}(\xi)\big] (x)\\
&\qquad\qquad\qquad\quad \text{(for $f$ with $P_cf=0$, if $\mathcal{M}=\mathbb{T}$)}.\end{aligned}\right. \]
For $u\in L^\infty_TL^2(\mathcal{M})$, we define
\[ v(t):=e^{-i\lambda J(u(t))}u(t),\qquad J(u):=\left\{ \begin{alignedat}{2} &\partial_x^{-1}(|u|^2) &\quad &(\mathcal{M}=\mathbb{R}), \\
&\partial_x^{-1}P_{\neq c}(|u|^2) &\quad &(\mathcal{M}=\mathbb{T}),
\end{alignedat}\right. \]
and 
\[ w(t,x):=v\Big( t, x-2\lambda \int_0^tP_c(|v(t')|^2)\,dt'\Big) \qquad (\mathcal{M}=\mathbb{T}). \]
\end{defn}
Since $|v(t,x)|=|u(t,x)|$ and $P_c(|w(t)|^2)=P_c(|v(t)|^2)$, the mappings $u\mapsto v$, $v\mapsto w$ are invertible and the inverse mappings are given by
\begin{align*}
v(t,x)\quad &\mapsto \quad u(t,x):=e^{i\lambda J(v(t))}v(t,x),\\
w(t,x)\quad &\mapsto \quad v(t,x):=w\Big( t,x+2\lambda \int_0^tP_c(|w(t')|^2)\,dt'\Big) .
\end{align*}
In the proof of unconditional uniqueness, we use the bijectivity on $L^\infty_TH^{\frac12}$ of the gauge transformation: $u\mapsto v$ for $\mathcal{M}=\mathbb{R}$ and $u\mapsto w$ for $\mathcal{M}=\mathbb{T}$.
Note that the mapping $u\mapsto v$ is locally bi-Lipschitz on $L^\infty_TH^s$ for each $s\geq 0$.
(For the periodic case, see \cite[Lemma~2.3]{H06}.
For the non-periodic case, a similar argument with the fractional Leibniz rule estimates would work.
A proof for $0\leq s\leq 1/2$ will be essentially given in the proof of Lemma~\ref{lem:conv-v} (i) and Lemma~\ref{lem:conv-vR} (i) below.)

If $u$ is a solution to \eqref{dnls}, the equation for its gauge transform is \emph{formally} given as follows.
In the non-periodic case, $v$ solves
\begin{equation}\label{eq:vR}
\partial_tv=i\partial_x^2v-\lambda v^2\partial_x\bar{v}+i\frac{\lambda^2}{2}|v|^4v.
\end{equation}
In the periodic case, $v$ and $w$ solve
\begin{align}
&\,\begin{aligned}
\partial_tv&=i\partial_x^2v -\lambda \Big( v^2\partial_x\bar{v}-2P_c(v\partial_x\bar{v})v\Big) +2\lambda P_c(|v|^2)\partial_xv\\
&\quad +i\lambda^2\Big( \frac{1}{2}P_{\neq c}(|v|^4)v-P_c(|v|^2)P_{\neq c}(|v|^2)v\Big) ,
\end{aligned}\label{eq:v} \\
&\begin{aligned}
\partial_tw&=i\partial_x^2w -\lambda \Big( w^2\partial_x\bar{w}-2P_c(w\partial_x\bar{w})w\Big) \\
&\quad +i\lambda ^2\Big( \frac{1}{2}P_{\neq c}(|w|^4)w-P_c(|w|^2)P_{\neq c}(|w|^2)w\Big) .
\end{aligned}\label{eq:w}
\end{align}
An important step in the problem of unconditional uniqueness is to verify these equations for a general (distributional) solution $u\in L^\infty_TH^{\frac12}$ of \eqref{dnls}, which we address in the next proposition.

\begin{prop}\label{prop:justify-eq}
Let $u\in L^\infty_TH^{\frac12}$ be a distributional solution of \eqref{dnls}, and let $v,w$ be defined as in Definition~\ref{defn:gauge}.
Then, $u,v,w\in L^\infty_TH^{\frac12}\cap L^4_TL^\infty$, and $v^2\partial_x\bar{v}$, $w^2\partial_x\bar{w}$ are well-defined as distributions in $L^4_TH^{-\frac12-}$.
Moreover, $v$ and $w$ are solutions of \eqref{eq:vR} or \eqref{eq:v}, and respectively \eqref{eq:w}, on $(-T,T)\times\mathcal{M}$ in the sense of distributions.
\end{prop}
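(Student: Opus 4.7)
The plan is to address the four assertions of the proposition in order, reducing the first three to the refined Strichartz bound and standard product estimates, and then deriving the transformed equations by a spatial mollification argument.

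For the first assertion, summing \eqref{est:RS1} over dyadic $N$ immediately gives $u\in L^4_TB^{0+}_{\infty,1}\hookrightarrow L^4_TL^\infty$. Since $|v|=|u|=|w|$ pointwise, the same $L^4_TL^\infty$ bound transfers to $v$ and $w$. The $L^\infty_TH^{1/2}$ regularity for $v$ and $w$ follows from the local bi-Lipschitz property of the gauge maps $u\mapsto v$ and $v\mapsto w$ on $L^\infty_TH^{1/2}$, whose quantitative form is essentially the content of the forthcoming Lemmas~\ref{lem:conv-v} and \ref{lem:conv-vR}; the key ingredient is the Moser estimate stated in the introduction, used to control the Sobolev norm of the exponential factor $e^{-i\lambda J(u)}$ by that of $|u|^2\in H^{1/2}$.

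For the second assertion, I would apply the Moser estimate with $f=g=v$ to obtain $\|v^2\|_{L^4_TH^{1/2}}\lesssim \|v\|_{L^4_TL^\infty}\|v\|_{L^\infty_TH^{1/2}}$, which is finite by the first step. Combined with $\partial_x\bar v\in L^\infty_TH^{-1/2}$ and the standard Sobolev multiplication $H^{1/2}\cdot H^{-1/2}\hookrightarrow H^{-1/2-}$, this gives $v^2\partial_x\bar v\in L^4_TH^{-1/2-}$; the case of $w^2\partial_x\bar w$ is identical. The subleading terms appearing in \eqref{eq:vR}, \eqref{eq:v}, \eqref{eq:w} (such as $P_c(|v|^2)\partial_xv$, $|v|^4v$, $P_c(v\partial_x\bar v)v$) are strictly easier: the periodic means are bounded scalar functions of $t$ by $\|u\|_{L^\infty_TL^2}^2$, and quintic terms are handled by iterated Moser using $v\in L^\infty_TH^{1/2}\cap L^4_TL^\infty$.

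The main task, and the principal technical obstacle, is to derive the equations distributionally. The plan is to mollify spatially: set $u_\varepsilon:=\rho_\varepsilon\ast_x u$ with $\rho_\varepsilon$ a standard mollifier. Then $u_\varepsilon\in L^\infty_TC^\infty_x$ and, from \eqref{dnls},
\[ \partial_tu_\varepsilon=i\partial_x^2u_\varepsilon+\lambda\partial_x\bigl(\rho_\varepsilon\ast_x(|u|^2u)\bigr) \]
as distributions, so $u_\varepsilon$ is Lipschitz in $t$ in a low Sobolev norm. Define $v_\varepsilon:=e^{-i\lambda J(u_\varepsilon)}u_\varepsilon$ (\emph{not} $\rho_\varepsilon\ast_xv$). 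Since $u_\varepsilon$ is classical in $x$, the chain and product rules apply, and a direct computation using $\partial_xJ(u_\varepsilon)=|u_\varepsilon|^2$ yields
\[ \partial_tv_\varepsilon-i\partial_x^2v_\varepsilon+\lambda v_\varepsilon^2\partial_x\bar v_\varepsilon-i\tfrac{\lambda^2}{2}|v_\varepsilon|^4v_\varepsilon=R_\varepsilon \]
in the non-periodic case (with analogous identities in the periodic one), where $R_\varepsilon$ collects the discrepancy between $\rho_\varepsilon\ast_x(|u|^2u)$ and $|u_\varepsilon|^2u_\varepsilon$. The core step is then to pass to the limit $\varepsilon\downarrow0$: one needs $v_\varepsilon\to v$ in $L^\infty_TH^{1/2-}\cap L^4_TL^\infty$, for which the bi-Lipschitz property of the gauge together with $u_\varepsilon\to u$ in those spaces suffices, and then multilinear convergence such as $v_\varepsilon^2\partial_x\bar v_\varepsilon\to v^2\partial_x\bar v$ in $L^4_TH^{-1/2-}$, which follows by applying the trilinear estimate of the previous paragraph to differences. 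The error $R_\varepsilon$ vanishes by the same trilinear control and the strong convergence of spatial mollifiers on the mixed Lebesgue--Sobolev spaces involved. The periodic case uses the same scheme, with the time-dependent translation in the definition of $w$ handled classically since $t\mapsto P_c(|v(t)|^2)$ is a bounded scalar function of $t$.
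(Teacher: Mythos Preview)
Your approach is essentially the same as the paper's: regularize $u$, define the gauge transform of the regularized function, compute its equation exactly, and pass to the limit. The paper uses the Littlewood--Paley truncation $u_N:=P_{\le N}u$ where you use a spatial mollifier $u_\varepsilon:=\rho_\varepsilon\ast_x u$; these are interchangeable Fourier-multiplier regularizations and lead to the same convergence issues.

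Where your outline is too thin is the structure and handling of the remainder $R_\varepsilon$. It is not a single commutator: when you differentiate $v_\varepsilon=e^{-i\lambda J(u_\varepsilon)}u_\varepsilon$ in $t$, you also pick up $-i\lambda\,\partial_tJ(u_\varepsilon)\,v_\varepsilon$, and $\partial_tJ(u_\varepsilon)=\partial_x^{-1}\partial_t(|u_\varepsilon|^2)$ brings in the equation for $u_\varepsilon$ under the antiderivative. Thus (as in the paper's terms \eqref{term3}--\eqref{term4} and \eqref{term2R}--\eqref{term3R}) $R_\varepsilon$ has two pieces, and the second one is
\[
-2i\lambda^2\,v_\varepsilon\,\partial_x^{-1}\Re\bigl[\bar u_\varepsilon\,\partial_x\bigl(\rho_\varepsilon\ast(|u|^2u)-|u_\varepsilon|^2u_\varepsilon\bigr)\bigr].
\]
In the periodic case this is harmless once you put the commutator $F_\varepsilon:=\rho_\varepsilon\ast(|u|^2u)-|u_\varepsilon|^2u_\varepsilon$ in $L^p_TH^{1/2}$ (which your Moser-type estimates do give). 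In the non-periodic case, however, $\partial_x^{-1}$ is $\int_{-\infty}^x$, and ``the same trilinear control'' is not enough: you cannot simply bound $\int_{-\infty}^x \bar u_\varepsilon\,\partial_xF_\varepsilon$ by $\|u_\varepsilon\|_{H^{1/2}}\|F_\varepsilon\|_{H^{1/2}}$, since there is no gain from the outer antiderivative against the inner $\partial_x$ without further structure. The paper (Lemma~\ref{lem:conv-vR}\,(iv)) handles this by a dyadic decomposition in both factors together with an integration by parts that moves $\partial_x$ onto the lower-frequency factor, producing a Schur-type kernel $\min(K_1/K_2,K_2/K_1)^{1/2}$; the summability in $K_1$ then relies on the refined-Strichartz decay \eqref{est:PNu}. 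You should either reproduce this step or note explicitly that it is where the real work lies; ``strong convergence of spatial mollifiers'' does not cover it.

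A minor point: your claim that $u_\varepsilon\to u$ in $L^4_TL^\infty$ is correct but not automatic (mollification need not converge in $L^\infty_x$). It follows here because Proposition~\ref{prop:RS} actually gives $u\in L^4_TB^{0+}_{\infty,1}$, so $u(t)$ is uniformly continuous for a.e.\ $t$ and dominated convergence applies; this is worth stating.
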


\begin{proof}
By Proposition~\ref{prop:RS} we have $u\in L^\infty_TH^{\frac12}\cap L^4_TL^\infty$, and hence, by definition, $v$, $w$ belong to $L^4_TL^\infty$.
The fact that $v,w\in L^\infty_TH^{\frac12}$ will be shown later.
Then, the nonlinear terms $v^2\partial_x\bar{v}$, $w^2\partial_x\bar{w}$ can be defined as distributions in $L^4_TH^{-\frac12-}$, using the next lemma:
\begin{lem}\label{lem:Sobolev-v}
We have 
\begin{align*}
\| f_1f_2\partial_x\bar{f}_3\|_{H^{-\frac12-}}\lesssim \Big( &\| f_1\|_{L^\infty}\| f_2\| _{H^{\frac12}}+\| f_1\|_{H^{\frac12}}\| f_2\| _{L^\infty}\Big) \| f_3\|_{H^{\frac12}}.
\end{align*}
In particular, if $f_1,f_2\in L^\infty _TH^{\frac12}\cap L^4_TL^\infty$ and $f_3\in L^\infty _TH^{\frac12}$, then $f_1f_2\partial_x\bar{f}_3\in L^4_TH^{-\frac12-}$.
\end{lem}

\begin{proof}
This follows from the Sobolev inequality
\[ \| fg\|_{H^{-\frac12-}}\lesssim \| f\|_{H^{\frac12}}\| g\| _{H^{-\frac12}} \]
and the Moser estimate
\[ \| fg\|_{H^{\frac12}}\lesssim \| f\|_{L^\infty}\| g\|_{H^{\frac12}}+\| f\|_{H^{\frac12}}\| g\|_{L^\infty}.\qedhere \]
\end{proof}

To justify the equation \eqref{eq:vR} or \eqref{eq:v} for $v$, we consider smooth approximation $v_N$ defined by
\begin{equation}\label{defn:v_N}
v_N:=e^{-i\lambda J(u_N)}u_N,\qquad u_N:=P_{\leq N}u.
\end{equation}
First, we prove the following:
\begin{lem}\label{lem:conv-u}
Let $u\in L^\infty_TH^{\frac12}$ be a distributional solution of \eqref{dnls} on $(-T,T)$, and define $u_N:=P_{\leq N}u$.
Then, the following holds.
\begin{enumerate}
\item[$\mathrm{(i)}$] $u\in L^p_TL^\infty$ and $u_N\to u$ in $L^p_TH^{\frac12}\cap L^\infty_TH^{\frac12-}\cap L^p_TL^\infty$ for any $1\leq p<\infty$.
\item[$\mathrm{(ii)}$] $P_{\leq N}(|u|^2u)-|u_N|^2u_N\to 0$ in $L^p_TH^{\frac12}\cap L^\infty_TH^{\frac12-}\cap L^p_TL^\infty$ for any $1\leq p<\infty$.
\item[$\mathrm{(iii)}$] $L^2$ conservation law: $\| u(t)\|_{L^2}=\| u(0)\|_{L^2}$ for $t\in [-T,T]$.%
\footnote{%
Recall that (after modifying on a set of measure zero) a distributional solution $u\in L^\infty_TH^{\frac12}$ to \eqref{dnls} has the regularity $C_TH^{\frac12-}$.%
}%
\end{enumerate}
\end{lem}

\begin{proof}
(i) Note that $P_{\leq N}f\to f$ in $H^s$ for any $f\in H^s$.
Then, $u\in L^\infty_TH^{\frac12}$ and the dominated convergence theorem imply the convergence in $L^p_TH^{\frac12}$.
The convergence in $L^\infty_TH^{\frac12-}$ follows from the fact that $u\in C_TH^{\frac12-}$.
By interpolation between \eqref{est:RS1} and
\[ \| P_Nu\|_{L^\infty_TL^\infty}\lesssim \| u\|_{L^\infty_TH^{\frac12}},\]
we have
\begin{equation}\label{est:PNu}
\| P_Nu\|_{L^p_TL^\infty}\lesssim_T N^{-\min\{ \frac1p,\frac14\}+}\big( 1+\| u\|_{L^\infty_TH^{\frac12}}^2\big) \| u\|_{L^\infty_TH^{\frac12}}.
\end{equation}
Therefore, we see $u\in L^p_TL^\infty$ and that 
\[ \| u_N-u\|_{L^p_TL^\infty}\lesssim _{p,T,\| u\|_{L^\infty_TH^{\frac12}}}N^{-\min \{ \frac1p,\frac14\} +}\to 0 \qquad (N\to \infty).\]

(ii) It suffices to show the convergence of $P_{>N}(|u|^2u)$ and $|u|^2u-|u_N|^2u_N$ to $0$.
By $u\in C_TH^{\frac12-}$ and the Sobolev inequality $\| fg\|_{H^{s_1}}\lesssim \| f\|_{H^{s_2}}\| g\|_{H^{s_2}}$ for $0<s_1<\frac12$ with $s_2:=\frac12(s_1+\frac12)$, we see $|u|^2u\in C_TH^{\frac12-}$.
Since the Moser estimate with $u\in L^p_TL^\infty$ implies $|u|^2u\in L^p_TH^{\frac12}$, the convergence of $P_{>N}(|u|^2u)$ in $L^p_TH^{\frac12}\cap L^\infty_TH^{\frac12-}$ follows similarly to (i). 
From the support property of $\psi_N$ we see that $P_{>N}(|P_{\leq N/4}u|^2P_{\leq N/4}u)=0$, from which we have
\[ \| P_{>N}(|u|^2u)\|_{L^p_TL^\infty}\lesssim \| u\|_{L^{3p}_TL^\infty}^2\| P_{>N/4}u\|_{L^{3p}_TL^\infty}.\]
The convergence in $L^p_TL^\infty$ follows from (i).
For $|u|^2u-|u_N|^2u_N$, the convergence in these norms can be shown by using Sobolev, Moser and H\"older and the convergence in (i).

(iii) Since $u_N$ is a solution (in the classical sense) of
\[ \partial_tu_N=i\partial_x^2u_N+\lambda \partial_xP_{\leq N}(|u|^2u),\]
a direct calculation shows that for any $N\geq 1$
\[ \| u_N(t)\|_{L^2}^2=\| u_N(0)\|_{L^2}^2+2\lambda \Re \int_0^t\int_{\mathcal{M}}\bar{u}_N\partial_x\Big( P_{\leq N}(|u|^2u)-|u_N|^2u_N\Big) \,dx\,dt',\quad t\in [-T,T].\]
The last integral is bounded by $2|\lambda|\| u_N\|_{L^\infty_TH^{\frac12}}\| P_{\leq N}(|u|^2u)-|u_N|^2u_N\|_{L^1_TH^{\frac12}}$.
Then, the $L^2$ conservation is verified by taking the limit $N\to \infty$ with the convergences in (i), (ii).
\end{proof}

From here, we consider the periodic and non-periodic cases separately.

\medskip
\noindent
\underline{Case $\mathcal{M}=\mathbb{T}$}

After a long calculation (see \cite[Section~5.1]{K-all} for details), we see that $v_N$ satisfies
\begin{align}
(\partial_t-i\partial_x^2)v_N
&=-\lambda \Big( v_N^2\partial_x\bar{v}_N-2P_c(v_N\partial_x\bar{v}_N)v_N\Big) +2\lambda P_c(|v_N|^2)\partial_xv_N \label{term1}\\
&\quad +i\lambda^2 \Big( \frac{1}{2}P_{\neq c}(|v_N|^4)v_N-P_c(|v_N|^2)P_{\neq c}(|v_N|^2)v_N\Big) \label{term2}\\
&\quad +\lambda e^{-i\lambda J(u_N)}\partial_x\Big( P_{\leq N}(|u|^2u)-|u_N|^2u_N\Big) \label{term3}\\
&\quad -2i\lambda^2 v_N\partial_x^{-1}P_{\neq c}\Re \Big[ \bar{u}_N\partial_x\Big( P_{\leq N}(|u|^2u)-|u_N|^2u_N\Big) \Big] .\label{term4}
\end{align}
We note that the equation is satisfied in the classical sense, because $u_N,v_N\in C^1_TH^\infty$ and $P_{\leq N}(|u|^2u)\in C_TH^\infty$.

\begin{lem}\label{lem:conv-v}
Let $\mathcal{M}=\mathbb{T}$ and $u\in L^\infty_TH^{\frac12}$ be a distributional solution of \eqref{dnls} on $(-T,T)$, and define $v,u_N,v_N$ as in Definition~\ref{defn:gauge} and in \eqref{defn:v_N}.
Then, the following holds.
\begin{enumerate}
\item[$\mathrm{(i)}$] $v\in L^\infty_TH^{\frac12}\cap L^p_TL^\infty$ and $v_N\to v$ in $L^p_TH^{\frac12}\cap L^p_TL^\infty$ for any $1\leq p<\infty$.
\item[$\mathrm{(ii)}$] $\eqref{term1}\to -\lambda (v^2\partial_x\bar{v}-2P_c(v\partial_x\bar{v})v) +2\lambda P_c(|v|^2)\partial_xv$\quad in $L^p_TH^{-\frac12-}$ for any $1\leq p<\infty$.
\item[$\mathrm{(iii)}$] $\eqref{term2}\to i\lambda^2 \big( \frac{1}{2}P_{\neq c}(|v|^4)v-P_c(|v|^2)P_{\neq c}(|v|^2)v\big)$\quad in $L^p_TH^{\frac12-}$ for any $1\leq p<\infty$.
\item[$\mathrm{(iv)}$] $\eqref{term3}\to 0$ in $L^\infty_TH^{-\frac12-}$.
\item[$\mathrm{(v)}$] $\eqref{term4}\to 0$ in $L^p_TH^{\frac12-}$ for any $1\leq p<\infty$.
\end{enumerate}
Consequently, $v$ is a distributional solution of \eqref{eq:v} on $(-T,T)$.
\end{lem}

\begin{proof}
(i) We have
\begin{gather*}
\begin{aligned}
\| e^{-i\lambda J(f)}\|_{H^1}&\lesssim \| e^{-i\lambda J(f)}\|_{L^2}+\| P_{\neq c}(|f|^2)e^{-i\lambda J(f)}\|_{L^2}\\
&\lesssim 1+\| f\|_{L^4}^2\lesssim 1+\| f\|_{H^{\frac14}}^2,
\end{aligned}\\
\begin{aligned}
\| e^{-i\lambda J(f)}-e^{-i\lambda J(g)}\|_{H^1}&\lesssim \| J(f)-J(g)\|_{L^2}+\| P_{\neq c}(|f|^2-|g|^2)\|_{L^2}\\
&\qquad +\| P_{\neq c}(|g|^2)[J(f)-J(g)]\|_{L^2}\\
&\lesssim (\| f\|_{H^{\frac14}}+\| g\|_{H^{\frac14}})(1+\| g\|_{H^{\frac14}}^2)\| f-g\|_{H^{\frac14}}.
\end{aligned}
\end{gather*}
From the Sobolev inequality $\| fg\|_{H^{\frac12}}\lesssim \| f\|_{H^1}\| g\|_{H^{\frac12}}$ and Lemma~\ref{lem:conv-u} (i), we see that
\begin{gather*}
\| v\|_{L^\infty_TH^{\frac12}}\lesssim \Big( 1+\| u\|_{L^\infty_TH^{\frac14}}^2\Big) \| u\|_{L^\infty_TH^{\frac12}},\\
\begin{aligned}
\| v_N-v\|_{L^p_TH^{\frac12}}&\lesssim \| e^{-i\lambda J(u_N)}-e^{-i\lambda J(u)}\| _{L^p_TH^1}\| u_N\|_{L^\infty_TH^{\frac12}}+\| e^{-i\lambda J(u)}\|_{L^\infty_TH^1}\| u_N-u\|_{L^p_TH^{\frac12}}\\
&\lesssim _{\| u\|_{L^\infty_TH^{\frac12}}}\| u_N-u\|_{L^p_TH^{\frac12}}\to 0\qquad (N\to \infty).\end{aligned}
\end{gather*}
Also, from Lemma~\ref{lem:conv-u} (i) we see $v\in L^p_TL^\infty$ and
\[ \begin{aligned}
\| v_N-v\|_{L^p_TL^\infty}&\lesssim \big\| P_{\neq c}(|u_N|^2-|u|^2)\big\| _{L^\infty_TL^1}\| u_N\|_{L^p_TL^\infty}+\| u_N-u\|_{L^p_TL^\infty}\\
&\lesssim _{p,\| u\|_{L^\infty_TH^{\frac12}}}\| u_N-u\|_{L^\infty_TL^2}+\| u_N-u\|_{L^p_TL^\infty}\to 0\qquad (N\to \infty).\end{aligned}\]

(ii) Lemma~\ref{lem:Sobolev-v} and (i) imply the convergence of the first term in $L^p_TH^{-\frac12-}$.
Since
\[ |P_c(v_1\partial_x\bar{v}_2)|+|P_c(v_1\bar{v}_2)|\lesssim \| v_1\|_{H^{1/2}}\| v_2\|_{H^{1/2}},\]
by using (i) we obtain the convergence of the last two terms in $L^p_TH^{\frac12}$ and in $L^p_TH^{-\frac12}$, respectively.

(iii) This is straightforward, using (i) and the Sobolev inequality $\| fg\|_{H^{s_1}}\lesssim \| f\|_{H^{s_2}}\| g\|_{H^{\frac12}}$ for $0<s_1<s_2<\frac12$.

(iv) By the Sobolev inequality $\| fg\|_{H^{-\frac12-}}\lesssim \| f\|_{H^1}\| g\|_{H^{-\frac12-}}$, the claim is reduced to
\[ \| P_{\leq N}(|u|^2u)-|u_N|^2u_N\|_{L^\infty_TH^{\frac12-}}\to 0 \qquad (N\to \infty),\]
which has been shown in Lemma~\ref{lem:conv-u} (ii).

(v) By the Sobolev inequalities 
\[ \| fg\|_{H^{s_1}}\lesssim \| f\|_{H^{\frac12}}\| g\|_{H^{s_2}}\quad (0<s_1<s_2<1/2),\qquad  \| fg\|_{H^{-\frac12-}}\lesssim \| f\|_{H^{\frac12}}\| g\| _{H^{-\frac12}}, \]
it suffices to show that
\[ \| P_{\leq N}(|u|^2u)-|u_N|^2u_N\|_{L^p_TH^{\frac12}}\to 0 \qquad (N\to \infty ),\]
which has been shown in Lemma~\ref{lem:conv-u} (ii).
\end{proof}

We have shown that $v$ is a solution of \eqref{eq:v} in the sense of distributions.
Noticing that $P_c(|v(t)|^2)$ is a constant by Lemma~\ref{lem:conv-u} (iii), it is easy to see that $w\in L^\infty_TH^{\frac12}$ and it satisfies the equation \eqref{eq:w} in the sense of distributions.

\medskip
\noindent
\underline{Case $\mathcal{M}=\mathbb{R}$}

A direct calculation shows that $v_N$ is a (classical) solution of 
\begin{align}
(\partial_t-i\partial_x^2)v_N
&=-\lambda v_N^2\partial_x\bar{v}_N+i\frac{\lambda^2}{2}|v_N|^4v_N \label{term1R}\\
&\quad +\lambda e^{-i\lambda J(u_N)}\partial_x\Big( P_{\leq N}(|u|^2u)-|u_N|^2u_N\Big) \label{term2R}\\
&\quad -2i\lambda^2 v_N\Re \int_{-\infty}^x \bar{u}_N\partial_x\Big( P_{\leq N}(|u|^2u)-|u_N|^2u_N\Big) \,dy .\label{term3R}
\end{align}
It suffices to prove the following lemma:
\begin{lem}\label{lem:conv-vR}
Let $\mathcal{M}=\mathbb{R}$ and $u\in L^\infty_TH^{\frac12}$ be a distributional solution of \eqref{dnls} on $(-T,T)$, and define $v,u_N,v_N$ as in Definition~\ref{defn:gauge} and in \eqref{defn:v_N}.
Then, the following holds.
\begin{enumerate}
\item[$\mathrm{(i)}$] $v\in L^\infty_TH^{\frac12}\cap L^p_TL^\infty$ and $v_N\to v$ in $L^p_TH^{\frac12}\cap L^p_TL^\infty$ for any $1\leq p<\infty$.
\item[$\mathrm{(ii)}$] $\eqref{term1R}\to -\lambda v^2\partial_x\bar{v}+i\frac{\lambda^2}{2}|v|^4v$\quad in $L^p_TH^{-\frac12-}$ for any $1\leq p<\infty$.
\item[$\mathrm{(iii)}$] $\eqref{term2R}\to 0$ in $L^\infty_TH^{-1}$.
\item[$\mathrm{(iv)}$] $\eqref{term3R}\to 0$ in $L^p_TL^\infty$ for any $1\leq p<\infty$.
\end{enumerate}
Consequently, $v$ is a distributional solution of \eqref{eq:vR} on $(-T,T)$.
\end{lem}
\begin{proof}
(i) The estimate in $L^p_TL^\infty$ is the same as the periodic case.
For the Sobolev norm, however, fractional Leibniz rule type estimates are needed to handle the exponential term which is not in $L^p$ for any $p<\infty$.
We recall the following:
\begin{lem}[{\cite[Lemma~2.7]{MP12}}] \label{lem:fracLeib}
Let $f,f_1,f_2$ be real-valued functions in $L^1(\mathbb{R})\cap L^2(\mathbb{R})$.
Then, for $0\leq s\leq \frac12$ we have
\begin{gather*}
\| e^{i\partial_x^{-1}f}g\|_{H^s}\lesssim \big( 1+\| f\|_{L^2}\big) \| g\|_{H^s},\\
\big\| \big( e^{i\partial_x^{-1}f_1}-e^{i\partial_x^{-1}f_2}\big) g\big\|_{H^s}\lesssim \Big( \| f_1-f_2\|_{L^2}+\big( 1+\| f_1\|_{L^2}\big) \| f_1-f_2\|_{L^1}\Big) \| g\|_{H^s}.
\end{gather*}
\end{lem}
From this lemma, we have
\begin{align*}
\| v\|_{L^\infty_TH^{\frac12}}&\lesssim \big(1+\| u\|_{L^\infty_TH^{\frac12}}^2\big) \| u\|_{L^\infty_TH^{\frac12}},\\
\| v_N-v\|_{L^pH^{\frac12}}&\lesssim \big( 1+\| u\|_{L^\infty_TH^{\frac12}}^4\big) \| u_N-u\|_{L^p_TH^{\frac12}}\to 0\qquad (N\to \infty)
\end{align*}
by Lemma~\ref{lem:conv-u} (i).

(ii) This is an immediate consequence of Lemma~\ref{lem:Sobolev-v}, the Sobolev inequality and the convergence in (i).

(iii) We write $F_N:=P_{\leq N}(|u|^2u)-|u_N|^2u_N$.
We can estimate this term as
\begin{align*}
&\| e^{-i\lambda J(u_N)}\partial_xF_N\|_{L^\infty_TH^{-1}}\\
&\lesssim \big\| \partial_x\big( e^{-i\lambda J(u_N)}F_N\big) \big\|_{L^\infty_TH^{-1}}+\| |u_N|^2F_N\|_{L^\infty_TL^1} \lesssim \big( 1+\| u_N\|_{L^\infty_TH^{\frac14}}^2\big) \| F_N\|_{L^\infty_TL^2}\\
&\lesssim \big( 1+\| u_N\|_{L^\infty_TH^{\frac14}}^2\big) \Big( \big\| P_{>N}(|u|^2u)\big\|_{L^\infty_TL^2}+\| u\|_{L^\infty_TH^{\frac13}}^2\| u_N-u\|_{L^\infty_TH^{\frac13}}\Big) \to 0\quad (N\to \infty),
\end{align*}
where we have used several Sobolev embeddings, Lemma~\ref{lem:conv-u} (i) and the fact that $u\in C_TH^{\frac12-}$ implies $|u|^2u\in C_TL^2$. 

(iv) Since $v\in L^p_TL^\infty$ for any $p<\infty$, it suffices to show
\[ \big\| \int_{-\infty}^xu_N\partial_xF_N\,dy \big\|_{L^p_TL^\infty_x}\to 0\qquad (N\to \infty) \]
for any $p<\infty$.
Decomposing $u_N$ and $F_N$ into dyadic pieces and using an integration by parts, we estimate it as
\begin{align*}
&\big\| \int_{-\infty}^x u_N\partial_xF_N\big\|_{L^p_TL^\infty}\\
&\leq \Big\| \sum_{K_1\leq N}\sum_{K_2\leq K_1}\big\| \int_{-\infty}^x P_{K_1}u\cdot \partial_xP_{K_2}F_N\big\| _{L^\infty}\Big\|_{L^p_T}\\
&\quad +\sum _{K_1\leq N}\| P_{K_1}u\cdot P_{K_1<\cdot \lesssim N}F_N\|_{L^p_TL^\infty} +\Big\| \sum_{K_1\leq N}\sum_{K_1<K_2\lesssim N}\big\| \int_{-\infty}^x \partial_xP_{K_1}u\cdot P_{K_2}F_N\big\| _{L^\infty}\Big\|_{L^p_T}\\
&\lesssim \Big\| \sum_{K_1,K_2}\min\Big\{ \frac{K_1}{K_2},\, \frac{K_2}{K_1}\Big\}^{\frac12}\| P_{K_1}u\|_{H^{\frac12}}\| P_{K_2}F_N\|_{H^{\frac12}}\Big\|_{L^p_T}+\sum_{K_1}\| P_{K_1}u\|_{L^{2p}_TL^\infty}\| F_N\|_{L^{2p}_TL^\infty}\\
&\lesssim \| u\|_{L^\infty_TH^{\frac12}}\| F_N\|_{L^p_TH^{\frac12}}+\sum_{K_1}\| P_{K_1}u\|_{L^{2p}_TL^\infty}\| F_N\|_{L^{2p}_TL^\infty}.
\end{align*}
Since $\sum_{N}\| P_Nu\|_{L^{2p}_TL^\infty}<\infty$ by \eqref{est:PNu}, we obtain the convergence from Lemma~\ref{lem:conv-u} (ii).
\end{proof}

This is the end of the proof of Proposition~\ref{prop:justify-eq}.
\end{proof}

We have already exploited the $L^4L^\infty$ refined Strichartz estimate with additional regularity (i.e., decay in \eqref{est:PNu}) for justification of the gauge transformed equations \eqref{eq:vR}, \eqref{eq:w}.
In the normal form reduction argument, we will also need the $L^4_TB^{0+}_{\infty,1}$ control of the gauge transform.
This can be obtained either from the control of $u$ in Proposition~\ref{prop:RS} and the definition of the gauge transformation (via fractional Leibniz rule) or by directly applying the short-time Strichartz argument to the equations \eqref{eq:vR}, \eqref{eq:w} we have just verified.
Here, we take the latter approach:
\begin{prop}\label{prop:RS-w}
Let $w\in L^\infty_TH^{\frac12}\cap L^4_TL^\infty$ be a distributional solution of \eqref{eq:vR} in the non-periodic case or \eqref{eq:w} in the periodic case.
Then, for $T\in (0,1]$ and $N\geq 1$, we have
\[ \| P_Nw\|_{L^4_TL^\infty}\lesssim N^{-\frac14+}\Big( 1+\| w\|_{L^\infty_TH^{\frac12}}^4+\| w\|_{L^4_TL^\infty}\| w\|_{L^\infty_TH^{\frac12}}\Big) \| w\|_{L^\infty_TH^{\frac12}}.\]
In particular, for $0\leq s<\frac14$ we have $w\in L^4_TB^s_{\infty,1}$ and 
\[ \| w\|_{L^4_TB^s_{\infty,1}}\lesssim T^{\frac14-s-}\Big( 1+\| w\|_{L^\infty_TH^{\frac12}}^4+\| w\|_{L^4_TL^\infty}\| w\|_{L^\infty_TH^{\frac12}}\Big) \| w\|_{L^\infty_TH^{\frac12}}.\]
Moreover, for two such solutions $w_1$ and $w_2$, we have
\[ \| w_1-w_2\|_{L^4_TB^s_{\infty,1}}\lesssim T^{\frac14-s-}\Big( 1+\| w_1\|_{L^\infty_TH^{\frac12}\cap L^4_TL^\infty}^4+\| w_2\|_{L^\infty_TH^{\frac12}\cap L^4_TL^\infty}^4\Big) \| w_1-w_2\|_{L^\infty_TH^{\frac12}\cap L^4_TL^\infty}.\]
\end{prop}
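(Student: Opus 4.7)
My plan is to mirror the proof of Proposition~\ref{prop:RS}, now applied to the gauge-transformed equations \eqref{eq:vR} and \eqref{eq:w} in place of \eqref{dnls}. I will write the integral form of the equation satisfied by $w$ and, for fixed $N\geq 1$ with $T\geq N^{-1}$, partition $[-T,T]$ into $\sim TN$ sub-intervals $I_j$ of length $\sim N^{-1}$. The short-time Strichartz estimates recalled in the proof of Proposition~\ref{prop:RS} give, on each $I_j$,
$$\|P_N w\|_{L^4(I_j;L^\infty)} \lesssim \|P_N w(t_j)\|_{L^2} + \|P_N \mathcal{N}\|_{L^1(I_j;L^2)},$$
where $\mathcal{N}$ stands for the entire nonlinear part of \eqref{eq:vR} or \eqref{eq:w}. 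Summing in $j$ and $\ell^4$-aggregating, the linear contribution gives $T^{1/4}N^{-1/4}\|w\|_{L^\infty_T H^{1/2}}$, and the nonlinear contribution reduces, via Bernstein, to bounding $\|\mathcal{N}\|_{L^4_T H^{-1/2-}}$ multiplied by $N^{-1/4+}$. The complementary case $T< N^{-1}$ is handled by applying the short-time estimate on the whole interval and using Bernstein as in Proposition~\ref{prop:RS}; the factor $T^{1/4}$ is harmless by $T\leq 1$.

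I will then estimate the two types of nonlinearities separately. For the cubic piece $w^2\partial_x\bar{w}$, Lemma~\ref{lem:Sobolev-v} yields
$$\|w^2\partial_x\bar{w}\|_{L^4_T H^{-1/2-}} \lesssim \|w\|_{L^4_T L^\infty}\,\|w\|_{L^\infty_T H^{1/2}}^2,$$
producing the contribution $N^{-1/4+}\|w\|_{L^4_T L^\infty}\|w\|_{L^\infty_T H^{1/2}}^2$ after taking into account $N^{-3/4}\cdot N^{1/2+}$. For the quintic piece $|w|^4 w$, I will use the Sobolev embedding $H^{1/2}\hookrightarrow L^p$ for all finite $p$ together with the Moser/Sobolev inequality to obtain $\||w|^4 w\|_{H^{-1/2-}}\lesssim \|w\|_{H^{1/2}}^5$, which combined with Hölder in time yields $T^{1/4}N^{-1/4+}\|w\|_{L^\infty_T H^{1/2}}^5$. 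In the periodic case, the extra scalar factors $P_c(w\partial_x\bar{w})$, $P_c(|w|^2)$, $P_{\neq c}(|w|^2)$ are all controlled by $\|w\|_{H^{1/2}}^2$, so the associated terms are dominated by those just handled.

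The Besov estimate for $\|w\|_{L^4_T B^s_{\infty,1}}$ with $0\leq s<1/4$ follows the same high/low frequency splitting at $M\sim T^{-1}$ used to derive \eqref{est:RS2}: the high part $P_{>M}w$ is summed dyadically from the bound just obtained, while the low part is controlled via $\|P_{\leq M}w\|_{L^4_T L^\infty}\lesssim T^{1/4}\langle\log M\rangle^{1/2}\|w\|_{L^\infty_T H^{1/2}}$, yielding the advertised $T^{1/4-s-}$ prefactor. For the Lipschitz estimate on $w_1-w_2$, I will linearize by expanding each cubic and quintic difference as a telescoping sum in which exactly one factor is replaced by $w_1-w_2$, and rerun the multilinear estimates above. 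The main obstacle is that $w\notin L^\infty_T L^\infty$, so the quintic term cannot be absorbed into $L^4_T L^\infty$ norms; the only available route is the Sobolev control $H^{1/2}\hookrightarrow L^p$ for all $p<\infty$, which is precisely why the quintic contribution appears with $\|w\|_{L^\infty_T H^{1/2}}^4$ rather than with additional $L^4_T L^\infty$ factors. The remaining work is the careful bookkeeping of powers of $T$ and $N$ and the verification that the sub-leading periodic terms respect the same estimates.
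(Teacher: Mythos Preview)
Your proposal is correct and follows essentially the same approach as the paper: apply the short-time Strichartz argument from Proposition~\ref{prop:RS} to the gauge-transformed equation, reduce the nonlinear contribution to an $L^4_TH^{-\frac12-}$ bound on the right-hand side (controlling $w^2\partial_x\bar w$ via Lemma~\ref{lem:Sobolev-v} and the quintic/periodic corrections via Sobolev embedding), then obtain the Besov bound by the high/low split at $M\sim T^{-1}$ and the difference estimate by telescoping. The only cosmetic difference is that the paper places the lower-order (quintic and periodic) terms directly in $L^2$ rather than $H^{-\frac12-}$, which is equivalent for the present purpose.
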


\begin{proof}
Similarly to the proof of Proposition~\ref{prop:RS}, the short-time Strichartz estimates imply
\begin{align*}
\| P_Nw\|_{L^4_TL^\infty}&\lesssim_T N^{-\frac14}\| w\|_{L^\infty_TH^{\frac12}}+N^{-\frac34}\| P_NF(w)\|_{L^4_TL^2}\\
&\lesssim N^{-\frac14}\| w\|_{L^\infty_TH^{\frac12}}+N^{-\frac14+}\| F(w)\|_{L^4_TH^{-\frac12-}},
\end{align*}
where
\[ F(w):=\left\{ \begin{alignedat}{2} &{-}\lambda w^2\partial_x\bar{w}+i\frac{\lambda^2}{2}|w|^4w & &(\mathcal{M}=\mathbb{R}),\\
&\begin{aligned}&{-}\lambda \Big( w^2\partial_x\bar{w}-2P_c(w\partial_x\bar{w})w\Big) \\
&\quad +i\lambda^2\Big( \frac{1}{2}P_{\neq c}(|w|^4)w-P_c(|w|^2)P_{\neq c}(|w|^2)w\Big)\end{aligned} &\qquad &(\mathcal{M}=\mathbb{T}).\end{alignedat}\right. \]
Applying Lemma~\ref{lem:Sobolev-v} and the Sobolev embedding, we have
\begin{align*}
\| F(w)\|_{H^{-\frac12-}}&\leq \| w^2\partial_x\bar{w}\|_{H^{-\frac12-}}+\| \text{other terms}\|_{L^2} \\
&\lesssim \Big( \| w\|_{L^\infty}+\| w\|_{L^2}+\| w\|_{H^{\frac12}}^3\Big) \| w\|_{H^{\frac12}}^2.
\end{align*}
This implies the first claimed estimate.
Given $M\geq 1$, summing up the estimate for $N\geq M$ implies
\[ \| P_{\geq M}w\|_{L^4_TB^s_{\infty,1}}\lesssim M^{s-\frac14+}\Big( 1+\| w\|_{L^\infty_TH^{\frac12}}^4+\| w\|_{L^4_TL^\infty}\| w\|_{L^\infty_TH^{\frac12}}\Big) \| w\|_{L^\infty_TH^{\frac12}}\]
for $s<\frac14$, while summing up for $N\leq M$ and H\"older in $t$, Bernstein in $x$ show
\[ \| P_{\leq M}w\|_{L^4_TB^s_{\infty,1}}\lesssim T^{\frac14}M^s\| w\|_{L^\infty_TH^{\frac12}}\]
for $s>0$ and with a factor $\langle \log M\rangle ^{\frac12}$ for $s=0$.
Choosing $M\sim T^{-1}$, we obtain the second estimate.
Finally, we can show the difference estimate by using
\[ \| P_N(w_1-w_2)\|_{L^4_TL^\infty}\lesssim_T N^{-\frac14}\| w_1-w_2\|_{L^\infty_TH^{\frac12}}+N^{-\frac14+}\| P_N(F(w_1)-F(w_2))\|_{L^4_TH^{-\frac12-}}\]
and repeating the same argument. 
\end{proof}


\subsection{Coifman-Meyer type lemmas}
\label{subsec:Coifman-Meyer}

All the results in this subsection hold equally for $\mathcal{M}=\mathbb{R}$ and $\mathbb{T}$.

In the NFR argument, we analyze the interaction representation of a solution $w(t,x)$ on the frequency space; i.e., consider the equation for $\omega (t,\xi) :=\mathcal{F}_x[S(-t)w(t,\cdot )](\xi)$ instead of $w$ itself.
On the other hand, the ($L^\infty_x$-type) refined Strichartz estimates for $w(t,x)$ are based on the Schr\"odinger dispersion and do not hold after taking the absolute value on the frequency side. 
In the previous result for the Benjamin-Ono equation in \cite{MP23}, the refined Strichartz estimates were used only to estimate $\partial_t\omega (t,\xi)$, and that part could be separated from the other ($L^2_x$-based) estimates, in which one can take absolute values of $\omega (t,\xi)$. 
In our problem, we need to use the refined Strichartz estimates more extensively, and it seems difficult to separate these steps in a similar manner.
Consequently, we will need Coifman-Meyer type estimates of trilinear Fourier multiplier operators.
For instance, consider the $L^2_x$ estimate of the following trilinear operator: 
\begin{align*}
&\mathcal{F}^{-1}\bigg[ \psi_N(\xi) \int _{\xi =\xi_{123}}\frac{e^{it\phi (\vec{\xi})}}{\phi (\vec{\xi})}i\xi_3(\psi_{N_1}\omega _1)(\xi_1)(\psi_{N_2}\omega _2)(\xi_2)(\psi_{N_3}\omega _3^*)(\xi_3)\bigg] \\
&\quad =S(-t)P_N\mathcal{F}^{-1}\bigg[ \int _{\xi =\xi_{123}}\frac{i\xi_3}{\phi (\vec{\xi})}\mathcal{F}[P_{N_1}w_1] (\xi_1)\mathcal{F}[P_{N_2}w_2](\xi_2)\mathcal{F}[P_{N_3}\bar{w}_3](\xi_3)\bigg] ,
\end{align*}
where 
\[ \omega^*(\xi):=\overline{\omega(-\xi)},\qquad \phi (\vec{\xi}):=\xi^2-\xi_1^2-\xi_2^2+\xi_3^2=2\xi_{13}\xi_{23}.\]
Suppose that we want to estimate its $L^2_x$ norm by
\[ B\| P_{N_1}w_1\|_{L^2}\| P_{N_2}w_2\|_{L^\infty}\| P_{N_3}w_3\|_{L^\infty}, \]
for instance, when the multiplier $b(\xi_1,\xi_2,\xi_3):=i\xi_3/(\xi_{13}\xi_{23})$ obeys the bound
\[ |b(\xi_1,\xi_2,\xi_3)|\lesssim B,\qquad \text{whenever}\quad \psi_N(\xi _{123})\prod_{j=1}^3\psi_{N_j}(\xi_j)\neq 0.\]
The classical Coifman-Meyer theorem requires a condition like
\[ |\partial_\xi^\alpha b(\xi)|\lesssim _\alpha B|\xi |^{-|\alpha|},\qquad \xi=(\xi_1,\xi_2,\xi_3),\quad \alpha =(\alpha_1,\alpha_2,\alpha_3).\]
This condition seems too strong, however.
Moreover, the endpoint space $L^\infty$ is often excluded in Coifman-Meyer type estimates in a general setting.

Here, thanks to the frequency localization, we can show the desired estimates under weaker assumptions.
Our proof follows the argument of \cite[Section~5]{H12} based on separation of variables by Fourier series expansion of the multiplier.
The same idea has been used for energy-type estimates; see, e.g., \cite{KT07,CHT12,S21,KT-shorttime}. 

\begin{lem}\label{lem:CM1}
Let $N_1,N_2,N_3$ be dyadic integers, and assume that the multiplier $b:I_{N_1}\times I_{N_2}\times I_{N_3}\to \mathbb{C}$ is $C^2$ and satisfies 
\[ |\partial_{\xi_1}^{\alpha_1}\partial_{\xi_2}^{\alpha_2}\partial_{\xi_3}^{\alpha_3}b(\xi_1,\xi_2,\xi_3)|\lesssim N_1^{-\alpha_1}N_2^{-\alpha_2}N_3^{-\alpha_3}B\quad \text{on $I_{N_1}\times I_{N_2}\times I_{N_3}$}\]
for any $\alpha =(\alpha_1,\alpha_2,\alpha_3)\in \mathbb{Z}_{\geq 0}^3$ with $|\alpha|\leq 2$.
Then, for any $p,p_1,p_2,p_3\in [1,\infty]$ with $\frac1p=\frac{1}{p_1}+\frac{1}{p_2}+\frac{1}{p_3}$, it holds that
\begin{align*}
&\Big\| \mathcal{F}^{-1}\Big[ \int _{\xi =\xi_{123}}b(\xi_1,\xi_2,\xi_3)\mathcal{F}[P_{N_1}w_1] (\xi_1)\mathcal{F}[P_{N_2}w_2](\xi_2)\mathcal{F}[P_{N_3}\bar{w}_3](\xi_3)\Big] \Big\|_{L^p}\\
&\quad \lesssim B\| P_{N_1}w_1\|_{L^{p_1}}\| P_{N_2}w_2\|_{L^{p_2}}\| P_{N_3}w_3\|_{L^{p_3}}.
\end{align*}
\end{lem}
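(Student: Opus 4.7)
The plan is to expand the multiplier $b$, restricted to the frequency box $I_{N_1}\times I_{N_2}\times I_{N_3}$, as an absolutely convergent Fourier series of pure exponentials, and then to reinterpret each exponential on the physical side as a triple of translations on which Hölder's inequality gives the trilinear estimate with constant $1$. The smoothness hypothesis on $b$ will provide the summability of the coefficients and generate the overall factor $B$.

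Concretely, I would first rescale by setting $\tilde b(\eta):=B^{-1}b(N_1\eta_1,N_2\eta_2,N_3\eta_3)$, which is $C^2$ on a box contained in a fixed compact set (say $[-2,2]^3$) with $C^2$-norm $\lesssim 1$, uniformly in the $N_j$. Multiplying by a fixed smooth tensor-product cutoff $\chi(\eta)=\chi_1(\eta_1)\chi_2(\eta_2)\chi_3(\eta_3)$ that equals $1$ on the support of interest and is compactly supported, I extend periodically to a fixed torus. Since each $\mathcal{F}[P_{N_j}w_j]$ is supported in $I_{N_j}$, replacing $b$ by $b\,\tilde\psi_{N_1}\tilde\psi_{N_2}\tilde\psi_{N_3}$ does not change the operator, so on the relevant support I obtain an expansion
\[
b(\xi)=B\sum_{k\in\mathbb{Z}^3}c_k\,e^{i(k_1\xi_1/N_1+k_2\xi_2/N_2+k_3\xi_3/N_3)},
\]
with coefficients satisfying $\sum_{k}|c_k|\lesssim 1$ by repeated integration by parts in the rescaled variables. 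For the monomial with $\tau_j:=k_j/N_j$, the identity $\mathcal{F}^{-1}[e^{i\tau\xi}\hat f(\xi)]=f(\cdot+\tau)$ reduces the trilinear operator to the pointwise product
\[
(P_{N_1}w_1)(x+\tau_1)\,(P_{N_2}w_2)(x+\tau_2)\,\overline{(P_{N_3}w_3)(x+\tau_3)}.
\]
Translation invariance of $L^{p_j}$ norms together with Hölder's inequality yields, uniformly in $k$, the monomial bound $\|P_{N_1}w_1\|_{L^{p_1}}\|P_{N_2}w_2\|_{L^{p_2}}\|P_{N_3}w_3\|_{L^{p_3}}$. Summing against $c_k$, pulling out the factor $B$, and applying the triangle inequality yields the lemma for the full range $p,p_1,p_2,p_3\in[1,\infty]$.

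The main obstacle, and the step requiring the most care, is extracting absolute summability of the Fourier coefficients $c_k$ from the given derivative bound. Integration by parts in $\eta_j$ converts one derivative bound into a factor $(1+|k_j|)^{-1}$, so combining such operations under the derivative hypothesis produces the anisotropic decay $|c_k|\lesssim \prod_j(1+|k_j|)^{-\alpha_j}$ for every allowed multi-index $\alpha$. The cutoff $\chi$ must be chosen as a fixed tensor product independent of the $N_j$, so that no parasitic factors appear; and the anisotropic bounds must be combined efficiently—e.g. by splitting $\mathbb{Z}^3$ into regions where one particular $|k_j|$ dominates and placing the available derivatives on that direction—in order to obtain a bound summable over $\mathbb{Z}^3$. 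Once $\sum|c_k|\lesssim 1$ is in hand, the remaining translation/Hölder step is immediate.
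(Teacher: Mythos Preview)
Your overall strategy—rescale, cut off with a fixed tensor-product bump, expand in Fourier series on a torus, then reinterpret each exponential as a triple of translations and apply H\"older—is exactly the paper's approach. However, there is a genuine gap in your proposed argument for the $\ell^1$ summability of the Fourier coefficients.

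Integration by parts with the $C^2$ bound yields at best the pointwise decay $|c_k|\lesssim \prod_j\langle k_j\rangle^{-\alpha_j}$ for $|\alpha|\le 2$, and your region-splitting idea does not rescue this. On the region where $|k_1|$ dominates, putting both derivatives on $\eta_1$ gives $|c_k|\lesssim \langle k_1\rangle^{-2}$, but there are $\sim\langle k_1\rangle^2$ lattice points $(k_2,k_3)$ with $|k_2|,|k_3|\le |k_1|$, so the sum over that region is $\sum_{k_1}1=\infty$. Using the mixed bound $\langle k_1\rangle^{-1}\langle k_2\rangle^{-1}$ on the region $|k_1|\ge|k_2|\ge|k_3|$ fares no better. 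In fact no convex combination of the available pointwise bounds can work: they all have total decay order $\le 2$, while summability over $\mathbb{Z}^3$ requires order $>3$.

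The paper avoids this by passing through $L^2$: Cauchy--Schwarz gives $\|\hat f\|_{\ell^1(\mathbb{Z}^3)}\lesssim \|f\|_{H^s(\mathbb{T}^3)}$ for any $s>3/2$, and then $\|f\|_{H^{3/2+}}\le \|f\|_{H^2}\lesssim \|f\|_{C^2}\lesssim B$. This is precisely why two derivatives are the right hypothesis in three variables—they just suffice, but only via the Sobolev embedding, not via pointwise decay of individual coefficients.
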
 

\begin{proof}
Define a $C^2$ function $f$ on $\mathbb{R}^3$ by
\[ f(\eta_1,\eta_2,\eta_3):=\left\{ \begin{aligned}
&b(N_1\eta_1,N_2\eta_2,N_3\eta_3)\prod_{j=1}^3\tilde{\psi}_{N_j}(N_j\eta_j) \qquad \text{on $\tfrac{1}{N_1}I_{N_1}\times \tfrac{1}{N_2}I_{N_2}\times \tfrac{1}{N_3}I_{N_3}$},\\
&0\quad \text{on $[-\pi ,\pi ]^3\setminus \tfrac{1}{N_1}I_{N_1}\times \tfrac{1}{N_2}I_{N_2}\times \tfrac{1}{N_3}I_{N_3}$},\\
&\text{$2\pi$-periodic\quad in $\mathbb{R}^3$.} 
\end{aligned} \right. \]
Let $\{ \hat{f}(k)\}_{k\in \mathbb{Z}^3}$ be the Fourier coefficients of $f$, so that
\[ b(\xi_1,\xi_2,\xi_3)\prod _{j=1}^3\tilde{\psi}_{N_j}(\xi_j)=f\Big( \frac{\xi_1}{N_1},\frac{\xi_2}{N_2},\frac{\xi_3}{N_3}\Big) =\sum _{k\in \mathbb{Z}^3}e^{i(\frac{\xi_1}{N_1}k_1+\frac{\xi_2}{N_2}k_2+\frac{\xi_3}{N_3}k_3)}\hat{f}(k) \]
on $I_{N_1}\times I_{N_2}\times I_{N_3}$.
Using this representation, Minkowski, and H\"older, we have
\begin{align*}
&\Big\| \mathcal{F}^{-1}\Big[ \int _{\xi =\xi_{123}}b(\xi_1,\xi_2,\xi_3)\mathcal{F}[P_{N_1}w_1] (\xi_1)\mathcal{F}[P_{N_2}w_2](\xi_2)\mathcal{F}[P_{N_3}\bar{w}_3](\xi_3)\Big] \Big\|_{L^p}\\
&\quad \leq \sum_{k\in \mathbb{Z}^3}|\hat{f}(k)| \Big\| \mathcal{F}^{-1}\Big[ \int _{\xi =\xi_{123}}e^{i\frac{k_1}{N_1}\xi_1}\mathcal{F}[P_{N_1}w_1] (\xi_1)e^{i\frac{k_2}{N_2}\xi_2}\mathcal{F}[P_{N_2}w_2](\xi_2)e^{i\frac{k_3}{N_3}\xi_3}\mathcal{F}[P_{N_3}\bar{w}_3](\xi_3)\Big] \Big\|_{L^p}\\
&\quad =c\sum_{k\in \mathbb{Z}^3}|\hat{f}(k)| \Big\| (P_{N_1}w_1)(\cdot +\frac{k_1}{N_1})(P_{N_2}w_2)(\cdot +\frac{k_2}{N_2})(P_{N_3}\bar{w}_3)(\cdot +\frac{k_3}{N_3})\Big\|_{L^p}\\
&\quad \leq c\| \hat{f}\|_{\ell ^1(\mathbb{Z}^3)}\| P_{N_1}w_1\|_{L^{p_1}}\| P_{N_2}w_2\|_{L^{p_2}}\| P_{N_3}w_3\|_{L^{p_3}}.
\end{align*}
Now, we observe that $b(\xi_1,\xi_2,\xi_3)\prod _{j=1}^3\tilde{\psi}_{N_j}(\xi_j)$ satisfies the same estimates as that assumed for $b$, and this implies $\| f\|_{C^2(\mathbb{T}^3)}\lesssim B$, hence $\| \hat{f}\|_{\ell^1(\mathbb{Z}^3)}\lesssim \| f\|_{H^{\frac32+}(\mathbb{T}^3)}\lesssim \| f\|_{C^2(\mathbb{T}^3)}\lesssim B$.
Substituting this bound to the above estimate, we obtain the claim.
\end{proof}

\begin{lem}\label{lem:CM2}
Let $N,N_1,N_2,N_3$ be dyadic integers, and assume that the multiplier $b:I_N\times I_{N_1}\times I_{N_2}\times I_{N_3}\to \mathbb{C}$ is $C^3$ and satisfies 
\[ |\partial_\xi^{\alpha_0}\partial_{\xi_1}^{\alpha_1}\partial_{\xi_2}^{\alpha_2}\partial_{\xi_3}^{\alpha_3}b(\xi,\xi_1,\xi_2,\xi_3)|\lesssim N^{-\alpha_0}N_1^{-\alpha_1}N_2^{-\alpha_2}N_3^{-\alpha_3}B\quad \text{on $I_N\times I_{N_1}\times I_{N_2}\times I_{N_3}$}\]
for any $\alpha =(\alpha_0,\alpha_1,\alpha_2,\alpha_3)\in \mathbb{Z}_{\geq 0}^4$ with $|\alpha|\leq 3$.
Then, for any $p,p_1,p_2,p_3\in [1,\infty]$ with $\frac1p=\frac{1}{p_1}+\frac{1}{p_2}+\frac{1}{p_3}$, it holds that
\begin{align*}
&\Big\| P_N\mathcal{F}^{-1}\Big[ \int _{\xi =\xi_{123}}b(\xi,\xi_1,\xi_2,\xi_3)\mathcal{F}[P_{N_1}w_1] (\xi_1)\mathcal{F}[P_{N_2}w_2](\xi_2)\mathcal{F}[P_{N_3}\bar{w}_3](\xi_3)\Big] \Big\|_{L^p}\\
&\quad \lesssim B\| P_{N_1}w_1\|_{L^{p_1}}\| P_{N_2}w_2\|_{L^{p_2}}\| P_{N_3}w_3\|_{L^{p_3}}.
\end{align*}
\end{lem}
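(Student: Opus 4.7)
The plan is to mirror the proof of Lemma~\ref{lem:CM1}, with one extra Fourier variable to handle. The only new conceptual ingredient is that a phase factor $e^{i\xi k_0/N}$ standing in front of the outer inverse Fourier transform represents a spatial translation of the output by $k_0/N$; since $L^p$ is translation-invariant and $P_N$ commutes with translations, such a translation simply disappears after taking the $L^p$ norm.

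Concretely, I would first use $\tilde{\psi}_N\equiv 1$ on $\mathrm{supp}\,\psi_N$ and $\tilde{\psi}_{N_j}\equiv 1$ on $\mathrm{supp}\,\psi_{N_j}$ to insert the cutoff product $\tilde{\psi}_N(\xi)\prod_{j=1}^{3}\tilde{\psi}_{N_j}(\xi_j)$ into the multiplier without changing the operator. Rescaling by $(\xi,\xi_1,\xi_2,\xi_3)\mapsto(N\eta_0,N_1\eta_1,N_2\eta_2,N_3\eta_3)$ and periodizing, I would define
\[ f(\eta_0,\eta_1,\eta_2,\eta_3):=b(N\eta_0,N_1\eta_1,N_2\eta_2,N_3\eta_3)\,\tilde{\psi}_N(N\eta_0)\prod_{j=1}^{3}\tilde{\psi}_{N_j}(N_j\eta_j) \]
on $[-\pi,\pi]^4$ and expand it in Fourier series on $\mathbb{T}^4$. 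Substituting the resulting expansion into the integrand, interchanging the sum with the inverse Fourier transform via Minkowski, and using the translation identity $\mathcal{F}^{-1}[e^{i\xi k_0/N}g(\xi)](x)=\mathcal{F}^{-1}[g](x+k_0/N)$ for the outer variable together with its analogues $e^{i\xi_j k_j/N_j}$ for the inner ones, the $k$-th term becomes a product of translated Littlewood-Paley pieces (with an overall translation in $x$). H\"older's inequality and translation invariance of $L^{p_j}$ then bound this term by $|\hat{f}(k)|\prod_{j=1}^{3}\|P_{N_j}w_j\|_{L^{p_j}}$.

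It remains to show $\|\hat{f}\|_{\ell^1(\mathbb{Z}^4)}\lesssim B$. By the Sobolev embedding $H^s(\mathbb{T}^4)\hookrightarrow\ell^1(\mathbb{Z}^4)$ for $s>2$, it suffices to bound $\|f\|_{H^{2+}(\mathbb{T}^4)}\lesssim\|f\|_{C^3(\mathbb{T}^4)}\lesssim B$. The last inequality follows from the chain rule, since the rescaling converts the weights $N^{-\alpha_0}N_1^{-\alpha_1}N_2^{-\alpha_2}N_3^{-\alpha_3}$ appearing in the hypothesis into uniform constants. This is precisely why the hypothesis requires derivatives up to order $3$, whereas Lemma~\ref{lem:CM1} needed only order $2$ (corresponding to the embedding $H^{3/2+}(\mathbb{T}^3)\hookrightarrow\ell^1(\mathbb{Z}^3)$). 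I expect no substantial analytic obstacle beyond Lemma~\ref{lem:CM1}; the main point warranting a moment of care is that inserting $\tilde{\psi}_N(\xi)$ into the multiplier is legitimate even though $\xi$ is constrained by $\xi=\xi_1+\xi_2+\xi_3$, which is fine because the outer $P_N$ already forces $\xi\in\mathrm{supp}\,\psi_N$, where $\tilde{\psi}_N=1$.
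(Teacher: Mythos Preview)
Your proposal is correct and follows essentially the same approach as the paper, which simply notes that the proof mirrors Lemma~\ref{lem:CM1} with four variables instead of three and uses $\|\hat{f}\|_{\ell^1(\mathbb{Z}^4)}\lesssim \|f\|_{H^{2+}(\mathbb{T}^4)}\lesssim \|f\|_{C^3(\mathbb{T}^4)}$. Your observation about the outer phase $e^{i\xi k_0/N}$ acting as a harmless translation (commuting with $P_N$ and invisible in $L^p$) is exactly the one extra detail needed, and your justification for inserting $\tilde{\psi}_N(\xi)$ is correct.
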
 

\begin{proof}
Proof is similar to the preceding lemma.
This time there are four variables, so we use $\| \hat{f}\|_{\ell^1(\mathbb{Z}^4)}\lesssim \| f\| _{H^{2+}(\mathbb{T}^4)}\lesssim \| f\| _{C^3(\mathbb{T}^4)}$.
\end{proof}

The above two lemmas cannot be applied directly when there is a cancellation in $\xi_{13}$ or $\xi_{23}$.
For instance, consider the case $N_1\gg N_2\sim N_3$.
We have $|\phi(\vec{\xi})|\sim N_1|\xi_{23}|$, but $|\xi_{23}|$ may be much smaller than $N_2$, and the assumption on the multiplier $b=\xi_3/(\xi_{13}\xi_{23})$ may not hold in this case (since $|\partial_{\xi_2}b|/|b|=|\xi_{23}|^{-1}\not\lesssim N_2^{-1}$). 
For such a case, we prepare the following lemma:
\begin{lem}\label{lem:CM3}
Let $N_1,N_2,N_3$ be dyadic integers, and assume that $N_1\not\sim N_2\sim N_3$.
Then, for any $p,q\in [1,\infty]$, it holds that
\begin{align*}
&\sup_{K\geq 1}\Big\| \mathcal{F}^{-1}\Big[ \int _{\begin{smallmatrix} \xi =\xi_{123} \\ K\leq |\xi_{23}|<2K\end{smallmatrix}}\frac{\xi_3}{\xi_{13}\xi_{23}}\mathcal{F}[P_{N_1}w_1] (\xi_1)\mathcal{F}[P_{N_2}w_2](\xi_2)\mathcal{F}[P_{N_3}\bar{w}_3](\xi_3)\Big] \Big\|_{L^p}\\
&\quad \lesssim \frac{N_3}{N_1\vee N_3}\| P_{N_1}w_1\|_{L^{p}}\| P_{N_2}w_2\|_{L^{q}}\| P_{N_3}w_3\|_{L^{q'}}.
\end{align*}
\end{lem}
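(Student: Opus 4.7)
My plan is to exploit the spectral gap $N_1\not\sim N_3$ via a Neumann expansion, reducing the non-separable multiplier $\xi_3/(\xi_{13}\xi_{23})$ to an absolutely convergent sum of \emph{tensor-product} multipliers of the form $a(\xi_1)\,b(\xi_3)\,c(\xi_{23})$. Assume WLOG $N_1\gg N_3$ (the case $N_1\ll N_3$ is analogous, via the geometric series $\xi_3/\xi_{13}=\sum_{k\geq 0}(-1)^k(\xi_1/\xi_3)^k$). On $\mathrm{supp}(\tilde{\psi}_{N_1}(\xi_1)\tilde{\psi}_{N_3}(\xi_3))$ one has $|\xi_3/\xi_1|\ll 1$, so
\[ \frac{\xi_3}{\xi_{13}}=\frac{\xi_3/\xi_1}{1+\xi_3/\xi_1}=\sum_{k\geq 0}(-1)^k\Bigl(\frac{\xi_3}{\xi_1}\Bigr)^{k+1} \]
converges absolutely. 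This yields the decomposition
\[ \frac{\xi_3}{\xi_{13}\xi_{23}}\mathbf{1}_{K\leq|\xi_{23}|<2K}=\sum_{k\geq 0}a_k(\xi_1)\,b_k(\xi_3)\,c(\xi_{23}),\qquad a_k:=(-1)^k\xi_1^{-(k+1)}\tilde{\psi}_{N_1},\ \ b_k:=\xi_3^{k+1}\tilde{\psi}_{N_3},\ \ c:=\eta^{-1}\mathbf{1}_{K\leq|\eta|<2K}. \]

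For each separable piece, a direct Fourier-side computation (using $\xi_{23}=\xi_2+\xi_3$ to convert the Fourier-side convolution back to a pointwise product in physical space) gives the factorization
\[ T_{a_k,b_k,c}[w_1,w_2,w_3](x)=\bigl(a_k(D)P_{N_1}w_1\bigr)(x)\cdot c(D)\bigl(P_{N_2}w_2\cdot b_k(D)P_{N_3}\bar{w}_3\bigr)(x). \]
Then H\"older ($L^p\cdot L^\infty\subset L^p$), Young ($L^\infty\ast L^1\subset L^\infty$) for the $c(D)$-convolution, H\"older ($L^q\cdot L^{q'}\subset L^1$) for the inner product, and Young again for each of $a_k(D)$ and $b_k(D)$ give
\[ \|T_{a_k,b_k,c}\|_{L^p}\lesssim\|\check{a}_k\|_{L^1}\,\|\check{b}_k\|_{L^1}\,\|\check{c}\|_{L^\infty}\,\|P_{N_1}w_1\|_{L^p}\|P_{N_2}w_2\|_{L^q}\|P_{N_3}w_3\|_{L^{q'}}. \]

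The one-dimensional constants are routine: by rescaling to unit scale and the standard bound $\|\check{\phi}\|_{L^1}\lesssim\|\phi\|_{C^2}$ for smooth bumps, $\|\check{a}_k\|_{L^1}\lesssim(k+1)^2 N_1^{-(k+1)}$ and $\|\check{b}_k\|_{L^1}\lesssim(k+1)^2 N_3^{k+1}$. For the sharp-cutoff multiplier, oddness of $c$ reduces the Fourier inverse to
\[ \check{c}(y)=\frac{2i\operatorname{sgn}(y)}{\sqrt{2\pi}}\int_{|y|K}^{2|y|K}\frac{\sin u}{u}\,du, \]
which is uniformly bounded in $(y,K)$ by boundedness of the Dirichlet-type integral $x\mapsto\int_0^x\sin u/u\,du$. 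Summing the geometric series,
\[ \sum_{k\geq 0}\|\check{a}_k\|_{L^1}\|\check{b}_k\|_{L^1}\|\check{c}\|_{L^\infty}\lesssim\sum_{k\geq 0}(k+1)^4\Bigl(\frac{N_3}{N_1}\Bigr)^{k+1}\lesssim\frac{N_3}{N_1}=\frac{N_3}{N_1\vee N_3}, \]
which matches the claim uniformly in $K\geq 1$.

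The main technical obstacle is that the sharp cutoff $\mathbf{1}_{K\leq|\xi_{23}|<2K}$ falls outside the smooth Coifman--Meyer framework of Lemmas~\ref{lem:CM1}--\ref{lem:CM2}, and replacing it by a smooth bump of comparable support loses uniformity in $K$ at the endpoints $p\in\{1,\infty\}$. My approach circumvents this difficulty by isolating the sharp cutoff as a purely one-dimensional multiplier in the single variable $\xi_{23}$, so that the only oscillatory-integral estimate required is the elementary uniform bound on the Dirichlet integral; the spectral-gap cancellation $|\xi_3/\xi_{13}|\sim N_3/N_1$ is then recovered algebraically from the convergence rate of the Neumann expansion rather than from any delicate cancellation in the Fourier inverse.
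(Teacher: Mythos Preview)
Your argument is correct and follows the same high-level strategy as the paper---factor the multiplier as $\tfrac{\xi_3}{\xi_{13}}\cdot\tfrac{1}{\xi_{23}}$, expand the first factor as an absolutely convergent tensor series in $(\xi_1,\xi_3)$, and treat the rough $\xi_{23}$-multiplier by a one-variable estimate---but the two implementations differ. For the $(\xi_1,\xi_3)$ factor the paper uses a Fourier series expansion of $\tfrac{\xi_3}{\xi_{13}}\tilde\psi_{N_1}(\xi_1)\tilde\psi_{N_3}(\xi_3)$ on the box $I_{N_1}\times I_{N_3}$; each exponential piece then acts as a pure translation, so no kernel $L^1$ bounds are needed and the constant $N_3/(N_1\vee N_3)$ emerges directly as $\|\hat f\|_{\ell^1}\lesssim\|f\|_{C^2(\mathbb{T}^2)}$. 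Your Neumann expansion achieves the same separation but requires tracking $\|\check a_k\|_{L^1}\|\check b_k\|_{L^1}$; note that after rescaling, the factor $\eta^{-(k+1)}$ on $[1/2,2]$ contributes a $2^{k+1}$ you omitted, so the true summand is $\lesssim (k+1)^4(4N_3/N_1)^{k+1}$ rather than $(k+1)^4(N_3/N_1)^{k+1}$---harmless once the implicit constant in ``$\gg$'' is sufficiently large, but worth recording. For the sharp cutoff, the paper takes the shorter route $\|\mathcal{F}^{-1}[\tfrac{\mathbf 1_K}{\xi}\hat G]\|_{L^\infty}\le\|\tfrac{\mathbf 1_{K\le|\xi|<2K}}{\xi}\|_{L^1_\xi}\|\hat G\|_{L^\infty_\xi}$ with $\|\tfrac{\mathbf 1_K}{\xi}\|_{L^1}=2\log 2$ and a second Hausdorff--Young to bound $\|\hat G\|_{L^\infty}\le\|G\|_{L^1}$, avoiding the Dirichlet-integral computation entirely; your kernel bound $\|\check c\|_{L^\infty}<\infty$ is equally valid but for $\mathcal M=\mathbb T$ would require the separate (classical) fact that the partial sums $\sum_{n=1}^N\tfrac{\sin(nx)}{n}$ are uniformly bounded.
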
 

\begin{proof}
This time we consider the Fourier series of the multiplier $b(\xi_1,\xi_3)=\frac{\xi_3}{\xi_{13}}\tilde{\psi}_{N_1}(\xi_1)\tilde{\psi}_{N_3}(\xi_3)$ defined on $I_{N_1}\times I_{N_3}$ to obtain
\[ \frac{\xi_3}{\xi_{13}}\tilde{\psi}_{N_1}(\xi_1)\tilde{\psi}_{N_3}(\xi_3)=\sum _{k=(k_1,k_3)\in \mathbb{Z}^2}e^{i(\frac{\xi_1}{N_1}k_1+\frac{\xi_3}{N_3}k_3)}\hat{f}(k),\]
with 
\[ \| \hat{f}\|_{\ell^1(\mathbb{Z}^2)}\lesssim \| f\|_{C^2(\mathbb{T}^2)}\lesssim \frac{N_3}{N_1\vee N_3}.\]
This expression gives 
\begin{align*}
&\Big\| \mathcal{F}^{-1}\Big[ \int _{\begin{smallmatrix} \xi =\xi_{123} \\ K\leq |\xi_{23}|<2K\end{smallmatrix}}\frac{\xi_3}{\xi_{13}\xi_{23}}\mathcal{F}[P_{N_1}w_1] (\xi_1)\mathcal{F}[P_{N_2}w_2](\xi_2)\mathcal{F}[P_{N_3}\bar{w}_3](\xi_3)\Big] \Big\|_{L^p}\\
&\leq \sum _{k\in \mathbb{Z}^2}|\hat{f}(k)|\Big\| \mathcal{F}^{-1}\Big[ \int _{\xi =\xi_1+\xi'}e^{i\frac{k_1}{N_1}\xi_1}\mathcal{F}[P_{N_1}w_1](\xi_1)\\
&\hspace{100pt} \times \frac{\mathbf{1}_{K\leq |\xi'|<2K}}{\xi'}\int_{\xi'=\xi_{23}}\mathcal{F}[P_{N_2}w_2](\xi_2)e^{i\frac{k_3}{N_3}\xi_3}\mathcal{F}[P_{N_3}\bar{w}_3](\xi_3)\Big] \Big\|_{L^p}\\
&\leq \| P_{N_1}w_1\|_{L^p}\sum _{k\in \mathbb{Z}^2}|\hat{f}(k)|\Big\| \mathcal{F}^{-1}\Big[ \frac{\mathbf{1}_{K\leq |\xi |<2K}}{\xi}\int _{\xi=\xi_{23}}\mathcal{F}[P_{N_2}w_2](\xi_2)e^{i\frac{k_3}{N_3}\xi_3}\mathcal{F}[P_{N_3}\bar{w}_3](\xi_3)\Big] \Big\|_{L^\infty}.
\end{align*}
By the Hausdorff-Young and the H\"older inequalities, we have
\begin{align*}
&\Big\| \mathcal{F}^{-1}\Big[ \frac{\mathbf{1}_{K\leq |\xi |<2K}}{\xi}\int _{\xi=\xi_{23}}\mathcal{F}[P_{N_2}w_2](\xi_2)e^{i\frac{k_3}{N_3}\xi_3}\mathcal{F}[P_{N_3}\bar{w}_3](\xi_3)\Big] \Big\|_{L^\infty}\\
&\quad \lesssim \Big\| \frac{\mathbf{1}_{K\leq |\xi |<2K}}{\xi}\int _{\xi=\xi_{23}}\mathcal{F}[P_{N_2}w_2](\xi_2)e^{i\frac{k_3}{N_3}\xi_3}\mathcal{F}[P_{N_3}\bar{w}_3](\xi_3) \Big\|_{L^1_\xi}\\
&\quad \lesssim \Big\| \int _{\xi=\xi_{23}}\mathcal{F}[P_{N_2}w_2](\xi_2)e^{i\frac{k_3}{N_3}\xi_3}\mathcal{F}[P_{N_3}\bar{w}_3](\xi_3) \Big\|_{L^{\infty}_\xi}\\
&\quad \lesssim \Big\| (P_{N_2}w_2)(\cdot )(P_{N_3}\bar{w}_3)(\cdot +\frac{k_3}{N_3})\Big\| _{L^1}\\
&\quad \leq \| P_{N_2}w_2\|_{L^{q}}\| P_{N_3}w_3\|_{L^{q'}}.
\end{align*}
Therefore, the claim follows.
\end{proof}


\section{Normal form reduction: Setup}
\label{sec:NFR0}

From now on, we set $\lambda =-1$ in \eqref{dnls} for simplicity.
Note that the sign of $\lambda$ is not relevant in our argument.
Furthermore, there will be no essential differences between the proofs for $\mathcal{M}=\mathbb{R}$ and $\mathbb{T}$.

In the previous section, we have seen that if $u\in L^\infty_TH^{\frac12}$ is a distributional solution of \eqref{dnls}, then its gauge transform%
\footnote{%
We will use the same letter $w$ to denote the gauge transform of the solution $u$ to \eqref{dnls} in both cases $\mathcal{M}=\mathbb{R}$ and $\mathbb{T}$.%
}
$w$ belongs to $L^\infty_TH^{\frac12}\cap L^4_TB^{0+}_{\infty,1}$ and it is a distributional solution of
\begin{equation}\label{eq:w2}
\partial_tw-i\partial_x^2w =\left\{ \begin{alignedat}{2}
&w^2\partial_x\bar{w}+\frac{i}{2}|w|^4w & &(\mathcal{M}=\mathbb{R}),\\
&w^2\partial_x\bar{w}\!-\!2P_c(w\partial_x\bar{w})w \!+\!i\Big( \frac{1}{2}P_{\neq c}(|w|^4)w\!-\!P_c(|w|^2)P_{\neq c}(|w|^2)w\Big) &\quad &(\mathcal{M}=\mathbb{T}).\end{alignedat}\right.
\end{equation}
Recall that each term on the right-hand side of \eqref{eq:w2} (especially the first cubic term) is a well-defined distribution in $L^4_TH^{-\frac12-}$.
Since the gauge transformation is invertible, the original problem of unconditional uniqueness for \eqref{dnls} is reduced to establishing uniqueness of solution $w$ to \eqref{eq:w2} with a fixed initial condition in the class $L^\infty_TH^{\frac12}\cap L^4_TB^{0+}_{\infty,1}$.


\subsection{Equation on the Fourier side}

We estimate the solution mainly on the Fourier side, introducing a new unknown function $\omega(t,\xi)$ defined by
\begin{gather}\label{def:omega}
\begin{gathered}
\omega (t,\xi ):=\mathcal{F}[S(-t)w(t)](\xi )=e^{it\xi^2}\hat{w}(t,\xi );\\
w(t)=\mathcal{F}^{-1}[e^{-it\xi^2}\omega(t,\xi)]=S(t)\check{\omega}(t).
\end{gathered}
\end{gather}
If $w\in L^\infty_TH^{\frac12}\cap L^4_TB^{0+}_{\infty,1}$ is a distributional solution of \eqref{eq:w2}, then $\omega$ solves
\begin{gather}\label{eq:omega}
\partial_t\omega (t,\xi) \ =\ \mathcal{N}^t[\omega(t)](\xi)+\mathcal{R}^t[\omega(t)](\xi),
\end{gather}
where
\begin{align*}
\mathcal{N}^t[\omega ](\xi) &:= e^{it\xi^2}\mathcal{F}\big[ \big( S(t)\check{\omega}\big)^2\partial_x\overline{S(t)\check{\omega}}\big] (\xi) -\frac{1}{2\pi}\int_{\begin{smallmatrix}\xi=\xi_{123}\\ |\xi_{13}|\wedge |\xi_{23}|<1\end{smallmatrix}}e^{it\phi(\vec{\xi})}i\xi_3\omega (\xi_1)\omega (\xi_2)\omega^*(\xi_3),\\
\mathcal{R}^t[\omega ](\xi) &:=\frac{1}{2\pi}\int_{\begin{smallmatrix}\xi=\xi_{123}\\ |\xi_{13}|\wedge |\xi_{23}|<1\end{smallmatrix}}e^{it\phi(\vec{\xi})}i\xi_3\omega (\xi_1)\omega (\xi_2)\omega^*(\xi_3) +e^{it\xi^2}\mathcal{F}\Big[ \frac{i}{2}|S(t)\check{\omega}|^4S(t)\check{\omega}\Big] (\xi )
\end{align*}
in the non-periodic case and 
\begin{align*}
\mathcal{N}^t[\omega ](\xi) &:= e^{it\xi^2}\mathcal{F}\big[ \big( S(t)\check{\omega}\big)^2\partial_x\overline{S(t)\check{\omega}}\big] (\xi) -\frac{1}{2\pi}\sum _{\begin{smallmatrix}\xi=\xi_{123}\\ \xi_{13}\xi_{23}=0\end{smallmatrix}}i\xi_3\omega (\xi_1)\omega (\xi_2)\omega^*(\xi_3),\\
\mathcal{R}^t[\omega ](\xi) &:=\frac{i\xi}{2\pi}|\omega(\xi)|^2\omega(\xi) \\
&\qquad +e^{it\xi^2}\mathcal{F}\Big[ \frac{i}{2}P_{\neq c}(|S(t)\check{\omega}|^4)S(t)\check{\omega}-iP_c(|S(t)\check{\omega}|^2)P_{\neq c}(|S(t)\check{\omega}|^2)S(t)\check{\omega}\Big] (\xi )
\end{align*}
in the periodic case,%
\footnote{%
We remove the contribution of the frequencies $(\xi_1,\xi_2,\xi_3)$ satisfying $|\xi_{13}|\wedge |\xi_{23}|
<1$ from the main cubic term.
This will be useful in applying the normal form reduction.
Note that the condition $|\xi_{13}|\wedge |\xi_{23}|<1$ (or $\geq 1$) is equivalent to $\xi_{13}\xi_{23}=0$ (or $\neq 0$) in the periodic case.}
with the notation
\[ \omega^*(\xi):=\overline{\omega(-\xi)},\qquad \phi (\vec{\xi}):=\xi^2-\xi_1^2-\xi_2^2+\xi_3^2=2\xi_{13}\xi_{23}\quad \text{if $\xi=\xi_{123}$}.\]
Note that $\mathcal{N}^t[\omega ]$ is (formally) written as
\[ \mathcal{N}^t[\omega](\xi) = \frac{1}{2\pi}\int _{\begin{smallmatrix}\xi=\xi_{123}\\ |\xi_{13}|\wedge |\xi_{23}|\geq 1\end{smallmatrix}}e^{it\phi(\vec{\xi})}i\xi_3\omega (\xi_1)\omega (\xi_2)\omega ^*(\xi_3).\]
Next, we consider the dyadic decomposition of the nonlinearity:
\begin{equation}\label{defn:N1}
\mathcal{N}^t[\omega(t)]=\sum _{N,N_1,N_2,N_3}\mathcal{N}^t_{N,N_1,N_2,N_3}[\omega(t)],
\end{equation}
where
\begin{equation}\label{defn:N2}
\mathcal{N}_{N,N_1,N_2,N_3}^t[\omega](\xi):=\frac{1}{2\pi}\psi_{N}(\xi)\int_{\begin{smallmatrix}\xi=\xi_{123}\\ |\xi_{13}|\wedge |\xi_{23}|\geq 1\end{smallmatrix}}e^{it\phi(\vec{\xi})}i\xi_3\big( \psi_{N_1}\omega\big) (\xi_1)\big( \psi_{N_2}\omega\big) (\xi_2)\big( \psi_{N_3}\omega\big)^* (\xi_3).
\end{equation}
Hereafter, we use the notation 
\[ \omega_N:=\psi_N\omega,\quad\text{and similarly,}\quad w_N:=P_Nw.\]
Also, we will basically disregard constants and signs in front of the nonlinear terms which are not essential in the proof.
Furthermore, we will omit the superscript $t$ in $\mathcal{N}^t[\omega ]$, $\mathcal{R}^t[\omega]$ for notational convenience.
Hence, we simply write \eqref{defn:N1}--\eqref{defn:N2} as 
\begin{align*}
\mathcal{N}[\omega]&=\sum_{N,N_1,N_2,N_3}\mathcal{N}_{N,N_1,N_2,N_3}[\omega] \\
&:=\sum _{N,N_1,N_2,N_3}\psi_N(\xi )\int_{\begin{smallmatrix}\xi=\xi_{123}\\ |\xi_{13}|\wedge |\xi_{23}|\geq 1\end{smallmatrix}}e^{it\phi(\vec{\xi})}\xi_3\omega_{N_1}(\xi_1)\omega_{N_2}(\xi_2)\omega_{N_3}^* (\xi_3).
\end{align*}

In the proof of Proposition~\ref{prop:RS-w}, we have estimated the cubic nonlinearity in $H^{-\frac12-}$ using Lemma~\ref{lem:Sobolev-v}.
In the next proposition, we refine the argument to obtain slightly stronger control in $H^{-\frac12}+B^0_{1,\infty}$.
\begin{prop}\label{prop:N-weak}
We have
\[ \| S(t)\mathcal{F}^{-1}\mathcal{N}[\omega ]\|_{H^{-\frac12}+B^0_{1,\infty}}\lesssim \| w\|_{H^{\frac12}}^2\| w\|_{H^{\frac12}\cap B^{0+}_{\infty,1}}\]
for any $w\in H^{\frac12}\cap B^{0+}_{\infty,1}$ and $t\in \mathbb{R}$, where $\omega:=\mathcal{F}S(-t)w$.
More precisely, we have the following estimates:
\begin{align*}
&\Big\| \mathcal{N}[\omega ]-\Big( \sum _{N\sim N_3\gg N_1,N_2}+\sum _{N_1\vee N_2\sim N_3\gg N\gg N_1\wedge N_2}\Big) \mathcal{N}_{N,N_1,N_2,N_3}[\omega ]\Big\|_{\hat{H}^{-\frac12+}}\\
&\qquad +\Big\| \sum _{N\sim N_3\gg N_1,N_2}\mathcal{N}_{N,N_1,N_2,N_3}[\omega ]\Big\|_{\hat{H}^{-\frac12}}\\
&\qquad +\Big\| S(t)\mathcal{F}^{-1}\Big[ \sum _{N_1\vee N_2\sim N_3\gg N\gg N_1\wedge N_2}\mathcal{N}_{N,N_1,N_2,N_3}[\omega ]\Big] \Big\|_{B^0_{1,\infty}}\\
&\quad \lesssim \| w\|_{H^{\frac12}}^2\| w\|_{H^{\frac12}\cap B^{0+}_{\infty,1}}.
\end{align*}
\end{prop}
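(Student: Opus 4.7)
The plan is to reduce all three estimates to trilinear bounds on the physical-space product $w_{N_1}w_{N_2}\partial_x\bar{w}_{N_3}$. A direct computation using $\omega(t,\xi)=e^{it\xi^2}\hat{w}(t,\xi)$, $\omega^*(\xi)=\overline{\omega(-\xi)}$, and the identity $\xi^2-\xi_1^2-\xi_2^2+\xi_3^2=2\xi_{13}\xi_{23}$ shows that the $e^{it\phi}$ and $e^{it\xi^2}$ phases combine so that $S(t)\mathcal{F}^{-1}\mathcal{N}_{N,N_1,N_2,N_3}[\omega]=cP_N(w_{N_1}w_{N_2}\partial_x\bar{w}_{N_3})$ up to the low-frequency cutoff from $|\xi_{13}|\wedge|\xi_{23}|\geq 1$, which is harmless at the level of the present (non-NFR) estimate and can be dropped. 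Thus $\|\mathcal{N}_{\ldots}[\omega]\|_{\hat{H}^s}=\|P_N(w_{N_1}w_{N_2}\partial_x\bar{w}_{N_3})\|_{H^s}$, and it suffices to establish the three bounds for this Littlewood-Paley piece via H\"older, Bernstein, and Besov summation, organized by the dyadic configuration of $(N,N_1,N_2,N_3)$.

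For the exceptional configuration $N\sim N_3\gg N_1,N_2$, the H\"older split with $p_1=p_2=\infty$, $p_3=2$ together with the dyadic Bernstein $\|w_{N_3}\|_{L^2}\sim N_3^{-1/2}\|w_{N_3}\|_{H^{1/2}}$ gives $\|P_N(w_{N_1}w_{N_2}\partial_x\bar{w}_{N_3})\|_{L^2}\lesssim N^{1/2}\|w_{N_1}\|_{L^\infty}\|w_{N_2}\|_{L^\infty}\|w_{N_3}\|_{H^{1/2}}$, so the $N^{-1/2}$ weight of $\hat{H}^{-1/2}$ is absorbed with no surplus power of $N$; $\ell^2$-orthogonality over $N\sim N_3$ together with $\sum_{N_j}\|w_{N_j}\|_{L^\infty}\leq\|w\|_{B^0_{\infty,1}}\leq\|w\|_{B^{0+}_{\infty,1}}$ closes this case. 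For the configuration $N_1\vee N_2\sim N_3\gg N\gg N_1\wedge N_2$ (WLOG $N_2\sim N_3\gg N\gg N_1$), I work directly in $L^1$ with $p_1=\infty$, $p_2=p_3=2$, where the two Bernstein factors $N_2^{-1/2}$, $N_3^{-1/2}$ exactly cancel the derivative $N_3$:
\[
\|P_N(w_{N_1}w_{N_2}\partial_x\bar{w}_{N_3})\|_{L^1}\lesssim \|w_{N_1}\|_{L^\infty}\|w_{N_2}\|_{H^{1/2}}\|w_{N_3}\|_{H^{1/2}};
\]
taking $\sup_N$ and summing $N_1$ against $\|w\|_{B^0_{\infty,1}}$ and the diagonal $\sum_{N_2\sim N_3}\|w_{N_2}\|_{H^{1/2}}\|w_{N_3}\|_{H^{1/2}}\lesssim\|w\|_{H^{1/2}}^2$ yields the $B^0_{1,\infty}$ bound.

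For the generic part, the remaining configurations split into: all four scales comparable to a common $M$; $N_1\sim N_2\gg N_3$ with $N\lesssim N_1$; $N_2\sim N_3\gg N_1$ with $N\lesssim N_1$; or $N\sim N_j\gg$ the rest for some $j\in\{1,2\}$. In each case I would combine the Bernstein bound $\|P_Nf\|_{L^2}\lesssim N^{1/2}\|f\|_{L^1}$ with an $L^1$-H\"older split in which the derivative falls on an $L^2$ factor paired with another $L^2$ factor at a comparable scale, producing a bound of the shape $\|P_N(\cdots)\|_{\hat{H}^{-1/2+}}\lesssim N^{\varepsilon}\|w_{N_{j_1}}\|_{L^\infty}\|w_{N_{j_2}}\|_{H^{1/2}}\|w_{N_{j_3}}\|_{H^{1/2}}$; the $N^\varepsilon$ factor is then absorbed into one dyadic summation via the termwise inequality $M^\varepsilon\|w_M\|_{L^\infty}\leq\|w\|_{B^{0+}_{\infty,1}}$. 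The main obstacle is the all-four-comparable case (the high$\times$high$\times$high$\to$high interaction that the infinite NFR iteration is designed to address later), because no auxiliary low-frequency scale is available to absorb $M^\varepsilon$; here I would additionally invoke the pointwise Bernstein $\|w_M\|_{L^\infty}\lesssim\|w_M\|_{H^{1/2}}$ to trade one $L^\infty$ factor for $H^{1/2}$, then combine $\ell^2$-orthogonality of the outputs at $N\sim M$ with the termwise weighted Besov bound to obtain the sharp estimate $\|w\|_{H^{1/2}}^2\|w\|_{H^{1/2}\cap B^{0+}_{\infty,1}}$ without any spurious norm factors.
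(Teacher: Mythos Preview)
Your approach---reduce to $P_N(w_{N_1}w_{N_2}\partial_x\bar{w}_{N_3})$ and argue case by case with H\"older and Bernstein---is exactly the paper's, and your treatment of the $B^0_{1,\infty}$ piece and the all-comparable case matches it. Two points need fixing.

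In the $H^{-\frac12}$ case $N\sim N_3\gg N_1,N_2$, summing \emph{both} low-frequency factors via $\sum_{N_j}\|w_{N_j}\|_{L^\infty}\leq\|w\|_{B^0_{\infty,1}}$ gives $\|w\|_{B^{0+}_{\infty,1}}^2\|w\|_{H^{1/2}}$, which is \emph{not} dominated by the stated $\|w\|_{H^{1/2}}^2\|w\|_{H^{1/2}\cap B^{0+}_{\infty,1}}$ (recall $H^{1/2}\not\hookrightarrow L^\infty$). The paper instead converts one of the two $L^\infty$ sums into an $H^{1/2}$ norm: with $N_1\geq N_2$, Bernstein gives $\|w_{N_2}\|_{L^\infty}\lesssim\|w_{N_2}\|_{H^{1/2}}$, and Cauchy--Schwarz over $N_2\leq N_1$ yields $\sum_{N_2\leq N_1}\|w_{N_2}\|_{L^\infty}\lesssim\langle\log N_1\rangle^{1/2}\|w\|_{H^{1/2}}$; the logarithm is absorbed into $\sum_{N_1}N_1^{0+}\|w_{N_1}\|_{L^\infty}=\|w\|_{B^{0+}_{\infty,1}}$. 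You already use this Bernstein trade in the all-comparable case; apply it here too.

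Your case list for the generic $\hat{H}^{-\frac12+}$ part omits the configurations where exactly three of the four scales (including $N_3$) sit at the top: namely $N_1\sim N_2\sim N_3\gg N$ and $N\sim N_j\sim N_3\gg N_{3-j}$ for $j\in\{1,2\}$. These slip through because your case (d) forces $N_3$ to be small. They are routine---the same $L^\infty\times L^2\times L^2$ H\"older with output Bernstein gives a spare $N^{0+}$ or $N_{\max}^{0-}$ factor---but must be covered. Finally, the assertion that the cutoff $|\xi_{13}|\wedge|\xi_{23}|\geq 1$ can be dropped needs a line of justification; the paper does this by showing the complementary region is controlled in the stronger space $\hat{H}^{\frac12-}$.
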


\begin{rem}
(i) Note that
\[ H^{-\frac12+}\hookrightarrow H^{-\frac12}\hookrightarrow H^{-\frac12}+B^0_{1,\infty}\hookrightarrow B^{-\frac12}_{2,\infty} \hookrightarrow H^{-\frac12-} .\]
We use the space $B^0_{1,\infty}$ only in the case $N_1\sim N_3\gg N\gg N_2$ due to the lack of summability in $N$, as we see in the proof below.
If we could also treat this case in $H^{-\frac12}$ (an $L^2$-based Sobolev space), then the normal form argument would become substantially simpler.

(ii) A multilinear version of the estimate can be shown by the same argument.
In particular, for two functions $w_1,w_2$ (and $\omega_m:=\mathcal{F}S(-t)w_m$, $m=1,2$) we have the following difference estimate:
\begin{align*}
&\big\| S(t)\mathcal{F}^{-1}\big( \mathcal{N}[\omega_1]-\mathcal{N}[\omega_2]\big) \big\|_{H^{-\frac12}+B^0_{1,\infty}}\\
&\lesssim \| (w_1,w_2)\|_{H^{\frac12}}^2\| w_1-w_2\|_{B^{0+}_{\infty,1}}+\| (w_1,w_2)\|_{H^{\frac12}}\| (w_1,w_2)\|_{H^{\frac12}\cap B^{0+}_{\infty,1}}\|w_1-w_2\|_{H^{\frac12}}.
\end{align*}
\end{rem}

\begin{proof}[Proof of Proposition~\ref{prop:N-weak}]
We first note that the ``resonant'' portion $|\xi_{13}|\wedge |\xi_{23}|<1$ excluded from $\mathcal{N}[\omega ]$ can be estimated even in $\hat{H}^{\frac12-}$; in fact, for each $N,N_1,N_2,N_3$ we have
\begin{align*}
&\Big\| \psi_N(\xi )\int_{\begin{smallmatrix}\xi=\xi_{123}\\ |\xi_{13}|\wedge |\xi_{23}|<1\end{smallmatrix}}\big| \xi_3\omega_{N_1}(t,\xi_1)\omega_{N_2}(t,\xi_2)\omega_{N_3}^*(t,\xi_3)\big| \Big\|_{\hat{H}^{\frac12-}} \\
&\lesssim \mathbf{1}_{N\sim N_2}\Big\| \int_{\xi=\xi_2+\xi'}\langle \xi_2\rangle^{\frac12-}\big| \omega_{N_2}(t,\xi_2)\big| \cdot \mathbf{1}_{|\xi'|<1}\int_{\xi'=\xi_{13}}|\xi_3|\big| \omega_{N_1}(t,\xi_1)\omega_{N_3}^*(t,\xi_3)\big| \Big\|_{L^2}+\text{(symm.~term)}\\
&\lesssim \mathbf{1}_{N\sim N_2}N^{0-}\| w_{N_2}(t)\|_{H^{\frac12}}\cdot \mathbf{1}_{N_1\sim N_3}\| w_{N_1}(t)\|_{H^{\frac12}}\| w_{N_3}(t)\|_{H^{\frac12}} +\text{(symm.~term)},
\end{align*}
which is clearly summable in $N,N_1,N_2,N_3$.
Therefore, we may estimate the whole interaction $w^2\partial_x\bar{w}$ instead of $S(t)\mathcal{F}^{-1}\mathcal{N}[\omega ]$.
We consider the estimate of localized terms
\[ I(X):=\Big\| \sum_{N,N_1,N_2,N_3}P_N(w_{N_1}w_{N_2}\partial_x\bar{w}_{N_3}) \Big\| _{X},\]
assuming $N_1\geq N_2$ and dividing the analysis into the following four cases.

(i) $N_2\gtrsim N_3$. In this case, we have $N\lesssim N_1$.
By H\"older and Bernstein, we treat this case in $H^{-\frac12+}$ as
\begin{align*}
I(H^{-\frac12+})&\lesssim \sum_{\begin{smallmatrix} N_1,N_2,N_3\\ N_1\geq N_2\gtrsim N_3\end{smallmatrix}}N_1^{0+}\| w_{N_1}\|_{L^\infty}\| w_{N_2}\|_{L^2}N_3\| w_{N_3}\|_{L^2}\lesssim \| w\|_{B^{0+}_{\infty,1}}\| w\|_{H^{\frac12}}^2.
\end{align*}

(ii) $N_1\gg N_3\gg N_2$. Similarly,
\begin{align*}
I(H^{-\frac12+})&\lesssim \sum_{N_1\gg N_3\gg N_2}N_1^{-\frac12+}\| w_{N_1}\|_{L^2}\| w_{N_2}\|_{L^\infty}N_3\| w_{N_3}\|_{L^\infty}\lesssim \| w\|_{H^{\frac12}}^2\| w\|_{B^{0+}_{\infty,1}}.
\end{align*}

(iii) $N_1\sim N_3\gg N_2$.
We further divide the case according to the size of $N$.
If $N\lesssim N_2$, we can estimate in $H^{-\frac12+}$ as
\begin{align*}
I(H^{-\frac12+})&\lesssim \sum_{N_1\sim N_3\gg N_2}N_2^{0+}\| w_{N_1}\|_{L^2}\| w_{N_2}\|_{L^\infty}N_3\| w_{N_3}\|_{L^2}\lesssim \| w\|_{H^{\frac12}}\| w\|_{B^{0+}_{\infty,1}}\| w\|_{H^{\frac12}}.
\end{align*}
If $N\sim N_1\sim N_3$, then
\begin{align*}
I(H^{-\frac12+})&\lesssim \sum_{N_1\sim N_3\gg N_2}N_1^{-\frac12+}\| w_{N_1}\|_{L^2}\| w_{N_2}\|_{L^\infty}N_3\| w_{N_3}\|_{L^\infty}\lesssim \| w\|_{H^{\frac12}}^2\| w\|_{B^{0+}_{\infty,1}}.
\end{align*}
Finally, when $N_1\sim N_3\gg N\gg N_2$, we estimate in $B^0_{1,\infty}$ to avoid summation in $N$ as
\begin{align*}
I(B^0_{1,\infty})&\lesssim \sum_{N_1\sim N_3\gg N_2}\| w_{N_1}\|_{L^2}\| w_{N_2}\|_{L^\infty}N_3\| w_{N_3}\|_{L^2}\lesssim \| w\|_{H^{\frac12}}\| w\|_{B^{0}_{\infty,1}}\| w\|_{H^{\frac12}}.
\end{align*}

(iv) $N_3\gg N_1$. We have $N\sim N_3$ and 
\begin{align*}
I(H^{-\frac12})&\lesssim \bigg[ \sum _{N}\Big( N^{-\frac12}\sum_{N_1\geq N_2}\| w_{N_1}\|_{L^\infty}\| w_{N_2}\|_{L^\infty}N\| w_{\sim N}\|_{L^2}\Big) ^2\bigg] ^{1/2}\lesssim \| w\|_{B^{0+}_{\infty,1}}\| w\|_{H^{\frac12}}^2.
\end{align*}
The proof is completed.
\end{proof}


\subsection{Classification of frequency interactions}

Starting from the equation \eqref{eq:omega}, the term $\mathcal{R}[\omega ]$ is already controlled in $\hat{H}^{\frac12}$ (see Lemma~\ref{lem:R} below).
The problem is how to treat the cubic term $\mathcal{N}[\omega]$.
Since this part cannot be controlled in $\hat{H}^{\frac12}$ due to the derivative loss, we need to apply the normal form reduction (NFR).
Basically, one application of NFR consists of a differentiation by parts%
\footnote{%
In practice, we work with the equations in the integral form. 
Hence, we actually apply an integration by parts in $t$.%
}
in $t$ and application of the product rule,
\begin{align*}
&\int_{\xi=\xi_{123}}e^{it\phi}\xi_3\omega(\xi_1)\omega(\xi_2)\omega^*(\xi_3) =\partial_t\Big[ \int_{\xi=\xi_{123}}\frac{e^{it\phi}}{i\phi}\xi_3\omega(\xi_1)\omega(\xi_2)\omega^*(\xi_3)\Big] \\
&-\int_{\xi=\xi_{123}}\frac{e^{it\phi}}{i\phi}\xi_3\Big\{ (\partial_t\omega)(\xi_1)\omega(\xi_2)\omega^*(\xi_3)+\omega(\xi_1)(\partial_t\omega)(\xi_2)\omega^*(\xi_3)+\omega(\xi_1)\omega(\xi_2)(\partial_t\omega)^*(\xi_3)\Big\} ,
\end{align*}
followed by substitution of the equation \eqref{eq:omega},
\begin{align*}
&\int_{\xi=\xi_{123}}\frac{e^{it\phi}}{i\phi}\xi_3(\partial_t\omega)(\xi_1)\omega(\xi_2)\omega^*(\xi_3)\\
&=\int_{\xi=\xi_{123}}\frac{e^{it\phi}}{i\phi}\xi_3\mathcal{R}[\omega](\xi_1)\omega(\xi_2)\omega^*(\xi_3)+\int_{\xi=\xi_{123}}\frac{e^{it\phi}}{i\phi}\xi_3\mathcal{N}[\omega ](\xi_1)\omega(\xi_2)\omega^*(\xi_3)\quad \text{(and so on)}.
\end{align*}
We will apply this procedure to each localized cubic nonlinearity $\mathcal{N}_{N,N_1,N_2,N_3}[\omega]$ in a different manner, as outlined below.
Assume $N_1\geq N_2$ by symmetry, and consider the following case separation:
\begin{align*}
\left\{ \begin{alignedat}{2}
&N_1\sim N_3 &\quad &\left\{ 
	\begin{alignedat}{2}
	&N_1\sim N_2 &\quad &\left\{ 
		\begin{aligned}
		&N\sim N_1\\
		&N_1\gg N
		\end{aligned} \right. \\
	&N_1\gg N_2 &\quad &\left\{ 
		\begin{aligned}
		&N\sim N_1\\
		&N_1\gg N\gg N_2\\
		&N\sim N_2\\
		&N_2\gg N
		\end{aligned} \right. 
	\end{alignedat} \right. \\
&N_1\gg N_3 &\quad &\left\{ 
	\begin{aligned}
	&N_1\sim N_2 \\
	&N_1\gg N_2\gg N_3 \\
	&N_2\sim N_3 \\
	&N_3\gg N_2 
	\end{aligned} \right. \\
&N_3\gg N_1 & &
\end{alignedat} \right.
\quad \begin{aligned}
\cdots \quad& N_1\sim N_2\sim N_3\sim N \\
\cdots \quad& N_1\sim N_2\sim N_3\gg N\\[2pt]
\cdots \quad& N_1\sim N_3\sim N\gg N_2\\
\cdots \quad& N_1\sim N_3\gg N\gg N_2\\
\cdots \quad& N_1\sim N_3\gg N\sim N_2\\
\cdots \quad& N_1\sim N_3\gg N_2\gg N\\[2pt]
\cdots \quad& N_1\sim N_2\gg N_3\\
\cdots \quad& (N\sim)\,N_1\gg N_2\gg N_3\\
\cdots \quad& (N\sim)\,N_1\gg N_2\sim N_3\\
\cdots \quad& (N\sim)\,N_1\gg N_3\gg N_2\\[2pt]
\cdots \quad& (N\sim)\,N_3\gg N_1\geq N_2
\end{aligned}
\end{align*}
We classify these 11 cases as in Table~\ref{table1}.
\begin{table}
\begin{center}
\begin{tabular}{|c|r@{~~}l|c|c|c|}
\hline
\multicolumn{3}{|c|}{Classification} & Deriv.~loss$^{\text{1)}}$ & Resonance$^{\text{2)}}$ & ~Control$^{\text{3)}}$~ \\ \hline
\texttt{A} & & $N_1\sim N_2\sim N_3\sim N$ & -- & $|\xi_{13}||\xi_{23}|$ & \checkmark \\ \hline
\multirow{2}{*}{\texttt{B}} & \texttt{B1}: & $N_1\sim N_3\gg N\sim N_2$ & -- & $N_1|\xi_{13}|$ & \checkmark \\ \cline{2-6}
& \texttt{B2}: & $N_1\sim N\gg N_2\sim N_3$ & -- & $N_1|\xi_{23}|$ & \checkmark \\ \hline
\multirow{2}{*}{\texttt{C}} & \texttt{C1}: & $N_1\sim N_3\gg N\gg N_2$ & $1/2$ & $NN_3$ & $B^0_{1,\infty}$ \\ \cline{2-6}
& \texttt{C2}: & $N_3\sim N\gg N_1\geq N_2$ & $1$ & $NN_3$ & $H^{-1/2}$ \\ \hline
\multirow{2}{*}{\texttt{D}} & \texttt{D1}: & $N_1\sim N\gg N_2\gg N_3$ & -- & $NN_2$ & \checkmark \\ \cline{2-6}
& \texttt{D2}: & $N_1\sim N\gg N_3\gg N_2$ & $1/2$ & $NN_3$ & \checkmark \\ \hline
\multirow{4}{*}{\texttt{E}}& \texttt{E1}: & $N_1\sim N_2\sim N_3\gg N$ & -- & $N_1N_3$ & \checkmark \\ \cline{2-6}
& \texttt{E2}: & $N_1\sim N_2\gg N_3$ & -- & $N_1^2$ & \checkmark \\ \cline{2-6}
& \texttt{E3}: & $N_1\sim N_3\sim N\gg N_2$ & $1/2$ & $NN_3$ & \checkmark \\ \cline{2-6}
& \texttt{E4}: & $N_1\sim N_3\gg N_2\gg N$ & -- & $N_2N_3$ & \checkmark \\ \hline
\end{tabular}
\caption{%
1) ``No derivative loss'' (--) means that $N^{\frac12}N_3\lesssim N_1^{\frac12}N_2^{\frac12}N_3^{\frac12}$ holds, and ``$\theta$ derivative loss'' means that one needs $N_{\max}^{\frac12+\theta}$ to bound $N^{\frac12}N_3$.
2) This means that $|\phi(\vec{\xi})|$ is similar to the indicated quantity.
Here, ``$|\xi_{13}|$'' can be much smaller than $N_1$ or $N_3$ due to cancellation,  and similarly for ``$|\xi_{23}|$''.
3) ``Good control'' (\checkmark) means that the nonlinearity (in the $w$-side) can be estimated in $H^{-\frac12+}$.
Otherwise, it is estimated only in the indicated space.}
\label{table1}
\end{center}
\end{table}
The classification is based on the following viewpoints:
\begin{itemize}
\item Resonance property.
The amount of regularity gained after NFR is determined by the size of $|\phi(\vec{\xi})|\sim |\xi_{13}||\xi_{23}|$.
In this sense, the most dangerous is Case~\texttt{A}: $N_1\sim N_2\sim N_3\sim N$, in which both $|\xi_{13}|$ and $|\xi_{23}|$ can be very small compared to the size of each frequency.
On the other hand, for instance Case~\texttt{E1}: $N_1\sim N_2\sim N_3\gg N$ is one of the most favorable, because $|\phi(\vec{\xi})|\sim |\xi-\xi_1||\xi-\xi_2|\sim N_{\max}^2$.
Considering the other cases as well, we see that in all the cases but Case~\texttt{A} we have at least one $N_{\max}$, while in Case~\texttt{B} there remains an unreliable factor $|\xi_{13}|$ or $|\xi_{23}|$.
\item Controllability in $H^{-\frac12+}$.
By Proposition~\ref{prop:N-weak}, in most cases the nonlinearity admits an estimate in $H^{-\frac12+}$ in terms of $\| w\|_{H^{\frac12}\cap B^{0+}_{\infty,1}}$.
In fact, the $H^{-\frac12+}$ control will make the argument substantially simpler.
In this sense, Case~\texttt{C} must be treated carefully.
\end{itemize}

\begin{rem}
The derivative loss in Cases~\texttt{C}, \texttt{D2}, \texttt{E3} will not affect the estimates so much, since the resonance property is good enough in these cases (especially in \texttt{C2}).
This is the feature of the nonlinearity $w^2\partial_x\bar{w}$ and the main reason why we need the gauge transformation.
In fact, if we consider a similar classification for the nonlinear term $w\bar{w}\partial_xw$, then the interaction of type \texttt{C2} has unfavorable resonance property ($|\phi|\sim N_{\max}|\xi_{12}|$), which prevents us from dealing with the derivative loss occurring in this case by the normal form argument.
\end{rem}

The detailed NFR procedure will be demonstrated in the following two sections.
We will see that infinite NFR argument is necessary only for Case~\texttt{A} (``high$\times$high$\times$high$\to$high'' interaction), while in the other cases an $\hat{H}^{\frac12}$-control is obtained after finite (at most twice) applications of NFR.

In Section~\ref{sec:NFR1}, we only transform the cubic nonlinearity of type \texttt{B}--\texttt{E}; i.e., we rewrite \eqref{eq:omega-I} as
\[ \partial_t \omega =\mathcal{R}[\omega ]+\mathcal{N}^{\texttt{A}}[\omega]+\mathcal{N}^{\texttt{B},\texttt{C},\texttt{D},\texttt{E}}[\omega] \]
(the superscripts \texttt{A}, \texttt{B}, $\ldots$ indicate the restriction of the range of $N,N_1,N_2,N_3$ to the corresponding set) and apply NFR to the last term.
This stage proceeds as follows: 
\begin{itemize}
\item After applying NFR, $\partial_t\omega$ is replaced by either $\mathcal{R}[\omega]$ or $\mathcal{N}[\omega]$.
If $\mathcal{R}[\omega]$ replaces, the resulting term can be estimated in $\hat{H}^{\frac12}$ and we can stop there.
\item If we start with $\mathcal{N}^{\texttt{E}}[\omega ]$, then after applying NFR (and replacing $\partial_t\omega$ with $\mathcal{N}[\omega]$) the good resonance property allows us to estimate the resulting term by putting $\mathcal{N}[\omega]$ in a good norm as $\| S(t)\mathcal{F}^{-1}\mathcal{N}[\omega]\|_{H^{-\frac12}+B^0_{1,\infty}}$.
Then, by Proposition~\ref{prop:N-weak} we can finish the reduction.
\item 
On the other hand, when we apply NFR to $\mathcal{N}^{\texttt{B},\texttt{C},\texttt{D}}[\omega]$, it is possible to put $\mathcal{N}[\omega]$ in $\hat{H}^{-\frac12+}$ but not in $H^{-\frac12}+B^0_{1,\infty}$ as for $\mathcal{N}^{\texttt{E}}[\omega]$.
In view of Proposition~\ref{prop:N-weak}, we can finish the procedure unless the substituting nonlinearity is $\mathcal{N}^{\texttt{C}}[\omega]$.
\item Hence, we need to apply the second NFR if the first NFR is applied to type $\mathcal{N}^{\texttt{B},\texttt{C},\texttt{D}}[\omega]$ and then $\partial_t\omega$ is replaced by $\mathcal{N}^{\texttt{C}}[\omega]$.
Then, the resonance property for \texttt{C} turns out to be good enough to control the resulting terms in $\hat{H}^{\frac12}$, and we can finish the procedure. 
\end{itemize}
Consequently, by applying NFR twice we will obtain the equation of the form
\[ \partial_t \omega=\mathcal{N}^{\texttt{A}}[\omega] +\partial_t\mathcal{W}_0[\omega]+\mathcal{W}_1[\omega] ,\]
where each of $\mathcal{W}_0[\omega]$, $\mathcal{W}_1[\omega]$ can be controlled in $\hat{H}^{\frac12}$.
We will see that $\mathcal{W}_0[\omega]$ has additional good properties so that
\begin{itemize}
\item $\| \mathcal{W}_0[\omega]\|_{L^\infty_T\hat{H}^{\frac12}}\ll \| \omega\|_{L^\infty_T\hat{H}^{\frac12}}$, and
\item $\mathcal{N}^{\texttt{A}}[\omega]-\mathcal{N}^{\texttt{A}}[\omega -\mathcal{W}_0[\omega]]$ can be controlled in $\hat{H}^{\frac12}$ (while $\mathcal{N}^{\texttt{A}}[\omega]$ is only in $\hat{H}^{-\frac12}$).
\end{itemize}
Therefore, we introduce a new unknown function $\varpi:=\omega -\mathcal{W}_0[\omega]$ and consider the equation for $\varpi$ of the form
\[ \partial_t \varpi =\mathcal{N}^{\texttt{A}}[\varpi] +\mathcal{W}_1[\omega],\]
where the term $\mathcal{W}_1[\omega]$ has been changed but it is still controlled in $\hat{H}^{\frac12}$.

In Section~\ref{sec:NFR2}, we will apply the infinite NFR scheme to the above equation (similarly to the previous results for $s>1/2$ in \cite{MY20,K-all}) and obtain a priori $L^\infty_T\hat{H}^{\frac12}$ bound for $\varpi$, which then implies that for $\omega$.


\section{Normal form reduction: I. Finite reductions}\label{sec:NFR1}

In this and the next sections, we will see the NFR procedure in detail.
For all of the nonlinear estimates in these two sections, suitable multilinear versions and difference estimates can be shown by the same argument; we will not particularly mention that. 
Computations in each step will be done only \emph{formally}, and we postpone the justification argument until Section~\ref{subsec:just}.

In this section, we carry out the first stage: the finite NFR iteration procedure to reduce the derivative nonlinearity $\mathcal{N}[\omega]$ into $\mathcal{N}^{\texttt{A}}[\omega]$.
As we see in the following lemma, the last term $\mathcal{R}[\omega]$ in the equation \eqref{eq:omega} is controlled in $\hat{H}^{\frac12}$.
\begin{lem}\label{lem:R}
We have
\[ \| \mathcal{R}[\omega ]\|_{\hat{H}^{\frac12}}\lesssim \| w\|_{H^{\frac12}}^3+\| w\|_{H^{\frac12}}^4\| w\|_{H^{\frac12}\cap B^{0+}_{\infty,1}}\]
for any $w\in H^{\frac12}\cap B^{0+}_{\infty,1}$ and $t\in \mathbb{R}$, where $\omega=\mathcal{F}S(-t)w$.
\end{lem}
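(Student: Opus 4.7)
The plan is to decompose $\mathcal{R}[\omega]=\mathcal{R}^{(3)}[\omega]+\mathcal{R}^{(5)}[\omega]$ into the ``resonant'' cubic piece (supported on $|\xi_{13}|\wedge|\xi_{23}|<1$ on $\mathbb{R}$, respectively on $\xi_{13}\xi_{23}=0$ on $\mathbb{T}$) and the quintic remainder in $w$, and to estimate each piece separately in $\hat{H}^{\frac12}$. Throughout, the factors $e^{it\xi^2}$ and $e^{it\phi(\vec\xi)}$ are pointwise modulus-one phases that can be ignored.

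For the cubic piece in the non-periodic setting, the argument refines the one already present in the proof of Proposition~\ref{prop:N-weak}. By symmetry one restricts to the region $|\xi_{13}|<1$, which forces $N_1\sim N_3$ in the Littlewood--Paley decomposition and yields $\langle\xi\rangle\sim\langle\xi_2\rangle$ since $|\xi-\xi_2|<1$. Change variables to $(\xi_1,\eta,\xi_2)$ with $\eta:=\xi_{13}\in(-1,1)$; the integration in $\eta$ has length $O(1)$ and $|\xi_3|=|\eta-\xi_1|\sim N_3$. Applying Cauchy--Schwarz in $\eta$ against the factor $\omega_{N_2}(\xi-\eta)$ and bounding the inner convolution in $\xi_1$ by $N_3\|\omega_{N_1}\|_{L^2}\|\omega_{N_3}\|_{L^2}$ yields a dyadic estimate of the form $\|w_{N_1}\|_{H^{\frac12}}\|w_{N_2}\|_{H^{\frac12}}\|w_{N_3}\|_{H^{\frac12}}$ with $N_1\sim N_3$ and $N\sim N_2$, which is summable to $\|w\|_{H^{\frac12}}^3$. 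In the periodic case, combining the contributions of $\xi_{13}=0$ and $\xi_{23}=0$ produces exactly the explicit term $\frac{i\xi}{2\pi}|\omega(\xi)|^2\omega(\xi)$. Its squared $\hat{H}^{\frac12}$ norm equals $\sum_\xi\langle\xi\rangle^3|\omega(\xi)|^6$. Setting $a_\xi:=\langle\xi\rangle^{1/2}|\omega(\xi)|$, the embedding $\ell^2(\mathbb{Z})\hookrightarrow\ell^6(\mathbb{Z})$ gives $\sum_\xi a_\xi^6\leq\bigl(\sum_\xi a_\xi^2\bigr)^3=\|w\|_{H^{\frac12}}^6$, yielding the bound $\|w\|_{H^{\frac12}}^3$ as well.

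For the quintic piece the task reduces to controlling expressions such as $\||w|^4w\|_{H^{\frac12}}$. Iterating the Moser estimate recorded in Lemma~\ref{lem:Sobolev-v},
\[
\|fg\|_{H^{\frac12}}\lesssim\|f\|_{L^\infty}\|g\|_{H^{\frac12}}+\|f\|_{H^{\frac12}}\|g\|_{L^\infty},
\]
and using $\||w|^k\|_{L^\infty}=\|w\|_{L^\infty}^k$ together with $\||w|^k\|_{H^{\frac12}}\lesssim\|w\|_{L^\infty}^{k-1}\|w\|_{H^{\frac12}}$, one obtains $\||w|^4w\|_{H^{\frac12}}\lesssim\|w\|_{L^\infty}^4\|w\|_{H^{\frac12}}$. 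The periodic quintic terms containing $P_c$ and $P_{\neq c}$ are handled identically, using $|P_cf|\lesssim\|f\|_{L^\infty}$ and the boundedness of $P_{\neq c}$ on $H^{\frac12}$ and $L^\infty$. The embedding $B^{0+}_{\infty,1}\hookrightarrow L^\infty$ finally converts the bound into the polynomial in $\|w\|_{H^{\frac12}}$ and $\|w\|_{H^{\frac12}\cap B^{0+}_{\infty,1}}$ stated on the right-hand side of the lemma.

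The main technical point is the non-periodic cubic piece: one must fully exploit both that the thin frequency shell $|\xi_{13}|<1$ collapses one frequency integration to a set of measure $O(1)$, and that on this set $|\xi_3|$ is comparable to the dyadic size of $\xi_1$, in order to absorb the derivative factor $\xi_3$ into the $H^{\frac12}$ weights on $\omega(\xi_1)$ and $\omega(\xi_3)$ simultaneously without log-divergence in the Littlewood--Paley summation. Everything else is a direct application of Moser's inequality.
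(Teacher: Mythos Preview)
Your treatment of the cubic piece is fine and matches the paper's argument. The gap is in the quintic estimate.

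Iterating the Moser inequality on $|w|^4w$ can only produce bounds of the form $\|w\|_{L^\infty}^4\|w\|_{H^{\frac12}}$; there is no way to trade any of the four $L^\infty$ factors for $H^{\frac12}$, because $H^{\frac12}$ is not an algebra and the product estimate genuinely requires the full $L^\infty$ norm on one factor. Your final sentence asserting that $B^{0+}_{\infty,1}\hookrightarrow L^\infty$ ``converts the bound into the polynomial \ldots\ stated on the right-hand side'' is therefore incorrect: from $\|w\|_{L^\infty}^4\|w\|_{H^{\frac12}}$ you obtain at best $\|w\|_{H^{\frac12}\cap B^{0+}_{\infty,1}}^4\|w\|_{H^{\frac12}}$, which is \emph{not} dominated by the stated bound $\|w\|_{H^{\frac12}}^4\|w\|_{H^{\frac12}\cap B^{0+}_{\infty,1}}$. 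This distinction matters downstream: the lemma is later used pointwise in $t$ with $w\in L^\infty_TH^{\frac12}\cap L^4_TB^{0+}_{\infty,1}$, and the paper needs $\mathcal{R}[\omega]\in L^4_T\hat{H}^{\frac12}$, which follows from having only \emph{one} factor of the $B^{0+}_{\infty,1}$ norm. Your bound would only yield $L^1_T$ integrability from the given hypotheses.

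The paper obtains the correct distribution by a direct Littlewood--Paley argument on $w^5$: after ordering $N_1\ge\cdots\ge N_5$, one places the highest (or two highest) frequency pieces in $L^2$ and exactly one remaining factor in $B^{0+}_{\infty,1}$ to ensure dyadic summability, while the other low-frequency $L^\infty$ factors are controlled by $\|w\|_{B^0_{\infty,\infty}}\lesssim\|w\|_{H^{\frac12}}$ via Bernstein. That last embedding is the missing idea: at the level of individual dyadic blocks one has $\|P_Nw\|_{L^\infty}\lesssim N^{\frac12}\|P_Nw\|_{L^2}$, so three of the ``$L^\infty$'' factors can be accounted for by $H^{\frac12}$, but this trade is only available after localizing and cannot be seen through the Moser inequality.
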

\begin{proof}
We begin with the periodic case.
The contribution from the first term in $\mathcal{R}[\omega (t)]$ is easily bounded by $\| w\|_{H^{\frac12}}^3$.
For the remainder, it suffices to prove that 
\[ \big\| |w|^4w\big\|_{H^{\frac12}}+\big\| \| w\|_{L^4}^4w\big\|_{H^{\frac12}}+\big\| \| w\|_{L^2}^2|w|^2w\big\|_{H^{\frac12}}+\big\| \| w\|_{L^2}^4w\big\|_{H^{\frac12}}\lesssim \| w\|_{H^{\frac12}}^4\| w\|_{H^{\frac12}\cap B^{0+}_{\infty,1}}.\]
We focus on the first term, but the others are easier to treat.
Apply the Littlewood-Paley decomposition and write $w^5=\sum_{N,N_1,\dots ,N_5}P_N(w_{N_1}\cdots w_{N_5})$, noticing that the complex conjugation plays no role here.
We may assume $N_1\geq N_2\geq \cdots \geq N_5$.
If $N_1\gg N_2$ we estimate as
\begin{align*}
\| w^5\|_{H^{\frac12}}&\lesssim \Big[ \sum _{N}\Big( N^{\frac12}\sum _{N_1\sim N}\| w_{N_1}\|_{L^2}\sum _{N_5\leq \cdots \leq N_2\ll N_1}\| w_{N_2}\|_{L^\infty}\cdots \| w_{N_5}\|_{L^\infty}\Big) ^2\Big] ^{\frac12} \\
&\lesssim \Big[ \sum _{N}\Big( \sum _{N_1\sim N}\| w_{N_1}\|_{H^{\frac12}}\sum _{N_2\ll N_1}N_2^{0+}\| w_{N_2}\|_{L^\infty}\Big) ^2\Big] ^{\frac12}\| w\|_{B^0_{\infty,\infty}}^3 \\
&\lesssim \| w\|_{H^{\frac12}}\| w\|_{B^{0+}_{\infty,1}}\| w\|_{H^{\frac12}}^3.
\end{align*} 
If $N_1\sim N_2$, we can estimate as
\begin{align*}
\| w^5\|_{H^{\frac12}}&\lesssim \sum_{N_1}\sum _{N\lesssim N_1}\sum _{N_2\sim N_1}\sum _{N_5\leq N_4\leq N_3\leq N_2}N^{\frac12}\| w_{N_1}\|_{L^2}\| w_{N_2}\|_{L^\infty}\cdots \| w_{N_5}\|_{L^{\infty}} \\
&\lesssim \sum _{N_2}N_2^{0+}\| w_{N_2}\|_{L^\infty} \| w\|_{B^{\frac12}_{2,\infty}}\| w\|_{B^0_{\infty,\infty}}^3\\
&\lesssim \| w\|_{B^{0+}_{\infty,1}}\| w\|_{H^{\frac12}}^4.
\end{align*} 
This completes the proof in the periodic case.

For the non-periodic problem, the $\hat{H}^{\frac12}$ estimate of the first term in $\mathcal{R}[\omega ]$ has been done in the proof of Proposition~\ref{prop:N-weak}; note that we can take $\ell^2$ summation in $N$ without the factor $N^{0-}$.
The second quintic term is estimated exactly as above.
\end{proof}


\subsection{Estimate of the resonant part and the boundary term after NFR}

We separate a certain part with relatively small $|\phi(\vec{\xi})|$ as the resonant%
\footnote{%
More precisely, this part is ``near resonant''. 
In fact, the ``true resonant'' portion $|\phi (\vec{\xi})|<1$ have already been removed by the condition $|\xi_{13}|\wedge |\xi_{23}|\geq 1$.%
}
part $\mathcal{N}_R[\omega]$ from the cubic nonlinearity $\mathcal{N}[\omega]$.
Let $M>1$ be the threshold value for the resonant part to be chosen later.
For Case~\texttt{A}, we define it by
\begin{align*}
\mathcal{N}^{\texttt{A}}_R[\omega ]&:=\sum _{N,N_1,N_2,N_3}^{\texttt{A}}\mathcal{N}_{N,N_1,N_2,N_3}^{\leq M}[\omega ] \\
&:=\sum _{N,N_1,N_2,N_3}^{\texttt{A}}\psi_N(\xi )\int _{\begin{smallmatrix}\xi=\xi_{123}\\ |\xi_{13}|\wedge |\xi_{23}|\geq 1\end{smallmatrix}}e^{it\phi(\vec{\xi})}\mathbf{1}_{|\phi(\vec{\xi})|\leq M}\xi_3\omega_{N_1}(\xi_1)\omega_{N_2}(\xi_2)\omega ^*_{N_3}(\xi_3).
\end{align*}
(We put \texttt{A}, \texttt{B}, $\ldots$ or \texttt{B1}, \texttt{C1}, $\ldots$ on the summation symbol to indicate the restriction of the range of $N,N_1,N_2,N_3$ to the corresponding set.)
For Cases~\texttt{C}, \texttt{D} and \texttt{E}, we do not use the cutoff $\mathbf{1}_{|\phi(\vec{\xi})|\leq M}$, but instead restrict the range of $N,N_1,N_2,N_3$ as
\begin{align*}
\mathcal{N}^{\texttt{C},\texttt{D},\texttt{E}}_R[\omega ]&:=\sum _{\begin{smallmatrix} N,N_1,N_2,N_3\\ \Phi\leq M\end{smallmatrix}}^{\texttt{C},\texttt{D},\texttt{E}}\mathcal{N}_{N,N_1,N_2,N_3}[\omega ]\\
&\;=\sum _{\begin{smallmatrix} N,N_1,N_2,N_3\\ \Phi\leq M\end{smallmatrix}}^{\texttt{C},\texttt{D},\texttt{E}}\psi_N(\xi )\int _{\xi=\xi_{123}}e^{it\phi(\vec{\xi})}\xi_3\omega_{N_1}(\xi_1)\omega_{N_2}(\xi_2)\omega ^*_{N_3}(\xi_3), 
\end{align*}
where $\Phi$ denotes the dyadic size of $|\phi(\vec{\xi})|$ in each case given in Table~\ref{table1}; for instance, $\Phi=N_1N_3$ in the case \texttt{E1}, $\Phi=NN_3$ for \texttt{E3}, and so on.
Note that the condition $|\xi_{13}|\wedge |\xi_{23}|\geq 1$ is automatically satisfied in these cases due to the dyadic localization.
The resonant part for \texttt{B} is defined similarly, but with an additional dyadic decomposition, as follows:
\begin{align*}
\mathcal{N}^{\texttt{B}}_R[\omega ]&:=\sum _{\begin{smallmatrix}N,N_1,N_2,N_3,K\\ N_1K\leq M\end{smallmatrix}}^{\texttt{B1}}\mathcal{N}_{N,N_1,N_2,N_3}^{1;K}[\omega ]+\sum _{\begin{smallmatrix}N,N_1,N_2,N_3,K\\ N_1K\leq M\end{smallmatrix}}^{\texttt{B2}}\mathcal{N}_{N,N_1,N_2,N_3}^{2;K}[\omega ]\\
&:=\sum _{\begin{smallmatrix}N,N_1,N_2,N_3,K\\ N_1K\leq M\end{smallmatrix}}^{\texttt{B1}}\psi_N(\xi)\int _{\begin{smallmatrix}\xi=\xi_{123}\\ K\leq |\xi_{13}|<2K\end{smallmatrix}}e^{it\phi(\vec{\xi})}\xi_3\omega_{N_1}(\xi_1)\omega_{N_2}(\xi_2)\omega^*_{N_3}(\xi_3) \\
&\;\quad +\sum _{\begin{smallmatrix}N,N_1,N_2,N_3,K \\ N_1K\leq M\end{smallmatrix}}^{\texttt{B2}}\psi_N(\xi)\int _{\begin{smallmatrix}\xi=\xi_{123}\\ K\leq |\xi_{23}|<2K\end{smallmatrix}}e^{it\phi(\vec{\xi})}\xi_3\omega_{N_1}(\xi_1)\omega_{N_2}(\xi_2)\omega^*_{N_3}(\xi_3) .
\end{align*}
We will need the decomposition in $K$ to estimate the non-resonant part later.
 
\begin{lem}\label{lem:N_R}
We have
\[ \| \mathcal{N}_R[\omega ]\|_{\hat{H}^{\frac12}}\lesssim M^{\frac12+}\| \omega\|_{\hat{H}^{\frac12}}^3\]
for any $\omega \in \hat{H}^{\frac12}$ and $t\in \mathbb{R}$.
\end{lem}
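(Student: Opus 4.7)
The plan is to bound $\mathcal{N}_R[\omega]$ case by case, treating $\mathcal{N}_R^{\texttt{A}}$ through $\mathcal{N}_R^{\texttt{E}}$ separately, since the resonance constraints take quite different forms in Table~\ref{table1}. In every case the mechanism is the same: apply Cauchy--Schwarz to separate the resonance region from the trilinear product, bound the measure of the resonance region by an appropriate power of $M$ (times frequency scales), and convert the resulting $L^2$-bounds on $\omega_{N_j}$ into $\hat H^{1/2}$-bounds via the Bernstein relation $\|\omega_N\|_{L^2}\lesssim \tilde N^{-1/2}\|\omega_N\|_{\hat H^{1/2}}$. The three derivatives $\langle\xi\rangle^{1/2}\cdot|\xi_3|$ on the left-hand side are then absorbed by the three Bernstein factors, leaving the desired $M^{1/2+}$ and a product of $\hat H^{1/2}$-norms.

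The most delicate piece is Case~\texttt{A}, where $N\sim N_1\sim N_2\sim N_3$ and no large frequency is available. I would parameterize the convolution constraint $\xi=\xi_{123}$ by $(\xi_{13},\xi_{23})$, setting $\xi_1=\xi-\xi_{23}$, $\xi_2=\xi-\xi_{13}$, $\xi_3=-\xi+\xi_{13}+\xi_{23}$ (Jacobian one), and estimate the two-dimensional measure
\[ \big|\{(\xi_{13},\xi_{23}):\,|\xi_{13}||\xi_{23}|\leq M/2,\ |\xi_{13}|,|\xi_{23}|\geq 1\}\big|\lesssim M\log M\lesssim M^{1+}. \]
Cauchy--Schwarz in $(\xi_{13},\xi_{23})$, followed by Fubini and the change of variables back to $(\xi_1,\xi_2,\xi_3)$, yields
\[ \|\langle\xi\rangle^{1/2}\mathcal{N}^{\leq M}_{N,N_1,N_2,N_3}[\omega]\|_{L^2_\xi}\lesssim M^{1/2+}\prod_{j=1}^3\|\omega_{N_j}\|_{\hat H^{1/2}}. \]
Since the sum over case~\texttt{A} is essentially diagonal, taking $\ell^2_N$ and using $\sum_N a_N^6\leq (\sum_N a_N^2)^3$ closes the estimate.

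For Case~\texttt{B}, the $K$-decomposition produces a thin annulus $\{K\leq|\xi_{13}|<2K\}$ (or $|\xi_{23}|$) of one-dimensional measure $\sim K$ with $N_1 K\leq M$. Two successive Cauchy--Schwarz applications --- first in $\xi_{13}$ over the annulus, then in $\xi_3$ against $\omega_{N_3}$ --- combined with Bernstein give a single-piece bound proportional to $N_1 K\prod_j\|\omega_{N_j}\|_{\hat H^{1/2}}$; dyadic summation $\sum_{K\leq M/N_1}K\lesssim M/N_1$ leaves a factor $M$ that, after the usual Bernstein balancing and dyadic summation in $N_1, N_2$, becomes $M^{1/2+}$. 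Cases~\texttt{C}, \texttt{D}, \texttt{E} I would handle by a direct H\"older--Plancherel estimate on the $w$-side of $P_N(w_{N_1}w_{N_2}\partial_x\bar w_{N_3})$: in each subcase the dyadic resonance constraint $\Phi\leq M$ from Table~\ref{table1} restricts the relevant frequencies (e.g.\ $N_1\lesssim M^{1/2}$ in \texttt{E1}, $N N_3\lesssim M$ in \texttt{C2}, and so on), so that after applying Bernstein to promote $L^\infty$ to $L^2$ and summing dyadically over the surviving range one arrives at exactly the $M^{1/2+}$ factor.

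The main obstacle is Case~\texttt{A}: without any large frequency, every derivative must be absorbed by Bernstein, so the entire argument boils down to the two-dimensional measure computation of the resonance set $\{|\xi_{13}||\xi_{23}|\lesssim M\}$; the logarithm in $M\log M$ is harmlessly absorbed into the "$+$" of $M^{1/2+}$ (more careful bookkeeping would give $M^{1/2}\log^{1/2}M$, but this refinement is not needed). A secondary technical point is the $K$-summation in Case~\texttt{B}, which is what forces the additional dyadic decomposition in the definition of $\mathcal{N}_R^{\texttt{B}}$; without it, the naive estimate would pick up an unacceptable logarithm in $N_1$.
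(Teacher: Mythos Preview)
Your Case~\texttt{A} argument is correct and essentially the same as the paper's: both amount to Cauchy--Schwarz against the resonant region, with your direct measure bound $|\{|\xi_{13}||\xi_{23}|\leq M\}|\lesssim M\log M$ being equivalent to the paper's weighted version $\sup_\xi\int|\xi_{13}|^{-1-}|\xi_{23}|^{-1-}<\infty$ after inserting the factor $(M/|\xi_{13}||\xi_{23}|)^{1/2+}\geq 1$.

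Your Case~\texttt{B} argument has a genuine gap. After Cauchy--Schwarz the single-piece bound is $K\prod_j\|\omega_{N_j}\|_{\hat H^{1/2}}$ (not $N_1K$; check the exponents in $N^{1/2}N_3K/(N_1N_2N_3)^{1/2}\sim K$), and summing $\sum_{K\leq M/N_1}K\sim M/N_1$. You then assert this ``becomes $M^{1/2+}$'' via ``Bernstein balancing and dyadic summation,'' but no such reduction is possible from $M/N_1$ alone: when $N_1\sim 1$ the factor is $\sim M$. What you are missing is the additional constraint $K\lesssim N$ in \texttt{B1} (since $\xi_{13}=\xi-\xi_2$ with $|\xi|\sim N\sim N_2\sim|\xi_2|$), respectively $K\lesssim N_2$ in \texttt{B2} (since $|\xi_{23}|\leq|\xi_2|+|\xi_3|\lesssim N_2$). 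With this one has $\sum_K K\lesssim\min(N,M/N_1)\leq M^{1/2}(N/N_1)^{1/2}$, and the decay $(N/N_1)^{1/2}$ then allows dyadic summation. The paper avoids all of this by treating \texttt{B} exactly as \texttt{A}: insert $(M/|\xi_{13}||\xi_{23}|)^{1/2+}\geq 1$ on the resonant set (recall $|\xi_{23}|\sim N_1$ in \texttt{B1}, so $|\xi_{13}||\xi_{23}|\sim N_1K\leq M$), after which the integrable kernel $|\xi_{13}|^{-1-}|\xi_{23}|^{-1-}$ handles everything uniformly and even produces a free factor $N_{\max}^{0-}$ for summation. Your closing remark that the $K$-decomposition is ``forced'' by this resonant estimate is therefore misplaced: the paper introduces it for the later non-resonant analysis (to apply Lemma~\ref{lem:CM3} in estimating $\mathcal{N}_0^{\texttt{B}},\mathcal{N}_1^{\texttt{B}}$), not for Lemma~\ref{lem:N_R}.

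For Cases~\texttt{C}--\texttt{E} your physical-side H\"older--Bernstein route is a genuine alternative to the paper's Fourier-side approach (insert $(M/NN_3)^{1/2+}\geq 1$, then Young on $|\omega_{N_1}|*|\omega_{N_2}|*|\omega_{N_3}^*|$). Both should succeed; the paper's method is more uniform across the many subcases, while yours requires tracking each frequency relation individually.
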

\begin{proof}
For Cases~\texttt{A} and \texttt{B}, there is no derivative loss; i.e., it holds that $N^{\frac12}N_3\lesssim N_1^{\frac12}N_2^{\frac12}N_3^{\frac12}$.
Then, for each fixed $N,N_1,N_2,N_3$ in these cases, the localized component of $\| \mathcal{N}_R[\omega ]\|_{\hat{H}^{\frac12}}$ is bounded (pointwise in $t$) by
\begin{align*}
&N^{\frac12}\Big\| \int _{\begin{smallmatrix}\xi=\xi_{123}\\ |\xi_{13}|\wedge |\xi_{23}|\geq 1\end{smallmatrix}}\Big( \frac{M}{|\xi_{13}||\xi_{23}|}\Big) ^{\frac12+}N_3|\omega_{N_1}(\xi_1)||\omega_{N_2}(\xi_2)||\omega^*_{N_3}(\xi_3)| \Big\| _{L^2_\xi}\\
&\lesssim M^{\frac12+}N_1^{\frac12}N_2^{\frac12}N_3^{\frac12}\sup_{\xi} \Big( \int _{\begin{smallmatrix}\xi=\xi_{123}\\ |\xi_{13}|\wedge |\xi_{23}|\geq 1\end{smallmatrix}}\frac{1}{|\xi_{13}|^{1+}|\xi_{23}|^{1+}}\Big) ^{\frac12}\prod _{l=1}^3\| \omega_{N_l}\|_{L^2}\\
&\lesssim \left\{ \begin{alignedat}{2}
&M^{\frac12+}\prod _{l=1}^3\| \omega_{N_l}\|_{\hat{H}^{\frac12}} &\quad &\text{(Case \texttt{A})},\\
&M^{\frac12+}N_{\max}^{0-}\prod _{l=1}^3\| \omega_{N_l}\|_{\hat{H}^{\frac12}} &\quad &\text{(Case \texttt{B})},
\end{alignedat} \right.
\end{align*}
where we have used the Cauchy-Schwarz inequality in $\xi_j$'s.
(For Case~\texttt{B}, we can derive a factor $N_{\max}^{0-}$ from $(|\xi_{13}||\xi_{23}|)^{0-}$.) 
Since for Case~\texttt{A} all the dyadic frequencies are comparable, in both cases we can take the $\ell^1_{N,N_1,N_2,N_3}$ summation to obtain the bound $M^{\frac12+}\| \omega\|_{\hat{H}^{\frac12}}^3$.

For Cases~\texttt{C}, \texttt{D} and \texttt{E}, we have a good resonance condition $|\phi (\vec{\xi})|\gtrsim NN_3$.
Hence, we can insert $(\frac{M}{NN_3})^{(1/2)+}$ and apply the Young, Cauchy-Schwarz inequalities, obtaining the bound
\begin{align*}
&\sum_{N,N_l}N^{\frac12}\Big\| \int _{\xi=\xi_{123}}\Big( \frac{M}{NN_3}\Big) ^{\frac12+}N_3|\omega_{N_1}(\xi_1)||\omega_{N_2}(\xi_2)||\omega^*_{N_3}(\xi_3)| \Big\| _{L^2_\xi}\\
&= M^{\frac12+}\sum_{N,N_l}N^{0-}N_3^{\frac12-}\Big\| |\omega_{N_1}|*|\omega_{N_2}|*|\omega^*_{N_3}|\Big\|_{L^2}\\
&\leq M^{\frac12+}\sum_{N,N_l}N^{0-}N_3^{\frac12-}\| \omega_{N_{\max}}\|_{L^2} \| \omega_{N_{med}}\|_{L^1}\| \omega_{N_{\min}}\|_{L^1}\\
&\lesssim M^{\frac12+}\sum_{N,N_l}N^{0-}N_{\max}^{0-}\prod_{l=1}^3 \| \omega_{N_l}\|_{\hat{H}^{\frac12}}\\
&\lesssim M^{\frac12+}\| \omega\|_{\hat{H}^{\frac12}}^3.\qedhere
\end{align*}
\end{proof}

For the non-resonant part $\mathcal{N}_{N\!R}[\omega ]:=\mathcal{N}[\omega ]-\mathcal{N}_R[\omega]$, we apply differentiation by parts as
\[ \mathcal{N}_{N\!R}[\omega ] = \partial_t \mathcal{N}_0[\omega ] + \mathcal{N}_1[\omega ,\partial_t\omega] .\]
The precise definition of $\mathcal{N}_0$ and $\mathcal{N}_1$ depends on the cases.
For instance, in Case~\texttt{B1} they are defined as
\begin{align*}
\mathcal{N}^{\texttt{B1}}_0[\omega ]&:=\sum _{\begin{smallmatrix}N_1\sim N_3\gg N\sim N_2 \\ K;\,N_1K>M\end{smallmatrix}}\psi_N(\xi )\int _{\begin{smallmatrix}\xi=\xi_{123}\\ K\leq |\xi_{13}|<2K\end{smallmatrix}}\frac{e^{it\phi(\vec{\xi})}}{\phi(\vec{\xi})}\xi_3\omega_{N_1}(\xi_1)\omega_{N_2}(\xi_2)\omega^*_{N_3}(\xi_3),\\
\mathcal{N}^{\texttt{B1}}_1[\omega ,\zeta]&:=\sum _{\begin{smallmatrix}N_1\sim N_3\gg N\sim N_2 \\ K;\,N_1K>M\end{smallmatrix}}\psi_N(\xi )\int _{\begin{smallmatrix}\xi=\xi_{123}\\ K\leq |\xi_{13}|<2K\end{smallmatrix}}\frac{e^{it\phi(\vec{\xi})}}{\phi(\vec{\xi})}\xi_3\\
&\quad \times \Big\{ \zeta_{N_1}(\xi_1)\omega_{N_2}(\xi_2)\omega^*_{N_3}(\xi_3)+\omega_{N_1}(\xi_1)\zeta_{N_2}(\xi_2)\omega^*_{N_3}(\xi_3)+\omega_{N_1}(\xi_1)\omega_{N_2}(\xi_2)\zeta^*_{N_3}(\xi_3)\Big\} .
\end{align*}
(Recall that we neglect some constants and signs.)
The next lemma gives the control of the boundary term $\mathcal{N}_0$.
\begin{lem}\label{lem:N_0}
We have
\[ \| \mathcal{N}_0[\omega ]\|_{\hat{H}^{\frac12}}\lesssim M^{-\frac12+}\| \omega\|_{\hat{H}^{\frac12}}^3,\qquad \| \mathcal{N}_1[\omega ,\zeta ]\|_{\hat{H}^{\frac12}}\lesssim M^{-\frac12+}\| \omega\|_{\hat{H}^{\frac12}}^2\| \zeta \|_{\hat{H}^{\frac12}}\]
for any $\omega ,\zeta \in \hat{H}^{\frac12}$ and $t\in \mathbb{R}$.
\end{lem}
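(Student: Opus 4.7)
The plan is to establish both estimates case by case according to the classification in Table~\ref{table1}, exploiting in each case the non-resonance condition ($|\phi(\vec{\xi})|>M$ for types \texttt{A}, \texttt{C}, \texttt{D}, \texttt{E}, and $N_1K>M$ for type \texttt{B}) through the pointwise inequality
\[
\frac{1}{|\phi(\vec{\xi})|}\,\mathbf{1}_{|\phi(\vec{\xi})|>M}\ \leq\ M^{-\frac12+}\,|\phi(\vec{\xi})|^{-\frac12-},
\]
which supplies the prefactor $M^{-\frac12+}$ and leaves the integrable tail $(|\xi_{13}||\xi_{23}|)^{-\frac12-}$ to be absorbed by the same Cauchy-Schwarz/Young-type estimates employed in the proof of Lemma~\ref{lem:N_R}, symmetrically to how $(M/|\phi|)^{\frac12+}$ was used there.

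For Case~\texttt{A}, Cauchy-Schwarz in $(\xi_1,\xi_2)$ together with $\sup_\xi \int_{|\xi_{13}|,|\xi_{23}|\geq 1}(|\xi_{13}||\xi_{23}|)^{-1-}\,d\xi_1 d\xi_2\lesssim 1$ and the comparability $N_1\sim N_2\sim N_3\sim N$ yield the per-dyadic bound $M^{-\frac12+}\prod_l\|\omega_{N_l}\|_{\hat{H}^{\frac12}}$, and the dyadic sum converges by the trivial embedding $\ell^2\subset\ell^3$. For Cases~\texttt{C}, \texttt{D}, \texttt{E} the strong resonance ($|\phi|\sim NN_3$, $N_1N_3$, $N_2N_3$, or $N_1^2$; see Table~\ref{table1}) produces a \emph{uniform} multiplier bound $|\xi_3|/|\phi|\lesssim M^{-\frac12+}\Phi^{-\frac12-}$ which absorbs the at-most-$\frac12$ derivative loss arising in \texttt{C}, \texttt{D2}, \texttt{E3}; combined with the convolution estimate $L^2\ast L^1\ast L^2\hookrightarrow L^\infty$ and the restriction of the output to $|\xi|\sim N$, this yields a bound summable in all dyadic indices (mirroring the corresponding cases in Lemma~\ref{lem:N_R}).

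The main obstacle is Case~\texttt{B}, where a cancellation $|\xi_{13}|\sim K\ll N_1$ (resp.\ $|\xi_{23}|\sim K$) makes $|\phi|\sim N_1K$ only marginally larger than $M$. One works slice by slice in the dyadic decomposition in $K$ already built into the definition of $\mathcal{N}_0^{\texttt{B}}$. Viewing the inner $\xi_1$-integral as $(\omega_{N_1}\ast\omega_{N_3}^*)(\eta)$ evaluated on the set $\{K\leq|\eta|<2K\}$ of measure $\lesssim K$, and then applying Young's inequality $L^1\ast L^2\hookrightarrow L^2$ in the outer variables, one obtains
\[
\Big\|\psi_N(\xi)\int_{|\xi_{13}|\sim K}\prod_{l=1}^3|\omega_{N_l}|\Big\|_{L^2_\xi}\ \lesssim\ K\prod_{l=1}^3\|\omega_{N_l}\|_{L^2}.
\]
Combining this with $(N_1K)^{-1}\leq M^{-\frac12+}(N_1K)^{-\frac12-}$ gives the per-$K$ estimate $M^{-\frac12+}N_1^{-\frac12-}K^{\frac12-}\prod_l\|\omega_{N_l}\|_{\hat{H}^{\frac12}}$. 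The dyadic sum over $M/N_1<K\lesssim N$ contributes a factor $\lesssim N^{\frac12-}$, and the remaining sum over $N\ll N_1$ is controlled by Cauchy-Schwarz in $N$ against the $\ell^2$-summable sequence $\{\|\omega_{N_1}\|_{\hat{H}^{\frac12}}^2\}$; the delicate point is that the choice of exponent $\frac12-$ is precisely what is needed to keep both the Cauchy-Schwarz kernel integrable and the $K$-sum convergent. The multilinear bound on $\mathcal{N}_1[\omega,\zeta]$ follows from identical computations after replacing one copy of $\omega_{N_l}$ by $\zeta_{N_l}$ in each step.
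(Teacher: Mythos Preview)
Your strategy is exactly the paper's: replace $(M/|\phi|)^{\frac12+}$ by $M^{-\frac12+}|\phi|^{-\frac12-}$ and rerun the Lemma~\ref{lem:N_R} estimates. Two remarks. First, the specific Young exponent you quote for Cases~\texttt{C}, \texttt{D}, \texttt{E} is not what works: $L^2*L^1*L^2\hookrightarrow L^\infty$ followed by restriction to $|\xi|\sim N$ costs an extra $N^{1/2}$, which you cannot afford in Case~\texttt{C2} (there $N\sim N_3$ is the largest frequency, and you end up with a non-summable $N^{\frac12-}$). What Lemma~\ref{lem:N_R} actually uses---and what your phrase ``mirroring the corresponding cases'' presumably intends---is $L^2*L^1*L^1\hookrightarrow L^2$ with the two smaller frequencies placed in $L^1$. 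Second, your Case~\texttt{B} argument via explicit $K$-slicing is correct but more elaborate than necessary: the paper handles \texttt{B} by the very same Cauchy--Schwarz as \texttt{A}, extracting the needed $N_{\max}^{0-}$ summability factor from the observation that one of $|\xi_{13}|,|\xi_{23}|$ is automatically $\sim N_{\max}$ in Case~\texttt{B}.
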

\begin{proof}
These estimates are shown by modifying the proof of the preceding lemma.
For Cases~\texttt{A} and \texttt{B}, it suffices to replace $(\frac{M}{|\xi_{13}||\xi_{23}|})^{\frac12+}$ by $(|\xi_{13}||\xi_{23}|)^{-\frac12-}M^{-\frac12+}$ and make the same argument.
For Cases~\texttt{C}, \texttt{D}, \texttt{E}, it suffices to replace $(\frac{M}{NN_3})^{\frac12+}$ by $(NN_3)^{-\frac12-}M^{-\frac12+}$.
\end{proof}

\begin{rem}\label{rem:N_R}
The proof of Lemma~\ref{lem:N_R} shows the absolute convergence of the sum of localized components:
\begin{align*}
&\sum_{N,N_1,N_2,N_3}^{\texttt{A}}\| \mathcal{N}_{N,N_1,N_2,N_3}^{\leq M}[\omega ]\|_{\hat{H}^{\frac12}}+\sum _{\begin{smallmatrix} N,N_1,N_2,N_3\\ \Phi\leq M\end{smallmatrix}}^{\texttt{C},\texttt{D},\texttt{E}}\| \mathcal{N}_{N,N_1,N_2,N_3}[\omega ]\|_{\hat{H}^{\frac12}}\\
&\quad +\sum _{\begin{smallmatrix}N,N_1,N_2,N_3,K\\ N_1K\leq M\end{smallmatrix}}^{\texttt{B1}}\| \mathcal{N}_{N,N_1,N_2,N_3}^{1;K}[\omega ]\|_{\hat{H}^{\frac12}}+\sum _{\begin{smallmatrix}N,N_1,N_2,N_3,K\\ N_1K\leq M\end{smallmatrix}}^{\texttt{B2}}\| \mathcal{N}_{N,N_1,N_2,N_3}^{2;K}[\omega ]\|_{\hat{H}^{\frac12}}<\infty
\end{align*}
for any $\omega \in \hat{H}^{\frac12}$.
In the same manner, from the proof of Lemma~\ref{lem:N_0} we see absolute convergence of the localized components for $\mathcal{N}_0[\omega]$ and $\mathcal{N}_1[\omega ,\zeta]$ in $\hat{H}^{\frac12}$, for any $\omega ,\zeta \in \hat{H}^{\frac12}$.
\end{rem}

Next, we plug the equation $\partial_t\omega =\mathcal{N}[\omega ]+\mathcal{R}[\omega ]$ into $\mathcal{N}_1[\omega,\partial_t\omega ]$.
The term $\mathcal{N}_1[\omega ,\mathcal{R}[\omega ]]$ is estimated in $\hat{H}^{\frac12}$ by the second estimate in Lemma~\ref{lem:N_0} and Lemma~\ref{lem:R}, while the term $\mathcal{N}_1[\omega ,\mathcal{N}[\omega]]$ will be considered in the following subsections.
In practice, we do not apply the first NFR to the term $\mathcal{N}^{\texttt{A}}[\omega]$; i.e., as a new equation we consider 
\begin{equation}\label{eq:NFR1}
\partial_t \omega= \mathcal{R}[\omega ]+\mathcal{N}^{\texttt{A}}[\omega] + \mathcal{N}^{\neg \texttt{A}}_{R}[\omega ] +\partial_t \mathcal{N}^{\neg \texttt{A}}_0[\omega ] + \mathcal{N}^{\neg \texttt{A}}_1[\omega,\mathcal{R}[\omega]]+\mathcal{N}^{\neg \texttt{A}}_1[\omega,\mathcal{N}[\omega]] ,
\end{equation}
where the superscript $\neg \texttt{A}$ denotes the sum of all the cases but \texttt{A}.


\subsection{Estimate of the remainder after NFR: Case \texttt{E}}

We proceed to handling the term $\mathcal{N}^{\neg \texttt{A}}_1[\omega ,\mathcal{N}[\omega ]]$, which requires the $B^{0+}_{\infty,1}$ bound on the $w$ side.
Here, we first consider easy Case~\texttt{E}.
In fact, the $\hat{H}^{\frac12}$-estimate of $\mathcal{N}^{\texttt{E}}_1[\omega ,\mathcal{N}[\omega]]$ follows from the next lemma and Proposition~\ref{prop:N-weak}.
\begin{lem}\label{lem:N_1E}
We have
\[ \| \mathcal{N}_1^{\texttt{E}}[\omega ,\zeta]\|_{\hat{H}^{\frac12}}\lesssim \| w\|_{H^{\frac12}}\| w\|_{B^{0+}_{\infty,1}}\| z\|_{H^{-\frac12}+B^0_{1,\infty}} \]
for any $w\in H^{\frac12}\cap B^{0+}_{\infty,1}$ and $z\in H^{-\frac12}+B^0_{1,\infty}$, where $\omega=\mathcal{F}S(-t)w$ and $\zeta=\mathcal{F}S(-t)z$.
\end{lem}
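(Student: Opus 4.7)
The plan is to decompose $\mathcal{N}_1^{\texttt{E}}$ according to the four sub-cases \texttt{E1}--\texttt{E4}, and within each according to the three positions at which $\zeta$ replaces $\omega$; every resulting dyadic piece will be estimated by the Coifman--Meyer type Lemma~\ref{lem:CM2}, applied to the multiplier
\[ b(\xi_1,\xi_2,\xi_3):=\frac{i\xi_3}{\phi(\vec{\xi})}=\frac{i\xi_3}{2\xi_{13}\xi_{23}}. \]
The crucial preliminary observation is that, unlike in Cases \texttt{B} and \texttt{C}, in each of \texttt{E1}--\texttt{E4} neither $|\xi_{13}|$ nor $|\xi_{23}|$ can shrink below a constant multiple of $\Phi/N_3$: both are $\sim N_1$ in \texttt{E1}, both $\sim N$ in \texttt{E3}, and one $\sim N_1$, the other $\sim N_2$ in \texttt{E4} (and similarly for \texttt{E2}). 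Hence $b$ satisfies the symbol estimate of Lemma~\ref{lem:CM2} with $B=N_3/\Phi$, where $\Phi$ is the dyadic size of $|\phi|$ read off Table~\ref{table1}.

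Lemma~\ref{lem:CM2} then bounds each dyadic piece's contribution to $\|\cdot\|_{\hat{H}^{1/2}}$ by
\[ N^{1/2}\cdot\frac{N_3}{\Phi}\cdot\|P_{N_1}v^{(1)}\|_{L^{p_1}}\|P_{N_2}v^{(2)}\|_{L^{p_2}}\|P_{N_3}v^{(3)}\|_{L^{p_3}}, \]
where $v^{(j)}=w$ at the two $\omega$-slots and $v^{(j)}=z$ at the $\zeta$-slot, with $\sum_j p_j^{-1}=\tfrac12$. To use $z\in H^{-1/2}+B^0_{1,\infty}$, I split $z=z_1+z_2$ with $z_1\in H^{-1/2}$, $z_2\in B^0_{1,\infty}$: for the $z_1$ part I take $p=2$ at the $\zeta$-slot (via Bernstein $\|P_Nz_1\|_{L^2}\lesssim N^{1/2}\|P_Nz_1\|_{H^{-1/2}}$) and $p=\infty$ at the $\omega$-slots, and for the $z_2$ part I take $p=1$ at the $\zeta$-slot (via $\|P_Nz_2\|_{L^1}\le\|z_2\|_{B^0_{1,\infty}}$) and $p=4$ at the $\omega$-slots. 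On one of the two $\omega$-slots I apply Bernstein $\|P_Nw\|_{L^\infty}\lesssim N^{1/2}\|P_Nw\|_{L^2}$ (or the corresponding $L^4$ version) to promote it to a factor of $\|w\|_{H^{1/2}}$, while the remaining $\omega$-slot is absorbed into $\|w\|_{B^{0+}_{\infty,1}}$ through $\|P_Nw\|_{L^\infty}\lesssim N^{-0-}\|w\|_{B^{0+}_{\infty,1}}$.

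Collecting the multiplier gain $N^{1/2}\cdot N_3/\Phi$, the Bernstein factors, and the $N^{-0-}$ decay from the $B^{0+}_{\infty,1}$ bound, in each sub-case the trilinear sum inherits a net weight $N_{\max}^{-0-}$ and is absolutely convergent; a Cauchy--Schwarz in the dyadic frequency carrying $z$ against the $\ell^2$-summable sequence $\{\|P_Nw\|_{H^{1/2}}\}_N$ then yields the desired bound $\|w\|_{H^{1/2}}\|w\|_{B^{0+}_{\infty,1}}\|z\|_{H^{-1/2}+B^0_{1,\infty}}$. The main obstacle I anticipate is the borderline sub-case \texttt{E3} ($N_1\sim N_3\sim N\gg N_2$), where $N^{1/2}\cdot N_3/\Phi\sim 1$ leaves no spare derivative and the summation in the transverse small frequency $N_2$ must be carried out using $\sum_{N_2}\|P_{N_2}w\|_{L^\infty}\lesssim\|w\|_{B^{0+}_{\infty,1}}$ in order to close the estimate. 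The three positions of $\zeta$ within each sub-case are treated by the same argument after permuting the roles of $N_1,N_2,N_3$; the complex conjugation on the third slot plays no role since only the support sizes of the Fourier transforms enter.
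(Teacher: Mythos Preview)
Your overall strategy---decompose into \texttt{E1}--\texttt{E4}, verify that both $|\xi_{13}|$ and $|\xi_{23}|$ are bounded below by the dyadic scale so that Lemma~\ref{lem:CM2} applies with $B=N_3/\Phi$, then split $z=z_1+z_2$---is correct and is exactly what the paper does. Your treatment of the $H^{-1/2}$ component via the exponents $(2,\infty,\infty)$ is fine and matches the paper's argument.

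The $B^0_{1,\infty}$ part, however, has a genuine gap. You propose $p=1$ at the $\zeta$-slot and $p=4$ at the two $\omega$-slots, but $\tfrac{1}{1}+\tfrac{1}{4}+\tfrac{1}{4}=\tfrac32\ne\tfrac12$, so Lemma~\ref{lem:CM2} does not produce an $L^2$ output with these exponents; indeed no triple with one entry equal to $1$ can sum to $\tfrac12$. (Your subsequent sentence reverts to $\|P_Nw\|_{L^\infty}$, which suggests the ``$p=4$'' is not what you intended, but the $(1,\infty,\infty)$ reading also fails for $L^2$ output.) The paper resolves this in \texttt{E3} and \texttt{E4} by first applying Bernstein to the \emph{output}: it replaces $N^{1/2}\|\,\cdot\,\|_{L^2}$ by $N\|\,\cdot\,\|_{L^1}$ and then invokes Lemma~\ref{lem:CM2} with target $L^1$ and exponents $(1,\infty,\infty)$ for the $B^0_{1,\infty}$ bound and $(2,\infty,2)$ for the $H^{-1/2}$ bound. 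After this correction the factor $N\cdot B=N\cdot N^{-1}=1$ in \texttt{E3} leaves exactly the borderline situation you anticipated, and the $N_2$-sum is closed by $\sum_{N_2}\|w_{N_2}\|_{L^\infty}\le\|w\|_{B^{0}_{\infty,1}}$ together with Cauchy--Schwarz over $N_1\sim N_3$. Note also that in \texttt{E1}, \texttt{E2} the paper does not bother splitting $z$: there the multiplier gain is strong enough to land directly in $H^{-1/2-}$, which already dominates $H^{-1/2}+B^0_{1,\infty}$.
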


\begin{proof}
We focus on the term in which $\zeta$ replaces the first $\omega$ in $\mathcal{N}_0[\omega ]$:
\begin{align*}
&\sum _{N,N_j;\, \Phi >M}^{\texttt{E}}\psi_N(\xi)\int _{\xi=\xi_{123}}\frac{e^{it\phi(\vec{\xi})}}{\phi(\vec{\xi})}\xi_3 \zeta_{N_1}(\xi_1) \omega_{N_2}(\xi_2) \omega^*_{N_3}(\xi_3) \\
&=c\sum _{N,N_j;\, \Phi >M}^{\texttt{E}}\psi_N(\xi)e^{it\xi^2}\int _{\xi=\xi_{123}}\frac{\xi_3}{\xi_{13}\xi_{23}}\hat{z}_{N_1}(\xi_1)\hat{w}_{N_2}(\xi_2)\hat{\bar{w}}_{N_3}(\xi_3).
\end{align*}
(The following argument is applicable to the other two terms as well.)

\medskip\noindent
\underline{Case \texttt{E1} ($N_1\sim N_2\sim N_3\gg N$)}:
We apply Lemma~\ref{lem:CM2} with
\[ b(\xi,\xi_1,\xi_2,\xi_3)=\frac{\xi_3}{(\xi-\xi_1)(\xi-\xi_2)}.\]
It is easy to see that this $b$ satisfies the assumption for the lemma with $B=\frac{N_3}{N_1N_2}\sim N_1^{-1}$.
We then obtain the bound
\[ \sum_{N,N_j}^{\texttt{E1}}N^{\frac12}N_1^{-1}\| z_{N_1}\|_{L^2}\| w_{N_2}\|_{L^\infty}\| w_{N_3}\|_{L^\infty} \lesssim \| z\|_{H^{-\frac12-}}\| w\|_{B^{0+}_{\infty,1}}\| w\|_{H^{\frac12}}.\]

\medskip\noindent
\underline{Case \texttt{E2} ($N_1\sim N_2\gg N_3$)}: 
We use Lemma~\ref{lem:CM1} with
\[ b(\xi_1,\xi_2,\xi_3)=\frac{\xi_3}{\xi_{13}\xi_{23}},\]
which satisfies the assumption with $B=\frac{N_3}{N_1N_2}\ll N_1^{-1}$.
The rest of the proof is similar to \texttt{E1}.

\medskip\noindent
\underline{Case \texttt{E3} ($N_1\sim N_3\sim N\gg N_2$)}:
We apply H\"older to obtain the bound
\[ \sum _{N_1\sim N_3\sim N\gg N_2}N\Big\| P_N\mathcal{F}^{-1}\Big[ \int _{\xi=\xi_{123}}\frac{\xi_3}{\xi_{13}\xi_{23}}\hat{z}_{N_1}(\xi_1)\hat{w}_{N_2}(\xi_2)\hat{\bar{w}}_{N_3}(\xi_3)\Big] \Big\|_{L^1}. \] 
Then, Lemma~\ref{lem:CM2} with 
\[ b(\xi,\xi_1,\xi_2,\xi_3)=\frac{\xi_3}{\xi_{23}(\xi-\xi_2)},\]
which satisfies the assumption with $B=\frac{N_3}{N_3N}=N^{-1}$, implies the bounds
\[ \sum _{N_1\sim N_3\gg N_2}\| z_{N_1}\|_{L^2}\| w_{N_2}\|_{L^\infty}\| w_{N_3}\|_{L^2}\lesssim \| z\|_{H^{-\frac12}}\| w\|_{B^{0}_{\infty,1}}\| w\|_{H^{\frac12}} \]
and 
\[ \sum _{N_1\sim N_3\gg N_2}\| z_{N_1}\|_{L^1}\| w_{N_2}\|_{L^\infty}\| w_{N_3}\|_{L^\infty}\lesssim \| z\|_{B^{0}_{1,\infty}}\| w\|_{H^{\frac12}}\| w\|_{B^{0+}_{\infty,1}}. \]
Combining them, we obtain the desired bound $\| z\|_{H^{-\frac12}+B^0_{1,\infty}}\| w\|_{H^{\frac12}\cap B^0_{\infty,1}}\| w\|_{H^{\frac12}}$.

\medskip\noindent
\underline{Case \texttt{E4} ($N_1\sim N_3\gg N_2\gg N$)}:
Proof is similar to \texttt{E3}. 
We use Lemma~\ref{lem:CM2} with the same $b$, which satisfies the assumption with $B=\frac{N_3}{N_3N_2}\ll N^{-1}$.
Note that the summation in $N$ gives a harmless factor $N_2^{0+}$.
\end{proof}


\subsection{Estimate of the remainder after NFR: Cases \texttt{C}, \texttt{D}}

Next, we consider Cases~\texttt{C} and \texttt{D}, where the treatment depends on which $\omega$ in $\mathcal{N}_0[\omega ]$ is replaced by $\zeta =\mathcal{N}[\omega ]$. 
We write $\mathcal{N}_1[\omega,\zeta]=\sum_{j=1}^3\mathcal{N}_{1,j}[\omega,\zeta]$, where $\mathcal{N}_{1,j}[\omega,\zeta]$ denotes the term in which $\zeta$ replaces the $j$-th $\omega$ in $\mathcal{N}_0[\omega]$.
\begin{lem}\label{lem:N_1CD}
We have
\begin{align*}
\| \mathcal{N}_{1,1}^{\texttt{C2}}[\omega ,\zeta]\|_{\hat{H}^{\frac12}}+\| \mathcal{N}_{1,2}^{\texttt{C},\texttt{D}}[\omega ,\zeta]\|_{\hat{H}^{\frac12}}+\| \mathcal{N}_{1,3}^{\texttt{D}}[\omega ,\zeta]\|_{\hat{H}^{\frac12}}
&\lesssim \| w\|_{H^{\frac12}}\| w\|_{B^{0+}_{\infty,1}}\| z\|_{H^{-\frac12-}} ,\\
\| \mathcal{N}_{1,1}^{\texttt{C1}}[\omega ,\zeta]\|_{\hat{H}^{\frac12}}+\| \mathcal{N}_{1,3}^{\texttt{C1}}[\omega ,\zeta]\|_{\hat{H}^{\frac12}}
&\lesssim \| w\|_{H^{\frac12}}\| w\|_{H^{\frac12}\cap B^{0+}_{\infty,1}}\| z\|_{H^{-\frac12+}+B^0_{1,\infty}},\\
\| \mathcal{N}_{1,1}^{\texttt{D}}[\omega ,\zeta]\|_{\hat{H}^{\frac12}}+\| \mathcal{N}_{1,3}^{\texttt{C2}}[\omega ,\zeta]\|_{\hat{H}^{\frac12}}
&\lesssim \| w\|_{H^{\frac12}}\| w\|_{B^{0+}_{\infty,1}}\| z\|_{H^{-\frac12}}
\end{align*}
for any $w\in H^{\frac12}\cap B^{0+}_{\infty,1}$ and $z\in H^{-\frac12}+B^0_{1,\infty}$, where $\omega=\mathcal{F}S(-t)w$ and $\zeta=\mathcal{F}S(-t)z$.
\end{lem}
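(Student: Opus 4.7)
The plan is to establish each sub-case of Lemma~\ref{lem:N_1CD} by a direct Coifman-Meyer analysis of the multiplier $b(\xi,\xi_1,\xi_2,\xi_3) = \xi_3/(\xi_{13}\xi_{23})$ appearing in $\mathcal{N}_0[\omega]$, in parallel to the proof of Lemma~\ref{lem:N_1E}. For each of \texttt{C1}, \texttt{C2}, \texttt{D1}, \texttt{D2}, I first check that on the relevant frequency range both $|\xi_{13}|$ and $|\xi_{23}|$ are comparable to the dyadic scales dictated by Table~\ref{table1} (no genuine cancellation occurs; e.g.\ in \texttt{C1} the identity $\xi_{13} = \xi - \xi_2$ with $|\xi| \sim N \gg N_2$ forces $|\xi_{13}| \sim N$), so that $|b| \lesssim B := N_3/\Phi$. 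For the derivative conditions of Lemma~\ref{lem:CM2}, I substitute $\xi_{13} = \xi - \xi_2$ (respectively $\xi_{23} = \xi - \xi_1$) in the functional form of $b$ whenever $\partial_{\xi_i}$ would otherwise hit a small denominator (e.g.\ in \texttt{C1}, differentiating $\xi_{13}$ in $\xi_1$ would produce $1/|\xi_{13}| \sim 1/N$, which exceeds the allowed $1/N_1$; the substitution removes the $\xi_1$-dependence from that factor).

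With the Coifman-Meyer bound available, the exponents $(p_1,p_2,p_3)$ in Lemma~\ref{lem:CM2} are chosen so that $\zeta_{N_j}$ (the factor replacing the $j$-th $\omega$) is placed in $L^2$ (or in $L^1$ for the $B^0_{1,\infty}$ contribution), while the two remaining $w$-factors are distributed between $L^2$ and $L^\infty$ according to their frequency size. I then take $\ell^2_N$ with the $N^{1/2}$ weight from $\hat{H}^{\frac12}$ and perform the dyadic sum in $(N_1,N_2,N_3)$, absorbing one high-frequency input into $\|w\|_{H^{1/2}}$, one into $\|z\|_{\hat{H}^{-1/2}}$ or variant, and the remaining low-frequency input into $\|w\|_{B^{0+}_{\infty,1}}$ (which supplies a convergent $N_j^{0+}$ weight).

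The main obstacle is Case~\texttt{C1} at $j\in\{1,3\}$, where the output frequency $N$ is a strict inner variable $N_2 \ll N \ll N_1 \sim N_3$, so the $\ell^2_N$ summation is decoupled from the $\ell^2$ summation over $N_1\sim N_3$. The natural estimate produces $(\sum_{N} N^{-1})^{1/2}$, which diverges logarithmically. To close it, I will exchange an $\varepsilon$-power either by using $\|z_{N_1}\|_{L^2} \leq N_1^{1/2-\varepsilon}\|z\|_{\hat{H}^{-1/2+\varepsilon}}$, or by shifting the loss to $w$ via $\|w_{N_2}\|_{L^\infty} \leq N_2^{-\varepsilon}\|w\|_{B^{\varepsilon}_{\infty,\infty}} \lesssim \|w\|_{B^{0+}_{\infty,1}}$; either variant converts $N^{-1}$ into a summable $N^{-1-\varepsilon+}$. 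The alternative of placing $\zeta_{N_1}$ in $L^1$ via $\|z\|_{B^0_{1,\infty}}$ avoids the $\ell^2_N$ sum altogether, which is why the output space for $z$ in the second bound is the hybrid $H^{-\frac12+}+B^0_{1,\infty}$. By contrast, in $\mathcal{N}_{1,1}^{\texttt{D}}$ and $\mathcal{N}_{1,3}^{\texttt{C2}}$ the replaced frequency satisfies $N_j \sim N$, so $\ell^2_N$ coincides with the $\ell^2$ summation over the high input frequency and is controlled by the $H^{1/2}$ norm of $w$ and the $H^{-1/2}$ norm of $z$ without any $\varepsilon$-loss, giving the sharper third bound.
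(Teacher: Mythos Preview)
Your approach is essentially the same as the paper's: both apply Lemma~\ref{lem:CM2} to the multiplier $\xi_3/(\xi_{13}\xi_{23})$ with $B\lesssim N^{-1}$ (using the rewrite $\xi_{13}=\xi-\xi_2$ or $\xi_{23}=\xi-\xi_1$ to verify the derivative hypotheses), and both organize the case split according to the position of $N_\sharp$ relative to $N$ and $N_{\max}$. Two points deserve correction, however.

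First, your alternative fix~(b) for Case~\texttt{C1}, $j\in\{1,3\}$ --- shifting the $\varepsilon$-loss onto $w_{N_2}$ via $\|w_{N_2}\|_{L^\infty}\le N_2^{-\varepsilon}\|w\|_{B^\varepsilon_{\infty,\infty}}$ --- does not cure the divergent $N$-sum: since $N_2\ll N$, the factor $N_2^{-\varepsilon}$ gives no decay in $N$. Only fix~(a) (weakening $z$ to $H^{-1/2+}$) and the $L^1$/$B^0_{1,\infty}$ route work; this is exactly what the paper records, and Remark~\ref{rem:N_1CD} confirms that the $H^{-1/2}$ bound fails to sum in $N$.

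Second, your generic distribution rule (``high-frequency $w$ into $H^{1/2}$, low-frequency $w$ into $B^{0+}_{\infty,1}$'') is backwards for the first group $\mathcal{N}_{1,1}^{\texttt{C2}},\mathcal{N}_{1,2}^{\texttt{C},\texttt{D}},\mathcal{N}_{1,3}^{\texttt{D}}$. There $z$ sits at a \emph{low} frequency $N_\sharp\ll N$, and the only $w$-factor at scale $\sim N$ is $w_{N_a}$. The paper places $w_{N_a}$ in $L^\infty$ (so that the $\ell^2_N$ sum is tied to the $B^{0+}_{\infty,1}$ norm via $N_a\sim N$) and the low-frequency $w_{N_b}$ in $H^{1/2}$. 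If instead you put $w_{N_a}$ in $L^2$ for $H^{1/2}$, the $\ell^2_N$ sum acquires an unavoidable $N^{0+}$ loss from $\sum_{N_\sharp\ll N}\|z_{N_\sharp}\|_{L^2}\lesssim N^{1/2+}\|z\|_{H^{-1/2-}}$, forcing $\|w\|_{H^{1/2+}}$ rather than $\|w\|_{H^{1/2}}$. Adjusting the exponent assignment per sub-case (as the paper does) resolves this.
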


\begin{proof}
We need to estimate
\begin{equation}\label{norm:NFR0-N1-C}
N^{\frac12}\Big\| P_N\mathcal{F}^{-1}\Big[ \int _{\xi=\xi_{123}}\frac{\xi_3}{\phi(\vec{\xi})}\hat{w}_{N_1}(\xi_1)\hat{w}_{N_2}(\xi_2)\hat{\bar{w}}_{N_3}(\xi_3)\Big] \Big\|_{L^2}\quad \text{with one of $w$'s replaced by $z$}
\end{equation}
for each $N,N_j$ and sum them up.
Note that $|\phi (\vec{\xi})|\gtrsim NN_3$ in the cases we consider; in particular, we can use either Lemma~\ref{lem:CM1} or Lemma~\ref{lem:CM2} with $B\lesssim N^{-1}$.
We denote by $N_\sharp$ the frequency of $z$, the function we want to estimate in $H^{-\frac12-}$ or $H^{-\frac12}+B^0_{1,\infty}$, and the other two frequencies by $N_a$ and $N_b$, assuming $N_a\geq N_b$.
We also define $N_{\max}:=\max \{ N,N_1,N_2,N_3\}\sim \max \{ N_{\sharp},N_a\}$.
\begin{enumerate}
\item[(I)] For $\mathcal{N}_{1,1}^{\texttt{C2}}$, $\mathcal{N}_{1,2}^{\texttt{C},\texttt{D}}$ and $\mathcal{N}_{1,3}^{\texttt{D}}$, we have $N_a\sim N_{\max}$ and $N\gg N_\sharp$.
Estimating as
\[ \eqref{norm:NFR0-N1-C}\lesssim N^{-\frac12}\| z_{N_\sharp}\|_{L^2}\| w_{N_a}\|_{L^\infty}\| w_{N_b}\|_{L^\infty}\lesssim N_{\max}^{0-}\| z_{N_\sharp} \|_{H^{-\frac12-}}N_a^{0+}\| w_{N_a}\|_{L^\infty}\| w_{N_b}\|_{H^{\frac12}},\]
which is summable in $N,N_1,N_2,N_3$, we obtain the bound $\| z\|_{H^{-\frac12-}}\| w\|_{B^{0+}_{\infty,1}}\| w\|_{H^{\frac12}}$.
\item[(II)] For $\mathcal{N}_{1,1}^{\texttt{C1}}$ and $\mathcal{N}_{1,3}^{\texttt{C1}}$, we have $N_\sharp \sim N_a\gg N\gg N_b$.
We first apply Bernstein to replace $L^2_x$ by $L^1_x$ with additional $N^{\frac12}$, then apply Lemma~\ref{lem:CM2} with $B\sim N^{-1}$, to obtain the bound
\[ \eqref{norm:NFR0-N1-C}\lesssim \| z_{N_\sharp}\|_{L^1}\| w_{N_a}\|_{L^\infty}\| w_{N_b}\|_{L^\infty}\lesssim N_{\max}^{0-}\| z_{N_\sharp}\|_{L^1}N_a^{0+}\| w_{N_a}\|_{L^\infty}\| w_{N_b}\|_{H^{\frac12}}.\]
This yields the bound $\| z\|_{B^0_{1,\infty}}\| w\|_{B^{0+}_{\infty,1}}\| w\|_{H^{\frac12}}$.
Also, we have
\[ \eqref{norm:NFR0-N1-C}\lesssim \| z_{N_\sharp}\|_{L^2}\| w_{N_a}\|_{L^2}\| w_{N_b}\|_{L^\infty}\lesssim N_{\max}^{0-}\| z_{N_\sharp}\|_{H^{-\frac12+}}\| w_{N_a}\|_{H^{\frac12}}\| w_{N_b}\|_{H^{\frac12}},\]
which yields the bound $\| z\|_{H^{-\frac12+}}\| w\|_{H^{\frac12}}^2$.
\item[(III)] For $\mathcal{N}_{1,1}^{\texttt{D}}$ and $\mathcal{N}_{1,3}^{\texttt{C2}}$, we have $N\sim N_\sharp \gg N_a\geq N_b$. 
Similarly to (I), we estimate as 
\[ \eqref{norm:NFR0-N1-C}\lesssim N^{-\frac12}\| z_{N_\sharp}\|_{L^2}\| w_{N_a}\|_{L^\infty}\| w_{N_b}\|_{L^\infty}\lesssim N_{a}^{0-}\| z_{N_\sharp} \|_{H^{-\frac12}}N_a^{0+}\| w_{N_a}\|_{L^\infty}\| w_{N_b}\|_{H^{\frac12}}\]
(and take $\ell^2$ sum in $N_\sharp\sim N$) to obtain the bound $\| z\|_{H^{-\frac12}}\| w\|_{B^{0+}_{\infty,1}}\| w\|_{H^{\frac12}}$.\qedhere
\end{enumerate}
\end{proof}

\begin{rem}\label{rem:N_1CD}
For (II), the latter argument shows
\begin{align*}
&\| \psi_N\mathcal{N}_{1,1}^{\texttt{C1}}[\omega ,\zeta]\|_{\hat{H}^{\frac12}}+\| \psi_N\mathcal{N}_{1,3}^{\texttt{C1}}[\omega ,\zeta]\|_{\hat{H}^{\frac12}}\\
&\quad \lesssim \sum_{N_\sharp\sim N_a\gg N_b}\| z_{N_\sharp}\|_{H^{-\frac12}}\| w_{N_a}\|_{H^{\frac12}}\| w_{N_b}\|_{L^\infty} \lesssim \| z\|_{H^{-\frac12}}\| w\|_{H^{\frac12}}\| w\|_{B^0_{\infty,1}},
\end{align*}
which yields the bound with $\| z\|_{H^{-\frac12}}$ for \emph{fixed} $N$.
However, we cannot take $\ell^2$ sum in $N$.

For (III), the argument in (II) shows 
\[ \eqref{norm:NFR0-N1-C}\lesssim \| z_{N_\sharp}\|_{L^1}\| w_{N_a}\|_{L^\infty}\| w_{N_b}\|_{H^{\frac12}}.\]
Although we can show a bound with $\| z\|_{B^0_{1,2}}$ by taking $\ell^2$ sum in $N_\sharp\sim N$, this is not sufficient for the bound with $\| z\|_{B^0_{1,\infty}}$.
\end{rem}

By Lemma~\ref{lem:N_1CD} and Proposition~\ref{prop:N-weak}, the $\hat{H}^{\frac12}$ norms of 
\begin{gather*}
\mathcal{N}_{1,1}^{\texttt{C2}}[\omega ,\mathcal{N}[\omega ]],\qquad \mathcal{N}_{1,2}^{\texttt{C},\texttt{D}}[\omega ,\mathcal{N}[\omega ]],\qquad \mathcal{N}_{1,3}^{\texttt{D}}[\omega ,\mathcal{N}[\omega ]],\\
\mathcal{N}_{1,1}^{\texttt{C1}}[\omega ,\mathcal{N}^{\neg \texttt{C2}}[\omega ]],\quad \mathcal{N}_{1,3}^{\texttt{C1}}[\omega ,\mathcal{N}^{\neg \texttt{C2}}[\omega ]],\quad 
\mathcal{N}_{1,1}^{\texttt{D}}[\omega ,\mathcal{N}^{\neg \texttt{C1}}[\omega ]],\quad \mathcal{N}_{1,3}^{\texttt{C2}}[\omega ,\mathcal{N}^{\neg \texttt{C1}}[\omega ]]
\end{gather*}
are bounded by $\| w\|_{H^{\frac12}}^3\| w\|_{H^{\frac12}\cap B^{0+}_{\infty,1}}^2$.
We now apply NFR once more to the remaining terms
\begin{equation}\label{defn:Lj1}
\begin{alignedat}{2}
\mathcal{L}^1[\omega] &:= \mathcal{N}_{1,1}^{\texttt{C1}}[\omega ,\mathcal{N}^{\texttt{C2}}[\omega ]], & \mathcal{L}^2[\omega] &:= \mathcal{N}_{1,3}^{\texttt{C1}}[\omega ,\mathcal{N}^{\texttt{C2}}[\omega ]],\\
\mathcal{L}^3[\omega] &:= \mathcal{N}_{1,1}^{\texttt{D}}[\omega ,\mathcal{N}^{\texttt{C1}}[\omega ]], &\qquad \mathcal{L}^4[\omega] &:= \mathcal{N}_{1,3}^{\texttt{C2}}[\omega ,\mathcal{N}^{\texttt{C1}}[\omega ]].
\end{alignedat}
\end{equation}
As an example, $\mathcal{L}^1[\omega]$ is given by
\begin{align*}
\mathcal{L}^1[\omega]=\sum _{\begin{smallmatrix} N_1\sim N_3\gg N\gg N_2 \\ NN_3>M \\ L_3\sim N_1\gg L_1,L_2\end{smallmatrix}} \int _{\begin{smallmatrix}\xi =\xi_{123} \\ \xi_1=\eta_{123}\end{smallmatrix}}&\frac{e^{it\phi (\vec{\xi})+it\phi (\vec{\eta})}}{\phi (\vec{\xi})}\xi_3\eta_3\psi_{N}(\xi)\psi_{N_1}(\xi_1) \\[-15pt]
&\quad \times \omega_{L_1}(\eta_1)\omega_{L_2}(\eta_2)\omega^*_{L_3}(\eta_3)\omega_{N_2}(\xi_2)\omega^*_{N_3}(\xi_3),
\end{align*}
where $\vec{\xi}=(\xi,\xi_1,\xi_2,\xi_3)$ and $\vec{\eta}=(\xi_1,\eta_1,\eta_2,\eta_3)$.
We separate from this term the resonant part roughly defined by ``$|\phi (\vec{\xi})+\phi (\vec{\eta})|\lesssim |\phi (\vec{\xi})|$''.
Since $|\phi (\vec{\eta})|\sim N_1L_3$, this separation can be done by imposing $\Phi \gtrsim N_1L_3$ (recall that $\Phi$ denotes the dyadic size of $\phi (\vec{\xi})$).
After differentiating by parts in $t$, we have
\[ \mathcal{L}^1[\omega ]=\mathcal{L}^1_R[\omega ]+\partial_t\mathcal{L}^1_0[\omega ] +\mathcal{L}^1_1[\omega ,\partial_t\omega ],\]
where 
\begin{align*}
\mathcal{L}^1_R[\omega]&:=\sum _{\begin{smallmatrix} N_1\sim N_3\gg N\gg N_2 \\ L_3\sim N_1\gg L_1,L_2\\ NN_3>M,\,NN_3\gtrsim N_1L_3\end{smallmatrix}} \int _{\begin{smallmatrix}\xi =\xi_{123} \\ \xi_1=\eta_{123}\end{smallmatrix}}\frac{e^{it\phi (\vec{\xi})+it\phi (\vec{\eta})}}{\phi (\vec{\xi})}\xi_3\eta_3\psi_{N}(\xi)\psi_{N_1}(\xi_1) \\[-20pt]
&\hspace{140pt} \times \omega_{L_1}(\eta_1)\omega_{L_2}(\eta_2)\omega^*_{L_3}(\eta_3)\omega_{N_2}(\xi_2)\omega^*_{N_3}(\xi_3)\\[5pt]
\mathcal{L}^1_0[\omega ]&:=\sum _{\begin{smallmatrix} N_1\sim N_3\gg N\gg N_2 \\ L_3\sim N_1\gg L_1,L_2\\ M<NN_3\ll N_1L_3\end{smallmatrix}} \int _{\begin{smallmatrix}\xi =\xi_{123} \\ \xi_1=\eta_{123}\end{smallmatrix}}\frac{e^{it\phi (\vec{\xi})+it\phi (\vec{\eta})}}{\phi (\vec{\xi})\big[ \phi (\vec{\xi})+\phi (\vec{\eta})\big]}\xi_3\eta_3\psi_{N}(\xi)\psi_{N_1}(\xi_1) \\[-20pt]
&\hspace{140pt} \times \omega_{L_1}(\eta_1)\omega_{L_2}(\eta_2)\omega^*_{L_3}(\eta_3)\omega_{N_2}(\xi_2)\omega^*_{N_3}(\xi_3)\\[5pt]
\mathcal{L}^1_1[\omega ,\zeta ]&:=\sum _{\begin{smallmatrix} N_1\sim N_3\gg N\gg N_2 \\ L_3\sim N_1\gg L_1,L_2\\ M<NN_3\ll N_1L_3\end{smallmatrix}} \int _{\begin{smallmatrix}\xi =\xi_{123} \\ \xi_1=\eta_{123}\end{smallmatrix}}\frac{e^{it\phi (\vec{\xi})+it\phi (\vec{\eta})}}{\phi (\vec{\xi})\big[ \phi (\vec{\xi})+\phi (\vec{\eta})\big]}\xi_3\eta_3\psi_{N}(\xi)\psi_{N_1}(\xi_1) \\[-15pt]
&\hspace{130pt} \times \Big\{ \zeta _{L_1}(\eta_1)\omega_{L_2}(\eta_2)\omega^*_{L_3}(\eta_3)\omega_{N_2}(\xi_2)\omega^*_{N_3}(\xi_3)+\cdots \Big\} .
\end{align*}
In a similar manner, we apply NFR to the other terms $\mathcal{L}^j[\omega ]$, $j=2,3,4$.
For $j=2,4$, we have $|\phi (\vec{\eta})|\sim N_3L_3$, and thus $\Phi \gtrsim$/$\ll N_1L_3$ in the above definition is replaced by $\Phi \gtrsim$/$\ll N_3L_3$.%
\footnote{%
Since the condition $L_3\sim N_1\sim N_3\gg N$ for $\mathcal{L}^1$ implies $NN_3\ll N_1L_3$, the resonant part for $\mathcal{L}^1$ actually does not appear if the implicit constants for ``$\sim$'' and ``$\ll$'' are chosen appropriately.
We do not need such a precise consideration, however.%
}%

To finish the NFR procedure, it suffices to prove the following estimates:
\begin{lem}\label{lem:L1}
For $j=1,2,3,4$, we have
\begin{align}
\| \mathcal{L}^j_R[\omega ]\|_{\hat{H}^{\frac12}}&\lesssim \| w\|_{H^{\frac12}}^4\| w\|_{B^{0+}_{\infty,1}}, \label{est:LjR} \\
\| \mathcal{L}^j_0[\omega ]\|_{\hat{H}^{\frac12}}&\lesssim M^{-1+}\| w\|_{H^{\frac12}}^5,\qquad \| \mathcal{L}^j_1[\omega ,\zeta ]\|_{\hat{H}^{\frac12}}\lesssim M^{-1+}\| w\|_{H^{\frac12}}^4\| z\|_{H^{\frac12}}, \label{est:Lj0} \\
\| \mathcal{L}^j_1[\omega ,\zeta ]\|_{\hat{H}^{\frac12}}&\lesssim M^{-\frac12+}\| w\|_{H^{\frac12}}^3\| w\|_{H^{\frac12}\cap B^0_{\infty,1}}\| z\|_{H^{-\frac12}+B^0_{1,\infty}}, \label{est:Lj1}
\end{align}
where $\omega=\mathcal{F}S(-t)w$ and $\zeta=\mathcal{F}S(-t)z$.
\end{lem}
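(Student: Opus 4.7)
\emph{Proof outline.} My plan is to reduce \eqref{est:LjR}--\eqref{est:Lj1} to a uniform pointwise bound on the multiplier
\[ b(\vec\xi,\vec\eta) \;=\; \frac{\xi_3\eta_3}{\phi(\vec\xi)\bigl(\phi(\vec\xi)+\phi(\vec\eta)\bigr)} \]
(the second factor absent in $\mathcal{L}^j_R$) followed by H\"older/Bernstein estimates on the resulting 5-linear expression in physical space. Setting $N_\star := N_1$ for $j=1,3$ and $N_\star := N_3$ for $j=2,4$, a direct case check gives $|\phi(\vec\eta)|\sim N_\star L_3$, $|\xi_3\eta_3|\lesssim N_3L_3$, and $NN_\star \geq \Phi := |\phi(\vec\xi)| > M$ (explicitly, $\Phi\sim NN_3$ in outer cases \texttt{C1}, \texttt{C2}, \texttt{D2} and $\Phi\sim NN_2$ in \texttt{D1}). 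In the non-resonant r\'egime $\Phi\ll N_\star L_3$ relevant to $\mathcal{L}^j_0, \mathcal{L}^j_1$, both denominator factors contribute and
\[ |b|\;\lesssim\; \frac{N_3 L_3}{\Phi\cdot N_\star L_3}\;\lesssim\; \frac{1}{NN_\star}, \]
which I record either as $M^{-1+}(NN_\star)^{0-}$ or, trading half the $M$-gain for extra frequency decay, as $M^{-1/2+}(NN_\star)^{-1/2-}$.

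For $\mathcal{L}^j_R$ the resonant condition $\Phi\gtrsim N_\star L_3$ combined with the outer frequency constraints forces an empty sum in every case: in \texttt{C1} one would need $N\gtrsim L_3\sim N_3$ against $N\ll N_3$; in \texttt{C2} one would need $N_3\gtrsim L_3\gg N_3$; in \texttt{D} one would need $N_2$ or $N_3$ to exceed $L_3\gg N_1$. Hence \eqref{est:LjR} holds vacuously (the footnote checks $\mathcal{L}^1$, and the same mechanism covers $j=2,3,4$); should loose $\sim/\ll$ constants leave a thin boundary region, the residue is controlled by $|b|\lesssim 1$, H\"older, and $\|w\|_{B^{0+}_{\infty,1}}$ absorbing the resulting logarithmic sums.

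For \eqref{est:Lj0} I then use $|b|\lesssim M^{-1+}(NN_\star)^{0-}$ and bound the 5-fold convolution in physical space by H\"older, placing the highest-frequency factor $w_{L_3}$ in $L^2$ (so $\|w_{L_3}\|_{L^2}\lesssim L_3^{-1/2}\|w_{L_3}\|_{H^{1/2}}$ and $N^{1/2}L_3^{-1/2}\leq 1$ since $L_3\sim N_\star\geq N$) and the other four in $L^\infty$ via Bernstein $\|w_f\|_{L^\infty}\lesssim f^{1/2}\|w_f\|_{L^2}\lesssim \|w_f\|_{H^{1/2}}$. This gives, pointwise in the dyadic parameters, the bound $M^{-1+}(NN_\star)^{0-}\prod_f\|w_f\|_{H^{1/2}}$, and the six dyadic sums close by Cauchy-Schwarz in the two high frequencies ($\sim N_\star$, contributing $\|w\|_{H^{1/2}}^2$) and $\ell^1$ in the three low ones (each with a $\langle\log\rangle^{1/2}\|w\|_{H^{1/2}}$ loss, all absorbed by $(NN_\star)^{0-}$). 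The corresponding bound for $\mathcal{L}^j_1[\omega,\zeta]$ with $\|z\|_{H^{1/2}}$ is identical via $\|z_f\|_{L^2}\lesssim f^{-1/2}\|z\|_{H^{1/2}}$.

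For \eqref{est:Lj1} I instead use $|b|\lesssim M^{-1/2+}(NN_\star)^{-1/2-}$ together with $\|z_f\|_{L^2}\lesssim f^{1/2}\|z\|_{H^{-1/2}+B^0_{1,\infty}}$ (Bernstein for the $B^0_{1,\infty}$ component). Placing $z_f$ in $L^2$ and four $w$-factors in $L^\infty$, the extra $f^{1/2}\lesssim N_\star^{1/2}$ is matched by the additional $(NN_\star)^{-1/2-}$ decay, restoring $M^{-1/2+}(NN_\star)^{0-}\|z\|_{H^{-1/2}+B^0_{1,\infty}}\prod\|w_f\|_{H^{1/2}}$; summation as before yields $M^{-1/2+}\|w\|_{H^{1/2}}^4\|z\|_{H^{-1/2}+B^0_{1,\infty}}$, which implies the stated bound since $\|w\|_{H^{1/2}}\leq \|w\|_{H^{1/2}\cap B^0_{\infty,1}}$. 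The hard part will be the case in which $\zeta$ lands on the highest inner frequency $L_3\sim N_\star$: summation over $N_\star$ then requires a Cauchy-Schwarz pairing $\|z_{\sim N_\star}\|_{L^2}$ against $\|w_{\sim N_\star}\|_{H^{1/2}}$, closing only through $\sum_{N_\star}N_\star^{-1}\|z_{\sim N_\star}\|_{L^2}^2\lesssim \|z\|_{H^{-1/2}+B^0_{1,\infty}}^2$ (Bernstein again for the $B^0_{1,\infty}$ piece), so the extra $N_\star^{-1/2-}$ gained from spending half the $M$-decay is essential. Verifying this uniformly for all four cases $j$, all three positions of $\zeta$, and both outer \texttt{D}-subcases is the main combinatorial labour.
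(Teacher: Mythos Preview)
Your scheme works for $j=1,2$ but has a genuine gap for $j=3,4$, stemming from the mis-identification ``$L_3\sim N_\star$''. For $\mathcal{L}^3,\mathcal{L}^4$ the inner nonlinearity is of type~\texttt{C1}, so the frequency hierarchy is $L_1\sim L_3\gg N_\star\,(=N_1$ or $N_3)\sim N$, i.e.\ the two largest frequencies $L_1,L_3$ sit \emph{strictly above} $N_\star$. Consequently your multiplier bound $|b|\lesssim M^{-1/2+}(NN_\star)^{-1/2-}$ carries no decay whatsoever in $L_1\sim L_3$. When $\zeta$ lands at $L_1$ (or $L_3$) and you pair it via Cauchy--Schwarz against $\|w_{L_3}\|_{H^{1/2}}$, the best you obtain (after using $N^{1/2}\|\cdot\|_{L^\infty_\xi}$ to place both high-frequency factors in $L^2$) is
\[
\sum_{L_3}L_3^{-1}\|z_{L_3}\|_{L^2}^2,
\]
which equals $\|z\|_{H^{-1/2}}^2$ but is \emph{not} controlled by $\|z\|_{B^0_{1,\infty}}^2$: Bernstein gives $L_3^{-1}\|z_{L_3}\|_{L^2}^2\lesssim \|z\|_{B^0_{1,\infty}}^2$ term-by-term, and the sum over $L_3$ then diverges. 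The saving ``extra $N_\star^{-1/2-}$'' you invoke lives at the wrong scale ($N_\star\ll L_3$) and does not rescue the $L_3$-sum. Thus \eqref{est:Lj1} for $j=3,4$ with $z\in B^0_{1,\infty}$ is not reached by your argument.

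The paper handles precisely this case by the partial-fraction identity
\[
\frac{1}{\phi(\vec\xi)\,[\phi(\vec\xi)\pm\phi(\vec\eta)]}
=\frac{1}{\phi(\vec\eta)\,[\phi(\vec\xi)\pm\phi(\vec\eta)]}\mp\frac{1}{\phi(\vec\xi)\,\phi(\vec\eta)}.
\]
The first piece has denominator $\sim (N_\star L_3)^2$, gaining an extra $L_3^{-1}$ and closing crudely in $\|z\|_{H^{-1/2-}}$. The second piece factorises as an outer multiplier in $\vec\xi$ times an inner one in $\vec\eta$; this product structure lets one invoke the Coifman--Meyer lemmas (Lemmas~\ref{lem:CM1}--\ref{lem:CM2}) to recover a genuine physical-space product and estimate $z_{L_1}$ directly in $L^1_x$, pairing it with $\|w_{L_3}\|_{L^\infty_x}$ (the two forming an $\ell^\infty\times\ell^1$ sum over $L_1\sim L_3$, hence bounded by $\|z\|_{B^0_{1,\infty}}\|w\|_{B^0_{\infty,1}}$). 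Your absolute-value scheme cannot access $\|z\|_{L^1_x}$ at all, since Hausdorff--Young goes the wrong way ($\|\hat z_{L_1}\|_{L^1_\xi}$ is not controlled by $\|z_{L_1}\|_{L^1_x}$); this is why the Coifman--Meyer step is essential and not merely a convenience.

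A smaller point: for the ``thin boundary'' residue of $\mathcal{L}^j_R$ you write ``$|b|\lesssim 1$, H\"older, and $\|w\|_{B^{0+}_{\infty,1}}$''. But $b$ is a nontrivial five-variable symbol, so plain H\"older in physical space is unavailable; one again needs either Coifman--Meyer (as the paper does) or absolute values on the Fourier side---and in the latter case the $L^\infty_x$-type norm $B^{0+}_{\infty,1}$ is lost.
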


\begin{proof}
We first consider $\mathcal{L}^1$ (the argument for $\mathcal{L}^2$ is almost the same and thus omitted).
Observe that
\begin{align*}
\mathcal{L}^1_R[\omega]&=\sum _{\begin{smallmatrix} N_1\sim N_3\gg N\gg N_2 \\ L_3\sim N_1\gg L_1,L_2\\ NN_3>M,\,NN_3\gtrsim N_1L_3\end{smallmatrix}} e^{it\xi^2}\psi_{N}(\xi)\int _{\xi =\xi_{123}}\frac{\xi_3}{\phi (\vec{\xi})} \mathcal{F}P_{N_1}\Big[ w_{L_1}w_{L_2}\partial_x\bar{w}_{L_3}\Big] (\xi_1)\hat{w}_{N_2}(\xi_2)\hat{\bar{w}}_{N_3}(\xi_3).
\end{align*}
We apply Lemma~\ref{lem:CM2} with
\[ b(\xi,\xi_1,\xi_2,\xi_3)=\frac{\xi_3}{\xi_{23}(\xi-\xi_2)},\]
which satisfies the assumption with $B=\frac{N_3}{NN_3}\lesssim \frac{N_3}{N_1L_3}\sim L_3^{-1}$.
As a result, we obtain the bound
\begin{align*}
&\sum _{\begin{smallmatrix} N_1\sim N_3\gg N\gg N_2 \\ L_3\sim N_1\gg L_1,L_2\end{smallmatrix}}\frac{N^{\frac12}}{L_3}\Big\| w_{L_1}w_{L_2}\partial_x\bar{w}_{L_3}\Big\|_{L^\infty}\| w_{N_2}\|_{L^\infty}\| w_{N_3}\|_{L^2} \\
&\lesssim \sum _{\begin{smallmatrix} N_1\sim N_3\gg N\gg N_2 \\ L_3\sim N_1\gg L_1,L_2\end{smallmatrix}}N^{\frac12}\| w_{L_1}\|_{L^\infty}\| w_{L_2}\|_{L^\infty}\| w_{L_3}\|_{L^\infty}\| w_{N_2}\|_{L^\infty}\| w_{N_3}\|_{L^2} \\
&\lesssim \sum_{L_3}\| w_{L_3}\|_{L^\infty}\sum _{N_1,N_3\sim L_3}\| w_{N_3}\|_{H^{\frac12}}\sum _{L_1,L_2,N_2\ll L_3}\| w_{L_1}\|_{H^{\frac12}}\| w_{L_2}\|_{H^{\frac12}}\| w_{N_2}\|_{H^{\frac12}} \\
&\lesssim \| w\|_{B^{0+}_{\infty,1}}\| w\|_{H^{\frac12}}^4,
\end{align*}
which implies \eqref{est:LjR}.
For $\mathcal{L}^1_0$, we can take the absolute value:
\begin{align*}
|\mathcal{L}^1_0[\omega ]|&\lesssim \sum _{\begin{smallmatrix} N_1\sim N_3\gg N\gg N_2 \\ L_3\sim N_1\gg L_1,L_2\end{smallmatrix}} \int _{\begin{smallmatrix}\xi =\xi_{123} \\ \xi_1=\eta_{123}\end{smallmatrix}}\frac{N_3L_3}{|\phi (\vec{\xi})| \big| \phi (\vec{\xi})+\phi (\vec{\eta})\big|}\mathbf{1}_{|\phi (\vec{\xi})|\sim NN_3\gtrsim M}\mathbf{1}_{|\phi (\vec{\xi})+\phi (\vec{\eta})|\sim N_1L_3}\\
&\qquad\qquad \times \psi_{N}(\xi)\psi_{N_1}(\xi_1)|\omega_{L_1}(\eta_1)||\omega_{L_2}(\eta_2)||\omega^*_{L_3}(\eta_3)||\omega_{N_2}(\xi_2)||\omega^*_{N_3}(\xi_3)|.
\end{align*}
We use Young and H\"older to have
\begin{align*}
\| \mathcal{L}^1_0[\omega ]\|_{\hat{H}^{\frac12}}&\lesssim \sum _{\begin{smallmatrix} N_1\sim N_3\gg N\gg N_2 \\ L_3\sim N_1\gg L_1,L_2\end{smallmatrix}} \frac{N^{\frac12}N_3L_3}{(NN_3)^{0+}M^{1-}N_1L_3}\Big\| |\omega_{L_1}|*\cdots *|\omega^*_{N_3}|\Big\| _{L^2}\\
&\lesssim M^{-1+}\sum _{\begin{smallmatrix} N_1\sim N_3\gg N\gg N_2 \\ L_3\sim N_1\gg L_1,L_2\end{smallmatrix}}N_3^{0-}\| \omega_{L_1}\|_{L^1}\| \omega_{L_2}\| _{L^1}\| \omega_{L_3}\|_{L^1}\| \omega_{N_2}\|_{L^1}N_3^{\frac12}\| \omega_{N_3}\|_{L^2} \\
&\lesssim M^{-1+}\sum _{\begin{smallmatrix} N_1\sim N_3\gg N\gg N_2 \\ L_3\sim N_1\gg L_1,L_2\end{smallmatrix}}N_3^{0-}\| \omega_{L_1}\|_{\hat{H}^{\frac12}}\| \omega_{L_2}\| _{\hat{H}^{\frac12}}\| \omega_{L_3}\|_{\hat{H}^{\frac12}}\| \omega_{N_2}\|_{\hat{H}^{\frac12}}\| \omega_{N_3}\|_{\hat{H}^{\frac12}} \\
&\lesssim M^{-1+}\| \omega \|_{\hat{H}^{\frac12}}^5.
\end{align*}
This implies \eqref{est:Lj0}.
Finally, by a similar argument we can estimate $\mathcal{L}^1_1[\omega ,\zeta ]$ in terms of $\| z\|_{H^{-\frac12-}}$:
\begin{align*}
&\| \mathcal{L}^1_1[\omega ,\zeta ]\|_{\hat{H}^{\frac12}}\\
&\lesssim M^{-\frac12+}\sum _{\begin{smallmatrix} N_1\sim N_3\gg N\gg N_2 \\ L_3\sim N_1\gg L_1,L_2\end{smallmatrix}} \frac{NN_3L_3}{(NN_3)^{\frac12+}N_1L_3}\Big\{ \Big\| |\zeta_{L_1}|*|\omega_{L_2}|*\cdots *|\omega^*_{N_3}|\Big\| _{L^\infty} +\cdots \Big\} \\
&\lesssim M^{-\frac12+}\sum _{\begin{smallmatrix} N_1\sim N_3\gg N\gg N_2 \\ L_3\sim N_1\gg L_1,L_2\end{smallmatrix}} N_{\max(2)}^{0-}\Big\{ \| \zeta_{L_1}\|_{L^1}\| \omega_{L_2}\|_{L^1}\| \omega_{L_3}\| _{L^2}\| \omega_{N_2}\|_{L^1}\| \omega_{N_3}\|_{L^2} +\cdots \Big\} \\
&\lesssim M^{-\frac12+}\sum _{\begin{smallmatrix} N_1\sim N_3\gg N\gg N_2 \\ L_3\sim N_1\gg L_1,L_2\end{smallmatrix}} N_{\max(2)}^{0-}\cdot N_{\max(2)}^{-1-}\Big\{ \| \zeta_{L_1}\|_{\hat{H}^{\frac12}}\cdots \| \omega_{N_3}\|_{\hat{H}^{\frac12}} +\cdots \Big\} \\
&\lesssim M^{-\frac12+}\| z\|_{H^{-\frac12-}}\| w\|_{H^{\frac12}}^4,
\end{align*}
where $N_{\max(2)}:=\max \{ N,N_j,L_j:j=1,2,3\} \sim \max \{ N_2,N_3,L_1,L_2,L_3\}$.
Since $H^{-\frac12}+B^0_{1,\infty}\hookrightarrow H^{-\frac12-}$, this estimate implies \eqref{est:Lj1}.

We next consider $\mathcal{L}^3$ and $\mathcal{L}^4$.
Recall the frequency conditions for these terms:
\[ \mathcal{L}^3:\left\{ \begin{aligned}
&N_1\sim N\gg N_2,N_3,\quad N_2\not\sim N_3, \\
&L_1\sim L_3\gg N_1\gg L_2,
\end{aligned}\right. 
\qquad \mathcal{L}^4:\left\{ \begin{aligned}
&N_3\sim N\gg N_1,N_2, \\
&L_1\sim L_3\gg N_3\gg L_2.
\end{aligned}\right. \]
The estimate of $\mathcal{L}^j_R$ is done almost in the same way as for $\mathcal{L}^1_R$.
Concerning $\mathcal{L}^j_0$, we have the multiplier bound 
\[ M^{-1+}\frac{N^{\frac12}N_3L_3}{(NN_3)^{0+}NL_3}\lesssim M^{-1+}N_3^{0-}L_1^{\frac12-},\]
and then the previous argument works, since we have a factor $L_1^{0-}\sim N_{\max(2)}^{0-}$.
On the other hand, the estimate of $\mathcal{L}^j_1[\omega ,\zeta]$ becomes harder, and we need Coifman-Meyer type argument.
After using H\"older to obtain the $L^\infty_\xi$ norm, we can use the multiplier bound
\[ M^{-\frac12+}\frac{NN_3L_3}{(NN_3)^{\frac12+}NL_3}\lesssim M^{-\frac12+}N^{0-}.\]
Since we have $L_1\sim L_3\gg N\gtrsim $ (others), this bound together with the previous argument is enough to obtain $\| z\|_{H^{-\frac12-}}\| w\|_{H^{\frac12}}^4$, unless $\omega_{L^1}$ or $\omega_{L^3}$ is replaced by $\zeta$.
Therefore, let us concentrate on the following term (one of the terms in $\mathcal{L}^4_1[\omega ,\zeta]$):%
\footnote{%
The other remaining terms in $\mathcal{L}^3_1[\omega,\zeta]$ and $\mathcal{L}^4_1[\omega ,\zeta]$ can be treated similarly.%
}%
\begin{align*}
&\sum _{\begin{smallmatrix} N_3\sim N\gg N_1,N_2 \\ L_1\sim L_3\gg N_3\gg L_2\\ M<NN_3\ll N_3L_3\end{smallmatrix}} \int _{\begin{smallmatrix}\xi =\xi_{123} \\ \xi_3=\eta_{123}\end{smallmatrix}}\frac{e^{it\phi (\vec{\xi})-it\phi (\vec{\eta})}}{\phi (\vec{\xi})\big[ \phi (\vec{\xi})-\phi (\vec{\eta})\big]}\xi_3\eta_3\psi_{N}(\xi)\psi_{N_3}(\xi_3) \\[-10pt]
&\hspace{120pt} \times \omega_{N_1}(\xi_1)\omega_{N_2}(\xi_2)\zeta _{L_1}^*(\eta_1)\omega_{L_2}^*(\eta_2)\omega_{L_3}(\eta_3).
\end{align*}
The key idea is the following decomposition of the multiplier:
\[ \frac{1}{\phi (\vec{\xi})\big[ \phi (\vec{\xi})-\phi (\vec{\eta})\big]}=\frac{1}{\phi (\vec{\eta})\big[ \phi (\vec{\xi})-\phi (\vec{\eta})\big]}-\frac{1}{\phi (\vec{\xi})\phi (\vec{\eta})} .\]
For the contribution from the first term, we have a better multiplier bound
\[ \Big| \frac{\xi_3\eta_3}{\phi (\vec{\eta})\big[ \phi (\vec{\xi})-\phi (\vec{\eta})\big]}\Big| \lesssim M^{-\frac12+}\frac{N_3L_3}{(N_3L_3)^{\frac12+}N_3L_3}\lesssim M^{-\frac12+}N^{-1}N_{\max(2)}^{0-}.\]
Then, the previous argument for $\mathcal{L}^1_1[\omega ,\zeta]$ is sufficient to obtain the bound $\| z\|_{H^{-\frac12-}}\| w\|_{H^{\frac12}}^4$.
For the contribution from the second term, the bound with $\| z\|_{H^{-\frac12}}$ can be obtained by the same argument:
\begin{align*}
&M^{-\frac12+}\sum _{\begin{smallmatrix} N_3\sim N\gg N_1,N_2 \\ L_1\sim L_3\gg N_3\gg L_2\end{smallmatrix}}\frac{NN_3L_3}{(NN_3)^{\frac12+}N_3L_3}\| \omega _{N_1}\|_{L^1}\| \omega_{N_2}\|_{L^1}\| \zeta_{L_1}\|_{L^2}\| \omega_{L_2}\|_{L^1}\| \omega_{L^3}\|_{L^2}\\
&\lesssim M^{-\frac12+}\sum _{L_1\sim L_3}\| \zeta_{L_1}\|_{\hat{H}^{-\frac12}}\| \omega_{L^3}\|_{\hat{H}^{\frac12}} \sum_{N\gtrsim \text{(others)}}N^{0-}\| \omega _{N_1}\|_{\hat{H}^{\frac12}}\| \omega_{N_2}\|_{\hat{H}^{\frac12}}\| \omega_{L_2}\|_{\hat{H}^{\frac12}}\\
&\lesssim M^{-\frac12+}\| z\|_{H^{-\frac12}}\| w\|_{H^{\frac12}}^4.
\end{align*}
To obtain the bound with $\| z\|_{B^0_{1,\infty}}$, we rewrite this term as 
\begin{align*}
\sum _{\begin{smallmatrix} N_3\sim N\gg N_1,N_2 \\ L_1\sim L_3\gg N_3\gg L_2\\ M<NN_3\ll N_3L_3\end{smallmatrix}} e^{it\xi^2}&\psi_{N}(\xi)\int _{\xi =\xi_{123}}\frac{\xi_3}{\phi (\vec{\xi})}\hat{w}_{N_1}(\xi_1)\hat{w}_{N_2}(\xi_2) \\[-10pt]
\times \Big[ &\psi_{N_3}(\xi_3)\int _{\xi_3=\eta_{123}}\frac{\eta_3}{\phi (\vec{\eta})}\hat{\bar{z}}_{L_1}(\eta_1)\hat{\bar{w}}_{L_2}(\eta_2)\hat{w}_{L_3}(\eta_3)\Big] .
\end{align*}
We use H\"older and first apply Lemma~\ref{lem:CM1} with 
\[ b(\xi_1,\xi_2,\xi_3)=\frac{\xi_3}{\xi_{13}\xi_{23}},\qquad B=\frac{N_3}{NN_3}\lesssim M^{-\frac12+}\frac{N_3}{(NN_3)^{\frac12+}}\sim M^{-\frac12+}N^{0-},\]
to obtain the bound
\begin{align*}
M^{-\frac12+}\sum _{\begin{smallmatrix} N_3\sim N\gg N_1,N_2 \\ L_1\sim L_3\gg N_3\gg L_2\end{smallmatrix}}&N^{1-}\| w_{N_1}\|_{L^\infty}\| w_{N_2}\|_{L^\infty} \\[-10pt]
&\times \Big\| \mathcal{F}^{-1}\Big[ \psi_{N_3}(\xi_3)\int _{\xi_3=\eta_{123}}\frac{\eta_3}{\phi (\vec{\eta})}\hat{\bar{z}}_{L_1}(\eta_1)\hat{\bar{w}}_{L_2}(\eta_2)\hat{w}_{L_3}(\eta_3)\Big] \Big\|_{L^1}.
\end{align*}
We then apply Lemma~\ref{lem:CM2} with
\[ b(\xi _3,\eta_1,\eta_2,\eta_3)=\frac{\eta_3}{\eta_{23}(\xi_3-\eta_2)},\qquad B=\frac{L_3}{N_3L_3}\sim N^{-1} ,\]
which gives the bound
\begin{align*}
&M^{-\frac12+}\sum _{\begin{smallmatrix} N_3\sim N\gg N_1,N_2 \\ L_1\sim L_3\gg N_3\gg L_2\end{smallmatrix}}\| w_{N_1}\|_{L^\infty}\| w_{N_2}\|_{L^\infty}N^{0-}\| z_{L_1}\|_{L^1}\| w_{L_2}\|_{L^\infty}\| w_{L_3}\|_{L^\infty} \\
&\lesssim M^{-\frac12+}\sum _{L_1\sim L_3}\| z_{L_1}\|_{L^1}\| w_{L_3}\|_{L^\infty} \sum_{N\gtrsim \text{(others)}}N^{0-}\| w_{N_1}\|_{H^{\frac12}}\| w_{N_2}\|_{H^{\frac12}}\| w_{L_2}\|_{H^{\frac12}}\\
&\lesssim M^{-\frac12+}\| z\|_{B^0_{1,\infty}}\| w\|_{B^0_{\infty,1}}\| w\|_{H^{\frac12}}^3.
\end{align*}
Hence, for $j=3,4$ we have 
\[ \| \mathcal{L}^j_1[\omega ,\zeta]\|_{\hat{H}^{\frac12}}\lesssim M^{-\frac12+}\| z\|_{H^{-\frac12}+B^0_{1,\infty}}\| w\|_{H^{\frac12}\cap B^0_{\infty,1}}\| w\|_{H^{\frac12}}^3,\]
which implies \eqref{est:Lj1}.
\end{proof}


\subsection{Estimate of the remainder after NFR: Case \texttt{B}}

Let us consider the estimate of $\mathcal{N}^{\texttt{B}}_1[\omega ,\mathcal{N}[\omega]]$, which is a little more delicate.
Recall that
\begin{align*}
&\mathcal{N}^{\texttt{B}}_1[\omega ,\zeta ]=\sum_{j=1}^3\mathcal{N}^{\texttt{B1}}_{1,j}[\omega ,\zeta] +\sum_{j=1}^3\mathcal{N}^{\texttt{B2}}_{1,j}[\omega ,\zeta ] \\
&:=\sum _{\begin{smallmatrix}N_1\sim N_3\gg N\sim N_2\\ N_1K>M\end{smallmatrix}}\psi_N(\xi)\int _{\begin{smallmatrix}\xi=\xi_{123}\\ K\leq |\xi_{13}|<2K\end{smallmatrix}}\frac{e^{it\phi(\vec{\xi})}}{\phi (\vec{\xi})}\xi_3\Big\{ \zeta_{N_1}(\xi_1)\omega_{N_2}(\xi_2)\omega^*_{N_3}(\xi_3)+\cdots \Big\}  \\
&\;\quad +\sum _{\begin{smallmatrix}N_1\sim N\gg N_2\sim N_3 \\ N_1K>M\end{smallmatrix}}\psi_N(\xi)\int _{\begin{smallmatrix}\xi=\xi_{123}\\ K\leq |\xi_{23}|<2K\end{smallmatrix}}\frac{e^{it\phi(\vec{\xi})}}{\phi (\vec{\xi})}\xi_3\Big\{ \zeta_{N_1}(\xi_1)\omega_{N_2}(\xi_2)\omega^*_{N_3}(\xi_3)+\cdots \Big\} .
\end{align*}

\begin{lem}\label{lem:N_1B}
We have
\[ \| \mathcal{N}^{\mathtt{B}}_1[\omega ,\zeta ]\|_{\hat{H}^{\frac12}}\lesssim \| w\|_{H^{\frac12}}^2\| z\|_{H^{-\frac12+}}\]
and
\[ \| \Big( \mathcal{N}^{\texttt{B1}}_{1,1}+\mathcal{N}^{\texttt{B1}}_{1,3}+\mathcal{N}^{\texttt{B2}}_{1,2}+\mathcal{N}^{\texttt{B2}}_{1,3}\Big) [\omega ,\zeta ]\|_{\hat{H}^{\frac12}} \lesssim \| w\|_{H^{\frac12}}\| w\|_{B^{0+}_{\infty,1}}\| z\|_{B^0_{1,\infty}}. \]
\end{lem}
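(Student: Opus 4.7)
The plan is to handle Cases~\texttt{B1} and \texttt{B2} separately, exploiting the fact that $|\phi(\vec{\xi})|=2|\xi_{13}||\xi_{23}|\sim N_1 K$ in both, so that $|\xi_3/\phi|\lesssim 1/K$ for \texttt{B1} while $|\xi_3/\phi|\lesssim N_3/(N_1K)$ for \texttt{B2} (since $|\xi_3|\sim N_3\ll N_1$ in the latter). After a harmless relabeling $\xi_1\leftrightarrow\xi_2$ in \texttt{B1}, both cases fit the hypothesis $N_1\not\sim N_2\sim N_3$ of Lemma~\ref{lem:CM3}, with the $K$-localization on the $\xi_{23}$-variable of that lemma.

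For the first estimate, my plan is to apply Lemma~\ref{lem:CM3} for each fixed $K$ with $(p,q,q')=(2,2,\infty)$ or $(2,\infty,2)$ (depending on which slot $\zeta$ occupies) and convert the $L^2$-norm of $\zeta$ to $\hat{H}^{-\frac12+}$ via Bernstein, $\|\zeta_{N_\sharp}\|_{L^2}\lesssim N_\sharp^{\frac12-}\|\zeta\|_{\hat{H}^{-\frac12+}}$. The bound provided by Lemma~\ref{lem:CM3} is uniform in $K$, so summing over the $\lesssim \log(N_1^2/M)$ admissible dyadic values of $K$ and over the dyadic $N,N_1$ only produces logarithmic losses, all of which are absorbed by the $\varepsilon$-gain in the $\hat{H}^{-\frac12+}$-norm.

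For the second estimate only the four ``paired'' terms are retained, namely those where $\zeta$ sits on one side of a small-frequency cancellation ($\xi_{13}$ in \texttt{B1}, $\xi_{23}$ in \texttt{B2}) together with an $\omega$ of comparable dyadic size. The idea is to make the pairing explicit by rewriting the multiplier. In Case~\texttt{B1}, I will use
\[
\frac{\xi_3}{2\xi_{13}\xi_{23}}\;=\;\frac{1}{2\xi_{13}}-\frac{\xi_2}{2\xi_{13}\xi_{23}}.
\]
The first summand is independent of $\xi_2$, so the associated trilinear operator factorizes as a pointwise product of $w_N(x)$ and $\mathcal{F}^{-1}\big[\mathbf{1}_{|\xi_{13}|\in[K,2K)}\,\xi_{13}^{-1}\,\widehat{z_{N_1}\bar{w}_{N_1}}(\xi_{13})\big](x)$; bounding the latter pointwise via Hausdorff-Young together with $\|\mathbf{1}_{[K,2K)}/\xi\|_{L^1}\lesssim 1$ yields $\lesssim \|z_{N_1}\|_{L^1}\|w_{N_1}\|_{L^\infty}$, so the $B^0_{1,\infty}$-norm of $z$ appears. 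For the second summand, the smooth factor $\xi_2/\xi_{23}$ has $C^2$-norm $\sim N/N_1$ on $I_N\times I_{N_1}$, and a Fourier-series expansion as in the proof of Lemma~\ref{lem:CM3} reduces the estimate to the same factorized form at the cost of an extra gain $N/N_1$. Case~\texttt{B2} is analogous, and there Lemma~\ref{lem:CM3} itself supplies the decay factor $N_3/N_1$.

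The main obstacle is managing three independent sources of logarithmic loss: the sum over dyadic $K$ (for which Lemma~\ref{lem:CM3} gives only a $K$-uniform bound), the Cauchy-Schwarz summation over the low-frequency dyadic block carrying the $H^{\frac12}$-norm, and the Littlewood-Paley summation over the high-frequency block. For the first estimate these are all absorbed by the $\varepsilon$-gain of $\hat{H}^{-\frac12+}$; for the second estimate one must exploit the $N^{0+}$-weight in $\|w\|_{B^{0+}_{\infty,1}}$ together with the uniform bound $\|z_{N_\sharp}\|_{L^1}\leq \|z\|_{B^0_{1,\infty}}$, and verifying that all these logarithms fit inside the available budget is the most delicate part of the argument.
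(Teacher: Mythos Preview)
Your first-estimate plan has a concrete error: the exponent triples $(p,q,q')=(2,2,\infty)$ and $(2,\infty,2)$ are not admissible in Lemma~\ref{lem:CM3}. In that lemma $q'$ is the H\"older conjugate of $q$ (the constraint enters through the step $\|P_{N_2}w_2\cdot P_{N_3}\bar w_3\|_{L^1}\le \|P_{N_2}w_2\|_{L^q}\|P_{N_3}w_3\|_{L^{q'}}$ in its proof), so the only choice compatible with output in $L^2$ and $\zeta$ in $L^2$ is $(p,q,q')=(2,2,2)$. With that choice the argument does recover the bound $N^{1/2}\|z_{N_\sharp}\|_{L^2}\|w_{N_a}\|_{L^2}\|w_{N_b}\|_{L^2}$ and the rest of your plan goes through. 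The paper, however, avoids Lemma~\ref{lem:CM3} here entirely: it takes absolute values on the Fourier side and applies Young's inequality to $\big(\mathbf{1}_{|\cdot|\sim K}(|\zeta_{N_1}|*|\omega^*_{N_3}|)\big)*|\omega_{N_2}|$, so that the $L^1$-norm of the $K$-localized factor contributes a factor $K$ cancelling the $1/K$ from the multiplier. This is strictly more elementary than invoking the Coifman--Meyer machinery.

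For the second estimate your multiplier identity $\frac{\xi_3}{2\xi_{13}\xi_{23}}=\frac{1}{2\xi_{13}}-\frac{\xi_2}{2\xi_{13}\xi_{23}}$ together with the Hausdorff--Young/Fourier-series treatment is correct, but it is essentially a re-derivation of the case $(p,q)=(2,1)$ of Lemma~\ref{lem:CM3}. The paper simply applies Lemma~\ref{lem:CM3} with $(p,q)=(2,1)$ directly: after your own relabeling $\xi_1\leftrightarrow\xi_2$ in \texttt{B1}, both \texttt{B1} and \texttt{B2} satisfy the hypothesis $N_1\not\sim N_2\sim N_3$, and the lemma immediately yields $N^{1/2}\|z_{N_\sharp}\|_{L^1}\|w_{N_a}\|_{L^2}\|w_{N_b}\|_{L^\infty}$ (with prefactor $1$ for \texttt{B1} and $N_3/N_1$ for \texttt{B2}), from which the $B^0_{1,\infty}\times H^{1/2}\times B^{0+}_{\infty,1}$ bound follows at once. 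Your route reaches the same endpoint but duplicates work already packaged in Lemma~\ref{lem:CM3}.
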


\begin{proof}
For the case \texttt{B1}, we apply Young/H\"older as follows:
\begin{align*}
\| \mathcal{N}^{\texttt{B1}}_1[\omega ,\zeta]\|_{\hat{H}^{\frac12}}&\lesssim \sum_{\begin{smallmatrix} N_1\sim N_3\gg N\sim N_2 \\ K\lesssim N_1\end{smallmatrix}}\frac{N^{\frac12}N_3}{N_1K}\Big\{ \Big\| \Big( \mathbf{1}_{|\cdot |\sim K}\big( |\zeta _{N_1}|*|\omega^*_{N_3}|\big) \Big) *|\omega_{N_2}|\Big\|_{L^2} +\cdots \Big\} \\
&\lesssim \sum_{\begin{smallmatrix} N_1\sim N_3\gg N\sim N_2 \\ K\lesssim N_1\end{smallmatrix}}N^{\frac12}\Big\{ \Big\| |\zeta _{N_1}|*|\omega^*_{N_3}|\Big\|_{L^\infty}\|\omega_{N_2}\|_{L^2} +\cdots \Big\} \\
&\lesssim \sum _{\begin{smallmatrix} N_1\sim N_3\gg N\sim N_2 \\ K\lesssim N_1\end{smallmatrix}}N^{\frac12}\Big\{ \| z_{N_1}\|_{L^2}\| w_{N_2}\|_{L^2}\| w_{N_3}\|_{L^2}+\cdots \Big\} .
\end{align*}
In the same manner, we have
\[ \| \mathcal{N}^{\texttt{B2}}_1[\omega ,\zeta]\|_{\hat{H}^{\frac12}}\lesssim \sum _{\begin{smallmatrix} N_1\sim N\gg N_2\sim N_3 \\ K\lesssim N_2\end{smallmatrix}}\frac{N^{\frac12}N_3}{N_1}\Big\{ \| z_{N_1}\|_{L^2}\| w_{N_2}\|_{L^2}\| w_{N_3}\|_{L^2}+\cdots \Big\} .\]
In both cases, we have the bound
\[ \sum _{N,N_j,K}N_{\max}^{-1}\Big\{ \| z_{N_1}\|_{H^{\frac12}}\| w_{N_2}\|_{H^{\frac12}}\| w_{N_3}\|_{H^{\frac12}}+\cdots \Big\} \lesssim \| z\|_{H^{-\frac12+}}\| w\|_{H^{\frac12}}^2,\]
which implies the first estimate.%
\footnote{%
This argument is not sufficient to get $\| z\|_{H^{-\frac12}}$ instead of $\| z\|_{H^{-\frac12+}}$, due to the logarithmic divergence in $K$.%
}%

To show the second estimate, we use Lemma~\ref{lem:CM3} with $p=2$ and $q=1$.
For $\mathcal{N}^{\texttt{B1}}_{1,1}$ (and similarly for $\mathcal{N}^{\texttt{B1}}_{1,3}$), the bound
\[ \sum _{\begin{smallmatrix} N_1\sim N_3\gg N\sim N_2 \\ K\lesssim N_1\end{smallmatrix}}N^{\frac12}\| z_{N_1}\|_{L^1}\| w_{N_2}\|_{L^2}\| w_{N_3}\|_{L^\infty} \lesssim \| z\|_{B^0_{1,\infty}}\| w\|_{H^{\frac12}}\| w\|_{B^{0+}_{\infty,1}} \]
is available.
Similarly, we have the bound
\[ \sum _{\begin{smallmatrix} N_1\sim N\gg N_2\sim N_3 \\ K\lesssim N_2\end{smallmatrix}}\frac{N^{\frac12}N_3}{N_1}\| w_{N_1}\|_{L^2}\| z_{N_2}\|_{L^1}\| w_{N_3}\|_{L^\infty} \lesssim \| w\|_{H^{\frac12}}\| z\|_{B^0_{1,\infty}}\| w\|_{B^{0+}_{\infty,1}}\]
for $\mathcal{N}^{\texttt{B2}}_{1,2}$ (and similarly for $\mathcal{N}^{\texttt{B2}}_{1,3}$).
Then, the second estimate follows.
\end{proof}

In view of the above lemma and Proposition~\ref{prop:N-weak}, the $\hat{H}^{\frac12}$ norms of 
\[ \Big( \mathcal{N}^{\texttt{B1}}_{1,1} + \mathcal{N}^{\texttt{B1}}_{1,3} + \mathcal{N}^{\texttt{B2}}_{1,2} + \mathcal{N}^{\texttt{B2}}_{1,3}\Big) [\omega ,\mathcal{N}^{\neg \texttt{C2}}[\omega]],\quad \Big( \mathcal{N}^{\texttt{B1}}_{1,2} + \mathcal{N}^{\texttt{B2}}_{1,1}\Big) [\omega ,\mathcal{N}^{\neg \texttt{C}}[\omega]] \]
are bounded by $\| w\|_{H^{\frac12}}^3\| w\|_{H^{\frac12}\cap B^{0+}_{\infty,1}}^2$.
Therefore, we apply the second NFR to the remaining terms:
\begin{equation}\label{defn:Lj2}
\begin{gathered}
\begin{alignedat}{2}
\mathcal{L}^5[\omega]&:=\Big( \mathcal{N}^{\texttt{B2}}_{1,2} + \mathcal{N}^{\texttt{B2}}_{1,3}\Big) [\omega ,\mathcal{N}^{\texttt{C2}}[\omega]],&\qquad \mathcal{L}^6[\omega]&:=\mathcal{N}^{\texttt{B2}}_{1,1}[\omega ,\mathcal{N}^{\texttt{C2}}[\omega]],\\
\mathcal{L}^7[\omega ]&:=\Big( \mathcal{N}^{\texttt{B1}}_{1,1} + \mathcal{N}^{\texttt{B1}}_{1,3}\Big) [\omega ,\mathcal{N}^{\texttt{C2}}[\omega]],&\qquad \mathcal{L}^8[\omega]&:=\mathcal{N}^{\texttt{B1}}_{1,2}[\omega ,\mathcal{N}^{\texttt{C2}}[\omega]],
\end{alignedat}\\
\mathcal{L}^9[\omega ]:=\mathcal{N}^{\texttt{B2}}_{1,1}[\omega ,\mathcal{N}^{\texttt{C1}}[\omega]],\qquad 
\mathcal{L}^{10}[\omega ]:= \mathcal{N}^{\texttt{B1}}_{1,2}[\omega ,\mathcal{N}^{\texttt{C1}}[\omega]].
\end{gathered}
\end{equation}
Recall the relation between $N$, $N_j$'s and $L_j$'s in each case:
\begin{alignat*}{2}
\mathcal{L}^5:&~
\begin{tabular}{|r@{\;}c@{\;}l|}
\hline 
\ \ $N_1\sim N\gg$ & $N_2$ & \\
& $\wr$ & \\
$L_3\sim$ & $N_3$ & $\gg L_1,L_2$ \\ \hline
\multicolumn{3}{|c|}{\:\ $|\xi_{23}|\sim K$, $|\phi (\vec{\eta})|\sim N_3L_3$\ \ } \\ \hline
\end{tabular}
&\qquad \mathcal{L}^6:&~
\begin{tabular}{|r@{\;}c@{\;}l|}
\hline 
& $N$ & $\gg N_2\sim N_3$\\
& $\wr$ & \\
\quad \ $L_3\sim$ & $N_1$ & $\gg L_1,L_2$ \\ \hline
\multicolumn{3}{|c|}{\:\ $|\xi_{23}|\sim K$, $|\phi (\vec{\eta})|\sim N_1L_3$\ \ } \\ \hline
\end{tabular}\quad
\\
\mathcal{L}^7:&~
\begin{tabular}{|r@{\;}c@{\;}l|}
\hline 
& $N_1$ & $\gg N\sim N_2$\\
& $\wr$ & \\
\quad \ $L_3\sim$ & $N_3$ & $\gg L_1,L_2$ \\ \hline
\multicolumn{3}{|c|}{\:\ $|\xi_{13}|\sim K$, $|\phi (\vec{\eta})|\sim N_3L_3$\ \ } \\ \hline
\end{tabular}
&\qquad \mathcal{L}^8:&~
\begin{tabular}{|r@{\;}c@{\;}l|}
\hline 
\ \,$N_1\sim N_3\gg$ & $N$ & \\
& $\wr$ & \\
$L_3\sim$ & $N_2$ & $\gg L_1,L_2$ \\ \hline
\multicolumn{3}{|c|}{\:\ $|\xi_{13}|\sim K$, $|\phi (\vec{\eta})|\sim N_2L_3$\ \ } \\ \hline
\end{tabular}
\\
\mathcal{L}^9:&~
\begin{tabular}{|r@{\;}c@{\;}l|}
\hline 
& $N$ & $\gg N_2\sim N_3$\\
& $\wr$ & \\
$L_1\sim L_3\gg$ & $N_1$ & $\gg L_2$ \\ \hline
\multicolumn{3}{|c|}{$|\xi_{23}|\sim K$, $|\phi (\vec{\eta})|\sim N_1L_3$} \\ \hline
\end{tabular}
&\qquad \mathcal{L}^{10}:&~
\begin{tabular}{|r@{\;}c@{\;}l|}
\hline 
\quad \ $N_1\sim N_3\gg$ & $N$ & \\
& $\wr$ & \\
$L_1\sim L_3\gg$ & $N_2$ & $\gg L_2$ \\ \hline
\multicolumn{3}{|c|}{\:\ $|\xi_{13}|\sim K$, $|\phi (\vec{\eta})|\sim N_2L_3$\ \ } \\ \hline
\end{tabular}
\end{alignat*}
Note that $|\phi(\vec{\xi})|\sim N_1K\sim N_{\max}K$ in all the cases.
Also, the treatments of two terms in $\mathcal{L}^5$ or $\mathcal{L}^7$ are analogous, and we will focus on the first term.

As in the previous case ($\mathcal{L}^j$ with $j=1,\dots ,4$), the second NFR is performed as
\[ \mathcal{L}^j[\omega ]=\mathcal{L}^j_R[\omega ]+\partial_t\mathcal{L}^j_0[\omega ] +\mathcal{L}^j_1[\omega ,\partial_t\omega ]\qquad (j=5,\dots ,10).\]
Here, the separation of the resonant part $\mathcal{L}^j_R$ is again based on the condition $|\phi(\vec{\xi})|\gtrsim |\phi(\vec{\eta})|$; this is actually done by imposing the relation between dyadic frequencies.
For instance, for $\mathcal{L}^{10}$ we define the resonant part by $N_1K\gtrsim N_2L_3$.
The following is a precise definition of $\mathcal{L}^{10}_R,\mathcal{L}^{10}_0,\mathcal{L}^{10}_1$, and the corresponding terms for $\mathcal{L}^j$, $j=5,\dots ,9$ are defined similarly.
\begin{align*}
\mathcal{L}^{10}_R[\omega ]&:=\sum _{\begin{smallmatrix}N_1\sim N_3\gg N\sim N_2\\ L_1\sim L_3\gg N_2\gg L_2\\ N_1K>M,\,N_1K\gtrsim N_2L_3\end{smallmatrix}}\int _{\begin{smallmatrix}\xi=\xi_{123},\,\xi_2=\eta_{123}\\ K\leq |\xi_{13}|<2K\end{smallmatrix}}\frac{e^{it\phi(\vec{\xi})+it\phi(\vec{\eta})}}{\phi (\vec{\xi})}\psi_N(\xi)\psi_{N_2}(\xi_2)\xi_3\eta_3\\[-20pt]
&\hspace{140pt} \times \omega_{N_1}(\xi_1)\omega^*_{N_3}(\xi_3)\omega_{L_1}(\eta_1)\omega_{L_2}(\eta_2)\omega^*_{L_3}(\eta_3) ,\\[5pt]
\mathcal{L}^{10}_0[\omega ]&:=\sum _{\begin{smallmatrix}N_1\sim N_3\gg N\sim N_2\\ L_1\sim L_3\gg N_2\gg L_2\\ M<N_1K\ll N_2L_3\end{smallmatrix}}\int _{\begin{smallmatrix}\xi=\xi_{123},\,\xi_2=\eta_{123}\\ K\leq |\xi_{13}|<2K\end{smallmatrix}}\frac{e^{it\phi(\vec{\xi})+it\phi(\vec{\eta})}}{\phi (\vec{\xi})\big[ \phi(\vec{\xi})+\phi(\vec{\eta})\big]}\psi_N(\xi)\psi_{N_2}(\xi_2)\xi_3\eta_3\\[-20pt]
&\hspace{140pt} \times \omega_{N_1}(\xi_1)\omega^*_{N_3}(\xi_3)\omega_{L_1}(\eta_1)\omega_{L_2}(\eta_2)\omega^*_{L_3}(\eta_3) ,\\[5pt]
\mathcal{L}^{10}_1[\omega ,\zeta ]&:=\sum _{\begin{smallmatrix}N_1\sim N_3\gg N\sim N_2\\ L_1\sim L_3\gg N_2\gg L_2\\ M<N_1K\ll N_2L_3\end{smallmatrix}}\int _{\begin{smallmatrix}\xi=\xi_{123},\,\xi_2=\eta_{123}\\ K\leq |\xi_{13}|<2K\end{smallmatrix}}\frac{e^{it\phi(\vec{\xi})+it\phi(\vec{\eta})}}{\phi (\vec{\xi})\big[ \phi(\vec{\xi})+\phi(\vec{\eta})\big]}\psi_N(\xi)\psi_{N_2}(\xi_2)\xi_3\eta_3\\[-15pt]
&\hspace{120pt} \times \Big\{ \zeta _{N_1}(\xi_1)\omega^*_{N_3}(\xi_3)\omega_{L_1}(\eta_1)\omega_{L_2}(\eta_2)\omega^*_{L_3}(\eta_3) +\cdots \Big\} .
\end{align*}

We need to prove:
\begin{lem}\label{lem:L2}
For $j=5,6,\dots ,10$, we have the same estimates \eqref{est:LjR}--\eqref{est:Lj1} as in Lemma~\ref{lem:L1}.
\end{lem}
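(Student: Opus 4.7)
The strategy mirrors that of Lemma~\ref{lem:L1}, and I would proceed case by case for $j \in \{5,\ldots,10\}$, exploiting the outer resonance $|\phi(\vec{\xi})| \sim N_1 K > M$ together with the inner resonance $|\phi(\vec{\eta})|$ of the sizes listed in the tables following \eqref{defn:Lj2}. First I would handle the resonant pieces $\mathcal{L}^j_R$ by reassembling the inner $\eta$-integral as $\mathcal{F}P_{L_3}[w_{L_1}w_{L_2}\partial_x\bar{w}_{L_3}]$ (up to permutations and conjugations), so that $\mathcal{L}^j_R$ takes the form of $e^{it\xi^2}\psi_N(\xi)$ times a Fourier multiplier in the outer variables applied to this reassembled product together with $\hat{w}_{N_a},\hat{\bar{w}}_{N_b}$. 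The potentially small outer denominator $\xi_{13}$ or $\xi_{23}$ of size $K$ forces an appeal to Lemma~\ref{lem:CM3} at the outer level; a subsequent Coifman--Meyer-type bound at the inner level places one factor $w_{L_j}$ in $L^\infty$, absorbed by $\|w\|_{B^{0+}_{\infty,1}}$, and yields \eqref{est:LjR} after $\ell^1$ summation with a harmless $N_{\max}^{0-}$ extracted from the inner resonance.

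For the boundary terms $\mathcal{L}^j_0$ and the ``crude'' $\|z\|_{H^{\frac12}}$ part of $\mathcal{L}^j_1$, the non-resonance $|\phi(\vec{\eta})| \gg N_1 K$ implies $|\phi(\vec{\xi})+\phi(\vec{\eta})| \sim |\phi(\vec{\eta})|$, so the full multiplier has size $(N_1 K)^{-1}|\phi(\vec{\eta})|^{-1}|\xi_3\eta_3|$. Taking absolute values on the Fourier side and applying Young and H\"older exactly as in the proof of \eqref{est:Lj0}, the constraint $N_1 K > M$ provides the desired $M^{-1+}$ factor after absorbing a small power $(N_1 K)^{0-}$ for $K$-summability, and the remaining dyadic sums close by Cauchy--Schwarz against $\hat{H}^{\frac12}$ norms.

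The main obstacle is the sharp estimate \eqref{est:Lj1} with $\|z\|_{H^{-\frac12}+B^0_{1,\infty}}$. When $z$ sits at a low inner frequency (slot $L_2$), the multiplier bound $M^{-\frac12+}(N_1 K)^{-\frac12-}|\phi(\vec{\eta})|^{-1}$ together with absolute-value Young/H\"older already yields an estimate even with the stronger norm $\|z\|_{H^{-\frac12-}}$, which suffices. The truly delicate case is when $z$ replaces an inner top-frequency factor $\omega_{L_1}$ or $\omega_{L_3}$ in $\mathcal{L}^9,\mathcal{L}^{10}$ (inner type $\texttt{C1}$ being high-high-to-low), where Coifman--Meyer on the inner level is needed to place $z$ in $L^1$ while keeping $w$ in $L^\infty$. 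To decouple the outer multiplier from the inner level I would reuse the algebraic identity
\[ \frac{1}{\phi(\vec{\xi})\bigl[\phi(\vec{\xi})+\phi(\vec{\eta})\bigr]} = \frac{1}{\phi(\vec{\eta})\bigl[\phi(\vec{\xi})+\phi(\vec{\eta})\bigr]} - \frac{1}{\phi(\vec{\xi})\phi(\vec{\eta})} \]
employed for $\mathcal{L}^3,\mathcal{L}^4$: the first summand eliminates the outer $\phi(\vec{\xi})$ from the dangerous denominator and closes via the absolute-value argument with $\|z\|_{H^{-\frac12-}}$; the second summand decouples the two phases, so Lemmas~\ref{lem:CM1}--\ref{lem:CM2} applied to the inner integral give either an $L^\infty$ bound (for the $\|z\|_{B^0_{1,\infty}}$ estimate) or an $L^2$ bound (for the $\|z\|_{H^{-\frac12}}$ estimate), while the outer factor $(N_1 K)^{-1}$ is split as $(N_1 K)^{-\frac12-}M^{-\frac12+}$ to supply both the $M^{-\frac12+}$ gain and the $K$-summability. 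Keeping all dyadic sums simultaneously convergent while avoiding a logarithmic divergence in $K$ is the principal technical subtlety.
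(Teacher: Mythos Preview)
Your treatment of \eqref{est:Lj0} and \eqref{est:Lj1} follows essentially the paper's approach: take absolute values and apply Young/H\"older for the bulk of the cases, and for the genuinely delicate situation ($z$ replacing $\omega_{L_1}$ or $\omega_{L_3}$ in $\mathcal{L}^9,\mathcal{L}^{10}$) invoke the multiplier identity to decouple the phases. Note that your identity has a sign error---the correct form is $\frac{1}{\phi(\vec{\xi})[\phi(\vec{\xi})+\phi(\vec{\eta})]} = \frac{1}{\phi(\vec{\xi})\phi(\vec{\eta})} - \frac{1}{\phi(\vec{\eta})[\phi(\vec{\xi})+\phi(\vec{\eta})]}$---but this is harmless since both pieces are estimated in absolute value. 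One point worth stressing: for $j=5,6,7,8$ the outer top frequency $N_1$ dominates all inner frequencies (including $L_3$), so the absolute-value argument already gives a bound with $\|z\|_{H^{-\frac12-}}$ regardless of which slot $z$ occupies; the Coifman--Meyer machinery is needed only for $j=9,10$. Also, for the decoupled piece $\frac{1}{\phi(\vec{\xi})\phi(\vec{\eta})}$ the paper does not apply Coifman--Meyer at the outer level but instead takes absolute values there (keeping the inner block $\Omega$ intact), uses Young together with the localization $\mathbf{1}_{|\cdot|\sim K}$ to extract a factor $K^{\frac12-}$, and only then applies Lemma~\ref{lem:CM2} and Bernstein at the inner level to bound $\|\Omega\|_{L^2}$.

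The substantive divergence is in your handling of \eqref{est:LjR}. You propose using Lemma~\ref{lem:CM3} at the outer level, but that lemma provides only a $\sup_K$ bound, so summing over the range of admissible $K$ would cost a logarithm that your sketch does not account for; the ``harmless $N_{\max}^{0-}$'' you hope to extract is not available once the inner phase size $|\phi(\vec{\eta})|$ has been consumed. The paper's device is simpler and avoids this entirely: on the resonant set one has $|\phi(\vec{\xi})|\gtrsim |\phi(\vec{\eta})|$, so $|\phi(\vec{\xi})|^{-1}$ is replaced by the $K$-\emph{independent} reciprocal of the dyadic size of $|\phi(\vec{\eta})|$, after which the indicators $\mathbf{1}_{K\leq |\xi_{jk}|<2K}$ sum to $1$ and the $K$-decomposition is undone for free. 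The inner block is then estimated by a direct H\"older (no Coifman--Meyer needed there, since $\mathcal{L}^j_R$ carries no inner denominator), placing one $w_{L_k}$ in $L^\infty$.
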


\begin{proof}
{\bf Proof of \eqref{est:LjR}:}
The common strategy is that we take the absolute value and replace $|\phi(\vec{\xi})|^{-1}$ by the reciprocal of the dyadic size of $\phi(\vec{\eta})$, but keeping the product structure of $w_{L_1}w_{L_2}\bar{w}_{L_3}$ in the $x$ variable, and then undo the decomposition in $K$.
For instance, $\mathcal{L}^6_R$ can be rewritten as follows:
\begin{align*}
\mathcal{L}^6_R[\omega]&=\sum _{\begin{smallmatrix}N_1\sim N\gg N_2\sim N_3\\ L_3\sim N_1\gg L_1,L_2 \\ N_1K>M,\,N_1K\gtrsim N_1L_3\end{smallmatrix}}\psi_N(\xi)\int _{\begin{smallmatrix}\xi=\xi_{123}\\ K\leq |\xi_{23}|<2K\end{smallmatrix}}\frac{e^{it\phi(\vec{\xi})}}{\phi (\vec{\xi})}\xi_3\Omega (\xi_1)\omega_{N_2}(\xi_2)\omega^*_{N_3}(\xi_3),
\end{align*}
where
\begin{align*}
\Omega (\xi_1)=\Omega _{N_1,L_1,L_2,L_3}(\xi_1)&:=\psi_{N_1}(\xi_1)e^{it\xi_1^2}\int_{\xi_1=\eta_{123}}\eta_3\hat{w}_{L_1}(\eta_1)\hat{w}_{L_2}(\eta_2)\hat{\bar{w}}_{L_3}(\eta_3) \\
&\;=\mathcal{F}\Big[ S(-t)P_{N_1}\big[ w_{L_1}w_{L_2}\partial_x\bar{w}_{L_3}\big] \Big] (\xi_1).
\end{align*}
We then estimate $|\mathcal{L}^6_R[\omega ]|$ as follows:
\begin{align*}
|\mathcal{L}^6_R[\omega ]|&\lesssim \sum _{\begin{smallmatrix}N_1\sim N\gg N_2\sim N_3\\ L_3\sim N_1\gg L_1,L_2 \\ N_1K>M,\,N_1K\gtrsim N_1L_3\end{smallmatrix}}\frac{N_3}{N_1L_3}\psi_N(\xi)\int _{\begin{smallmatrix}\xi=\xi_{123}\\ K\leq |\xi_{23}|<2K\end{smallmatrix}}|\Omega (\xi_1)||\omega_{N_2}(\xi_2)||\omega^*_{N_3}(\xi_3)|\\
&\lesssim \sum _{\begin{smallmatrix}N_1\sim N\gg N_2\sim N_3\\ L_3\sim N_1\gg L_1,L_2\end{smallmatrix}}\frac{N_3}{N_1L_3}\psi_N(\xi) \Big( |\Omega |*|\omega_{N_2}|*|\omega^*_{N_3}|\Big) (\xi ).
\end{align*}
To estimate the $\hat{H}^{\frac12}$ norm, we first use Young to obtain the bound
\[ \sum _{\begin{smallmatrix}N_1\sim N\gg N_2\sim N_3\\ L_3\sim N_1\gg L_1,L_2\end{smallmatrix}}\frac{N^{\frac12}N_3}{N_1L_3}\| \Omega \|_{L^2}\| \omega_{N_2}\|_{L^1}\| \omega_{N_3}\|_{L^1}, \]
and then insert the H\"older estimate on the physical side
\[ \| \Omega \|_{L^2}=\| P_{N_1}[ w_{L_1}w_{L_2}\partial_x\bar{w}_{L_3}]\|_{L^2}\lesssim L_3\| w_{L_1}\|_{L^\infty}\| w_{L_2}\|_{L^\infty}\| w_{L_3}\|_{L^2} \]
to get the bound
\begin{align*}
&\sum _{\begin{smallmatrix}N_1\sim N\gg N_2\sim N_3\\ L_3\sim N_1\gg L_1,L_2\end{smallmatrix}}\frac{N^{\frac12}N_3L_3}{N_1L_3}\| w_{L_1}\|_{L^\infty}\| w_{L_2}\|_{L^\infty}\| w_{L_3}\|_{L^2}\| \omega_{N_2}\|_{L^1}\| \omega_{N_3}\|_{L^1}\\
&\lesssim \sum _{\begin{smallmatrix}N_1\sim N\gg N_2\sim N_3\\ L_3\sim N_1\gg L_1,L_2\end{smallmatrix}}\frac{N^{\frac12}N_3L_3^{\frac12}}{N_1L_3}\| w_{L_1}\|_{L^\infty}\| w_{L_2}\|_{L^\infty}\| w_{L_3}\|_{H^{\frac12}}\| w_{N_2}\|_{H^{\frac12}}\| w_{N_3}\|_{H^{\frac12}}\\
&\lesssim \sum _{L_1,L_2}\| w_{L_1\vee L_2}\|_{L^\infty}\| w_{L_1\wedge L_2}\|_{H^{\frac12}}\sum _{N_2\sim N_3}\| w_{N_2}\|_{H^{\frac12}}\| w_{N_3}\|_{H^{\frac12}}\sum _{N_1\sim N\sim L_3\gg N_3}\frac{N_3}{L_3}\| w_{L_3}\|_{H^{\frac12}}\\
&\lesssim \| w\|_{B^{0+}_{\infty,1}}\| w\|_{H^{\frac12}}^4.
\end{align*}

For $\mathcal{L}^5_R$, a similar argumet (but use Young/H\"older in a different way) yields the bound
\begin{align*}
&\sum _{\begin{smallmatrix}N_1\sim N\gg N_2\sim N_3\\ L_3\sim N_3\gg L_1,L_2\end{smallmatrix}}\frac{N^{\frac12}N_3}{N_3L_3}\| \omega_{N_1}\|_{L^2}\| \mathcal{F}[S(-t)P_{N_2}(w_{L_1}w_{L_2}\partial_x\bar{w}_{L_3})\|_{L^1}\| \omega_{N_3}\|_{L^1}\\
&\lesssim \sum _{\begin{smallmatrix}N_1\sim N\gg N_2\sim N_3\\ L_3\sim N_3\gg L_1,L_2\end{smallmatrix}}\frac{N^{\frac12}N_3N_2^{\frac12}L_3^{\frac12}}{N_3L_3N_1^{\frac12}}\| w_{N_1}\|_{H^{\frac12}}\| w_{L_1}\|_{L^\infty}\| w_{L_2}\|_{L^\infty}\| w_{L_3}\|_{H^{\frac12}}\| w_{N_3}\|_{H^{\frac12}}\\
&\lesssim \sum _{L_1,L_2}\| w_{L_1\vee L_2}\|_{L^\infty}\| w_{L_1\wedge L_2}\|_{H^{\frac12}}\sum _{N_2\sim N_3\sim L_3}\| w_{N_3}\|_{H^{\frac12}}\| w_{L_3}\|_{H^{\frac12}}\sum _{N_1\sim N}\| w_{N_1}\|_{H^{\frac12}}\\
&\lesssim \| w\|_{B^{0+}_{\infty,1}}\| w\|_{H^{\frac12}}^4,
\end{align*}
where at the last inequality we have used the fact that the sum in $N_1\sim N$ is originally the $\ell^2$ sum (this kind of observation will be made below without being mentioned).

For $\mathcal{L}^7_R$ to $\mathcal{L}^{10}_R$, we first apply H\"older in $\xi$ to obtain the $L^\infty_\xi$ norm multiplied by $N$.
Then, a similar argument with Young/H\"older yields the following bounds:
\begin{align*}
\| \mathcal{L}^7_R\|_{\hat{H}^{\frac12}}&\lesssim \sum _{\begin{smallmatrix}N_1\sim N_3\gg N\sim N_2\\ L_3\sim N_3\gg L_1,L_2\end{smallmatrix}}\frac{N}{N_3}\| w_{L_1}\|_{L^\infty}\| w_{L_2}\|_{L^\infty}\| w_{L_3}\|_{H^{\frac12}}\| w_{N_2}\|_{H^{\frac12}}\| w_{N_3}\|_{H^{\frac12}},\\
\| \mathcal{L}^8_R\|_{\hat{H}^{\frac12}}&\lesssim \sum _{\begin{smallmatrix}N_1\sim N_3\gg N\sim N_2\\ L_3\sim N_2\gg L_1,L_2\end{smallmatrix}}\| w_{L_1}\|_{L^\infty}\| w_{L_2}\|_{L^\infty}\| w_{L_3}\|_{H^{\frac12}}\| w_{N_1}\|_{H^{\frac12}}\| w_{N_3}\|_{H^{\frac12}},\\
\| \mathcal{L}^9_R\|_{\hat{H}^{\frac12}}&\lesssim \sum _{\begin{smallmatrix}N_1\sim N\gg N_2\sim N_3\\ L_1\sim L_3\gg N_1\gg L_2\end{smallmatrix}}\frac{N_1^{\frac12}}{L_1^{\frac12}}\| w_{L_1}\|_{H^{\frac12}}\| w_{L_2}\|_{L^\infty}\| w_{L_3}\|_{H^{\frac12}}\| w_{N_2}\|_{H^{\frac12}}\| w_{N_3}\|_{H^{\frac12}},\\
\| \mathcal{L}^{10}_R\|_{\hat{H}^{\frac12}}&\lesssim \sum _{\begin{smallmatrix}N_1\sim N_3\gg N\sim N_2\\ L_1\sim L_3\gg N_2 \gg L_2\end{smallmatrix}}\frac{N_2^{\frac12}}{L_1^{\frac12}}\| w_{L_1}\|_{H^{\frac12}}\| w_{L_2}\|_{L^\infty}\| w_{L_3}\|_{H^{\frac12}}\| w_{N_1}\|_{H^{\frac12}}\| w_{N_3}\|_{H^{\frac12}}.
\end{align*}
The summations are performed appropriately in each case, yielding the bound $\| w\|_{B^{0+}_{\infty,1}}\|w\|_{H^{\frac12}}^4$.

\medskip
{\bf Proof of \eqref{est:Lj0}:}
This time we can take the absolute value of each function and replace $|\phi(\vec{\xi})|^{-1}$ by $M^{-1+}(N_{\max}K)^{0-}\leq M^{-1+}N_{\max}^{0-}$, also $|\phi(\vec{\xi})+\phi(\vec{\eta})|^{-1}$ by the reciprocal of the dyadic size of $|\phi(\vec{\eta})|$, and then undo the decomposition in $K$.
Similarly to the estimate of $\mathcal{L}^j_R$, we begin with H\"older to get the $L^\infty_\xi$ norm multiplied by $N$ for $\mathcal{L}^7_0$ to $\mathcal{L}^{10}_0$.
This time we simply apply Young to the convolution of five $|\omega|$'s.
Then, we get the following bounds:
\begin{align*}
\| \mathcal{L}^5_0\|_{\hat{H}^{\frac12}}&\lesssim M^{-1+}\sum _{\begin{smallmatrix}N_1\sim N\gg N_2\sim N_3\\ L_3\sim N_3\gg L_1,L_2\end{smallmatrix}}N_{\max}^{0-}\| \omega _{N_1}\|_{\hat{H}^{\frac12}}\| \omega_{N_3}\|_{L^1}\| \omega_{L_1}\|_{L^1}\| \omega_{L_2}\|_{L^1}\| \omega_{L_3}\|_{L^1},\\
\| \mathcal{L}^6_0\|_{\hat{H}^{\frac12}}&\lesssim M^{-1+}\sum _{\begin{smallmatrix}N_1\sim N\gg N_2\sim N_3\\ L_3\sim N_1\gg L_1,L_2\end{smallmatrix}}N_{\max}^{0-}\frac{N_3}{N_1}\| \omega _{N_2}\|_{L^1}\| \omega_{N_3}\|_{L^1}\| \omega_{L_1}\|_{L^1}\| \omega_{L_2}\|_{L^1}\| \omega_{L_3}\|_{\hat{H}^{\frac12}},\\
\| \mathcal{L}^7_0\|_{\hat{H}^{\frac12}}&\lesssim M^{-1+}\sum _{\begin{smallmatrix}N_1\sim N_3\gg N\sim N_2\\ L_3\sim N_3\gg L_1,L_2\end{smallmatrix}}N_{\max}^{0-}\frac{N}{N_3}\| \omega _{N_2}\|_{L^1}\| \omega_{N_3}\|_{\hat{H}^{\frac12}}\| \omega_{L_1}\|_{L^1}\| \omega_{L_2}\|_{L^1}\| \omega_{L_3}\|_{\hat{H}^{\frac12}},\\
\| \mathcal{L}^8_0\|_{\hat{H}^{\frac12}}&\lesssim M^{-1+}\sum _{\begin{smallmatrix}N_1\sim N_3\gg N\sim N_2\\ L_3\sim N_2\gg L_1,L_2\end{smallmatrix}}N_{\max}^{0-}\| \omega _{N_1}\|_{\hat{H}^{\frac12}}\| \omega_{N_3}\|_{\hat{H}^{\frac12}}\| \omega_{L_1}\|_{L^1}\| \omega_{L_2}\|_{L^1}\| \omega_{L_3}\|_{L^1},\\
\| \mathcal{L}^9_0\|_{\hat{H}^{\frac12}}&\lesssim M^{-1+}\sum _{\begin{smallmatrix}N_1\sim N\gg N_2\sim N_3\\ L_1\sim L_3\gg N_1\gg L_2\end{smallmatrix}}N_{\max}^{0-}\frac{N_3}{L_3}\| \omega _{N_2}\|_{L^1}\| \omega_{N_3}\|_{L^1}\| \omega_{L_1}\|_{\hat{H}^{\frac12}}\| \omega_{L_2}\|_{L^1}\| \omega_{L_3}\|_{\hat{H}^{\frac12}},\\
\| \mathcal{L}^{10}_0\|_{\hat{H}^{\frac12}}&\lesssim M^{-1+}\sum _{\begin{smallmatrix}N_1\sim N_3\gg N\sim N_2\\ L_1\sim L_3\gg N_2 \gg L_2 \\ N_3\geq L_3 \end{smallmatrix}}N_{\max}^{0-}\| \omega _{N_1}\|_{\hat{H}^{\frac12}}\| \omega_{N_3}\|_{\hat{H}^{\frac12}}\| \omega_{L_1}\|_{L^1}\| \omega_{L_2}\|_{L^1}\| \omega_{L_3}\|_{L^1}\\
&\quad +M^{-1+}\sum _{\begin{smallmatrix}N_1\sim N_3\gg N\sim N_2\\ L_1\sim L_3\gg N_2 \gg L_2 \\ N_3<L_3 \end{smallmatrix}}N_{\max}^{0-}\frac{N_3}{L_3}\| \omega _{N_1}\|_{L^1}\| \omega_{N_3}\|_{L^1}\| \omega_{L_1}\|_{\hat{H}^{\frac12}}\| \omega_{L_2}\|_{L^1}\| \omega_{L_3}\|_{\hat{H}^{\frac12}}.
\end{align*}
For $\mathcal{L}^5_0$ to $\mathcal{L}^8_0$ and the first term of $\mathcal{L}^{10}_0$, $N_{\max}$ bounds all of $N,N_j,L_j$, and hence the factor $N_{\max}^{0-}$ ensures summability.
For $\mathcal{L}^9_0$ and the second term of $\mathcal{L}^{10}_0$, the factor $N_{\max}^{0-}\frac{N_3}{L_3}\lesssim L_3^{0-}$ is sufficient for summability.
Therefore, in each case we obtain the bound $M^{-1+}\| w\|_{H^{\frac12}}^5$.

\medskip
{\bf Proof of \eqref{est:Lj1}:}
Here, we need to exploit the decomposition in $K$.
In most cases (for $j=5,6,7,8$ and partially for $j=9,10$), it turns out that we can take the absolute value and obtain the estimate with $\| z\|_{H^{-\frac12-}}$.
Let us begin with $\mathcal{L}^6$.
We bound $\mathcal{L}^6_0[\omega]$ as follows (and obtain the estimate of $\mathcal{L}^6_1[\omega ,\zeta]$ by replacing one of $\omega$'s with $\zeta$):
\begin{align*}
|\mathcal{L}^6_0[\omega ]|\lesssim M^{-\frac12+}\sum _{\begin{smallmatrix}N_1\sim N\gg N_2\sim N_3\\ L_3\sim N_1\gg L_1,L_2 \\ K\lesssim N_2\end{smallmatrix}}\frac{N_3L_3}{(N_1K)^{\frac12+}N_1L_3}\psi_N\Big( \psi_{N_1}&\big( |\omega_{L_1}|*|\omega_{L_2}|*|\omega^*_{L_3}|\big) \Big) \\[-20pt]
&*\Big( \mathbf{1}_{|\cdot |\sim K}\big( |\omega_{N_2}|*|\omega^*_{N_3}|\big) \Big) ,
\end{align*}
and by Young/H\"older,
\begin{align*}
&\| \mathcal{L}^6_0[\omega ]\|_{\hat{H}^{\frac12}}\\
&\lesssim M^{-\frac12+}\sum _{\begin{smallmatrix}N_1\sim N\gg N_2\sim N_3\\ L_3\sim N_1\gg L_1,L_2 \\ K\lesssim N_2\end{smallmatrix}}\frac{N^{\frac12}N_3L_3}{(N_1K)^{\frac12+}N_1L_3}\| |\omega_{L_1}|*|\omega_{L_2}|*|\omega^*_{L_3}|\|_{L^2}\| \mathbf{1}_{|\cdot |\sim K}\big( |\omega_{N_2}|*|\omega^*_{N_3}|\big) \|_{L^1}\\
&\lesssim M^{-\frac12+}\sum _{\begin{smallmatrix}N_1\sim N\gg N_2\sim N_3\\ L_3\sim N_1\gg L_1,L_2 \\ K\lesssim N_2\end{smallmatrix}}\frac{N^{\frac12}N_3L_3K}{(N_1K)^{\frac12+}N_1L_3}\| \omega_{L_1}\|_{L^1}\| \omega_{L_2}\|_{L^1} \| \omega_{L_3}\|_{L^2} \| \omega_{N_2}\|_{L^2}\| \omega_{N_3}\|_{L^2}\\
&\lesssim M^{-\frac12+}\sum _{\begin{smallmatrix}N_1\sim N\gg N_2\sim N_3\\ L_3\sim N_1\gg L_1,L_2\end{smallmatrix}}\frac{1}{N_1^{1+}}\| \omega_{L_1}\|_{\hat{H}^{\frac12}}\| \omega_{L_2}\|_{\hat{H}^{\frac12}} \| \omega_{L_3}\|_{\hat{H}^{\frac12}} \| \omega_{N_2}\|_{\hat{H}^{\frac12}}\| \omega_{N_3}\|_{\hat{H}^{\frac12}}.
\end{align*}
Since $N_1\sim N_{\max(2)}:=\max \{N,N_j,L_j:j=1,2,3\}$, we can measure any of five $w$'s in $H^{-\frac12-}$ and still have $N_{\max(2)}^{0-}$ to ensure summability.
$\mathcal{L}^7$ is handled in a similar way: we have
\begin{align*}
&\| \mathcal{L}^7_0[\omega ]\|_{\hat{H}^{\frac12}}\\
&\lesssim M^{-\frac12+}\sum _{\begin{smallmatrix}N_1\sim N_3\gg N\sim N_2\\ L_3\sim N_3\gg L_1,L_2 \\ K\lesssim N\end{smallmatrix}}\frac{N^{\frac12}N_3L_3}{(N_1K)^{\frac12+}N_3L_3}\Big\| \Big( \textbf{1}_{|\cdot |\sim K}\big[ \big( |\omega_{L_1}|*|\omega_{L_2}|*|\omega^*_{L_3}|\big) * |\omega^*_{N_3}|\big] \Big) *|\omega_{N_2}|\Big\|_{L^2}\\
&\lesssim M^{-\frac12+}\sum _{\begin{smallmatrix}N_1\sim N_3\gg N\sim N_2\\ L_3\sim N_3\gg L_1,L_2 \\ K\lesssim N\end{smallmatrix}}\frac{N^{\frac12}N_3L_3K^{\frac12-}}{N_1^{\frac12+}N_3L_3}\| \omega_{L_1}\|_{L^1}\| \omega_{L_2}\|_{L^1} \| \omega_{L_3}\|_{L^2} \| \omega_{N_2}\|_{L^2}\| \omega_{N_3}\|_{L^2}\\
&\lesssim M^{-\frac12+}\sum _{\begin{smallmatrix}N_1\sim N_3\gg N\sim N_2\\ L_3\sim N_3\gg L_1,L_2\end{smallmatrix}}\frac{1}{N_1^{1+}}\| \omega_{L_1}\|_{\hat{H}^{\frac12}}\| \omega_{L_2}\|_{\hat{H}^{\frac12}} \| \omega_{L_3}\|_{\hat{H}^{\frac12}} \| \omega_{N_2}\|_{\hat{H}^{\frac12}}\| \omega_{N_3}\|_{\hat{H}^{\frac12}}.
\end{align*}
We next consider $\mathcal{L}^5$.
From the non-resonant condition $N_1K\ll N_3L_3$, we have $K\ll \frac{L_3^2}{N_1}$ and $N_1\ll L_3^2$.
Then, by a similar argument we obtain that
\begin{align*}
\| \mathcal{L}^5_0[\omega ]\|_{\hat{H}^{\frac12}}
&\lesssim M^{-\frac12+} \sum_{\begin{smallmatrix}N_1\sim N\gg N_2\sim N_3\\ L_3\sim N_3\gg L_1,L_2\\ N_1K\ll N_3L_3\end{smallmatrix}}\frac{K^{\frac12-}}{N_1^{\frac12+}L_3}\| \omega_{L_1}\|_{\hat{H}^{\frac12}}\| \omega_{L_2}\|_{\hat{H}^{\frac12}} \| \omega_{L_3}\|_{\hat{H}^{\frac12}} \| \omega_{N_1}\|_{\hat{H}^{\frac12}}\| \omega_{N_3}\|_{\hat{H}^{\frac12}}\\
&\lesssim M^{-\frac12+} \sum_{\begin{smallmatrix}N_1\sim N\gg N_2\sim N_3\\ L_3\sim N_3\gg L_1,L_2\\ N_1\ll L_3^2\end{smallmatrix}}\frac{1}{N_1L_3^{0+}}\| \omega_{L_1}\|_{\hat{H}^{\frac12}}\| \omega_{L_2}\|_{\hat{H}^{\frac12}} \| \omega_{L_3}\|_{\hat{H}^{\frac12}} \| \omega_{N_1}\|_{\hat{H}^{\frac12}}\| \omega_{N_3}\|_{\hat{H}^{\frac12}}\\
&\lesssim M^{-\frac12+} \sum_{\begin{smallmatrix}N_1\sim N\gg N_2\sim N_3\\ L_3\sim N_3\gg L_1,L_2\end{smallmatrix}}\frac{1}{N_1^{1+}}\| \omega_{L_1}\|_{\hat{H}^{\frac12}}\| \omega_{L_2}\|_{\hat{H}^{\frac12}} \| \omega_{L_3}\|_{\hat{H}^{\frac12}} \| \omega_{N_1}\|_{\hat{H}^{\frac12}}\| \omega_{N_3}\|_{\hat{H}^{\frac12}}.
\end{align*}
Since $N_1\sim N_{\max(2)}$, we have $\| z\|_{H^{-\frac12-}}$ and still $N_{\max(2)}^{0-}$ to perform the summation.
The treatment of $\mathcal{L}^8$ is similar: 
\begin{align*}
\| \mathcal{L}^8_0[\omega]\|_{\hat{H}^{\frac12}}&\lesssim M^{-\frac12+}\sum_{\begin{smallmatrix}N_1\sim N_3\gg N\sim N_2\\ L_3\sim N_2\gg L_1,L_2\\ N_1K\ll N_2L_3\end{smallmatrix}}\frac{K^{\frac12-}}{N_1^{\frac12+}L_3}\| \omega_{L_1}\|_{\hat{H}^{\frac12}}\| \omega_{L_2}\|_{\hat{H}^{\frac12}} \| \omega_{L_3}\|_{\hat{H}^{\frac12}} \| \omega_{N_1}\|_{\hat{H}^{\frac12}}\| \omega_{N_3}\|_{\hat{H}^{\frac12}}\\
&\lesssim M^{-\frac12+}\sum_{N,N_j,L_j}N_{\max(2)}^{-1-}\| \omega_{L_1}\|_{\hat{H}^{\frac12}}\| \omega_{L_2}\|_{\hat{H}^{\frac12}} \| \omega_{L_3}\|_{\hat{H}^{\frac12}} \| \omega_{N_1}\|_{\hat{H}^{\frac12}}\| \omega_{N_3}\|_{\hat{H}^{\frac12}}.
\end{align*}

Let us next consider $\mathcal{L}^9$.
A similar argument gives the following bound:
\begin{align*}
\| \mathcal{L}^9_0[\omega]\|_{\hat{H}^{\frac12}}&\lesssim M^{-\frac12+}\sum_{\begin{smallmatrix}N_1\sim N\gg N_2\sim N_3\\ L_1\sim L_3\gg N_1\gg L_2\\ K\lesssim N_2\end{smallmatrix}}\frac{NN_3L_3}{(N_1K)^{\frac12+}N_1L_3}\Big\| \big( |\omega_{L_1}|*|\omega_{L_2}|*|\omega^*_{L_3}|\big) \\[-25pt]
&\hspace{220pt} * \Big( \mathbf{1}_{|\cdot |\sim K}\big( |\omega_{N_2}|*|\omega^*_{N_3}|\big) \Big) \Big\| _{L^\infty}\\
&\lesssim M^{-\frac12+}\sum_{\begin{smallmatrix}N_1\sim N\gg N_2\sim N_3\\ L_1\sim L_3\gg N_1\gg L_2\\ K\lesssim N_2\end{smallmatrix}}\frac{NN_3L_3K^{\frac12-}}{N_1^{\frac12+}N_1L_3}\| |\omega_{L_1}|*|\omega_{L_2}|*|\omega^*_{L_3}|\|_{L^\infty} \| |\omega_{N_2}|*|\omega^*_{N_3}| \|_{L^\infty}\\
&\lesssim M^{-\frac12+}\sum_{\begin{smallmatrix}N_1\sim N\gg N_2\sim N_3\\ L_1\sim L_3\gg N_1\gg L_2\end{smallmatrix}}\frac{1}{N_1^{0+}L_3}\| \omega_{L_1}\|_{\hat{H}^{\frac12}}\| \omega_{L_2}\|_{\hat{H}^{\frac12}} \| \omega_{L_3}\|_{\hat{H}^{\frac12}} \| \omega_{N_2}\|_{\hat{H}^{\frac12}}\| \omega_{N_3}\|_{\hat{H}^{\frac12}}.
\end{align*}
Here, we have $L_1\sim L_3\gg N_1\gtrsim$ (others).
Hence, if $\zeta =\mathcal{N}[\omega ]$ replaces one of $\omega_{L_2},\omega_{N_2},\omega_{N_3}$, then we have $\| z\|_{H^{-\frac12-}}$ and still $L_3^{0-}$ to ensure the summability.
Moreover, even if $\omega_{L_1}$ or $\omega_{L_3}$ is replaced, the above estimate is sufficient to obtain the bound with $\| z\|_{H^{-\frac12}}$.
Therefore, from here we focus on the situation that $\zeta$ replaces $\omega_{L_1}$, and prove the estimate with $\| z\|_{B^0_{1,\infty}}$.
Similarly to the argument for $\mathcal{L}^4_1[\omega,\zeta]$ in the preceding subsection, we use the following decomposition of the multiplier:
\begin{gather}\label{mult-id2}
\frac{1}{\phi (\vec{\xi})\big[ \phi (\vec{\xi})+\phi (\vec{\eta})\big]}=\frac{1}{\phi(\vec{\xi})\phi(\vec{\eta})}-\frac{1}{\phi (\vec{\eta})\big[ \phi (\vec{\xi})+\phi (\vec{\eta})\big]}.
\end{gather}
The contribution from the second term is easier to estimate.
In fact, we have the multiplier bound $M^{-\frac12+}(N_1L_3)^{-\frac32-}$, and a crude estimate with Young (after undoing the decomposition in $K$ and applying H\"older in $\xi$) implies the bound
\begin{align*}
&M^{\frac12+}\sum_{\begin{smallmatrix}N_1\sim N\gg N_2\sim N_3\\ L_1\sim L_3\gg N_1\gg L_2\end{smallmatrix}}\frac{NN_3L_3}{N_1^{\frac32+}L_3^{\frac32+}}\| \omega_{L_1}\|_{L^2}\| \omega_{L_2}\|_{L^1}\| \omega_{L_3}\|_{L^2}\| \omega_{N_2}\|_{L^1}\| \omega_{N_3}\|_{L^1} \\
&\lesssim M^{-\frac12+}\sum_{N,N_j,L_j}N_{\max(2)}^{-1-}\| \omega_{L_1}\|_{\hat{H}^{\frac12}}\| \omega_{L_2}\|_{\hat{H}^{\frac12}} \| \omega_{L_3}\|_{\hat{H}^{\frac12}} \| \omega_{N_2}\|_{\hat{H}^{\frac12}}\| \omega_{N_3}\|_{\hat{H}^{\frac12}}.
\end{align*}
On the other hand, thanks to the separation of variables, the contribution from the first term can be written as
\begin{align*}
&\sum _{\begin{smallmatrix}N_1\sim N\gg N_2\sim N_3\\ L_1\sim L_3\gg N_1\gg L_2\\ M<N_1K\ll N_1L_3\end{smallmatrix}}\int _{\begin{smallmatrix}\xi=\xi_{123},\,\xi_1=\eta_{123}\\ K\leq |\xi_{23}|<2K\end{smallmatrix}}\frac{e^{it\phi(\vec{\xi})+it\phi(\vec{\eta})}}{\phi (\vec{\xi})\phi(\vec{\eta})}\psi_N(\xi)\psi_{N_1}(\xi_1)\xi_3\eta_3\\[-10pt]
&\hspace{160pt} \times \zeta_{L_1}(\eta_1)\omega_{L_2}(\eta_2)\omega^*_{L_3}(\eta_3)\omega _{N_2}(\xi_2)\omega^*_{N_3}(\xi_3)\\[5pt]
&\quad =\sum _{\begin{smallmatrix}N_1\sim N\gg N_2\sim N_3\\ L_1\sim L_3\gg N_1\gg L_2\\ M<N_1K\ll N_1L_3\end{smallmatrix}}\psi_N(\xi)\int _{\begin{smallmatrix}\xi=\xi_{123}\\ K\leq |\xi_{23}|<2K\end{smallmatrix}}\frac{e^{it\phi(\vec{\xi})}}{\phi (\vec{\xi})}\xi_3 \Omega (\xi_1)\omega _{N_2}(\xi_2)\omega^*_{N_3}(\xi_3),
\end{align*}
where 
\begin{align*}
\Omega (\xi_1)&=\Omega_{N_1,L_1,L_2,L_3}(\xi_1) :=\psi_{N_1}(\xi_1)\int _{\xi_1=\eta_{123}}\frac{e^{it\phi(\vec{\eta})}}{\phi(\vec{\eta})}\eta_3\zeta_{L_1}(\eta_1)\omega_{L_2}(\eta_2)\omega^*_{L_3}(\eta_3)\\
&=\mathcal{F}\Big[ S(-t)P_{N_1}\mathcal{F}^{-1}\Big[ \int _{\xi_1=\eta_{123}}\frac{\eta_3}{\phi(\vec{\eta})}\hat{z}_{L_1}(\eta_1)\hat{w}_{L_2}(\eta_2)\hat{\bar{w}}_{L_3}(\eta_3)\Big] \Big] .
\end{align*}
We first take the absolute value (keeping the structure of $\Omega$) and applying Young, which gives the bound
\begin{align*}
&M^{-\frac12+}\sum _{\begin{smallmatrix}N_1\sim N\gg N_2\sim N_3\\ L_1\sim L_3\gg N_1\gg L_2\\ K\lesssim N_2\end{smallmatrix}}\frac{N^{\frac12}N_3}{(N_1K)^{\frac12+}}\Big\| |\Omega|*\Big( \mathbf{1}_{|\cdot |\sim K}\big( |\omega_{N_2}|*|\omega^*_{N_3}|\big) \Big) \Big\|_{L^2} \\
&\lesssim M^{-\frac12+}\sum _{\begin{smallmatrix}N_1\sim N\gg N_2\sim N_3\\ L_1\sim L_3\gg N_1\gg L_2\end{smallmatrix}}\frac{N^{\frac12}N_2^{\frac12-}}{N_1^{\frac12+}}\| \Omega\|_{L^2}\| \omega_{N_2}\|_{\hat{H}^{\frac12}}\| \omega_{N_3}\|_{\hat{H}^{\frac12}}.
\end{align*}
Next, application of Plancherel, Bernstein and Lemma~\ref{lem:CM2} with
\[ b(\xi_1,\eta_1,\eta_2,\eta_3)=\frac{\eta_3}{\eta_{23}(\xi_1-\eta_2)},\qquad B=\frac{L_3}{L_3N_1}=N_1^{-1}\]
imply that
\[ \| \Omega \|_{L^2}\lesssim N_1^{\frac12}\| S(t)\mathcal{F}^{-1}\Omega \|_{L^1}\lesssim N_1^{-\frac12}\| z_{L_1}\|_{L^1}\| w_{L_2}\|_{L^\infty}\| w_{L_3}\|_{L^\infty}.\]
Inserting this into the above, we obtain the bound
\begin{align*}
&M^{-\frac12+}\sum_{L_1\sim L_3}\| z_{L_1}\|_{L^1}\| w_{L_3}\|_{L^\infty}\sum _{N,N_j,L_2\lesssim N_1}N_1^{0-}\| w_{L_2}\|_{H^{\frac12}}\| w_{N_2}\|_{H^{\frac12}}\| w_{N_3}\|_{H^{\frac12}} \\
&\lesssim M^{-\frac12+}\| z\|_{B^0_{1,\infty}}\| w\|_{B^0_{\infty,1}}\| w\|_{H^{\frac12}}^3.
\end{align*}

Finally, we consider the estimate of $\mathcal{L}^{10}_1[\omega ,\zeta]$.
For this term, we have either $N_1\sim N_3\sim N_{\max(2)}$ or $L_1\sim L_3\sim N_{\max(2)}$.
When the former occurs, we apply the previous argument for $\mathcal{L}^5$.
Noticing that the non-resonant condition $N_1K\ll N_2L_3$ implies $K\ll \frac{L_3^2}{N_1}$ and $N_1\ll L_3^2$, we take the absolute value, apply H\"older in $\xi$ and further Young/H\"older to obtain
\begin{align*}
\| \mathcal{L}^{10}_0[\omega]\|_{\hat{H}^{\frac12}}
&\lesssim M^{-\frac12+}\sum_{\begin{smallmatrix}N_1\sim N_3\gg N\sim N_2\\ L_1\sim L_3\gg N_2\gg L_2\\ N_1\gtrsim L_3,\,N_1K\ll N_2L_3\end{smallmatrix}}\frac{NN_3L_3}{(N_1K)^{\frac12+}N_2L_3}\Big\| \big( |\omega_{L_1}|*|\omega_{L_2}|*|\omega^*_{L_3}|\big) \\[-25pt]
&\hspace{220pt} * \Big( \mathbf{1}_{|\cdot |\sim K}\big( |\omega_{N_1}|*|\omega^*_{N_3}|\big) \Big) \Big\| _{L^\infty}\\
&\lesssim M^{-\frac12+}\sum_{\begin{smallmatrix}N_1\sim N_3\gg N\sim N_2\\ L_1\sim L_3\gg N_2\gg L_2\\ N_1\gtrsim L_3,\,N_1K\ll N_2L_3\end{smallmatrix}}\frac{N_3K^{\frac12-}}{N_1^{\frac12+}}\| |\omega_{L_1}|*|\omega_{L_2}|*|\omega^*_{L_3}|\|_{L^\infty} \| |\omega_{N_1}|*|\omega^*_{N_3}| \|_{L^\infty}\\
&\lesssim M^{-\frac12+}\sum_{\begin{smallmatrix}N_1\sim N_3\gg N\sim N_2\\ L_1\sim L_3\gg N_2\gg L_2\\ L_3\lesssim N_1\ll L_3^2\end{smallmatrix}}\frac{N_3L_3^{1-}}{N_1}\| \omega_{L_1}\|_{L^2}\| \omega_{L_2}\|_{L^1} \| \omega_{L_3}\|_{L^2} \| \omega_{N_1}\|_{L^2}\| \omega_{N_3}\|_{L^2}\\
&\lesssim M^{-\frac12+}\sum_{N,N_j,L_j}N_{\max(2)}^{-1-}\| \omega_{L_1}\|_{\hat{H}^{\frac12}}\| \omega_{L_2}\|_{\hat{H}^{\frac12}} \| \omega_{L_3}\|_{\hat{H}^{\frac12}} \| \omega_{N_1}\|_{\hat{H}^{\frac12}}\| \omega_{N_3}\|_{\hat{H}^{\frac12}}.
\end{align*}
It remains to consider the case $L_1\sim L_3\sim N_{\max(2)}\gg N_1\gtrsim$ (others), and we apply the previous argument for $\mathcal{L}^9$.
The same calculation gives the bound
\begin{align*}
&M^{-\frac12+}\sum_{\begin{smallmatrix}N_1\sim N_3\gg N\sim N_2\\ L_1\sim L_3\gg N_2\gg L_2\\ L_3\gg N_1,\,K\lesssim N\end{smallmatrix}}\frac{N_3K^{\frac12-}}{N_1^{\frac12+}}\| \omega_{L_1}\|_{L^2}\| \omega_{L_2}\|_{L^1} \| \omega_{L_3}\|_{L^2} \| \omega_{N_1}\|_{L^2}\| \omega_{N_3}\|_{L^2}\\
&\lesssim M^{-\frac12+}\sum_{\begin{smallmatrix}L_1\sim L_3\gg N_1\sim N_3\\ \gg N\sim N_2\gg L_2\end{smallmatrix}}\frac{1}{N_1^{0+}L_3}\| \omega_{L_1}\|_{\hat{H}^{\frac12}}\| \omega_{L_2}\|_{\hat{H}^{\frac12}} \| \omega_{L_3}\|_{\hat{H}^{\frac12}} \| \omega_{N_1}\|_{\hat{H}^{\frac12}}\| \omega_{N_3}\|_{\hat{H}^{\frac12}}.
\end{align*}
This gives: (i) when $\zeta$ replaces one of $\omega_{L_2},\omega_{N_1},\omega_{N_3}$, the bound with $\| z\|_{H^{-\frac12-}}$; and (ii) when $\zeta$ replaces either $\omega_{L_1}$ or $\omega_{L_3}$, the bound with $\| z\|_{H^{-\frac12}}$.
To obtain the bound with $\| z\|_{B^0_{1,\infty}}$ when $\omega_{L_1}$ is replaced by $\zeta$, for instance, we apply the decomposition \eqref{mult-id2} again.
To estimate the contribution from the second term, we bound the multiplier $|\phi (\vec{\eta})|^{-1}\big| \phi (\vec{\xi})+\phi (\vec{\eta})\big|^{-1}$ by $M^{-\frac12+}(N_2L_3)^{-\frac32-}$, undo the decomposition in $K$, and apply Young/H\"older as
\begin{align*}
&\Big\| \psi_N\Big[ |\omega_{N_1}|*|\omega^*_{N_3}|*\Big( \psi_{N_2}\big( |\zeta_{L_1}|*|\omega_{L_2}|*|\omega^*_{L_3}|\big) \Big) \Big] \Big\|_{\hat{H}^{\frac12}}\\
&\lesssim N\| \omega_{N_1}\|_{L^2}\| \omega_{N_3}\|_{L^2}\Big\| \psi_{N_2}\big( |\zeta_{L_1}|*|\omega_{L_2}|*|\omega^*_{L_3}|\big) \Big\|_{L^1} \\
&\lesssim N\| \omega_{N_1}\|_{L^2}\| \omega_{N_3}\|_{L^2}\cdot N_2\| \zeta_{L_1}\|_{L^2}\| \omega_{L_2}\|_{L^1}\| \omega_{L_3}\|_{L^2},
\end{align*}
which yields the bound
\begin{align*}
&M^{-\frac12+}\sum_{\begin{smallmatrix}L_1\sim L_3\gg N_1\sim N_3\\ \gg N\sim N_2\gg L_2\end{smallmatrix}}\frac{NN_2N_3L_3}{(N_2L_3)^{\frac32+}}\| \zeta_{L_1}\|_{L^2}\| \omega_{L_2}\|_{L^1}\| \omega_{L_3}\|_{L^2}\| \omega_{N_1}\|_{L^2}\| \omega_{N_3}\|_{L^2} \\
&\lesssim M^{-\frac12+}\sum_{\begin{smallmatrix}L_1\sim L_3\gg N_1\sim N_3\\ \gg N\sim N_2\gg L_2\end{smallmatrix}}\frac{1}{L_3^{1+}}\| \zeta_{L_1}\|_{\hat{H}^{\frac12}}\| \omega_{L_2}\|_{\hat{H}^{\frac12}} \| \omega_{L_3}\|_{\hat{H}^{\frac12}} \| \omega_{N_1}\|_{\hat{H}^{\frac12}}\| \omega_{N_3}\|_{\hat{H}^{\frac12}}\\
&\lesssim M^{-\frac12+}\| z\|_{H^{-\frac12-}}\| w\|_{H^{\frac12}}^4.
\end{align*}
The contribution from the first term is rewritten as
\begin{align*}
&\sum _{\begin{smallmatrix}L_1\sim L_3\gg N_1\sim N_3\\ \gg N\sim N_2\gg L_2,\\ M<N_1K\ll N_2L_3\end{smallmatrix}}\int _{\begin{smallmatrix}\xi=\xi_{123},\,\xi_2=\eta_{123}\\ K\leq |\xi_{13}|<2K\end{smallmatrix}}\frac{e^{it\phi(\vec{\xi})+it\phi(\vec{\eta})}}{\phi (\vec{\xi})\phi(\vec{\eta})}\psi_N(\xi)\psi_{N_2}(\xi_2)\xi_3\eta_3\\[-15pt]
&\hspace{160pt} \times \zeta_{L_1}(\eta_1)\omega_{L_2}(\eta_2)\omega^*_{L_3}(\eta_3)\omega _{N_1}(\xi_1)\omega^*_{N_3}(\xi_3)\\[5pt]
&\quad =\sum _{\begin{smallmatrix}L_1\sim L_3\gg N_1\sim N_3\\ \gg N\sim N_2\gg L_2,\\ M<N_1K\ll N_2L_3\end{smallmatrix}}\psi_N(\xi)\int _{\begin{smallmatrix}\xi=\xi_{123}\\ K\leq |\xi_{13}|<2K\end{smallmatrix}}\frac{e^{it\phi(\vec{\xi})}}{\phi (\vec{\xi})}\xi_3 \omega _{N_1}(\xi_1)\Omega (\xi_2)\omega^*_{N_3}(\xi_3),
\end{align*}
where 
\begin{align*}
\Omega (\xi_2)&=\Omega_{N_2,L_1,L_2,L_3}(\xi_2) :=\psi_{N_2}(\xi_2)\int _{\xi_2=\eta_{123}}\frac{e^{it\phi(\vec{\eta})}}{\phi(\vec{\eta})}\eta_3\zeta_{L_1}(\eta_1)\omega_{L_2}(\eta_2)\omega^*_{L_3}(\eta_3)\\
&=\mathcal{F}\Big[ S(-t)P_{N_2}\mathcal{F}^{-1}\Big[ \int _{\xi_2=\eta_{123}}\frac{\eta_3}{\phi(\vec{\eta})}\hat{z}_{L_1}(\eta_1)\hat{w}_{L_2}(\eta_2)\hat{\bar{w}}_{L_3}(\eta_3)\Big] \Big] .
\end{align*}
By Bernstein and Lemma~\ref{lem:CM2}, we have
\[ \| \Omega \|_{L^2}\lesssim N_2^{\frac12}\| S(t)\mathcal{F}^{-1}\Omega \|_{L^1}\lesssim N_2^{-\frac12}\| z_{L_1}\|_{L^1}\| w_{L_2}\|_{L^\infty}\| w_{L_3}\|_{L^\infty}.\]
Using this, we can derive the bound
\begin{align*}
&M^{-\frac12+}\sum _{\begin{smallmatrix}L_1\sim L_3\gg N_1\sim N_3\\ \gg N\sim N_2\gg L_2,\\ K\lesssim N\end{smallmatrix}}\frac{N^{\frac12}N_3K}{(N_1K)^{\frac12+}}\| \Omega\|_{L^2}\| \omega_{N_1}\|_{L^2}\| \omega_{N_3}\|_{L^2} \\
&\lesssim M^{-\frac12+}\sum _{\begin{smallmatrix}L_1\sim L_3\gg N_1\sim N_3\\ \gg N\sim N_2\gg L_2\end{smallmatrix}} \frac{N_3}{N_1^{0+}}\| z_{L_1}\|_{L^1}\| w_{L_2}\|_{L^\infty}\| w_{L_3}\|_{L^\infty} \| w_{N_1}\|_{L^2}\| w_{N_3}\|_{L^2}\\
&\lesssim M^{-\frac12+}\sum_{L_1\sim L_3}\| z_{L_1}\|_{L^1}\| w_{L_3}\|_{L^\infty}\sum _{N,N_j,L_2\lesssim N_1}N_1^{0-}\| w_{L_2}\|_{H^{\frac12}}\| w_{N_1}\|_{H^{\frac12}}\| w_{N_3}\|_{H^{\frac12}}\\
&\lesssim M^{-\frac12+}\| z\|_{B^0_{1,\infty}}\| w\|_{B^0_{\infty,1}}\| w\|_{H^{\frac12}}^3,
\end{align*}
as desired.
\end{proof}


\subsection{Justification of formal computations}
\label{subsec:just}

Here, we see how to justify each step in NFR argument.

We start with the integral equation
\begin{equation}\label{eq:omega-I}
\omega (t)=\omega (0) +\int_0^t \mathcal{R}[\omega(t')]\,dt'+\int _0^t\mathcal{N}[\omega (t')] \,dt'
\end{equation}
associated to \eqref{eq:omega}.
By Lemma~\ref{lem:R} and Proposition~\ref{prop:N-weak}, $\mathcal{R}[\omega]\in L^4_T\hat{H}^{\frac12}$ and $\mathcal{N}[\omega]\in L^4_T\hat{H}^{-\frac12-}$ for any $\omega (t)$ with $w(t)=S(t)\check{\omega}(t)\in L^\infty_TH^{\frac12}\cap L^4_TB^{0+}_{\infty,1}$, and hence the integrals in \eqref{eq:omega-I} belongs to $W^{1,4}_T\hat{H}^{-\frac12-}\subset C_T\hat{H}^{-\frac12-}$.
We also recall that $w(t)$ is a (distributional) solution to \eqref{eq:w2}, which implies $w\in C_TH^{\frac12-}(\mathcal{M})$ and thus $\omega \in C_T\hat{H}^{\frac12-}(\hat{\mathcal{M}})$.
Hence, the equation \eqref{eq:omega-I} holds pointwise in $t\in [-T,T]$ as $\hat{H}^{-\frac12-}$-valued functions.

It is worth noticing that the expression 
\begin{equation}\label{defn:N0}
\mathcal{N}[\omega(t)](\xi) = \frac{1}{2\pi}\int _{\begin{smallmatrix}\xi=\xi_{123}\\ |\xi_{13}|\wedge |\xi_{23}|\geq 1\end{smallmatrix}}e^{it\phi(\vec{\xi})}i\xi_3\omega (t,\xi_1)\omega (t,\xi_2)\omega ^*(t,\xi_3)
\end{equation}
is only \emph{formal}, because the integral in \eqref{defn:N0} does not in general converge absolutely for each $t,\xi$.
Indeed, the $L^4_TL^\infty_x$ control is available for $w=\mathcal{F}^{-1}[e^{-it\xi^2}\omega(t,\xi)]$ but not for $\mathcal{F}^{-1}[|\omega(t,\xi)|]$.
However, if we regard this term as the sum of localized terms as \eqref{defn:N1}--\eqref{defn:N2}, it is clear that the integral in \eqref{defn:N2} converges absolutely for each  $(t,\xi)\in [-T,T]\times \hat{\mathcal{M}}$.
We also have absolute convergence of the sum in \eqref{defn:N1} in the sense of $\hat{H}^{-\frac12-}$ for \emph{a.e.}~$t$, as in the next lemma.

\begin{lem}\label{lem:absconv}
Let $w\in L^\infty_TH^{\frac12}\cap L^4_TB^{0+}_{\infty,1}$ and $\omega (t):=\mathcal{F}S(-t)w(t)$.
Then, we have
\[ \int _{-T}^T\sum _{N,N_1,N_2,N_3}\| \mathcal{N}_{N,N_1,N_2,N_3}[\omega(t)]\|_{\hat{H}^{-\frac12-}}\,dt \ < \ \infty.\] 
\end{lem}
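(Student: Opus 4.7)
The plan is to establish the stronger pointwise-in-$t$ estimate
\begin{equation}\label{eq:absconv-plan}
\sum_{N,N_1,N_2,N_3}\| \mathcal{N}_{N,N_1,N_2,N_3}[\omega(t)]\|_{\hat{H}^{-\frac12-}} \lesssim \| w(t)\|_{H^{\frac12}}^2\| w(t)\|_{B^{0+}_{\infty,1}},
\end{equation}
and then integrate in $t$ using H\"older:
\[
\int_{-T}^T\! \| w(t)\|_{H^{\frac12}}^2\| w(t)\|_{B^{0+}_{\infty,1}}\,dt \;\leq\; T^{3/4}\| w\|_{L^\infty_TH^{\frac12}}^2\| w\|_{L^4_TB^{0+}_{\infty,1}}<\infty.
\]
By Plancherel and the definition $\omega(t)=\mathcal{F}S(-t)w(t)$, the $\hat{H}^{-\frac12-}$ norm of each localized piece $\mathcal{N}_{N,N_1,N_2,N_3}[\omega(t)]$ is comparable (up to the harmless $|\xi_{13}|\wedge|\xi_{23}|<1$ cutoff, already estimated at the top of the proof of Proposition~\ref{prop:N-weak}) to $\| P_N(w_{N_1}w_{N_2}\partial_x\bar w_{N_3})\|_{H^{-\frac12-}}$.

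Having reduced to a purely physical-side estimate, I would revisit the four-case analysis (i)--(iv) of the proof of Proposition~\ref{prop:N-weak}, but throughout work in $H^{-\frac12-\varepsilon}$ for a small fixed $\varepsilon>0$ rather than in $H^{-\frac12}+B^0_{1,\infty}$. In cases (i), (ii), (iv), and in the two sub-cases of (iii) with $N\lesssim N_2$ or $N\sim N_1\sim N_3$, the estimate was already done in $H^{-\frac12+}$ with absolute summability in $(N,N_1,N_2,N_3)$, so nothing changes. The only delicate sub-case is (iii) with $N_1\sim N_3\gg N\gg N_2$, where Proposition~\ref{prop:N-weak} was forced into $B^0_{1,\infty}$ to avoid a logarithmic divergence in the $N$ sum. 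At the present regularity $-\frac12-\varepsilon$ one instead writes
\[
\| P_N(w_{N_1}w_{N_2}\partial_x\bar w_{N_3})\|_{H^{-\frac12-\varepsilon}}\lesssim N^{-\varepsilon}\| w_{N_1}\|_{L^2}\| w_{N_2}\|_{L^\infty}N_3\| w_{N_3}\|_{L^2},
\]
so that $\sum_{N_2\ll N\ll N_1}N^{-\varepsilon}\lesssim 1$ recovers the summation, producing a contribution controlled by $\| w\|_{H^{\frac12}}^2\| w\|_{B^{0+}_{\infty,1}}$. Collecting the four cases yields \eqref{eq:absconv-plan}.

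The main obstacle is precisely this $N_1\sim N_3\gg N\gg N_2$ interaction: in $H^{-\frac12}$ the $N$-summation is logarithmically divergent (which is why Proposition~\ref{prop:N-weak} had to introduce the Besov endpoint $B^0_{1,\infty}$ in the first place), but here the $\varepsilon$-loss in the regularity index is exactly the cushion needed to make the sum absolutely convergent. All other case estimates are a verbatim repetition of computations already carried out in Proposition~\ref{prop:N-weak}, so no new multilinear estimate is required. Finally, Tonelli's theorem lets us exchange the sum and integral and conclude.
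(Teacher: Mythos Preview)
Your proposal is correct and takes essentially the same approach as the paper: both reduce to estimating $P_N(w_{N_1}w_{N_2}\partial_x\bar w_{N_3})$ in $H^{-\frac12-\varepsilon}$ and exploit the extra $N^{-\varepsilon}$ decay (absent at $H^{-\frac12}$) to make the dyadic sum in $N$ absolutely convergent, then integrate in $t$ by H\"older. The only cosmetic difference is that the paper uses a slightly more economical two-case split ($N_{\max}\sim N_{\mathrm{med}}$ versus $N_{\max}\sim N\gg N_{\mathrm{med}}$) rather than recycling the full four-case decomposition from Proposition~\ref{prop:N-weak}.
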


\begin{proof}
Similarly to Proposition~\ref{prop:N-weak}, we may estimate the whole cubic interaction
\begin{align*}
&\Big\| \mathcal{F}S(-t)\Big[ P_N\big[ w_{N_1}(t)w_{N_2}(t)\partial_x\bar{w}_{N_3}(t)\big] \Big] \Big\|_{\hat{H}^{-\frac12-}}\\
&\quad \lesssim N^{-\frac12-}\big\| P_N\big[ w_{N_1}(t)w_{N_2}(t)\partial_x\bar{w}_{N_3}(t)\big] \big\|_{L^2} \lesssim N^{0-}\| w_{N_1}(t)w_{N_2}(t)\partial_x\bar{w}_{N_3}(t) \|_{L^1}
\end{align*}
instead of $\mathcal{N}_{N,N_1,N_2,N_3}[\omega (t)]$.
Denote the maximum, median and minimum of $N_1,N_2,N_3$ by $N_{\max},N_{\mathrm{med}},N_{\min}$, respectively.
When the set $\{ (\xi ,\xi_l):\xi=\xi_{123},\,\xi\in I_N,\,\xi_l\in I_{N_l}\}$ is non-empty, it must hold either $N_{\max}\sim N_{\mathrm{med}}$ or $N_{\max}\sim N\gg N_{\mathrm{med}}$.
If $N_{\max}\sim N_{\mathrm{med}}$, we apply H\"older to the $L^1$ bound and obtain
\begin{align*}
&\lesssim N^{0-}N_3\| w_{N_{\max}}(t)\|_{L^2}\| w_{N_{\mathrm{med}}}(t)\|_{L^2}\| w_{N_{\min}}(t)\|_{L^\infty}\\
&\lesssim N^{0-}N_{\min}^{0-}\| w_{N_{\max}}(t)\|_{H^{\frac12}}\| w_{N_{\mathrm{med}}}(t)\|_{H^{\frac12}}\| w(t)\|_{B^{0+}_{\infty,1}},
\end{align*}
while if $N_{\max}\sim N\gg N_{\mathrm{med}}$ we estimate the $L^2$ bound with H\"older and Bernstein as
\begin{align*}
&\lesssim N^{-\frac12-}N_3\| w_{N_{\max}}(t)\|_{L^2}\| w_{N_{\mathrm{med}}}(t)\|_{L^\infty}\| w_{N_{\min}}(t)\|_{L^\infty}\\
&\lesssim N_{\max}^{0-}\| w(t)\|_{H^{\frac12}}^3.
\end{align*}
In each case, the resulting bound is summable in $\{ N,N_1,N_2,N_3\}$.
Given the above estimates for fixed $t$, we obtain the claim by applying the H\"older inequality.
\end{proof}

By Lemma~\ref{lem:absconv}, we can exchange the order of summation in $N,N_l$ and integration in $t$, to obtain the equation 
\[ \omega (t) =\omega (0) +\int_0^t\mathcal{R}[\omega]+\sum _{N,N_1,N_2,N_3}\int _0^t\mathcal{N}_{N,N_1,N_2,N_3}[\omega] ,\]
which holds pointwise in $t\in [-T,T]$ as $\hat{H}^{-\frac12-}_\xi$-valued functions.
Then, we fix $N,N_1,N_2,N_3$ and consider each integral:
\[ \int _0^t\mathcal{N}_{N,N_1,N_2,N_3}[\omega(t')](\xi)\,dt'=\int _0^t\psi_N(\xi)\int_{\begin{smallmatrix}\xi=\xi_{123}\\ |\xi_{13}|\wedge |\xi_{23}|\geq 1\end{smallmatrix}}e^{it'\phi(\vec{\xi})}\xi_3\omega_{N_1}(t',\xi_1)\omega_{N_2} (t',\xi_2)\omega_{N_3}^*(t',\xi_3)\,dt'.\]
After subtracting the resonant part, we have the trilinear term
\[ \int _0^t\psi_N(\xi)\int_{\xi=\xi_{123}}e^{it'\phi(\vec{\xi})}\mathbf{1}_{\text{NR},\,|\phi(\vec{\xi})|\gtrsim M}\xi_3\omega_{N_1}(t',\xi_1)\omega_{N_2} (t',\xi_2)\omega_{N_3}^*(t',\xi_3)\,dt',\]
where ``NR'' means the condition $|\xi_{13}|\wedge |\xi_{23}|\geq 1$, and the precise meaning of the restriction ``$|\phi(\vec{\xi})|\gtrsim M$'' depends on the relation between $N,N_1,N_2,N_3$.
Now, we need to justify the following NFR operation:
\begin{equation}\label{id:just}
\begin{aligned}
&\int _0^t\int_{\xi=\xi_{123}}e^{it'\phi}\mathbf{1}_{\text{NR},\,|\phi(\vec{\xi})|\gtrsim M}\xi_3\omega_{N_1}(t',\xi_1)\omega_{N_2} (t',\xi_2)\omega_{N_3}^*(t',\xi_3)\,dt'\\
&\quad=\int_{\xi=\xi_{123}}\frac{e^{it'\phi}}{i\phi}\mathbf{1}_{\text{NR},\,|\phi(\vec{\xi})|\gtrsim M}\xi_3\omega_{N_1}(t',\xi_1)\omega_{N_2} (t',\xi_2)\omega_{N_3}^*(t',\xi_3)\bigg|_{t'=0}^{t'=t}\\
&\qquad -\int_0^t\int_{\xi=\xi_{123}}\frac{e^{it'\phi}}{i\phi}\mathbf{1}_{\text{NR},\,|\phi(\vec{\xi})|\gtrsim M}\xi_3\Big\{ \zeta_{N_1}(t',\xi_1)\omega_{N_2} (t',\xi_2)\omega_{N_3}^*(t',\xi_3)\\
&\hspace*{50pt} +\omega_{N_1}(t',\xi_1)\zeta_{N_2} (t',\xi_2)\omega_{N_3}^*(t',\xi_3)+\omega_{N_1}(t',\xi_1)\omega_{N_2} (t',\xi_2)\zeta_{N_3}^*(t',\xi_3) \Big\} \,dt',
\end{aligned}
\end{equation}
where $\omega_{N_l}=\psi_{N_l}\omega \in W^{1,4}(-T,T;L^2(\hat{\mathcal{M}}))\cap C([-T,T];L^2(\hat{\mathcal{M}}))$ and 
\[ \zeta_{N_l}:=\partial_t\omega_{N_l} = \psi_{N_l}\big( \mathcal{N}[\omega ]+\mathcal{R}[\omega ]\big) \in L^4_TL^2(\hat{\mathcal{M}}) \]
for each $N_l$.
Since we do not know if $\omega _{N_l}(t,\xi)$ is continuously differentiable in $t$ (neither pointwise in $\xi$ nor in the $L^2_\xi$ sense), the above operation (i.e., integration by parts and application of the product rule) must be interpreted in a weak sense, which is done in the next lemma.

\begin{lem}\label{lem:just}
The equality \eqref{id:just} holds for any $t\in [-T,T]$ and $\xi\in \hat{\mathcal{M}}$.
\end{lem}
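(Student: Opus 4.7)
The plan is to fix $t\in[-T,T]$ and $\xi\in\hat{\mathcal{M}}$ and to reduce the asserted identity to its classical, smooth version by a time-regularization argument. First I would observe that for our fixed $\xi$ the relevant set of integration is the compact set
\[ \Sigma:=\big\{(\xi_1,\xi_2,\xi_3)\in I_{N_1}\times I_{N_2}\times I_{N_3}: \xi_1+\xi_2+\xi_3=\xi,\;|\xi_{13}|\wedge|\xi_{23}|\geq 1,\;|\phi(\vec{\xi})|\gtrsim M\big\}, \]
on which the symbol $\xi_3/\phi(\vec{\xi})$ is bounded (by $\lesssim N_3/M$). Hence for any $f_l\in L^2(I_{N_l})$ the Cauchy--Schwarz inequality gives an $L^1(\Sigma)$-bound of the form $\lesssim (N_3/M)\prod_l N_l^{1/2}\|f_l\|_{L^2}$. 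Consequently every integral in \eqref{id:just} converges absolutely in $(t',\xi_1,\xi_2,\xi_3)$, and Fubini applies freely.

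Next I would regularize in time. Extend $\omega_{N_l}\in W^{1,4}((-T,T);L^2)\cap C([-T,T];L^2)$ to $W^{1,4}_{\mathrm{loc}}(\mathbb{R};L^2)\cap C_{\mathrm{loc}}(\mathbb{R};L^2)$ (say, by constant extension beyond $\pm T$), let $\rho_\epsilon$ be a standard mollifier on $\mathbb{R}$, and set $\omega_{N_l}^{(\epsilon)}:=\rho_\epsilon*_{t'}\omega_{N_l}$. Then $\omega_{N_l}^{(\epsilon)}\in C^\infty(\mathbb{R};L^2)$, $\partial_{t'}\omega_{N_l}^{(\epsilon)}=\zeta_{N_l}^{(\epsilon)}:=\rho_\epsilon*_{t'}\zeta_{N_l}$, and $\omega_{N_l}^{(\epsilon)}\to\omega_{N_l}$ in $C_TL^2$, $\zeta_{N_l}^{(\epsilon)}\to\zeta_{N_l}$ in $L^4_TL^2$. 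By Fubini, for a.e.~$\xi_l\in I_{N_l}$ the scalar function $t'\mapsto\omega_{N_l}^{(\epsilon)}(t',\xi_l)$ is $C^\infty$ with derivative $\zeta_{N_l}^{(\epsilon)}(\cdot,\xi_l)$. Therefore, for a.e.~$(\xi_1,\xi_2,\xi_3)\in\Sigma$, the classical product rule together with the fundamental theorem of calculus (applied to $t'\mapsto e^{it'\phi}\prod_l\omega_{N_l}^{(\epsilon)}$) yield \eqref{id:just} pointwise, with $\omega$ and $\zeta$ replaced by $\omega^{(\epsilon)}$ and $\zeta^{(\epsilon)}$. Integrating over $\Sigma$ gives the regularized identity at $(t,\xi)$.

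Finally I would pass to the limit $\epsilon\downarrow 0$. Each of the four terms in \eqref{id:just} is a multilinear functional of $(\omega_{N_l},\zeta_{N_l})$ which, thanks to the estimate of Step~1, is continuous in the topology of $C_{[0,t]}L^2\times L^4_{[0,t]}L^2$: the boundary term is controlled by $\lesssim(N_3/M)\prod_l N_l^{1/2}\|\omega_{N_l}(t_\ast)\|_{L^2}$ at each endpoint $t_\ast\in\{0,t\}$, and the time-integral terms are controlled similarly after one application of H\"older in $t'$ (placing $\zeta$ in $L^4_{t'}L^2$ and the other two factors in $L^\infty_{t'}L^2$). The mollification converges in exactly these norms, so every term passes to the limit and \eqref{id:just} follows for $\omega$ itself. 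The main obstacle that this scheme is designed to overcome is that $\omega_{N_l}(\cdot,\xi_l)$ is a priori not pointwise differentiable in $t'$, so the product rule cannot be applied in the raw integrand; time-mollification (combined with Fubini to move $\partial_{t'}$ inside the $\xi_l$-integration) provides just enough classical differentiability, while the compactness of $\Sigma$ and the $M$-lower bound on $|\phi|$ keep all quantities uniformly controlled during the limit.
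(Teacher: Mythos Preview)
Your argument is correct and complete, but it proceeds by a genuinely different route from the paper's own proof. The paper does not mollify in time; instead it works directly with finite difference quotients. Concretely, the paper writes the discrete identity
\[
\int_0^t \int \frac{e^{it'\phi}}{i\phi}\,\frac{f(t'+h)-f(t')}{h}\,dt'
= \int_h^t\int \frac{e^{i(t'-h)\phi}-e^{it'\phi}}{ih\phi}\,f(t')\,dt'
+\frac{1}{h}\int_t^{t+h}\!\!\int \frac{e^{i(t'-h)\phi}}{i\phi}f(t')\,dt'
-\frac{1}{h}\int_0^h\!\!\int \frac{e^{it'\phi}}{i\phi}f(t')\,dt'
\]
and then lets $h\to 0$, showing separately that (i) the right-hand side reproduces the left-hand side and the boundary term of \eqref{id:just} (using dominated convergence and continuity of $t\mapsto\omega_{N_l}(t)$ in $L^2$), and (ii) the left-hand side converges to the $g$-term (using that for $\omega_{N_l}\in W^{1,4}_TL^2$ the difference quotients converge to $\zeta_{N_l}$ in $L^4_TL^2$). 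Your mollification approach buys a conceptually cleaner reduction to the classical product rule at the cost of introducing an extension beyond $[-T,T]$ and a second limit parameter; the paper's difference-quotient approach stays intrinsic to $[-T,T]$ but requires checking three separate convergences by hand. Either method relies on exactly the same quantitative inputs you identified: boundedness of $\xi_3/\phi$ on the (bounded, finite-measure) frequency set, $C_TL^2$-continuity of $\omega_{N_l}$, and $L^4_TL^2$-integrability of $\zeta_{N_l}$. One small wording point: your set $\Sigma$ need not be literally compact (the cutoff $|\phi|>M$ is open), but you only use that it is bounded with finite measure, which is all that is needed.
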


\begin{proof}
We write \eqref{id:just} as
\[ \int_0^t\int_{\xi=\xi_{123}}e^{it'\phi}f(t',\vec{\xi})\,dt'=\int_{\xi=\xi_{123}}\frac{e^{it'\phi}}{i\phi }f(t',\vec{\xi})\bigg|_0^t-\int_0^t\int_{\xi=\xi_{123}}\frac{e^{it'\phi}}{i\phi}g(t',\vec{\xi})\,dt', \]
where
\begin{align*}
f(t,\vec{\xi})&:=\mathbf{1}_{\text{NR},\,|\phi(\vec{\xi})|\gtrsim M}\xi_3\omega_{N_1}(t,\xi_1)\omega_{N_2}(t,\xi_2)\omega^*_{N_3}(t,\xi_3),\\
g(t,\vec{\xi})&:=\mathbf{1}_{\text{NR},\,|\phi(\vec{\xi})|\gtrsim M}\xi_3\Big\{ \zeta_{N_1}(t,\xi_1)\omega_{N_2}(t,\xi_2)\omega^*_{N_3}(t,\xi_3)\\
&\hspace{90pt}+\omega_{N_1}(t,\xi_1)\zeta_{N_2}(t,\xi_2)\omega^*_{N_3}(t,\xi_3)+\omega_{N_1}(t,\xi_1)\omega_{N_2}(t,\xi_2)\zeta^*_{N_3}(t,\xi_3)\Big\} .
\end{align*}
Note that every integral in $\xi_1,\xi_2,\xi_3$ in \eqref{id:just} converges absolutely for each $\xi$ due to the localization.
We show \eqref{id:just} by taking the limit $h\to 0$ of the following identity:
\begin{multline*}
\int_0^t \int_{\xi=\xi_{123}}\frac{e^{it'\phi}}{i\phi}\frac{f(t'+h,\vec{\xi})-f(t',\vec{\xi})}{h}\,dt'\ =\ \int _h^t\int_{\xi=\xi_{123}}\frac{e^{i(t'-h)\phi}-e^{it'\phi}}{ih\phi}f(t',\vec{\xi})\,dt' \\
+\frac{1}{h}\int_t^{t+h}\int _{\xi=\xi_{123}}\frac{e^{i(t'-h)\phi}}{i\phi}f(t',\vec{\xi})\,dt'-\frac{1}{h}\int_0^h\int _{\xi=\xi_{123}}\frac{e^{it'\phi}}{i\phi}f(t',\vec{\xi})\,dt'.
\end{multline*}
Therefore, it suffices to show the following convergences as $h\to 0$ (with $t+h\in [-T,T]$) for any $t\in [-T,T]$ and $\xi\in \hat{\mathcal{M}}$:
\begin{gather}
\int_h^t\int_{\xi=\xi_{123}}\frac{e^{i(t'-h)\phi}-e^{it'\phi}}{ih\phi}f(t',\vec{\xi})\,dt'\ \to \ -\int _0^t\int_{\xi=\xi_{123}}e^{it'\phi}f(t',\vec{\xi})\,dt', \label{conv1}\\
\frac{1}{h}\int_t^{t+h}\!\!\!\int _{\xi=\xi_{123}}\!\!\!\frac{e^{i(t'-h)\phi}}{i\phi}f(t',\vec{\xi})\,dt'-\frac{1}{h}\int_0^h\!\!\!\int _{\xi=\xi_{123}}\!\!\frac{e^{it'\phi}}{i\phi}f(t',\vec{\xi})\,dt' \to \int _{\xi=\xi_{123}}\!\!\frac{e^{it'\phi}}{i\phi}f(t',\vec{\xi})\bigg|_0^t, \label{conv2}\\
\int_0^t \int_{\xi=\xi_{123}}\frac{e^{it'\phi}}{i\phi}\frac{f(t'+h,\vec{\xi})-f(t',\vec{\xi})}{h}\,dt'\ \to \ \int_0^t \int_{\xi=\xi_{123}}\frac{e^{it'\phi}}{i\phi}g(t',\vec{\xi})\,dt'. \label{conv3}
\end{gather}

\medskip
{\bf Proof of \eqref{conv1}:} 
By the dominated convergence theorem, it suffices to prove that
\[ \int_{-T}^T\int_{\xi=\xi_{123}}|f(t',\vec{\xi})|\,dt'<\infty.\]
This is easy to see from the estimate
\begin{align*}
\sup _{\xi}\int_{\xi=\xi_{123}}|f(t',\vec{\xi})|&\lesssim N_3\Big\| |\omega_{N_1}(t')|*|\omega_{N_2}(t')|*|\omega^*_{N_3}(t')|\Big\|_{L^\infty} \lesssim N_3N_{\min}^{\frac12}\prod_{l=1}^3\| \omega_{N_l}(t')\|_{L^2}
\end{align*}
and the fact that $\omega_{N_l}\in C_TL^2$.

\medskip
{\bf Proof of \eqref{conv2}:} 
We first observe that
\[ \Big| \frac{1}{h}\int_t^{t+h}\int _{\xi=\xi_{123}}\frac{e^{i(t'-h)\phi}-e^{it'\phi}}{i\phi}f(t',\vec{\xi})\,dt'\Big| \lesssim \int_t^{t+h}\int _{\xi=\xi_{123}}|f(t',\vec{\xi})|\,dt' \to 0\qquad (h\to 0).\]
It then suffices to show that the map
\[ [-T,T]\ni ~t\quad \mapsto \quad \int_{\xi=\xi_{123}}\frac{e^{it\phi}}{i\phi}f(t,\vec{\xi})\quad \in L^\infty_\xi \]
is continuous.
On one hand, similarly to (i) we have
\begin{align*}
\Big\| \int_{\xi=\xi_{123}}\frac{e^{it'\phi}-e^{it\phi}}{i\phi}f(t',\vec{\xi})\Big\|_{L^\infty_\xi}&\lesssim |t'-t|N_3\Big\| |\omega_{N_1}(t')|*|\omega_{N_2}(t')|*|\omega^*_{N_3}(t')|\Big\|_{L^\infty} \\
&\lesssim |t'-t|N_3N_{\min}^{\frac12}\prod_{l=1}^3\| \omega_{N_l}(t')\|_{L^2}\to 0\qquad (t'\to t).
\end{align*}
On the other hand, using the fact $\omega_{N_l}\in C_TL^2$ and estimating similarly, we have
\begin{align*}
&\Big\| \int_{\xi=\xi_{123}}\frac{e^{it\phi}}{i\phi}\Big( f(t',\vec{\xi})-f(t,\vec{\xi})\Big)\Big\|_{L^\infty_\xi}\lesssim M^{-1}N_3N_{\min}^{\frac12}\Big\{ \| \omega_{N_1}(t')-\omega_{N_1}(t)\|_{L^2}\| \omega_{N_2}(t')\|_{L^2}\| \omega_{N_3}(t')\|_{L^2}\\
&\quad+\| \omega_{N_1}(t)\|_{L^2}\| \omega_{N_2}(t')-\omega_{N_2}(t)\|_{L^2}\| \omega_{N_3}(t')\|_{L^2}+\| \omega_{N_1}(t)\|_{L^2}\| \omega_{N_2}(t)\|_{L^2}\| \omega_{N_3}(t')-\omega_{N_3}(t)\|_{L^2}\Big\} \\
&\to 0\qquad (t'\to t).
\end{align*}
Hence, we obtain the continuity as desired.

\medskip
{\bf Proof of \eqref{conv3}:}
We begin with the identity
\begin{align*}
&\frac{f(t'+h)-f(t')}{h}-g(t')\\
&=\mathbf{1}_{|\phi |\gtrsim M}\xi_3 \Big\{ \begin{aligned}[t]
&\Big( \frac{\omega_{N_1}(t'+h)-\omega_{N_1}(t')}{h}-\zeta_{N_1}(t')\Big) \omega _{N_2}(t'+h)\omega_{N_3}^*(t'+h)\\
&+\omega_{N_1}(t')\Big( \frac{\omega_{N_2}(t'+h)-\omega_{N_2}(t')}{h}-\zeta_{N_2}(t')\Big) \omega_{N_3}^*(t'+h)\\
&+\omega _{N_1}(t')\omega_{N_2}(t')\Big( \frac{\omega^*_{N_3}(t'+h)-\omega^*_{N_3}(t')}{h}-\zeta^*_{N_3}(t')\Big) \\
&+\zeta_{N_1}(t')\Big( \omega_{N_2}(t'+h)\omega^*_{N_3}(t'+h)-\omega_{N_2}(t')\omega^*_{N_3}(t')\Big) \\
&+\omega_{N_1}(t')\zeta_{N_2}(t')\Big( \omega^*_{N_3}(t'+h)-\omega^*_{N_3}(t')\Big) \Big\} .
\end{aligned}
\end{align*}
Then, a similar argument implies
\begin{align*}
&\sup _\xi \Big| \int_0^t \int_{\xi=\xi_{123}}\frac{e^{it'\phi}}{i\phi}\Big( \frac{f(t'+h,\vec{\xi})-f(t',\vec{\xi})}{h}-g(t',\vec{\xi})\Big) \,dt'\Big| \\
&\quad \lesssim \int_{-|t|}^{|t|}\sup_\xi \int _{\xi=\xi_{123}}\frac{1}{|\phi|}\Big| \frac{f(t'+h,\vec{\xi})-f(t',\vec{\xi})}{h}-g(t',\vec{\xi})\Big| \,dt' \\
&\quad \lesssim T^{\frac34}M^{-1}N_3N_{\min}^{\frac12}\Big\{ \sum_{l=1}^3\Big\| \frac{\omega_{N_l}(\cdot +h)-\omega_{N_l}(\cdot )}{h}-\zeta_{N_l}(\cdot )\Big\|_{L^4_{|t|}L^2}\prod_{m\neq l}\| \omega_{N_m}\|_{L^\infty_TL^2} \\
&\qquad\qquad +\sum_{(k,l,m)\in \{ (1,2,3),(1,3,2),(2,1,3)\}}\| \zeta_{N_k}\|_{L^4_TL^2}\| \omega_{N_l}\|_{L^\infty_TL^2}\| \omega_{N_m}(\cdot +h)-\omega_{N_m}(\cdot )\|_{L^\infty_{|t|}L^2} \Big\} \\
&\quad \to 0\qquad (h\to 0),
\end{align*}
where in the last step we have used the convergences
\[ \| \omega _{N_m}(\cdot +h)-\omega _{N_m}(\cdot)\|_{L^\infty_{T'}L^2}\to 0,\qquad \Big\| \frac{\omega _{N_l}(\cdot +h)-\omega _{N_l}(\cdot)}{h}-\zeta_{N_l}(\cdot )\Big\|_{L^4_{T'}L^2}\to 0\qquad (h\to 0)\]
for $0<T'<T$.%
\footnote{%
The argument is also valid for $t=\pm T$ by considering a suitable half of the interval $[-T,T]$ and a one-sided limit in $h$.%
}
The first one is by the uniform continuity of $t\mapsto \omega_{N_m}(t)\in L^2_\xi$, while the second one is a general property of the Sobolev space $W^{1,4}_TL^2_\xi$ (see, e.g., \cite[Corollary~1.4.39]{CazHar}).
\end{proof}

After the first NFR, we have various terms such as
\begin{align*}
\sum_{N,N_1,N_2,N_3}\int_0^t\psi_N(\xi) \int_{\xi=\xi_{123}}&\frac{e^{it'\phi}}{i\phi}\mathbf{1}_{\text{NR},\,|\phi|\gtrsim M}\xi_3\\
&\quad \times \Big[ \sum _{L_1,L_2,L_3}\mathcal{N}_{N_1,L_2,L_2,L_3}[\omega (t')](\xi_1)\Big] \omega_{N_2} (t',\xi_2)\omega_{N_3}^*(t',\xi_3)\,dt'.
\end{align*}
When the dyadic frequencies $N,N_l,L_l$ are in a certain range, we need to apply NFR once more (see the last two subsections).
For such $N,N_l,L_l$, we need to exchange the order of summation in $L_l$ and integration in $t',\xi_l$.
This is verified from the fact
\[ \sum_{L_1,L_2,L_3}\int_{-T}^T\int_{\xi=\xi_{123}}\big| \mathcal{N}_{N_1,L_2,L_2,L_3}[\omega(t')](\xi_1)\big| |\omega_{N_2}(t',\xi_2)||\omega^*_{N_3}(t',\xi_3)|\,dt' <\infty \]
for each $\xi$ and $N,N_l$, which is a consequence of the Young inequality and Lemma~\ref{lem:absconv}.
Then, we fix all of $N,N_l,L_l$ and apply NFR to the localized integral
\begin{align*}
\int_0^t\psi_N(\xi) \int_{\begin{smallmatrix} \xi=\xi_{123} \\ \xi_1=\eta_{123}\end{smallmatrix}}&e^{it'(\phi(\vec{\xi})+\phi(\vec{\eta}))}\mathbf{1}_{\text{NR},\,|\phi(\vec{\xi})|\gtrsim M}\mathbf{1}_{\text{NR},\,|\phi(\vec{\xi})+\phi(\vec{\eta})|\gtrsim M}\frac{\xi_3\eta_3\psi_{N_1}(\xi_1)}{\phi(\vec{\xi})} \\
&\times \omega_{L_1}(t',\eta_1)\omega_{L_2} (t',\eta_2)\omega_{L_3}^*(t',\eta_3)\omega_{N_2} (t',\xi_2)\omega_{N_3}^*(t',\xi_3)\,dt',
\end{align*}
which is clearly absolutely convergent.
The second application of NFR is justified in exactly the same way as Lemma~\ref{lem:just}, so we omit the detail.

After the second NFR, we obtain the following equation (written in a formal way):
\begin{equation}\label{eq:NFR2}
\begin{aligned}
\omega (t)&= \omega (0)+ \int_0^t \mathcal{R}[\omega ]+\int_0^t \mathcal{N}^{\texttt{A}}[\omega] +\int_0^t \mathcal{N}^{\neg \texttt{A}}_{R}[\omega ] +\mathcal{N}^{\neg \texttt{A}}_0[\omega ] \Big|_0^t + \int_0^t \mathcal{N}^{\neg \texttt{A}}_1[\omega,\mathcal{R}[\omega]]\\
&\quad +\int_0^t\tilde{\mathcal{N}}_1[\omega ]+\sum_{k=1}^{10}\Big( \int _0^t\mathcal{L}^k_R[\omega ]+\mathcal{L}^k_0[\omega ]\Big|_0^t +\int_0^t\mathcal{L}^k_1[\omega ,\mathcal{R}[\omega]]+\int_0^t\mathcal{L}^k_1[\omega ,\mathcal{N}[\omega]]\Big) ,
\end{aligned}
\end{equation}
where
\begin{align*}
\tilde{\mathcal{N}}_1[\omega ]&:= \mathcal{N}^{\neg \texttt{A}}_1[\omega ,\mathcal{N}[\omega ]]-\sum_{k=1}^{10}\mathcal{L}^k[\omega] \\
&\;=\int_0^t \Big( \mathcal{N}^{\texttt{E}}_1+\mathcal{N}^{\texttt{C2}}_{1,1}+\mathcal{N}^{\texttt{C},\texttt{D}}_{1,2}+\mathcal{N}^{\texttt{D}}_{1,3}\Big) [\omega,\mathcal{N}[\omega]] +\int_0^t \Big( \mathcal{N}^{\texttt{B2}}_{1,1}+\mathcal{N}^{\texttt{B1}}_{1,2} \Big) [\omega ,\mathcal{N}^{\neg \texttt{C}}[\omega ]] \\
&\qquad +\int_0^t \Big( \mathcal{N}^{\texttt{D}}_{1,1}+\mathcal{N}^{\texttt{C2}}_{1,3} \Big) [\omega ,\mathcal{N}^{\neg \texttt{C1}}[\omega ]] +\int_0^t \Big( \mathcal{N}^{\texttt{B1},\texttt{C1}}_{1,1}+\mathcal{N}^{\texttt{B2}}_{1,2}+\mathcal{N}^{\texttt{B},\texttt{C1}}_{1,3}\Big) [\omega ,\mathcal{N}^{\neg \texttt{C2}}[\omega ]].
\end{align*}
Let us conclude this subsection by considering how to make sense of each term in \eqref{eq:NFR2}.
The first two integrals of $\mathcal{R}[\omega]$ and $\mathcal{N}^{\texttt{A}}[\omega ]$ have been considered in \eqref{eq:omega-I}. 
The remaining terms are also defined as the $t$-integral of the absolutely-convergent (in $\hat{H}^{\frac12-}$) sums of frequency-localized components (see Remark~\ref{rem:N_R} for the last three terms in the first line; absolute convergence for the other terms follows similarly from the proofs of Lemmas~\ref{lem:N_1E}--\ref{lem:L2}).
For instance, the last term $\int_0^t \mathcal{L}^{10}_1[\omega ,\mathcal{N}[\omega]]$ in \eqref{eq:NFR2} was initially obtained in the form
\[ \sum_{N,N_l,K}\bigg( \sum_{L_l}\int_0^t \big[ \text{$\mathcal{L}^{10}_1[\omega,\mathcal{N}[\omega]]$ localized according to $N,N_l,K,L_l$}\big] \bigg) ,\]
where absolute convergence of the $\xi_l,\eta_l$-integrals in localized components is always clear due to localization, but the summations in $N,N_l,K$ and in $L_l$ are still formal.
Now, the proof of the estimate \eqref{est:Lj1} in Lemma~\ref{lem:L2} shows%
\footnote{%
Although we have absolute convergence in $\hat{H}^{\frac12}$ in the case of $\mathcal{L}^{10}_1[\omega,\mathcal{N}[\omega]]$, for some terms (e.g., $\mathcal{L}^{5}_R[\omega]$) we use a slightly weaker $\hat{H}^{\frac12-}$ norm to ensure $\ell^1$ summability in $N$.}
\[ \sum _{N,N_l,K,L_l}\| \mathcal{L}^{10}_1[\omega,\mathcal{N}[\omega]] \|_{\hat{H}^{\frac12}}\lesssim M^{-\frac12+}\| w\|_{H^{\frac12}}^3\| w\|_{H^{\frac12}\cap B^{0+}_{\infty,1}}\| S(t)\mathcal{F}^{-1}\mathcal{N}[\omega]\|_{H^{-\frac12}+B^0_{1,\infty}},\]
and hence, by Proposition~\ref{prop:N-weak} and the H\"older inequality, 
\[ \int_{-T}^T\sum _{N,N_l,K,L_l}\| \mathcal{L}^{10}_1[\omega,\mathcal{N}[\omega]] \|_{\hat{H}^{\frac12}}<\infty \]
for any $\omega(t)$ such that $w(t)=S(t)\check{\omega}(t)\in L^\infty_TH^{\frac12}\cap L^4_TB^{0+}_{\infty,1}$.
Hence, we can exchange the order of summation in $N,N_l,K,L_l$ and integration in $t$, obtaining the expression $\int_0^t \mathcal{L}^{10}_1[\omega ,\mathcal{N}[\omega]]$ in \eqref{eq:NFR2}.
From Proposition~\ref{prop:N-weak} and Lemmas~\ref{lem:R}--\ref{lem:L2}, we see that all the integrated terms in \eqref{eq:NFR2} (except for $\int_0^t\mathcal{N}^{\texttt{A}}[\omega]$) are the $t$-integrals of functions in $L^2_T\hat{H}^{\frac12}$.


\subsection{Conclusion of the first stage}

We set the non-integrated part of the nonlinearity in \eqref{eq:NFR2} as
\[ \mathcal{W}_0[\omega ]:=\mathcal{N}^{\neg \texttt{A}}_0[\omega ]+\sum _{j=1}^{10}\mathcal{L}^j_0[\omega ] ,\]
and introduce the new unknown function
\[ \varpi := \omega -\mathcal{W}_0[\omega ]. \]
Then, the equation \eqref{eq:NFR2} can be rewritten as 
\begin{equation}\label{eq:varpi}
\varpi (t) = \varpi (0) +\int_0^t \mathcal{N}^{\texttt{A}}[\varpi ] +\int_0^t \mathcal{W}_1[\omega ],
\end{equation}
where 
\begin{align*}
\mathcal{W}_1[\omega ]&:=\mathcal{R}[\omega ]+\mathcal{N}^{\neg \texttt{A}}_{R}[\omega ]+\mathcal{N}^{\neg \texttt{A}}_1[\omega,\mathcal{R}[\omega]] +\tilde{\mathcal{N}}_1[\omega] \\
&\quad +\sum_{j=1}^{10}\Big( \mathcal{L}^j_R[\omega ]+\mathcal{L}^j_1[\omega ,\mathcal{R}[\omega]] +\mathcal{L}^j_1[\omega ,\mathcal{N}[\omega]]\Big) +\Big( \mathcal{N}^{\texttt{A}}[\omega ]-\mathcal{N}^{\texttt{A}}[\omega -\mathcal{W}_0[\omega]]\Big) .
\end{align*}
It is to the equation \eqref{eq:varpi} that we will apply infinite NFR scheme in Section~\ref{sec:NFR2}.

We show the regularity properties of $\mathcal{W}_0[\omega]$ and $\mathcal{W}_1[\omega]$ in the next two lemmas.
In particular, it turns out that $\mathcal{W}_0[\omega]$ enjoys an extra smoothing property, which ensures the $\hat{H}^{\frac12}$ control of the last difference term in $\mathcal{W}_1[\omega]$ (note that $\mathcal{N}^{\texttt{A}}[\omega]$ itself admits only the $\hat{H}^{-\frac12+}$ control; see the proof of Proposition~\ref{prop:N-weak}).
\begin{lem}\label{lem:W0}
Let $T\in (0,1]$.
For any $\omega$ with $w=S(t)\check{\omega}\in L^\infty_TH^{\frac12}\cap L^4_TB^{0+}_{\infty,1}$, the function $\mathcal{W}_0[\omega]$ belongs to $C_T\hat{H}^{\frac12}$ and satisfies
\begin{equation}\label{est:W00}
\|\mathcal{W}_0[\omega]\|_{L^\infty_T\hat{H}^{\frac12}}\lesssim \mathcal{P}_0\Big( M^{-\frac12+}\| w\|_{L^\infty_TH^{\frac12}}^2\Big) \| w\|_{L^\infty_TH^{\frac12}},
\end{equation}
where $\mathcal{P}_0(x):=x+x^2$.
Moreover, the following estimates hold:
\begin{gather}
\sup_NN\big\| P_NS(t)\mathcal{F}^{-1}\mathcal{W}_0[\omega ]\big\|_{L^4_TL^\infty}\lesssim \| w\|_{L^\infty_TH^{\frac12}}^2\Big( 1+M^{-\frac12+}\| w\|_{L^\infty_TH^{\frac12}}^2\Big) \| w\|_{L^\infty_TH^{\frac12}\cap L^4_TB^{0+}_{\infty,1}},\qquad \label{est:W0} \\
\begin{aligned}
&\Big\| \mathcal{N}^{\texttt{A}}[\omega ]-\mathcal{N}^{\texttt{A}}[\omega -\mathcal{W}_0[\omega]]\Big\|_{L^4_T\hat{H}^{\frac12}}\\
&\quad \lesssim \| w\|_{L^\infty_TH^{\frac12}}^3\Big( 1+M^{-\frac12+}\| w\|_{L^\infty_TH^{\frac12}}^2\Big) ^5\Big( 1+\| w\|_{L^\infty_TH^{\frac12}}\| w\|_{L^\infty_TH^{\frac12}\cap L^4_TB^{0+}_{\infty,1}}\Big). 
\end{aligned} \label{est:W0-N}
\end{gather}
\end{lem}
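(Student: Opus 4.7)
The plan is to prove the three estimates in order, using the building-block estimates already established for $\mathcal{N}^{\neg\texttt{A}}_0$ (Lemma~\ref{lem:N_0}) and for the $\mathcal{L}^j_0$ (the first halves of \eqref{est:Lj0} in Lemmas~\ref{lem:L1} and \ref{lem:L2}). For estimate \eqref{est:W00} I would simply sum: the $\mathcal{N}^{\neg\texttt{A}}_0$-contribution gives $M^{-\frac12+}\|w\|_{L^\infty_TH^{1/2}}^3$, and each $\mathcal{L}^j_0$-contribution gives $M^{-1+}\|w\|_{L^\infty_TH^{1/2}}^5$, so that the sum factorizes as $\|w\|_{L^\infty_TH^{1/2}}\cdot \mathcal{P}_0(M^{-\frac12+}\|w\|_{L^\infty_TH^{1/2}}^2)$. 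For the $C_T\hat{H}^{1/2}$ continuity, the proofs of those lemmas show that the dyadic sum defining $\mathcal{W}_0[\omega]$ converges absolutely in $\hat{H}^{1/2}$ uniformly in $t$ (cf.~Remark~\ref{rem:N_R}), while each frequency-localized summand depends continuously in $t$ on the factors $\omega_{N_j}(t)$ (which are continuous with values in $L^2_\xi$, since $w\in C_TH^{1/2-}$); dominated convergence then upgrades this to continuity of the total sum in $\hat{H}^{1/2}$.

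For the refined Strichartz bound \eqref{est:W0}, the key observation is that the NFR phase cancels nicely against $S(t)$: $e^{-it\xi^2}e^{it\phi(\vec\xi)}=e^{-it\xi_1^2}e^{-it\xi_2^2}e^{it\xi_3^2}$, which converts each $\omega_{N_j}(\xi_j)$ (or its starred version) back into $\hat{w}_{N_j}$ (or $\hat{\bar{w}}_{N_j}$). Consequently $P_N S(t)\mathcal{F}^{-1}\mathcal{W}_0[\omega]$ becomes a sum of frequency-localized trilinear (resp.~quintilinear) Fourier multiplier operators on $w$ and $\bar{w}$, whose symbols are bounded by $\frac{N_3}{|\phi(\vec\xi)|}$ and, for the $\mathcal{L}^j_0$-pieces, also by $\frac{1}{|\phi(\vec\eta)|}$. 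I would then bound $\|P_N\,\cdot\,\|_{L^\infty_x}$ pointwise in $t$ by the Coifman–Meyer type lemmas (Lemmas~\ref{lem:CM1}, \ref{lem:CM2}, and Lemma~\ref{lem:CM3} in the degenerate Case~\texttt{B}), assigning $L^\infty_x$ to one factor and $L^2_x$ to the others, extracting the gain $N^{-1}$ from the multiplier size in every regime, and then applying H\"older in $t$ to obtain $L^4_T$ by putting one factor into $L^4_T B^{0+}_{\infty,1}$ and the rest into $L^\infty_T H^{1/2}$. The structure of this argument is parallel to the proofs of \eqref{est:Lj0} and \eqref{est:Lj1}, except that one $L^2$-norm is systematically traded for the $L^\infty$-norm (at no cost given frequency localization and $B$).

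Estimate \eqref{est:W0-N} then follows by expanding the trilinear difference into a sum of terms in which at least one of the three inputs is replaced by $\mathcal{W}_0[\omega]$. Since we are in Case~\texttt{A} with $N_1\sim N_2\sim N_3\sim N$, the obstruction in the raw $\mathcal{N}^{\texttt{A}}[\omega]$ bound is the derivative $\xi_3\sim N$; but in each difference term the factor that equals $\mathcal{W}_0[\omega]$ enjoys the bound from \eqref{est:W0}, i.e.~$\|P_N S(t)\mathcal{F}^{-1}\mathcal{W}_0[\omega]\|_{L^4_T L^\infty}\lesssim N^{-1}$ times the requisite norms. The plan is thus to put the $\mathcal{W}_0[\omega]$-factor into $L^4_T L^\infty_x$, absorbing the $N$-loss; one of the remaining factors into $L^\infty_T L^2$ carrying the $\hat{H}^{1/2}$ weight $N^{1/2}$; and the last factor into $L^4_T L^\infty_x$ via its $B^{0+}_{\infty,1}$-norm. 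The resulting bound is $L^4_T\hat{H}^{1/2}$ with the combinatorial prefactor claimed in \eqref{est:W0-N}. The hard part will be the case-by-case analysis in \eqref{est:W0}: the multiplier $\xi_3/\phi$ degenerates in Case~\texttt{B} (forcing the $K$-decomposition and Lemma~\ref{lem:CM3}) and in the \texttt{C1}-pieces of the $\mathcal{L}^j$'s, and in the quintilinear terms the decomposition identity \eqref{mult-id2} used in the proof of Lemma~\ref{lem:L2} must be reused to route the $\frac{1}{\phi(\vec\eta)}$-factor into a separately estimable block. Maintaining the $N^{-1}$-gain uniformly across all eleven regimes while ensuring summability is the main technical burden.
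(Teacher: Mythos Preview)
Your overall plan is sound and aligns with the paper's approach for \eqref{est:W00} and for the continuity in $C_T\hat H^{1/2}$. The two later estimates, however, have norm-assignment issues, and the paper takes a cleaner route that avoids most of the case analysis you anticipate.

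For \eqref{est:W0}, the assignment ``$L^\infty_x$ to one factor and $L^2_x$ to the others'' cannot produce $L^\infty_x$ output via Lemmas~\ref{lem:CM1}--\ref{lem:CM2}, since those obey the H\"older scaling $\frac1p=\sum\frac1{p_j}$; only Lemma~\ref{lem:CM3} (Case~\texttt{B}) gives $L^p\times L^q\times L^{q'}\to L^p$. The paper sidesteps this by first applying Bernstein $N\|P_N\,\cdot\,\|_{L^\infty}\lesssim\|\psi_N\,\cdot\,\|_{\hat H^{3/2}}$ and then observing that
\[
\|\mathcal{L}^j_0[\omega]\|_{\hat H^{3/2}}\le\|\mathcal{L}^j_1[\omega,\langle\,\cdot\,\rangle\omega]\|_{\hat H^{1/2}},
\]
which recycles the already-proved $\|z\|_{H^{-1/2}}$ estimates (from the proofs of Lemmas~\ref{lem:N_1E}, \ref{lem:N_1CD}, \ref{lem:L1}, \ref{lem:L2}) with $z=\langle\,\cdot\,\rangle w$, so that $\|z\|_{H^{-1/2}}=\|w\|_{H^{1/2}}$. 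This handles $\mathcal{N}^{\texttt{C},\texttt{D},\texttt{E}}_0$ and all $\mathcal{L}^j_0$ in one stroke; only $\mathcal{N}^{\texttt{B}}_0$ (where the log-divergence in $K$ obstructs the $\hat H^{3/2}$ bound) is treated directly in $B^1_{\infty,1}$ via Lemma~\ref{lem:CM3}. Your direct case-by-case approach would work once you insert the Bernstein step, but it is considerably more labor and the decomposition \eqref{mult-id2} is not needed here.

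For \eqref{est:W0-N}, your assignment places \emph{two} factors in $L^4_T$ (the $\mathcal{W}_0$-factor and the $B^{0+}_{\infty,1}$-factor), which yields only $L^2_T\hat H^{1/2}$, not the claimed $L^4_T\hat H^{1/2}$. The paper fixes this by first applying Bernstein $\|P_N\,\cdot\,\|_{L^2}\lesssim N^{1/2}\|P_N\,\cdot\,\|_{L^1}$ on the output, then H\"older with both non-$W_0$ inputs in $L^2_x$, giving a pointwise-in-$t$ bound
\[
\big(\|w\|_{H^{1/2}}+\|W_0\|_{H^{1/2}}\big)^2\sup_N N\|P_NW_0\|_{L^\infty},
\]
so that only the single $W_0$-factor carries the $L^4_T$ (via the pointwise version of \eqref{est:W0}), and the result is genuinely $L^4_T\hat H^{1/2}$.
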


\begin{proof}
The first estimate \eqref{est:W00} is obtained from Lemma~\ref{lem:N_0} and the estimate \eqref{est:Lj0} in Lemmas~\ref{lem:L1}, \ref{lem:L2}.
To see that $\mathcal{N}^{\neg \texttt{A}}[\omega]\in C_T\hat{H}^{\frac12}$, we first notice that it is true for each localized components, which we denote by $\mathcal{N}_{0,N,N_1,N_2,N_3}[\omega]$ (see the proof of \eqref{conv2} in Lemma~\ref{lem:just}).
Then, (except for Case \texttt{A}) we see from the proof of Lemma~\ref{lem:N_0} that
\[ \big\| \mathcal{N}_{0,N,N_1,N_2,N_3}[\omega]\big\|_{L^\infty_T\hat{H}^{\frac12}}\lesssim M^{-\frac12+}\big( NN_1N_2N_3\big) ^{0-}\| \omega\|_{L^\infty_T\hat{H}^{\frac12}}^3.\]
Hence, the sum of these localized pieces is absolutely convergent in Banach space $C_T\hat{H}^{\frac12}$ and gives $\mathcal{N}_0^{\neg \texttt{A}}[\omega]\in C_T\hat{H}^{\frac12}$.
The same argument using the proof of \eqref{est:Lj0} in Lemmas~\ref{lem:L1}, \ref{lem:L2} shows that $\mathcal{L}_0^j[\omega]\in C_T\hat{H}^{\frac12}$ for $1\leq j\leq 10$.

We move on to proving \eqref{est:W0}.
The Bernstein inequality bounds the left-hand side by $\sup_N\| \psi_N\mathcal{W}_0[\omega]\|_{L^4_T\hat{H}^{\frac32}}$, and we start with this norm for most cases.
Concerning $\mathcal{L}^j_0$ with $1\leq j\leq 10$, we see from the proof of the estimate \eqref{est:Lj1} in Lemmas~\ref{lem:L1}, \ref{lem:L2} that
\[ \| \mathcal{L}^j_1[\omega ,\zeta ]\|_{\hat{H}^{\frac12}}\lesssim M^{-\frac12+}\| w\|_{H^{\frac12}}^4\| z\|_{H^{-\frac12}},\]
and hence%
\footnote{%
Recall that each of five terms in $\mathcal{L}^j_1[\omega ,\zeta]$ has been estimated separately.}%
\[ \| \mathcal{L}^j_0[\omega ]\|_{\hat{H}^{\frac32}}\leq \| \mathcal{L}^j_1[\omega ,\langle \xi\rangle \omega ]\|_{\hat{H}^{\frac12}}\lesssim M^{-\frac12+}\| w\|_{H^{\frac12}}^5.\]
Similarly, the proof of the estimates in Lemma~\ref{lem:N_1E} and in Lemma~\ref{lem:N_1CD} (see also Remark~\ref{rem:N_1CD}) actually show that
\[ \| \mathcal{N}^{\texttt{C2},\texttt{D},\texttt{E}}_1[\omega ,\zeta]\|_{\hat{H}^{\frac12}}+\| \mathcal{N}^{\texttt{C1}}_{1,2}[\omega ,\zeta]\|_{\hat{H}^{\frac12}}\lesssim \| w\|_{H^{\frac12}}\| w\|_{B^{0+}_{\infty,1}}\| z\|_{H^{-\frac12}}\]
and
\[ \sup_N\| \psi_N\mathcal{N}^{\texttt{C1}}_{1,1}[\omega ,\zeta]\|_{\hat{H}^{\frac12}}+\sup_N\| \psi_N\mathcal{N}^{\texttt{C1}}_{1,3}[\omega ,\zeta]\|_{\hat{H}^{\frac12}}\lesssim \| w\|_{H^{\frac12}}\| w\|_{B^{0}_{\infty,1}}\| z\|_{H^{-\frac12}},\]
from which we conclude
\[ \sup_N\| \psi_N\mathcal{N}^{\texttt{C},\texttt{D},\texttt{E}}_0[\omega]\|_{\hat{H}^{\frac32}}\lesssim \| w\|_{H^{\frac12}}^2\| w\|_{B^{0+}_{\infty,1}}.\]
It remains to treat $\mathcal{N}^{\texttt{B}}_0[\omega ]$; note that we cannot estimate this portion in $\hat{H}^{\frac32}$ due to logarithmic divergence in $K$ (see the footnote to the proof of Lemma~\ref{lem:N_1B}).
We use Lemma~\ref{lem:CM3} with $p=\infty$ and $q=2$, then for $\mathcal{N}^{\texttt{B1}}_0$ we have 
\begin{align*}
&\big\| S(t)\mathcal{F}^{-1}\big[ \mathcal{N}^{\texttt{B1}}_0[\omega ]\big] \big\| _{B^1_{\infty,1}} \\
&\lesssim \sum_{\begin{smallmatrix} N_1\sim N_3\gg N\sim N_2 \\ K\lesssim N_1\end{smallmatrix}}N\Big\| P_N\mathcal{F}^{-1}\Big[ \int_{\begin{smallmatrix} \xi=\xi_{123} \\ K\leq |\xi_{13}|<2K\end{smallmatrix}}\frac{\xi_3}{\xi_{13}\xi_{23}}\hat{w}_{N_1}(\xi_1)\hat{w}_{N_2}(\xi_2)\hat{\bar{w}}_{N_3}(\xi_3)\Big] \Big\| _{L^\infty}\\
&\lesssim \sum_{N_1\sim N_3}N_1^{1-}\| w_{N_1}\|_{L^2}\| w_{N_3}\|_{L^2}\sum _{K,N_2\lesssim N_1}N_2^{0+}\| w_{N_2}\|_{L^\infty} \lesssim \| w\|_{H^{\frac12}}^2\| w\|_{B^{0+}_{\infty,1}},
\end{align*}
and for $\mathcal{N}^{\texttt{B2}}_0$,
\begin{align*}
&\big\| S(t)\mathcal{F}^{-1}\big[ \mathcal{N}^{\texttt{B2}}_0[\omega ]\big] \big\| _{B^1_{\infty,1}} \\
&\lesssim \sum_{\begin{smallmatrix} N_1\sim N\gg N_2\sim N_3 \\ K\lesssim N_2\end{smallmatrix}}N\Big\| P_N\mathcal{F}^{-1}\Big[ \int_{\begin{smallmatrix} \xi=\xi_{123} \\ K\leq |\xi_{23}|<2K\end{smallmatrix}}\frac{\xi_3}{\xi_{13}\xi_{23}}\hat{w}_{N_1}(\xi_1)\hat{w}_{N_2}(\xi_2)\hat{\bar{w}}_{N_3}(\xi_3)\Big] \Big\| _{L^\infty}\\
&\lesssim \sum_{N_1\sim N}N_1\| w_{N_1}\|_{L^\infty}\sum _{K\lesssim N_2\sim N_3\ll N_1}\frac{N_3}{N_1}\| w_{N_2}\|_{L^2}\| w_{N_3}\|_{L^2} \lesssim \| w\|_{B^{0+}_{\infty,1}}\| w\|_{H^{\frac12}}^2.
\end{align*}
So far, we have proved
\begin{equation}\label{est:W0'}
\sup_NN\big\| P_NS(t)\mathcal{F}^{-1}\mathcal{W}_0[\omega (t)]\big\|_{L^\infty}\lesssim \| w(t)\|_{H^{\frac12}}^2\Big( 1+M^{-\frac12+}\| w(t)\|_{H^{\frac12}}^2\Big) \| w(t)\|_{H^{\frac12}\cap B^{0+}_{\infty,1}}
\end{equation}
for each $t$, and then \eqref{est:W0} follows from the H\"older inequality in $t$.

We next prove \eqref{est:W0-N}.
Recall that
\begin{align*}
S(t)\mathcal{F}^{-1}\mathcal{N}^{\texttt{A}}[\omega ]&=\sum_{N\sim N_1\sim N_2\sim N_3}P_N\Big\{ w_{N_1}w_{N_2}\partial_x\bar{w}_{N_3}\\
&\qquad -\mathcal{F}^{-1}\Big[ \frac{1}{2\pi}\int_{\begin{smallmatrix} \xi=\xi_{123} \\ |\xi_{13}|\wedge |\xi_{23}|<1 \end{smallmatrix}}i\xi_3\hat{w}_{N_1}(\xi_1)\hat{w}_{N_2}(\xi_2)\hat{\bar{w}}_{N_3}(\xi_3)\Big] \Big\}.
\end{align*}
The $H^{\frac12}$ norm of the second part is easily estimated by $\| w\|_{H^{\frac12}}^3$; see the argument in the proof of Lemma~\ref{lem:absconv}.
For the first part, it suffices to consider 
\[ \sum_{N\sim N_1\sim N_2\sim N_3}\Big\| P_N\Big[ w_{N_1}w_{N_2}\partial_x\bar{w}_{N_3}-P_{N_1}(w-W_0)P_{N_2}(w-W_0)P_{N_3}\partial_x(\overline{w-W_0})\Big] \Big\|_{H^{\frac12}}, \]
where $W_0:=S(t)\mathcal{F}^{-1}\mathcal{W}_0[\omega ]$.
By Bernstein and H\"older, the above is estimated by
\begin{align*}
&\sum_{N'_1\sim N'_2\sim N'_3}(N_1')^2\big( \| w_{N'_1}\|_{L^2}+\| P_{N_1'}W_0\|_{L^2}\big) \big( \| w_{N'_2}\|_{L^2}+\| P_{N_2'}W_0\|_{L^2}\big) \| P_{N'_3}W_0\|_{L^\infty} \\
&\quad \lesssim \big( \| w\|_{H^{\frac12}}+\| W_0\|_{H^{\frac12}}\big) ^2 \sup_{N}N\| P_{N}W_0\|_{L^\infty}.
\end{align*}
Hence, we obtain
\[ \Big\| \mathcal{N}^{\texttt{A}}[\omega ]-\mathcal{N}^{\texttt{A}}[\omega -\mathcal{W}_0[\omega]]\Big\|_{\hat{H}^{\frac12}}\lesssim \big( \| w\|_{H^{\frac12}}+\| W_0\|_{H^{\frac12}}\big) ^2 \Big( \| w\|_{H^{\frac12}}+\| W_0\|_{H^{\frac12}}+\sup_{N}N\| P_{N}W_0\|_{L^\infty}\Big) .\]
From Lemma~\ref{lem:N_0} and the estimate \eqref{est:Lj0} in Lemmas~\ref{lem:L1}, \ref{lem:L2}, we have
\[ \| W_0\|_{H^{\frac12}}\leq \| \mathcal{N}^{\neg \texttt{A}}_0[\omega ]\|_{\hat{H}^{\frac12}}+\sum _{j=1}^{10}\| \mathcal{L}^j_0[\omega ]\|_{\hat{H}^{\frac12}}\lesssim M^{-\frac12+}\| w\|_{H^{\frac12}}^3+M^{-1+}\| w\|_{H^{\frac12}}^5.\]
Using this estimate and \eqref{est:W0'}, we obtain
\begin{align*}
\Big\| \mathcal{N}^{\texttt{A}}[\omega ]-\mathcal{N}^{\texttt{A}}[\omega -\mathcal{W}_0[\omega]]\Big\|_{\hat{H}^{\frac12}}&\lesssim \| w\|_{H^{\frac12}}^3\Big( 1+M^{-\frac12+}\| w\|_{H^{\frac12}}^2\Big) ^5\Big( 1+\| w\|_{H^{\frac12}}\| w\|_{H^{\frac12}\cap B^{0+}_{\infty,1}}\Big) .
\end{align*}
The claim \eqref{est:W0-N} follows from H\"older in $t$.
\end{proof}

\begin{lem}\label{lem:W1}
Let $T\in (0,1]$.
For any $\omega$ with $w=S(t)\check{\omega}\in L^\infty_TH^{\frac12}\cap L^4_TB^{0+}_{\infty,1}$, we have $\mathcal{W}_1[\omega]\in L^2_T\hat{H}^{\frac12}$ and 
\[ \| \mathcal{W}_1[\omega] \|_{L^1_T\hat{H}^{\frac12}}\lesssim \mathcal{P}_1\Big( M^{-\frac12+}\| w\|_{S_T}^2,\,T^{\frac12}(1+\| w\|_{S_T}^2)\| w\|_{S_T}^2,\,TM\Big) \| w\|_{S_T} , \]
where $\mathcal{P}_1(x,y,z):=(1+x^5)y+xz$.
\end{lem}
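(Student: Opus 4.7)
The plan is to estimate each summand in the definition of $\mathcal{W}_1[\omega]$ by combining the pointwise (in $t$) bounds already established in Section~\ref{sec:NFR1} with H\"older's inequality in time, and then to organize the resulting factors into the three quantities $x=M^{-\frac12+}\|w\|_{S_T}^2$, $y=T^{\frac12}(1+\|w\|_{S_T}^2)\|w\|_{S_T}^2$, and $z=TM$ appearing in $\mathcal{P}_1$. The basic bookkeeping rule is that any pointwise trilinear $\hat H^{\frac12}$-bound of the form $C\|w\|_{H^{\frac12}}^3$ integrates to $TC\|w\|_{L^\infty_TH^{\frac12}}^3\lesssim T^{\frac12}\|w\|_{S_T}^3\cdot (T^{\frac12}C)$, while any bound that uses $B^{0+}_{\infty,1}$ control on one factor integrates via $\|w\|_{L^1_TB^{0+}_{\infty,1}}\lesssim T^{\frac34}\|w\|_{L^4_TB^{0+}_{\infty,1}}$ and lands inside the $y$-quantity.

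Concretely, I would run through the list in order. For $\mathcal{R}[\omega]$, Lemma~\ref{lem:R} gives pointwise $\hat H^{\frac12}$ control, and integrating in $t$ together with $\|w\|_{L^2_TB^{0+}_{\infty,1}}\lesssim T^{\frac14}\|w\|_{L^4_TB^{0+}_{\infty,1}}$ yields a bound absorbable into $y\|w\|_{S_T}$. For $\mathcal{N}^{\neg\texttt{A}}_R[\omega]$, Lemma~\ref{lem:N_R} gives the factor $M^{\frac12+}\|\omega\|_{\hat H^{\frac12}}^3$, whose time integral is $TM^{\frac12+}\|w\|_{L^\infty_TH^{\frac12}}^3\lesssim (TM)\,(M^{-\frac12+}\|w\|_{S_T}^2)\|w\|_{S_T}=xz\|w\|_{S_T}$, accounting precisely for the $xz$-term in $\mathcal{P}_1$. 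The term $\mathcal{N}^{\neg\texttt{A}}_1[\omega,\mathcal{R}[\omega]]$ is bounded by combining Lemma~\ref{lem:N_0} (or Lemmas~\ref{lem:N_1E}, \ref{lem:N_1CD}, \ref{lem:N_1B}) with Lemma~\ref{lem:R}, placing $\mathcal{R}[\omega]$ in $\hat H^{\frac12}\subset \hat H^{-\frac12+}$; this fits into $y\|w\|_{S_T}$.

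For $\tilde{\mathcal{N}}_1[\omega]$, each of the summands is of the form $\mathcal{N}_{1,\bullet}^{\bullet}[\omega,\mathcal{N}^{\bullet}[\omega]]$ handled by Lemmas~\ref{lem:N_1E}, \ref{lem:N_1CD}, \ref{lem:N_1B}, feeding Proposition~\ref{prop:N-weak} in the slot for $\mathcal{N}^{\bullet}[\omega]$. The resulting pointwise bounds all have the shape $\|w\|_{H^{\frac12}}^3\|w\|_{B^{0+}_{\infty,1}}\|w\|_{H^{\frac12}\cap B^{0+}_{\infty,1}}$ (or simpler), and a H\"older-in-$t$ argument as above produces $T^{\frac12}\|w\|_{S_T}^5\lesssim y\|w\|_{S_T}^3$, again absorbed into $y\|w\|_{S_T}$. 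The terms $\mathcal{L}^j_R[\omega]$, $\mathcal{L}^j_1[\omega,\mathcal{R}[\omega]]$, $\mathcal{L}^j_1[\omega,\mathcal{N}[\omega]]$ are treated identically using Lemmas~\ref{lem:L1}, \ref{lem:L2} together with Lemma~\ref{lem:R} and Proposition~\ref{prop:N-weak}; the factor $M^{-\frac12+}$ that appears in \eqref{est:Lj1} only improves the estimate, so these also contribute to $y\|w\|_{S_T}$.

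Finally, for the difference term $\mathcal{N}^{\texttt{A}}[\omega]-\mathcal{N}^{\texttt{A}}[\omega-\mathcal{W}_0[\omega]]$ we invoke the estimate \eqref{est:W0-N} of Lemma~\ref{lem:W0}, whose polynomial factor $(1+M^{-\frac12+}\|w\|_{L^\infty_TH^{\frac12}}^2)^5=(1+x)^5$ is exactly what generates the $(1+x^5)$ prefactor in $\mathcal{P}_1$ after time integration (the $T^{\frac14-}$ inside the $L^4_T$ norm of \eqref{est:W0-N} gives the requisite $T^{\frac12}$ when combined with a further H\"older). The only point requiring care is to check that this difference is controlled in $\hat H^{\frac12}$, not merely in $\hat H^{-\frac12+}$ where $\mathcal{N}^{\texttt{A}}[\omega]$ itself lives; this is precisely the role of the extra smoothing \eqref{est:W0} of $\mathcal{W}_0[\omega]$, which provides the $N^{-1}$-decay of $P_NS(t)\mathcal{F}^{-1}\mathcal{W}_0[\omega]$ in $L^4_TL^\infty_x$ used in the proof of \eqref{est:W0-N}. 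Collecting all contributions and using $T\leq 1$ to absorb negligible positive powers of $T$ gives the claimed bound, and the main obstacle is purely organizational: verifying that the worst case in each of the eleven frequency regimes really does fit into the stated polynomial $\mathcal{P}_1$ without producing hidden logarithmic or $M$-growing factors.
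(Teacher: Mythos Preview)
Your proposal is correct and follows essentially the same approach as the paper. The paper's own proof is extremely terse---it simply lists, term by term, which earlier lemmas (Lemmas~\ref{lem:R}, \ref{lem:N_R}, \ref{lem:N_0}, \ref{lem:N_1E}, \ref{lem:N_1CD}, \ref{lem:N_1B}, \ref{lem:L1}, \ref{lem:L2}, \ref{lem:W0} and Proposition~\ref{prop:N-weak}) place each constituent of $\mathcal{W}_1[\omega]$ into $L^p_T\hat H^{\frac12}$ for some $p\geq 2$, and then states that the quantitative bound follows from those same estimates; you have supplied exactly this argument together with the explicit bookkeeping showing how the factors organize into $\mathcal{P}_1(x,y,z)=(1+x^5)y+xz$, in particular identifying $\mathcal{N}_R^{\neg\texttt{A}}[\omega]$ as the source of the $xz$ term and \eqref{est:W0-N} as the source of the $(1+x^5)$ prefactor.
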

\begin{proof}
We see $\mathcal{R}[\omega]\in L^4_T\hat{H}^{\frac12}$ from Lemma~\ref{lem:R}; $\mathcal{N}_R[\omega]\in L^\infty_T\hat{H}^{\frac12}$ from Lemma~\ref{lem:N_R}; $\mathcal{N}_1[\omega,\mathcal{R}[\omega]]\in L^4_T\hat{H}^{\frac12}$ from Lemmas~\ref{lem:N_0} and \ref{lem:R}; $\tilde{\mathcal{N}}_1[\omega]\in L^2_T\hat{H}^{\frac12}$ from Lemmas~\ref{lem:N_1E}, \ref{lem:N_1CD}, \ref{lem:N_1B} and Proposition~\ref{prop:N-weak}; $\mathcal{L}_R^j[\omega],\mathcal{L}_1^j[\omega,\mathcal{R}[\omega]]\in L^4_T\hat{H}^{\frac12}$ and $\mathcal{L}_1^j[\omega ,\mathcal{N}[\omega]]\in L^2_T\hat{H}^{\frac12}$ from Lemmas~\ref{lem:L1}, \ref{lem:L2}, \ref{lem:R} and Proposition~\ref{prop:N-weak}; $\mathcal{N}^{\texttt{A}}[\omega]-\mathcal{N}^{\texttt{A}}[\omega-\mathcal{W}_0[\omega]]\in L^4_T\hat{H}^{\frac12}$ from Lemma~\ref{lem:W0}.
The claimed estimate also follows from the estimates given in these lemmas.
\end{proof}

We conclude the first stage of our NFR argument with the following:
\begin{cor}\label{cor:eq-varpi}
Let $w\in L^\infty_TH^{\frac12}\cap L^4_TB^{0+}_{\infty,1}$ be a (distributional) solution of \eqref{eq:w2}, and let $\omega (t):=\mathcal{F}S(-t)w(t)$.
Then, $\varpi:=\omega -\mathcal{W}_0[\omega]$ belongs to $L^\infty_T\hat{H}^{\frac12}\cap C_T\hat{H}^{\frac12-}$ and satisfies the integral equation \eqref{eq:varpi} in the sense of $C_T\hat{H}^{-\frac12}$.
Moreover, $\varpi$ belongs to $W^{1,2}_T\hat{H}^{-\frac12}$ and satisfies the differential equation
\begin{equation}\label{eq:varpi-d}
\partial_t\varpi=\mathcal{N}^{\texttt{A}}[\varpi] +\mathcal{W}_1[\omega]
\end{equation}
in the sense of $L^2_T\hat{H}^{-\frac12}$.
\end{cor}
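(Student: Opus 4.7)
The plan is to establish the four assertions of the corollary -- the regularity of $\varpi$, the integral identity \eqref{eq:varpi} in $C_T\hat{H}^{-\frac12}$, the $W^{1,2}_T\hat{H}^{-\frac12}$ regularity, and the differential equation \eqref{eq:varpi-d} -- in that order, with most of the analytic work borrowed from the preceding subsections.

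\emph{Regularity of $\varpi$.} Since $w\in L^\infty_TH^{\frac12}$ is a distributional solution of \eqref{eq:w2}, the argument in the appendix gives $w\in C_TH^{\frac12-}$, hence $\omega\in L^\infty_T\hat{H}^{\frac12}\cap C_T\hat{H}^{\frac12-}$. Lemma~\ref{lem:W0} furnishes $\mathcal{W}_0[\omega]\in C_T\hat{H}^{\frac12}$, which is embedded in the same space, and therefore so is $\varpi=\omega-\mathcal{W}_0[\omega]$.

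\emph{Integral identity.} The starting point is \eqref{eq:omega-I}, already valid pointwise in $t$ as an equation in $\hat{H}^{-\frac12-}$. I would decompose $\mathcal{N}[\omega]$ into its Littlewood-Paley pieces $\mathcal{N}_{N,N_1,N_2,N_3}[\omega]$ and, by Lemma~\ref{lem:absconv}, exchange summation with $t$-integration. For each fixed tuple $(N,N_1,N_2,N_3)$ the differentiation-by-parts identity of Lemma~\ref{lem:just} splits the localized integral into a boundary contribution, a resonant remainder, and three substitution terms driven by $\partial_t\omega=\mathcal{N}[\omega]+\mathcal{R}[\omega]$. The substitution terms falling into the regimes \eqref{defn:Lj1}--\eqref{defn:Lj2} -- where the first NFR does not yet close -- require a second round: after a further Littlewood-Paley decomposition of the inner nonlinearity, Young's inequality combined with Lemma~\ref{lem:absconv} justifies the second exchange of sum and integral, and Lemma~\ref{lem:just} applies once more to each localized piece. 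Gathering the boundary terms as $\mathcal{W}_0[\omega]\big|_0^t$ and bundling the remaining non-boundary contributions into $\int_0^t\mathcal{N}^{\texttt{A}}[\omega]$ and an auxiliary remainder produces \eqref{eq:NFR2}. Rewriting $\mathcal{N}^{\texttt{A}}[\omega]=\mathcal{N}^{\texttt{A}}[\varpi]+\bigl(\mathcal{N}^{\texttt{A}}[\omega]-\mathcal{N}^{\texttt{A}}[\omega-\mathcal{W}_0[\omega]]\bigr)$, absorbing the bracketed correction into $\mathcal{W}_1[\omega]$, and moving $\mathcal{W}_0[\omega]\big|_0^t$ to the left-hand side yields \eqref{eq:varpi}. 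Lemmas~\ref{lem:W0} and \ref{lem:W1} guarantee that the resulting right-hand side is a continuous $\hat{H}^{-\frac12}$-valued function of $t$.

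\emph{Differential form.} It remains to verify that $\mathcal{N}^{\texttt{A}}[\varpi]+\mathcal{W}_1[\omega]\in L^2_T\hat{H}^{-\frac12}$. The contribution of $\mathcal{W}_1[\omega]$ is covered by Lemma~\ref{lem:W1}. For $\mathcal{N}^{\texttt{A}}[\varpi]$, the same splitting as above shows that $\mathcal{N}^{\texttt{A}}[\omega]\in L^\infty_T\hat{H}^{-\frac12+}$ via the Case~\texttt{A} portion of Proposition~\ref{prop:N-weak}, while the correction $\mathcal{N}^{\texttt{A}}[\omega]-\mathcal{N}^{\texttt{A}}[\omega-\mathcal{W}_0[\omega]]$ lies in $L^4_T\hat{H}^{\frac12}$ by \eqref{est:W0-N}. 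Consequently $\mathcal{N}^{\texttt{A}}[\varpi]\in L^4_T\hat{H}^{-\frac12}$, the right-hand side of \eqref{eq:varpi} is absolutely continuous into $\hat{H}^{-\frac12}$ with an $L^2_T$ derivative, and \eqref{eq:varpi-d} holds in $L^2_T\hat{H}^{-\frac12}$.

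The only delicate technical point is the second paragraph, namely the justification of two successive exchanges of summation and $t$-integration together with two successive integration-by-parts operations inside heavily frequency-localized trilinear and quintilinear expressions. Once absolute convergence is in place at each stage -- via Lemma~\ref{lem:absconv} for the first NFR, and via pointwise Young bounds combined with the sum-level estimates from Remark~\ref{rem:N_R} and the proofs of Lemmas~\ref{lem:N_1E}--\ref{lem:L2} for the second -- the algebraic content of the corollary reduces to two applications of Lemma~\ref{lem:just}.
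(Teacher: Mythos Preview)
Your proposal is correct and follows the paper's route closely: the regularity of $\varpi$, the recapitulation of the two-step NFR justification from Subsection~\ref{subsec:just} leading to \eqref{eq:NFR2} and then \eqref{eq:varpi}, and the appeal to Lemmas~\ref{lem:W0}--\ref{lem:W1} are all as in the paper.

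There is one genuine difference and one small imprecision worth flagging. For the control of $\mathcal{N}^{\texttt{A}}[\varpi]$ in $\hat{H}^{-\frac12}$, the paper does \emph{not} split into $\mathcal{N}^{\texttt{A}}[\omega]$ plus the correction; instead it invokes the direct trilinear bound
\[
\|\mathcal{N}^{\texttt{A}}[\varpi]\|_{\hat{H}^{-\frac12}}\lesssim \|\varpi\|_{\hat{H}^{\frac12}}^3,
\]
which is exactly the $J=1$ case of Lemma~\ref{lem:NFR-weak} (Cauchy--Schwarz plus the fact that all four frequencies are comparable). This gives $\mathcal{N}^{\texttt{A}}[\varpi]\in L^\infty_T\hat{H}^{-\frac12}$ immediately, without any reference to $w$ or to the $B^{0+}_{\infty,1}$ norm. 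Your splitting argument via Proposition~\ref{prop:N-weak} and \eqref{est:W0-N} also works, but note that Proposition~\ref{prop:N-weak} as stated bounds the Case~\texttt{A} piece by $\|w\|_{H^{\frac12}\cap B^{0+}_{\infty,1}}\|w\|_{H^{\frac12}}^2$, which places $\mathcal{N}^{\texttt{A}}[\omega]$ only in $L^4_T\hat{H}^{-\frac12+}$, not $L^\infty_T$; this is still enough for the $L^2_T\hat{H}^{-\frac12}$ conclusion, so the slip is harmless. Relatedly, your sentence at the end of the second paragraph (``Lemmas~\ref{lem:W0} and \ref{lem:W1} guarantee \dots'') does not yet cover $\int_0^t\mathcal{N}^{\texttt{A}}[\varpi]$; the $C_T\hat{H}^{-\frac12}$ claim there really needs the $\mathcal{N}^{\texttt{A}}[\varpi]$ bound you only supply in the next paragraph.
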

\begin{proof}
As we have seen in Subsection~\ref{subsec:just}, $\omega$ belongs to $L^\infty_T\hat{H}^{\frac12}\cap C_T\hat{H}^{\frac12-}$, and thus $\varpi$ belongs to the same space by Lemma~\ref{lem:W0}.
It is not hard to show the estimate
\[ \| \mathcal{N}^{\texttt{A}}[\varpi]\|_{\hat{H}^{-\frac12}}\lesssim \| \varpi\|_{\hat{H}^{\frac12}}^3 \]
(this is the special case $J=1$ of Lemma~\ref{lem:NFR-weak} below), which implies $\mathcal{N}^{\texttt{A}}[\varpi]\in L^\infty_T\hat{H}^{-\frac12}$ for any $\varpi\in L^\infty_T\hat{H}^{\frac12}$, and thus $\int_0^t\mathcal{N}^{\texttt{A}}[\varpi] \in W^{1,\infty}_T\hat{H}^{-\frac12}\subset C_T\hat{H}^{-\frac12}$.
From Lemma~\ref{lem:W1}, we have $\int_0^t \mathcal{W}_1[\omega]\in W^{1,2}_T\hat{H}^{\frac12}\subset C_T\hat{H}^{\frac12}$.
Hence, the equation \eqref{eq:varpi} holds pointwise in $t\in [-T,T]$ as $\hat{H}^{-\frac12}$-valued functions, while the equation \eqref{eq:varpi-d} holds in $L^2_T\hat{H}^{-\frac12}$.
\end{proof}


\section{Normal form reduction: II. Infinite reductions}
\label{sec:NFR2}

In the second stage, we implement infinite NFR iteration scheme on the equation \eqref{eq:varpi} for $\varpi$ to derive a limit equation which does not have derivative losses.
Using it, we shall prove Theorem~\ref{thm:main} in Subsection~\ref{subsec:proof}.


\subsection{Description of general steps via the tree notation}

We recall some notation for the infinite NFR argument from \cite{K-all}.
\begin{defn}[Trees]
For $J\in \mathbb{N}$, a tree $\mathcal{T}$ of the $J$-th generation means a partially ordered set (with a partial order $\succeq$) of $3J+1$ elements satisfying all the following conditions.
\begin{enumerate}
\item[$\mathrm{(i)}$] There is the unique element $r\in \mathcal{T}$ called the ``root'' such that $r\succeq a$ for all $a\in \mathcal{T}$.
\item[$\mathrm{(ii)}$] We say $a\in \mathcal{T}$ is a ``parent'' of $b\in \mathcal{T}$ and $b$ is a ``child'' of $a$ if $a\neq b$, $a\succeq b$, and also $a\succeq c\succeq b$ implies $c=a$ or $c=b$.
(We write $\mathcal{T}^0$ to denote the subset of $\mathcal{T}$ consisting of all elements that have a child, and $\mathcal{T}^\infty :=\mathcal{T}\setminus \mathcal{T}^0$.)
Then, each element of $\mathcal{T}\setminus \{ \text{root}\}$ has exactly one parent, whereas each element of $\mathcal{T}^0$ has exactly three children.
(This condition implies that $\# \mathcal{T}^0=J$ and $\# \mathcal{T}^\infty =2J+1$.)
\item[$\mathrm{(iii)}$] The elements of $\mathcal{T}^0$ are numbered from $1$ to $J$ so that $a^{j_1}\succeq a^{j_2}$ implies $j_1\le j_2$, where we write the $j$-th parent (the $j$-th element of $\mathcal{T}^0$) as $a^j$. 
We also distinguish the three children of $a^j$ and write them as $a^j_1,a^j_2,a^j_3$.
\end{enumerate}
We write $\mathfrak{T}(J)$ to denote the set of all trees of the $J$-th generation.
\end{defn} 

We note that the above set of trees $\mathfrak{T}(J)$ also has an inductive definition:
\begin{itemize}
\item Let $\mathfrak{T}(1)$ be the set of one tree consisting of the root and its three children.
\item With $\mathfrak{T}(J-1)$ given, define $\mathfrak{T}(J)$ as the set of all trees that can be obtained from some $\mathcal{T}\in \mathfrak{T}(J-1)$ by choosing an element $a\in \mathcal{T}^\infty$ as the $J$-th parent and adding its three children.
\end{itemize}
From this definition, we easily see that $\# \mathfrak{T}(J)=\prod _{j=1}^J(2j-1)$. 
\begin{defn}[Index functions]
Given $J\in \mathbb{N}$ and $\mathcal{T}\in \mathfrak{T}(J)$, an index function $\bm{\xi}=(\xi_a)_{a\in \mathcal{T}}$ on $\mathcal{T}$ is a map from $\mathcal{T}$ to $\hat{\mathcal{M}}$ such that
\[ \xi_{a^j}=\xi_{a^j_1}+\xi_{a^j_2}+\xi_{a^j_3} \qquad (j=1,\dots ,J). \]
We write $\Xi (\mathcal{T})$ to denote the set of all index functions on $\mathcal{T}$, and for $\xi \in \hat{\mathcal{M}}$, denote by $\Xi_\xi(\mathcal{T})$ all index functions with the value $\xi$ at the root.
We divide $\mathcal{T}$ into two disjoint subsets $\mathcal{T}_1$ and $\mathcal{T}_2$ by the following rule:
\[ a^1\in \mathcal{T}_1,\qquad \left\{ \begin{aligned} a^j\in \mathcal{T}_1\quad &\Rightarrow \quad a^j_1,a^j_2\in \mathcal{T}_1~~\text{and}~~a^j_3\in \mathcal{T}_2\\
a^j\in \mathcal{T}_2\quad &\Rightarrow \quad a^j_1,a^j_2\in \mathcal{T}_2~~\text{and}~~a^j_3\in \mathcal{T}_1\end{aligned} \right. \qquad (j=1,\dots ,J). \]
Write $\mathcal{T}^0_{m}:=\mathcal{T}^0\cap \mathcal{T}_m$ and $\mathcal{T}^\infty_m:=\mathcal{T}^\infty\cap \mathcal{T}_m$ for $m=1,2$.
Given $\bm{\xi}\in \Xi (\mathcal{T})$ and for $j=1,\dots ,J$, we define
\[ \phi _j:=\left\{ \begin{alignedat}{2} &\xi_{a^j}^2-\xi_{a^j_1}^2-\xi_{a^j_2}^2+\xi_{a^j_3}^2 & &\text{if $a^j\in \mathcal{T}_1^0$,} \\
-\big( &\xi_{a^j}^2-\xi_{a^j_1}^2-\xi_{a^j_2}^2+\xi_{a^j_3}^2\big) &\quad &\text{if $a^j\in \mathcal{T}_2^0$,} \end{alignedat}\right. \quad\qquad \tilde{\phi}_j:=\sum _{k=1}^j\phi_k ,\]
and 
\[ \Psi_j:=\mathbf{1}_{|\xi_{a^j_1a^j_3}|\wedge |\xi_{a^j_2a^j_3}|\geq 1}\sum_{\begin{smallmatrix} N,N_1,N_2,N_3\geq 1 \\ N\sim N_1\sim N_2\sim N_3\end{smallmatrix}}\psi_N(\xi_{a^j})\psi_{N_1}(\xi_{a^j_1})\psi_{N_2}(\xi_{a^j_2})\psi_{N_3}(\xi_{a^j_3}). \]
\end{defn}
By the above definition, an index function $\bm{\xi}\in \Xi(\mathcal{T})$ is determined uniquely by its values on $\mathcal{T}^\infty$.
Moreover, we see that $\# \mathcal{T}^\infty_1=J+1$, $\# \mathcal{T}^\infty_2=J$, and 
\[ \xi =\sum_{a\in \mathcal{T}^\infty}\xi_a,\qquad \tilde{\phi}_J=\xi^2-\sum _{a\in \mathcal{T}^\infty_1}\xi_a^2+\sum _{a'\in \mathcal{T}^\infty_2}\xi_{a'}^2\]
for any $\mathcal{T}\in \mathfrak{T}(J)$, $\bm{\xi}\in \Xi _\xi (\mathcal{T})$.
We also note that 
\[ |\Psi_j|\leq 1,\qquad {\rm supp}\,\Psi_j\subset \{ \bm{\xi}\in \Xi(\mathcal{T}): \langle \xi_{a^j}\rangle \sim \langle \xi_{a^j_1}\rangle \sim \langle \xi_{a^j_2}\rangle \sim \langle \xi_{a^j_3}\rangle \} .\]

Using the tree notation, we can explicitly write down the multilinear terms arising during the NFR argument (up to a harmless multiplicative constant), as we describe below.
In the following we \emph{formally} perform the computations, and the justification will be addressed in Subsection~\ref{subsec:just-infinite}.

The initial cubic term 
\[ \mathcal{N}^{\texttt{A}}[\varpi](\xi)=\sum_{N\sim N_1\sim N_2\sim N_3}\int_{\begin{smallmatrix} \xi=\xi_{123} \\ |\xi_{13}|\wedge |\xi_{23}|\geq 1\end{smallmatrix}}e^{it(\xi^2-\xi_1^2-\xi_2^2+\xi_3^2)}\psi_N(\xi)\xi_3\varpi_{N_1}(\xi_1)\varpi_{N_2}(\xi_2)\varpi_{N_3}^*(\xi_3) \]
in the equation \eqref{eq:varpi} can be written as
\begin{align*}
\mathcal{N}^{\texttt{A}}[\varpi]&=\sum_{\mathcal{T}\in \mathfrak{T}(1)}\mathcal{N}^{(1;\mathcal{T})}[\varpi],\\
\mathcal{N}^{(1;\mathcal{T})}[\varpi](\xi)&:=\int_{\bm{\xi}\in \Xi_\xi(\mathcal{T})}e^{it\tilde{\phi}_1}\Psi_1\xi_{a^1_3}\prod_{a\in \mathcal{T}^\infty_1}\varpi(\xi_a)\prod_{a'\in \mathcal{T}^\infty_2}\varpi^*(\xi_{a'}).
\end{align*}
By NFR, as we did for the other type of cubic terms, we decompose it as $\mathcal{N}^{\texttt{A}}=\mathcal{N}^{\texttt{A}}_R+\mathcal{N}^{\texttt{A}}_{N\!R}$ and apply an integration by parts as $\mathcal{N}^{\texttt{A}}_{N\!R}[\varpi]=\partial_t\mathcal{N}^{\texttt{A}}_0[\varpi]+\mathcal{N}^{\texttt{A}}_1[\varpi,\partial_t\varpi]$.
These terms can be written as follows:
\begin{gather*}
\mathcal{N}^{\texttt{A}}_R[\varpi]=\sum_{\mathcal{T}\in \mathfrak{T}(1)}\mathcal{N}^{(1;\mathcal{T})}_R[\varpi],\qquad \mathcal{N}^{\texttt{A}}_0[\varpi]=\sum_{\mathcal{T}\in \mathfrak{T}(1)}\mathcal{N}^{(1;\mathcal{T})}_0[\varpi],\\
\mathcal{N}^{\texttt{A}}_1[\varpi,\zeta ]=\sum_{\mathcal{T}\in \mathfrak{T}(1)}\sum_{a^*\in \mathcal{T}^\infty}\mathcal{N}^{(1;\mathcal{T},a^*)}_1[\varpi,\zeta],
\end{gather*}
\begin{align*}
\mathcal{N}^{(1;\mathcal{T})}_R[\varpi](\xi)&:=\int_{\bm{\xi}\in \Xi_\xi(\mathcal{T})}e^{it\tilde{\phi}_1}\Psi_1\xi_{a^1_3}\mathbf{1}_{|\tilde{\phi}_1|\leq M}\prod_{a\in \mathcal{T}^\infty_1}\varpi(\xi_a)\prod_{a'\in \mathcal{T}^\infty_2}\varpi^*(\xi_{a'}),\\
\mathcal{N}^{(1;\mathcal{T})}_0[\varpi](\xi)&:=\int_{\bm{\xi}\in \Xi_\xi(\mathcal{T})}e^{it\tilde{\phi}_1}\Psi_1\xi_{a^1_3}\frac{\mathbf{1}_{|\tilde{\phi}_1|>M}}{\tilde{\phi}_1}\prod_{a\in \mathcal{T}^\infty_1}\varpi(\xi_a)\prod_{a'\in \mathcal{T}^\infty_2}\varpi^*(\xi_{a'}),\\
\mathcal{N}^{(1;\mathcal{T},a^*)}_1[\varpi,\zeta ](\xi)&:=\int_{\bm{\xi}\in \Xi_\xi(\mathcal{T})}e^{it\tilde{\phi}_1}\Psi_1\xi_{a^1_3}\frac{\mathbf{1}_{|\tilde{\phi}_1|>M}}{\tilde{\phi}_1}\\
&\qquad \times \left\{ \begin{alignedat}{2}
\zeta(\xi_{a^*})\prod_{a\in \mathcal{T}^\infty_1\setminus \{ a^*\}}\varpi(\xi_a)\prod_{a'\in \mathcal{T}^\infty_2}\varpi^*(\xi_{a'}) &\qquad &(\text{if $a^*\in \mathcal{T}^\infty_1$}),\\
\zeta^*(\xi_{a^*})\prod_{a\in \mathcal{T}^\infty_1}\varpi(\xi_a)\prod_{a'\in \mathcal{T}^\infty_2\setminus \{ a^*\}}\varpi^*(\xi_{a'}) &\qquad &(\text{if $a^*\in \mathcal{T}^\infty_2$}).
\end{alignedat}\right.
\end{align*}
Inserting $\partial_t\varpi =\mathcal{W}_1[\omega] +\mathcal{N}^{\texttt{A}}[\varpi]$, the quintic term $\mathcal{N}_1^{\texttt{A}}[\varpi,\mathcal{N}^{\texttt{A}}[\varpi]]$ appears.
Recall that $\mathfrak{T}(2)=\{ \tilde{\mathcal{T}}(a^*):\mathcal{T}\in \mathfrak{T}(1),a^*\in \mathcal{T}^\infty\}$, where $\tilde{\mathcal{T}}(a^*)$ denotes the tree obtained by assigning $a^*\in \mathcal{T}^\infty$ to the second parent and adding its three children.
Then, we see that
\begin{align*}
\mathcal{N}_1^{\texttt{A}}[\varpi,\mathcal{N}^{\texttt{A}}[\varpi]](\xi) 
&=\sum_{\mathcal{T}\in \mathfrak{T}(1)} \sum_{a^*\in \mathcal{T}^\infty}\mathcal{N}_1^{(1;\mathcal{T},a^*)}\bigg[ \varpi, \sum_{\mathcal{T}'\in \mathfrak{T}(1)}\mathcal{N}^{(1;\mathcal{T}')}[\varpi]\bigg] \\
&=\sum_{\tilde{\mathcal{T}}\in \mathfrak{T}(2)}\int_{\bm{\xi}\in \Xi_\xi(\tilde{\mathcal{T}})}e^{it\tilde{\phi}_2}\Psi_1\xi_{a^1_3}\frac{\mathbf{1}_{|\tilde{\phi}_1|>M}}{\tilde{\phi}_1}\Psi_2\xi_{a^2_3}\prod_{a\in \tilde{\mathcal{T}}^\infty_1}\varpi(\xi_a)\prod_{a'\in \tilde{\mathcal{T}}^\infty_2}\varpi^*(\xi_{a'}).
\end{align*}
We write the right-hand side as $\sum_{\tilde{\mathcal{T}}\in \mathfrak{T}(2)}\mathcal{N}^{(2;\tilde{\mathcal{T}})}[\varpi]$ and obtain the equation of the second generation:
\begin{align*}
\varpi (t)&=\varpi(0)+\int_0^t\mathcal{W}_1[\omega ] \\
&\quad +\sum_{\mathcal{T}\in \mathfrak{T}(1)}\Big\{ \int_0^t\mathcal{N}^{(1;\mathcal{T})}_R[\varpi ]+\mathcal{N}^{(1;\mathcal{T})}_0[\varpi] \Big|_0^t +\sum _{a^*\in \mathcal{T}^\infty}\int_0^t \mathcal{N}^{(1;\mathcal{T},a^*)}_1[\varpi,\mathcal{W}_1[\omega ]] \Big\} \\
&\quad +\sum_{\tilde{\mathcal{T}}\in \mathfrak{T}(2)}\int_0^t \mathcal{N}^{(2;\tilde{\mathcal{T}})}[\varpi].
\end{align*}
For notational simplicity, from now on we do not distinguish $\varpi$ and $\varpi^*$; in fact, the difference will play no role in the following argument.

In a general step (i.e., in the equation of the $J$-th generation obtained after $(J-1)$-times applications of NFR), we have the last term 
\[ \sum_{\mathcal{T}\in \mathfrak{T}(J)}\int_0^t\mathcal{N}^{(J;\mathcal{T})}[\varpi],\]
where for $\mathcal{T}\in \mathfrak{T}(J)$
\begin{align*}
\mathcal{N}^{(J;\mathcal{T})}[\varpi ](\xi ):=\int_{\bm{\xi}\in \Xi_\xi(\mathcal{T})} e^{it\tilde{\phi}_J}\prod_{j=1}^J\Psi_j\xi_{a^j_3}\cdot \prod_{j=1}^{J-1}\frac{\mathbf{1}_{|\tilde{\phi}_j|>M_j}}{\tilde{\phi}_j}\prod_{a\in \mathcal{T}^\infty}\varpi(\xi_a).
\end{align*}
The $\bm{\xi}$-integral in the above expression is an integral (or sum) on a $2J$-dimensional hyperplane in $\hat{\mathcal{M}}^{3J+1}$.
We choose a different threshold $M_j>1$ for the ``resonant'' part at each step; in fact, we will take $M_j:=10^{j-1}M$.
We then carry out the $J$-th NFR as 
\[ \mathcal{N}^{(J;\mathcal{T})}[\varpi]=\mathcal{N}^{(J;\mathcal{T})}_R[\varpi] +\partial_t\mathcal{N}^{(J;\mathcal{T})}_0[\varpi]+\sum _{a^*\in \mathcal{T}^\infty}\mathcal{N}^{(J;\mathcal{T},a^*)}_1[\varpi,\partial_t\varpi]\qquad (\mathcal{T}\in \mathfrak{T}(J)),\]
where
\begin{align*}
\mathcal{N}^{(J;\mathcal{T})}_R[\varpi ](\xi )&:=\int_{\bm{\xi}\in \Xi_\xi(\mathcal{T})} e^{it\tilde{\phi}_J}\prod_{j=1}^J\Psi_j\xi_{a^j_3}\cdot \prod_{j=1}^{J-1}\frac{\mathbf{1}_{|\tilde{\phi}_j|>M_j}}{\tilde{\phi}_j}\textbf{1}_{|\tilde{\phi}_{J}|\leq M_{J}}\prod_{a\in \mathcal{T}^\infty}\varpi(\xi_a),\\
\mathcal{N}^{(J;\mathcal{T})}_0[\varpi ](\xi )&:=\int_{\bm{\xi}\in \Xi_\xi(\mathcal{T})} e^{it\tilde{\phi}_J}\prod_{j=1}^J\frac{\Psi_j\xi_{a^j_3}\mathbf{1}_{|\tilde{\phi}_j|>M_j}}{\tilde{\phi}_j}\prod_{a\in \mathcal{T}^\infty}\varpi(\xi_a),\\
\mathcal{N}^{(J;\mathcal{T},a^*)}_1[\varpi ,\zeta ](\xi )&:=\int_{\bm{\xi}\in \Xi_\xi(\mathcal{T})} e^{it\tilde{\phi}_J}\prod_{j=1}^J\frac{\Psi_j\xi_{a^j_3}\mathbf{1}_{|\tilde{\phi}_j|>M_j}}{\tilde{\phi}_j}\zeta(\xi_{a^*})\prod_{a\in \mathcal{T}^\infty\setminus \{ a^*\}}\varpi(\xi_a).
\end{align*}
Inserting the equation $\partial_t\varpi =\mathcal{W}_1[\omega] +\mathcal{N}^{\texttt{A}}[\varpi]$, we have 
\[ \mathcal{N}^{(J;\mathcal{T},a^*)}_1[\varpi,\partial_t\varpi] =\mathcal{N}^{(J;\mathcal{T},a^*)}_1[\varpi,\mathcal{W}_1[\omega ]]+\mathcal{N}^{(J+1;\tilde{\mathcal{T}}(a^*))}[\varpi],\]
where $\tilde{\mathcal{T}}(a^*)\in \mathfrak{T}(J+1)$ is the tree obtained by assigning $a^*\in \mathcal{T}^\infty$ to the $(J+1)$-th parent and adding its three children.
Since $\mathfrak{T}(J+1)=\{ \tilde{\mathcal{T}}(a^*):\mathcal{T}\in \mathfrak{T}(J),\,a^*\in \mathcal{T}^\infty\}$, the equation of the $(J+1)$-th generation is given by
\begin{equation}\label{eq:omega-J}
\begin{aligned}
\varpi (t)&=\varpi(0)+\int_0^t\mathcal{W}_1[\omega ]\\
&\quad +\sum_{j=1}^J\sum_{\mathcal{T}\in \mathfrak{T}(j)}\Big\{ \int_0^t\mathcal{N}^{(j;\mathcal{T})}_R[\varpi ]+\mathcal{N}^{(j;\mathcal{T})}_0[\varpi] \Big|_0^t +\sum _{a^*\in \mathcal{T}^\infty}\int_0^t \mathcal{N}^{(j;\mathcal{T},a^*)}_1[\varpi,\mathcal{W}_1[\omega ]] \Big\} \\
&\quad +\sum_{\tilde{\mathcal{T}}\in \mathfrak{T}(J+1)}\int_0^t \mathcal{N}^{(J+1;\tilde{\mathcal{T}})}[\varpi].
\end{aligned}
\end{equation}
We will see that the terms in the second line of \eqref{eq:omega-J} are estimated in $\hat{H}^{\frac12}$, while the last term $\mathcal{N}^{(J+1;\tilde{\mathcal{T}})}[\varpi]$ is still out of the $\hat{H}^{\frac12}$ control.
However, as $J\to \infty$ we \emph{formally} obtain the limit equation
\begin{equation}\label{eq:limit}
\begin{aligned}
\varpi(t)&=\varpi(0)+\int_0^t\mathcal{W}_1[\omega]\\
&\quad +\sum_{j=1}^\infty \sum_{\mathcal{T}\in \mathfrak{T}(j)}\Big\{ \int_0^t\mathcal{N}^{(j;\mathcal{T})}_R[\varpi]+\mathcal{N}^{(j;\mathcal{T})}_0[\varpi] \Big|_0^t +\sum _{a^*\in \mathcal{T}^\infty}\int_0^t \mathcal{N}^{(j;\mathcal{T},a^*)}_1[\varpi,\mathcal{W}_1[\omega]] \Big\} ,
\end{aligned}
\end{equation}
in which the last term disappears.
In particular, the limit equation \eqref{eq:limit} has no derivative losses.


\subsection{Multilinear estimates}

We give multilinear estimates of the terms appearing in \eqref{eq:omega-J}.
\begin{lem}\label{lem:NFR-NR}
There exists $C>0$ independent of $J,\mathcal{T},M$ such that for any $T\in (0,1]$, $J\in \mathbb{N}$, and $\mathcal{T}\in \mathfrak{T}
(J)$, we have
\begin{align*}
\Big( \prod_{j=1}^J3^{j-1}\Big) \| \mathcal{N}^{(J;\mathcal{T})}_R[\varpi ]\|_{L^\infty_T\hat{H}^{\frac12}}&\leq M(CM^{-\frac12+}\| \varpi \|_{L^\infty_T\hat{H}^{\frac12}}^{2})^J\| \varpi \|_{L^\infty_T\hat{H}^{\frac12}},\\
\Big( \prod_{j=1}^J3^{j-1}\Big) \| \mathcal{N}^{(J;\mathcal{T})}_0[\varpi ]\|_{L^\infty_T\hat{H}^{\frac12}}&\leq (CM^{-\frac12+}\| \varpi \|_{L^\infty_T\hat{H}^{\frac12}}^{2})^J\| \varpi \|_{L^\infty_T\hat{H}^{\frac12}}.
\end{align*}
\end{lem}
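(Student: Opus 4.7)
The plan is to dyadically decompose the nested resonance cutoffs and then iterate the Case~\texttt{A} estimate from Lemma~\ref{lem:N_R} node by node along the tree. I first expand
\[
\frac{\mathbf{1}_{|\tilde{\phi}_j|>M_j}}{\tilde{\phi}_j}=\sum_{\substack{L_j\,\text{dyadic}\\ L_j>M_j}}\frac{\chi_{L_j}(\tilde{\phi}_j)}{\tilde{\phi}_j},\qquad \mathbf{1}_{|\tilde{\phi}_J|\leq M_J}=\sum_{\substack{L_J\,\text{dyadic}\\ L_J\leq M_J}}\chi_{L_J}(\tilde{\phi}_J),
\]
with $\chi_L$ a smooth bump supported in $\{|\tilde{\phi}|\sim L\}$, and denote by $\mathcal{N}^{(J;\mathcal{T},\mathbf{L})}[\varpi]$ the corresponding frequency-localized piece of $\mathcal{N}_R^{(J;\mathcal{T})}$ or $\mathcal{N}_0^{(J;\mathcal{T})}$ for fixed $\mathbf{L}=(L_1,\dots,L_J)$. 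The lemma then reduces to the pointwise-in-$t$ multilinear bound
\begin{equation}\label{plan:core}
\Bigl(\prod_{j=1}^J 3^{j-1}\Bigr)\bigl\|\mathcal{N}^{(J;\mathcal{T},\mathbf{L})}[\varpi]\bigr\|_{\hat{H}^{\frac12}}\lesssim C^J\,L_J^{\alpha_J}\prod_{j=1}^{J-1}L_j^{-\frac12-}\,\|\varpi\|_{\hat{H}^{\frac12}}^{2J+1}
\end{equation}
with $\alpha_J=-\tfrac12-$ for $\mathcal{N}_0$ and $\alpha_J=+\tfrac12+$ for $\mathcal{N}_R$, uniform in $J$, $\mathcal{T}$, and $\mathbf{L}$, followed by summing in $\mathbf{L}$.

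The core of \eqref{plan:core} is a descent induction along the tree, peeling off the deepest parent $a^J$ first, then $a^{J-1}$, and so on. At step $j$, I freeze $\bm{\xi}$ outside the three children of $a^j$; since $\tilde{\phi}_{j-1}$ is then a constant $c$ in the remaining variables, the cutoff $\chi_{L_j}(\tilde{\phi}_j)$ becomes $\chi_{L_j}(\phi_j+c)$, and the residual integral in $(\xi_{a^j_1},\xi_{a^j_2},\xi_{a^j_3})$ subject to $\xi_{a^j_1}+\xi_{a^j_2}+\xi_{a^j_3}=\xi_{a^j}$ is exactly of Case~\texttt{A} type. The factor $\Psi_j$ forces comparability of the four local frequencies, so $\xi_{a^j_3}$ costs no derivative, and Cauchy--Schwarz together with the measure estimate on $\{|\xi_{a^j_1 a^j_3}\xi_{a^j_2 a^j_3}|\sim L_j\}$ used in Lemma~\ref{lem:N_R} gains a factor $L_j^{\frac12+}$. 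Combined with the denominator $\tilde{\phi}_j^{-1}$ at non-resonant nodes, this yields a per-node gain of $L_j^{-\frac12-}$ for $j<J$, and $L_J^{+\frac12+}$ at the resonant leaf in the $\mathcal{N}_R$ case. The factor $\prod_{j=1}^J 3^{j-1}$ on the left of \eqref{plan:core} is precisely what the iteration demands to absorb the combinatorial cost of repeating Cauchy--Schwarz through the $3^{j-1}$ sibling branches sitting above $a^j$ at the $j$-th step.

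Once \eqref{plan:core} is established, the summation in $\mathbf{L}$ with the choice $M_j=10^{j-1}M$ is routine: for $\mathcal{N}_0$ one has $\sum_{L_j>M_j}L_j^{-\frac12-}\lesssim M_j^{-\frac12+}$ at every node, and $\prod_{j=1}^J M_j^{-\frac12+}=(C'M^{-\frac12+})^J$ after absorbing the geometric factor $10^{-(j-1)(\frac12-)}$ into the constant, which is the second estimate of the lemma. For $\mathcal{N}_R$ the same product runs only up to $j=J-1$ and the last sum becomes $\sum_{L_J\leq M_J}L_J^{\frac12+}\lesssim M_J^{\frac12+}\lesssim M\cdot C^J$, producing the extra factor $M$. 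The main obstacle I anticipate is controlling the coupling introduced by the nested phases $\tilde{\phi}_j=\sum_{k\leq j}\phi_k$: after dyadic localization only the size $|\tilde{\phi}_j|\sim L_j$ is ever used, but one must verify that freezing the ancestor variables cleanly reduces the $j$-th step to Case~\texttt{A} regardless of the value of the constant $c$, and that the universal constant $C$ stays bounded uniformly in $J$, $\mathcal{T}$, and $\mathbf{L}$ so that the available prefactor $3^{J(J-1)/2}$ actually suffices. Verifying this bookkeeping, and ensuring that the measure bound at each node really yields $L_j^{\frac12+}$ in the presence of the shift $c$, is where the argument will need to be executed with the most care.
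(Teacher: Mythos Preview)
Your overall strategy---Cauchy--Schwarz plus an iterated measure bound on shifted resonance sets, followed by summation using the geometric thresholds $M_j=10^{j-1}M$---is the right one and is essentially what the paper does. But you have misplaced the source of the super-exponential factor $\prod_{j=1}^J 3^{j-1}$, and your core estimate (the displayed bound you call \textup{(plan:core)}) is false as written.

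There is no ``combinatorial cost of repeating Cauchy--Schwarz through the $3^{j-1}$ sibling branches'' that would put $\prod 3^{j-1}$ on the left at fixed $\mathbf{L}$: for fixed $\mathbf{L}$ the correct bound is merely
\[
\bigl\|\mathcal{N}^{(J;\mathcal{T},\mathbf{L})}\bigr\|_{\hat{H}^{\frac12}}\lesssim C^J\prod_{j=1}^{J}L_j^{-\frac12+}\,\|\varpi\|_{\hat{H}^{\frac12}}^{2J+1},
\]
with no factorial gain (check $J=2$ directly). The entire factor $\prod 3^{j-1}$ comes from the threshold choice: after summing in $\mathbf{L}$ one gets $\prod_j M_j^{-\frac12+}$, and
\[
\prod_{j=1}^J M_j^{-\frac12+}=M^{-(\frac12-)J}\prod_{j=1}^J 10^{-(j-1)(\frac12-)}\leq M^{-(\frac12-)J}\prod_{j=1}^J 3^{-(j-1)}
\]
using $10^{\frac12-}\geq 3$. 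Your sentence ``$\prod_{j} M_j^{-\frac12+}=(C'M^{-\frac12+})^J$ after absorbing the geometric factor $10^{-(j-1)(\frac12-)}$ into the constant'' discards exactly this super-exponential gain---a $j$-dependent factor cannot be absorbed into a $j$-independent constant to produce a pure $J$-th power. Your two errors cancel in the end, but the mechanism is misidentified, and the proof as planned would not go through.

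The paper also dispenses with the dyadic decomposition in $L_j$ and with iterated Cauchy--Schwarz: it applies a \emph{single} global Cauchy--Schwarz over all $2J+1$ leaves, reducing matters to a pure measure integral, and then iterates the key Lemma~\ref{lem:combi}, which bounds $\int_{|\xi_{13}|\wedge|\xi_{23}|\geq 1}\langle\phi_*+2\xi_{13}\xi_{23}\rangle^{-1-\varepsilon}\lesssim\langle\phi_*\rangle^{\varepsilon'}$ \emph{uniformly} in the shift $\phi_*$. This is precisely the ``measure bound in the presence of the shift $c$'' that you correctly flag as the delicate point; once one has it, the $\langle\phi_*\rangle^{0+}$ loss at step $j$ is absorbed by using $|\tilde\phi_{j-1}|^{-2+0+}$ instead of $|\tilde\phi_{j-1}|^{-2}$ at the previous step, and the iteration is clean.
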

\begin{rem}
The factor $\prod\limits_{j=1}^J3^{j-1}$ will be used to sum up in $\mathcal{T}\in \mathfrak{T}(J)$; in fact, we see $\# \mathfrak{T}(J)=\prod\limits_{j=1}^{J}(2j-1)\leq \prod\limits_{j=1}^J3^{j-1}$.
Note that $\# \mathfrak{T}(J)$ is of $O(C^{J\log J})$ but not of $O(C^J)$.
\end{rem}

\begin{proof}[Proof of Lemma~\ref{lem:NFR-NR}]
The proof is a generalization of the argument for $J=1$ given in Lemmas~\ref{lem:N_R}, \ref{lem:N_0}.
Applying the Cauchy-Schwarz inequality to the $\bm{\xi}$-integral, we bound $\| \mathcal{N}^{(J;\mathcal{T})}_R[\varpi]\|_{\hat{H}^{\frac12}}$ and $\| \mathcal{N}^{(J;\mathcal{T})}_0[\varpi]\|_{\hat{H}^{\frac12}}$ by
\[ \| \varpi \|_{\hat{H}^{\frac12}}^{2J+1}\times \left\{ \begin{aligned}
&\sup _{\xi\in \hat{\mathcal{M}}}\Big( \int_{\bm{\xi}\in \Xi_\xi(\mathcal{T})}\frac{\langle \xi_{a^1}\rangle}{\prod\limits_{a\in \mathcal{T}^\infty}\langle \xi_a\rangle}\prod_{j=1}^{J}\Psi_j^2|\xi_{a^j_3}|^2\cdot \prod_{j=1}^{J-1}\frac{\mathbf{1}_{|\tilde{\phi}_j|>M_j}}{|\tilde{\phi}_j|^2}\cdot \mathbf{1}_{|\tilde{\phi}_{J}|\leq M_{J}}\Big) ^{\frac12},\\
&\sup _{\xi\in \hat{\mathcal{M}}}\Big( \int_{\bm{\xi}\in \Xi_\xi(\mathcal{T})}\frac{\langle \xi_{a^1}\rangle}{\prod\limits_{a\in \mathcal{T}^\infty}\langle \xi_a\rangle}\prod_{j=1}^{J}\frac{\Psi_j^2|\xi_{a^j_3}|^2\mathbf{1}_{|\tilde{\phi}_j|>M_j}}{|\tilde{\phi}_j|^2}\Big) ^{\frac12},
\end{aligned}\right. \]
respectively.
To estimate the inside of $\sup_\xi$, we first notice that 
\[ \frac{\langle \xi_{a^j}\rangle |\xi_{a^j_3}|^2}{\langle \xi_{a^j_1}\rangle \langle \xi_{a^j_2}\rangle \langle \xi_{a^j_3}\rangle}\Psi_j\leq C,\qquad j=1,\dots ,J,\]
which shows
\begin{equation}\label{est:weight-J}
\frac{\langle \xi_{a^1}\rangle}{\prod\limits_{a\in \mathcal{T}^\infty}\langle \xi_a\rangle}\prod_{j=1}^{J}\Psi_j|\xi_{a^j_3}|^2
=\prod_{j=1}^J \frac{\langle \xi_{a^j}\rangle |\xi_{a^j_3}|^2}{\langle \xi_{a^j_1}\rangle \langle \xi_{a^j_2}\rangle \langle \xi_{a^j_3}\rangle}\Psi_j \leq C^{J}.
\end{equation}
Next, we prepare the following lemma:
\begin{lem}\label{lem:combi}
Let $\mathcal{M}=\mathbb{R}$ or $\mathbb{T}$.
For any $\varepsilon,\varepsilon'>0$, there exists $C>0$ such that
\[ \int_{\begin{smallmatrix} \xi=\xi_{123}\\ |\xi_{13}|\wedge |\xi_{23}|\geq 1\end{smallmatrix}}\frac{1}{\langle \phi_*+2\xi_{13}\xi_{23}\rangle^{1+\varepsilon}}\leq C\langle \phi_*\rangle ^{\varepsilon'}\]
for any $\xi,\phi_*\in \hat{\mathcal{M}}$.
\end{lem}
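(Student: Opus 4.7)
The plan is to reduce the claim to a two-dimensional integral by a linear change of variables, then carry out a dyadic decomposition in which the bulk contribution is summable and the only loss comes from a logarithmic number of "diagonal" dyadic pairs.

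\medskip
\noindent\textbf{Step 1: Change of variables.}
With $\xi$ fixed, the constraint $\xi=\xi_{123}$ makes the integration $2$-dimensional. I will use the substitution $u:=\xi_{13}$, $v:=\xi_{23}$; solving the linear system gives $\xi_1=\xi-v$, $\xi_2=\xi-u$, $\xi_3=u+v-\xi$, so the map $(\xi_1,\xi_2,\xi_3)\mapsto(u,v)$ is a unimodular bijection (equally valid for $\mathcal{M}=\mathbb{R}$ and $\mathbb{T}$). The integral reduces to
\[
I(\phi_*):=\int_{|u|\geq 1,\,|v|\geq 1}\frac{du\,dv}{\langle \phi_*+2uv\rangle^{1+\varepsilon}},
\]
independent of $\xi$, with the integration over either $\mathbb{R}^2$ or $\mathbb{Z}^2$ depending on the case.

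\medskip
\noindent\textbf{Step 2: Dyadic decomposition and the basic slice estimate.}
Split the domain into dyadic shells $|u|\sim U$, $|v|\sim V$ with $U,V\geq 1$ dyadic, and assume $U\leq V$ by symmetry. For fixed $u$ with $|u|\sim U$, the substitution $w=2uv$ yields
\[
\int_{|v|\sim V}\frac{dv}{\langle \phi_*+2uv\rangle^{1+\varepsilon}}\;=\;\frac{1}{2|u|}\int_{|w|\sim UV}\frac{dw}{\langle w+\phi_*\rangle^{1+\varepsilon}}\;\lesssim\;\frac{1}{U}\min\bigl(1,(UV)/\langle\phi_*\rangle^{1+\varepsilon}\bigr)+\frac{1}{U}\mathbf{1}_{|\phi_*|\lesssim UV},
\]
and more crudely by $1/U$. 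Integrating $|u|\sim U$ gives each dyadic block a uniform bound of $O(1)$.

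\medskip
\noindent\textbf{Step 3: Separation into "off-diagonal" and "diagonal" contributions.}
I will sharpen Step~2 when $UV$ is far from $\langle\phi_*\rangle$. If $UV\ll\langle\phi_*\rangle$, then $\langle \phi_*+2uv\rangle\sim\langle\phi_*\rangle$, so the block is bounded by $UV\langle\phi_*\rangle^{-1-\varepsilon}$; summing over all $(U,V)$ with $UV\leq \langle\phi_*\rangle/100$ gives $\lesssim \langle\phi_*\rangle^{-\varepsilon}\log^2\langle\phi_*\rangle$. If $UV\gg\langle\phi_*\rangle$, a crude bound via $\int_{|w|\sim UV}\langle w\rangle^{-1-\varepsilon}dw\lesssim(UV)^{-\varepsilon}$ gives a block bound of $(UV)^{-\varepsilon}$, summable over $UV\geq 100\langle\phi_*\rangle$. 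The remaining "diagonal" regime $UV\sim\langle\phi_*\rangle$ contains only $O(\log\langle\phi_*\rangle)$ dyadic pairs (for each dyadic $U$, at most one $V$ works), and in each pair the block bound from Step~2 is $O(1)$, so their total contribution is $\lesssim \log\langle\phi_*\rangle$.

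\medskip
\noindent\textbf{Step 4: Conclusion.}
Combining the three regimes yields $I(\phi_*)\lesssim 1+\log\langle\phi_*\rangle\lesssim_{\varepsilon'}\langle\phi_*\rangle^{\varepsilon'}$ for any $\varepsilon'>0$. The same argument applies verbatim in the periodic case with $\int$ replaced by $\sum$: the change of variables is a bijection of $\mathbb{Z}^2$, and the one-dimensional Schur-type bound $\sum_{w\in\mathbb{Z}}\langle w+\phi_*\rangle^{-1-\varepsilon}\lesssim 1$ plays the role of the corresponding integral. The main (and essentially the only) obstacle is correctly isolating the logarithmic diagonal contribution $UV\sim\langle\phi_*\rangle$, which is intrinsic to the problem and forces the $\langle\phi_*\rangle^{\varepsilon'}$ loss on the right-hand side.
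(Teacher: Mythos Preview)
Your argument for $\mathcal{M}=\mathbb{R}$ is correct and is essentially the paper's: both perform the change of variables $(u,v)=(\xi_{13},\xi_{23})$ and reduce to a one-dimensional Schur-type integral after freezing one variable; your dyadic decomposition is just a more explicit packaging of the paper's direct computation $\int_{1\leq|\eta_1|\lesssim|\phi_*|}\frac{d\eta_1}{|\eta_1|}\lesssim\log|\phi_*|$.

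The gap is in Step~4 for $\mathcal{M}=\mathbb{T}$. The substitution $w=2uv$ for fixed $u$ does \emph{not} carry a Jacobian factor $\tfrac{1}{2|u|}$ in the discrete setting: as $v$ ranges over $\mathbb{Z}$, $w$ ranges over the sublattice $2u\mathbb{Z}$, and the sum over that sublattice is not the full sum over $\mathbb{Z}$ divided by $2|u|$. If $-\phi_*\in 2u\mathbb{Z}$, the single term $w=-\phi_*$ already contributes $1$, so the inner sum over $v$ is only $O(1)$, not $O(1/U)$; summing over $|u|\sim U$ gives a block bound $O(U)$, not $O(1)$. Concretely, take $\phi_*=-2N$ with $N$ a product of the first $m$ primes: then $I(\phi_*)\geq\#\{(u,v):uv=N\}=2^m$, which exceeds any fixed power of $\log N\sim m\log m$, so the picture ``$O(\log\langle\phi_*\rangle)$ diagonal blocks, each $O(1)$'' is false on $\mathbb{T}$. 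The paper treats the periodic case by a genuinely different device: it writes the diagonal sum as $\sum_{\nu}\langle\nu\rangle^{-1-\varepsilon}\,\#\{(u,v):2uv=-\phi_*+\nu\}$ and invokes the classical divisor bound $d(n)\lesssim_{\delta} n^{\delta}$. You need this arithmetic input (or an equivalent) for the torus case; the continuous argument does not transfer verbatim.
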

\begin{proof}
We may assume $|\phi_*|\gg 1$ and restrict the domain of the integral to $\{ (\xi_1,\xi_2):|\phi_*+2(\xi-\xi_1)(\xi-\xi_2)|\ll |(\xi-\xi_1)(\xi-\xi_2)|\sim |\phi_*|\}$.
In the non-periodic case, we can estimate the integral as
\[ \iint_{\begin{smallmatrix} |\eta_1|, |\eta_2|\geq 1\\ |\phi_*+2\eta_1\eta_2|\ll |\eta_1\eta_2|\sim |\phi_*|\end{smallmatrix}}\frac{d\eta_1\,d\eta_2}{\langle \phi_*+2\eta_1\eta_2\rangle^{1+}}\leq \int_{1\leq |\eta_1|\lesssim |\phi_*|}\frac{1}{2|\eta_1|}\int_{\mathbb{R}}\frac{d\eta_2'}{\langle \eta_2'\rangle^{1+}}\,d\eta_1 \lesssim |\phi_*|^{0+},\]
as claimed.
In the periodic case, the integral (sum) is bounded by 
\[ \sum_{\nu \in [-\frac{|\phi_*|}{2},\frac{|\phi_*|}{2}]\cap \mathbb{Z}}\frac{\# \{ (\xi_1,\xi_2):2(\xi-\xi_1)(\xi-\xi_2)=-\phi_*+\nu\}}{\langle \nu \rangle ^{1+}} ,\]
and the estimate follows from the divisor bound
\[ \# \{ (\xi_1,\xi_2):2(\xi-\xi_1)(\xi-\xi_2)=-\phi_*+\nu\} \lesssim |{-}\phi_*+\nu |^{0+}\sim |\phi_*|^{0+}. \qedhere \]
\end{proof}

By Lemma~\ref{lem:combi}, we have
\begin{align*}
\int_{\begin{smallmatrix} \xi=\xi_{123}\\ |\xi_{13}|\wedge |\xi_{23}|\geq 1\end{smallmatrix}}\frac{\mathbf{1}_{|\phi_*+2\xi_{13}\xi_{23}|>M}}{|\phi_*+2\xi_{13}\xi_{23}|^{1+\alpha}}&\lesssim _\alpha M^{-\alpha+}\langle \phi_*\rangle ^{0+}\qquad (\alpha >0)
\intertext{and}
\int_{\begin{smallmatrix} \xi=\xi_{123}\\ |\xi_{13}|\wedge |\xi_{23}|\geq 1\end{smallmatrix}}\mathbf{1}_{|\phi_*+2\xi_{13}\xi_{23}|\leq M}&\lesssim M^{1+}\langle \phi_*\rangle ^{0+},
\end{align*}
where the implicit constants can be chosen uniformly in $\xi,\phi_*\in \hat{\mathcal{M}}$ and $M>1$.
Using these estimates (with $\alpha={1-}$ or $\alpha =1$) repeatedly, we obtain that
\[ \left\{ \begin{aligned}
\sup _{\xi\in \hat{\mathcal{M}}}\Big( \int_{\bm{\xi}\in \Xi_\xi(\mathcal{T})}\prod_{j=1}^{J-1}\frac{\mathbf{1}^{N\!R}_{|\tilde{\phi}_j|>M_j}}{|\tilde{\phi}_j|^2}\cdot \mathbf{1}^{N\!R}_{|\tilde{\phi}_{J}|\leq M_{J}}\Big) ^{\frac12} &\leq C^{J}\prod_{j=1}^{J-1}M_j^{-\frac12+}\cdot M_{J}^{\frac12+}=C^{J}M_J\prod_{j=1}^{J}M_j^{-\frac12+},\\
\sup _{\xi\in \hat{\mathcal{M}}}\Big( \int_{\bm{\xi}\in \Xi_\xi(\mathcal{T})}\prod_{j=1}^{J}\frac{\mathbf{1}^{N\!R}_{|\tilde{\phi}_j|>M_j}}{|\tilde{\phi}_j|^2}\Big) ^{\frac12} &\leq C^{J}\prod_{j=1}^{J}M_j^{-\frac12+},
\end{aligned}\right. \]
where we write $\mathbf{1}^{N\!R}$ to indicate the restriction to $\{ |\xi_{a^j_1a^j_3}|\wedge |\xi_{a^j_2a^j_3}|\geq 1\}$.
So far, we have obtained the bounds
\begin{align*}
\| \mathcal{N}^{(J;\mathcal{T})}_R[\varpi]\|_{\hat{H}^{\frac12}}&\leq C^{J}M_J\Big( \prod_{j=1}^{J}M_j^{-\frac12+}\Big) \| \varpi\|_{\hat{H}^{\frac12}}^{2J+1},\\
\| \mathcal{N}^{(J;\mathcal{T})}_0[\varpi]\|_{\hat{H}^{\frac12}}&\leq C^{J}\Big( \prod_{j=1}^{J}M_j^{-\frac12+}\Big) \| \varpi\|_{\hat{H}^{\frac12}}^{2J+1}. 
\end{align*}
Now, we insert $M_j=10^{j-1}M$ and use $10^{-\frac12+}\leq 3^{-1}$ to get the bounds
\begin{align*}
\| \mathcal{N}^{(J;\mathcal{T})}_R[\varpi]\|_{\hat{H}^{\frac12}}&\leq M\big( 10CM^{-\frac12+}\| \varpi\|_{\hat{H}^{\frac12}}^2\big)^J \Big( \prod_{j=1}^{J}3^{j-1}\Big)^{-1}\| \varpi\|_{\hat{H}^{\frac12}},\\
\| \mathcal{N}^{(J;\mathcal{T})}_0[\varpi]\|_{\hat{H}^{\frac12}}&\leq \big( CM^{-\frac12+}\| \varpi\|_{\hat{H}^{\frac12}}^2\big)^J \Big( \prod_{j=1}^{J}3^{j-1}\Big)^{-1}\| \varpi\|_{\hat{H}^{\frac12}}. 
\end{align*}
This completes the proof of Lemma~\ref{lem:NFR-NR}.
\end{proof}

The proof of the estimate of $\mathcal{N}^{(J;\mathcal{T})}_0[\varpi]$ in Lemma~\ref{lem:NFR-NR} immediately implies the following:
\begin{lem}\label{lem:NFR-N1R}
We have
\begin{align*}
\Big( \prod _{j=1}^J3^{j-1}\Big) \sum_{a^*\in \mathcal{T}^\infty}\| \mathcal{N}^{(J;\mathcal{T},a^*)}_1[\varpi,\zeta]\|_{L^1_T\hat{H}^{\frac12}}&\leq (CM^{-\frac12+}\| \varpi \|_{L^\infty_T\hat{H}^{\frac12}}^2)^J\| \zeta \|_{L^1_T\hat{H}^{\frac12}}.
\end{align*}
\end{lem}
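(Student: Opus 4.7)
The key observation is that $\mathcal{N}^{(J;\mathcal{T},a^*)}_1[\varpi ,\zeta ]$ has exactly the same multiplier structure as $\mathcal{N}^{(J;\mathcal{T})}_0[\varpi]$: the phase factor $e^{it\tilde{\phi}_J}$, the cutoffs $\Psi_j$ and $\mathbf{1}_{|\tilde{\phi}_j|>M_j}$, the derivatives $\xi_{a^j_3}$, and the denominators $\tilde{\phi}_j$ appear identically for $j=1,\dots,J$. The only difference is that one of the $(2J+1)$ factors $\varpi(\xi_a)$ is replaced by $\zeta(\xi_{a^*})$. Accordingly, the plan is to mimic the Cauchy-Schwarz/weighted-integral argument used for $\mathcal{N}^{(J;\mathcal{T})}_0[\varpi]$ in the proof of Lemma~\ref{lem:NFR-NR}, now placing $\zeta(\xi_{a^*})$ in $L^2_{\xi_{a^*}}$ and each remaining $\varpi(\xi_a)$ in $L^2_{\xi_a}$.

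Concretely, I will first fix $t\in [-T,T]$, $\mathcal{T}\in \mathfrak{T}(J)$ and $a^*\in \mathcal{T}^\infty$, and bound the $\hat{H}^{\frac12}$ norm of $\mathcal{N}^{(J;\mathcal{T},a^*)}_1[\varpi,\zeta](t)$ by Cauchy-Schwarz applied to the $\bm{\xi}$-integral over $\Xi_\xi(\mathcal{T})$. Distributing the weight $\langle \xi_{a^1}\rangle$ arising from the $\hat{H}^{\frac12}$ norm across the $\langle\xi_a\rangle$ factors attached to $\mathcal{T}^\infty$ and using the pointwise bound \eqref{est:weight-J}, the resulting weighted multiplier integral is precisely the one controlled in the second line of the displayed estimate in the proof of Lemma~\ref{lem:NFR-NR}; that is, iteratively applying Lemma~\ref{lem:combi} with $\alpha=1$ yields
\[
\sup_{\xi\in\hat{\mathcal{M}}}\Big( \int_{\bm{\xi}\in\Xi_\xi(\mathcal{T})}\prod_{j=1}^{J}\frac{\mathbf{1}^{N\!R}_{|\tilde{\phi}_j|>M_j}}{|\tilde{\phi}_j|^2}\Big)^{\frac12}\leq C^J\prod_{j=1}^J M_j^{-\frac12+}.
\]
Combined with the $L^2_{\xi_a}$-$\hat{H}^{\frac12}_{\xi_a}$ identification and the pointwise weight bound, this yields
\[
\| \mathcal{N}^{(J;\mathcal{T},a^*)}_1[\varpi,\zeta](t)\|_{\hat{H}^{\frac12}}\leq C^J\prod_{j=1}^J M_j^{-\frac12+}\cdot \| \zeta(t)\|_{\hat{H}^{\frac12}}\| \varpi(t)\|_{\hat{H}^{\frac12}}^{2J}.
\]

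I will then integrate in time, placing the $2J$ copies of $\varpi$ in $L^\infty_T\hat{H}^{\frac12}$ and the single factor $\zeta$ in $L^1_T\hat{H}^{\frac12}$, which gives the same bound for $\| \mathcal{N}^{(J;\mathcal{T},a^*)}_1[\varpi,\zeta]\|_{L^1_T\hat{H}^{\frac12}}$. Summing over the $\#\mathcal{T}^\infty=2J+1$ choices of $a^*$ produces a polynomial factor in $J$ that is absorbed into $C^J$ at the cost of enlarging $C$. Finally, substituting $M_j=10^{j-1}M$ gives
\[
\prod_{j=1}^J M_j^{-\frac12+}=M^{(-\frac12+)J}\prod_{j=1}^J 10^{(-\frac12+)(j-1)},
\]
and choosing the small parameter in $-\frac12+$ small enough so that $3\cdot 10^{-\frac12+}\leq 1$ ensures $\bigl(\prod_{j=1}^J 3^{j-1}\bigr)\prod_{j=1}^J 10^{(-\frac12+)(j-1)}\leq 1$; hence the combinatorial prefactor $\prod_{j=1}^J 3^{j-1}$ on the left-hand side is absorbed, yielding the claimed bound $(CM^{-\frac12+}\| \varpi \|_{L^\infty_T\hat{H}^{\frac12}}^2)^J\| \zeta \|_{L^1_T\hat{H}^{\frac12}}$.

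Since the weighted-integral step is a verbatim reuse of the argument already established for $\mathcal{N}^{(J;\mathcal{T})}_0$, I do not anticipate any essential obstacle. The only points requiring slight care are the bookkeeping of the weight $\langle\xi_{a^1}\rangle$ when one of the factors is $\zeta$ rather than $\varpi$ (which is neutral, as the argument only uses $L^2$ norms in each $\xi_a$), and the absorption of the $2J+1$ and $\prod 3^{j-1}$ combinatorial factors by the geometric gain from $M_j=10^{j-1}M$.
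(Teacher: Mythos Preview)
Your proposal is correct and is exactly the approach the paper takes: the paper simply states that the estimate of $\mathcal{N}^{(J;\mathcal{T})}_0[\varpi]$ in Lemma~\ref{lem:NFR-NR} immediately implies Lemma~\ref{lem:NFR-N1R}, which is precisely your argument of rerunning the Cauchy--Schwarz/weighted-integral bound with one $\varpi$ replaced by $\zeta$ and then integrating in time.
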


We conclude this subsection with the following weak ($\hat{H}^{-\frac12}$) estimate of $\mathcal{N}^{(J;\mathcal{T})}[\varpi]$:
\begin{lem}\label{lem:NFR-weak}
We have
\[ \Big( \prod_{j=1}^J3^{j-1}\Big) \| \mathcal{N}^{(J;\mathcal{T})}[\varpi ]\|_{L^\infty_T\hat{H}^{-\frac12}}\leq C(CM^{-\frac12+}\| \varpi \|_{L^\infty_T\hat{H}^{\frac12}}^2)^{J-1}\| \varpi \|_{L^\infty_T\hat{H}^{\frac12}}^3.\]
\end{lem}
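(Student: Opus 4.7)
The argument parallels the estimate of $\mathcal{N}^{(J;\mathcal{T})}_0[\varpi]$ in Lemma~\ref{lem:NFR-NR}, with the key observation that passing from the $\hat{H}^{\frac12}$-norm to the $\hat{H}^{-\frac12}$-norm trades a factor $\langle\xi\rangle$ in the weight for $\langle\xi\rangle^{-1}$, and this extra $\langle\xi\rangle^{-2}$ gain exactly compensates for the fact that $\mathcal{N}^{(J;\mathcal{T})}[\varpi]$ lacks the phase factor $(\tilde{\phi}_J)^{-1}$ and the cutoff $\mathbf{1}_{|\tilde{\phi}_J|>M_J}$ present in $\mathcal{N}^{(J;\mathcal{T})}_0[\varpi]$.

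First, I apply the Cauchy--Schwarz inequality on the hyperplane $\Xi_\xi(\mathcal{T})$ exactly as in Lemma~\ref{lem:NFR-NR}, splitting off the multiplier/weight from $\prod_{a\in\mathcal{T}^\infty}\langle\xi_a\rangle|\varpi(\xi_a)|^2$. After multiplying by $\langle\xi\rangle^{-1}$ and integrating in $\xi$, using that $\int_\xi\int_{\Xi_\xi(\mathcal{T})}\prod_a \langle\xi_a\rangle|\varpi(\xi_a)|^2\,d\xi=\|\varpi\|_{\hat{H}^{1/2}}^{2(2J+1)}$, I reduce matters to proving
\[
\sup_\xi \int_{\bm{\xi}\in\Xi_\xi(\mathcal{T})} \frac{1}{\langle\xi_{a^1}\rangle\prod_{a\in\mathcal{T}^\infty}\langle\xi_a\rangle}\prod_{j=1}^J \Psi_j^2|\xi_{a^j_3}|^2\prod_{j=1}^{J-1}\frac{\mathbf{1}_{|\tilde\phi_j|>M_j}}{|\tilde\phi_j|^2}\leq C^{2J}\prod_{j=1}^{J-1}M_j^{-1+}.
\]

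To obtain this, I use the same telescoping identity \eqref{est:weight-J} as in Lemma~\ref{lem:NFR-NR}, now written as
\[
\frac{1}{\langle\xi_{a^1}\rangle\prod_a\langle\xi_a\rangle}\prod_{j=1}^J \Psi_j|\xi_{a^j_3}|^2
=\frac{1}{\langle\xi_{a^1}\rangle^2}\cdot\frac{\langle\xi_{a^1}\rangle}{\prod_a\langle\xi_a\rangle}\prod_{j=1}^J \Psi_j|\xi_{a^j_3}|^2\leq \frac{C^J}{\langle\xi\rangle^2}.
\]
Next, I iteratively apply Lemma~\ref{lem:combi} at each level $j=1,\dots,J-1$ (integrating over the pair $\xi_{a^j_1},\xi_{a^j_2}$ with the earlier variables fixed) to gain the factors $M_j^{-1+}$; this is precisely the argument used in Lemma~\ref{lem:NFR-NR} to obtain its $\prod_{j=1}^J M_j^{-1/2+}$ bound, here carried out only up to $j=J-1$. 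At the remaining level $j=J$ there is no $\tilde\phi_J$ cutoff, so I simply bound the two-dimensional integration $d\xi_{a^J_1}d\xi_{a^J_2}$ on the support of $\Psi_J$ by its measure $\lesssim \langle\xi_{a^J}\rangle^2\sim \langle\xi\rangle^2$, using that every frequency in the tree is $\sim\langle\xi\rangle$ on $\mathrm{supp}\,\prod_j\Psi_j$ (as already noted in the proof of Lemma~\ref{lem:NFR-NR}). The $\langle\xi\rangle^2$ thus produced cancels exactly with the $\langle\xi\rangle^{-2}$ from the telescoping step, giving the claimed bound.

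Finally, taking square roots, inserting $M_j=10^{j-1}M$, and using $10^{-1/2+}\leq 3^{-1}$ to absorb the combinatorial factor $\prod_{j=1}^J 3^{j-1}$ (exactly as in Lemma~\ref{lem:NFR-NR}), the product $\prod_{j=1}^{J-1}M_j^{-1/2+}$ becomes $M^{-(J-1)/2+}\prod_{j=1}^{J-1}3^{-(j-1)+}$; multiplying both sides by $\prod_{j=1}^J 3^{j-1}=3^{J(J-1)/2}$ leaves a net factor $3^{J-1}$, which is absorbed into $C^{J-1}$, and pointwise-in-$t$ bounds upgrade to $L^\infty_T$ bounds trivially. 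The conceptual obstacle is a mild one: losing one $M$-gain at the terminal level $j=J$ is exactly the price paid for not performing NFR there, and it is repaid by the two powers of $\langle\xi\rangle$ saved in moving from $\hat{H}^{\frac12}$ to $\hat{H}^{-\frac12}$.
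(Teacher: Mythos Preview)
Your proposal is correct and follows essentially the same approach as the paper: Cauchy--Schwarz reduces to a weighted multiplier integral, the telescoping identity \eqref{est:weight-J} together with the extra $\langle\xi\rangle^{-2}$ from the $\hat H^{-1/2}$ norm handles the $J$-th level trivially (via the volume bound on $\mathrm{supp}\,\Psi_J$), and Lemma~\ref{lem:combi} is iterated over levels $J-1,\dots,1$ exactly as in Lemma~\ref{lem:NFR-NR}. The paper's proof makes explicit that the comparability $\langle\xi_{a^J_l}\rangle\sim\langle\xi_{a^1}\rangle$ carries a constant $C^J$ (since it propagates through $J$ levels of $\Psi_j$), which you should also track, but this is absorbed into the overall $C^J$ and does not affect the argument.
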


\begin{proof}
The proof is similar to the estimate of $\mathcal{N}^{(J;\mathcal{T})}_R[\varpi]$ in Lemma~\ref{lem:NFR-NR}.
By Cauchy-Schwarz and \eqref{est:weight-J}, we have
\begin{align*}
\| \mathcal{N}^{(J;\mathcal{T})}[\varpi ]\|_{\hat{H}^{-\frac12}}
&\leq \| \varpi\|_{\hat{H}^{\frac12}}^{2J+1}\sup_{\xi\in \hat{\mathcal{M}}}\Big( \int_{\bm{\xi}\in \Xi_\xi(\mathcal{T})}\frac{1}{\langle \xi_{a^1}\rangle \prod\limits_{a\in \mathcal{T}^\infty}\langle \xi_a\rangle}\prod_{j=1}^{J}\Psi_j^2|\xi_{a^j_3}|^2\cdot \prod_{j=1}^{J-1}\frac{\mathbf{1}_{|\tilde{\phi}_j|>M_j}}{|\tilde{\phi}_j|^2}\Big) ^{\frac12} \\
&\leq C^J\| \varpi\|_{\hat{H}^{\frac12}}^{2J+1}\sup_{\xi\in \hat{\mathcal{M}}}\Big( \int_{\bm{\xi}\in \Xi_\xi(\mathcal{T})}\frac{1}{\langle \xi_{a^1}\rangle^2}\prod_{j=1}^{J}\Psi_j\cdot \prod_{j=1}^{J-1}\frac{\mathbf{1}^{N\!R}_{|\tilde{\phi}_j|>M_j}}{|\tilde{\phi}_j|^2}\Big) ^{\frac12} .
\end{align*}
Now, we have $\langle \xi_{a^J_l}\rangle \leq C^J\langle \xi_{a^1}\rangle$ for $l=1,2$ in the support of $\prod_{j=1}^J\Psi_j$, and hence
\[ \frac{1}{\langle \xi_{a^1}\rangle^2}\int_{\xi_{a^J}=\xi_{a^J_1a^J_2a^J_3}}\prod_{j=1}^{J}\Psi_j\leq 4C^{2J}.\]
Using this inequality instead of 
\[ \int_{\xi_{a^J}=\xi_{a^J_1a^J_2a^J_3}} \mathbf{1}^{N\!R}_{|\tilde{\phi}_{J-1}+\phi_J|\leq M_J}\lesssim M_J^{1+}\langle \tilde{\phi}_{J-1}\rangle ^{0+} \]
for the previous estimate of $\mathcal{N}^{(J;\mathcal{T})}_R[\varpi]$, we see that
\[ \sup_{\xi\in \hat{\mathcal{M}}}\Big( \int_{\bm{\xi}\in \Xi_\xi(\mathcal{T})}\frac{1}{\langle \xi_{a^1}\rangle^2}\prod_{j=1}^{J}\Psi_j\cdot \prod_{j=1}^{J-1}\frac{\mathbf{1}^{N\!R}_{|\tilde{\phi}_j|>M_j}}{|\tilde{\phi}_j|^2}\Big) ^{\frac12} \leq C^J\prod_{j=1}^{J-1}M_j^{-\frac12+}\leq C^J\big( M^{-\frac12+}\big)^{J-1}\Big( \prod_{j=1}^{J-1}3^{j-1}\Big) ^{-1}.\]
This implies the desired estimate.
\end{proof}


\subsection{Justification of formal computations}
\label{subsec:just-infinite}

Here, we give a justification to the derivation of each generation \eqref{eq:omega-J} and the limit equation \eqref{eq:limit}.
The basic idea is the same as in Subsection~\ref{subsec:just}.
According to Corollary~\ref{cor:eq-varpi}, let $\varpi\in L^\infty_T\hat{H}^{\frac12}\cap C_T\hat{H}^{\frac12-}\cap W^{1,2}_T\hat{H}^{-\frac12}$ satisfy the equation (of the first generation) \eqref{eq:varpi} and its differential form \eqref{eq:varpi-d}.
We regard $\mathcal{W}_1[\omega]$ as a given forcing term in $L^2_T\hat{H}^{\frac12}$.

Assuming that the $J$-th generation equation is given, let us see how to justify the transformation of the last term $\int_0^t\mathcal{N}^{(J;\mathcal{T})}[\varpi(t')]$ for each fixed $\mathcal{T}\in \mathfrak{T}(J)$.
Since all the frequencies are comparable in this term, it is quite easy to see that the integral in $(t',\bm{\xi})$ is absolutely convergent for any $(t,\xi)\in [-T,T]\times \hat{\mathcal{M}}$:
\begin{equation}\label{est:integrable}
\begin{aligned}
&\int_0^t\int_{\bm{\xi}\in \Xi_\xi(\mathcal{T})}\bigg| \prod_{j=1}^J\Psi_j\xi_{a^j_3}\cdot \prod_{j=1}^{J-1}\frac{\mathbf{1}_{|\tilde{\phi}_j|>M_j}}{\tilde{\phi}_j}\cdot \prod_{a\in \mathcal{T}^\infty}\varpi(t',\xi_a)\bigg| \,dt' \\
&\lesssim \langle \xi\rangle^{J+} \int_{-T}^T\int_{\bm{\xi}\in \Xi_\xi(\mathcal{T})}\prod_{a\in \mathcal{T}^\infty}\langle \xi_a\rangle^{0-}|\varpi(t',\xi_a)|\,dt' \\
&\lesssim \langle \xi\rangle^{J+} \int_{-T}^T\big\| \langle \xi \rangle^{0-}\varpi(t')\big\|_{L^1}^{2J-1} \big\| \langle \xi \rangle^{0-}\varpi(t')\big\|_{L^2}^2\,dt' \\
&\lesssim \langle \xi\rangle^{J+}T\| \varpi\|_{L^\infty_T\hat{H}^{\frac12}}^{2J+1}<\infty .
\end{aligned}
\end{equation}
We divide the $\bm{\xi}$-integral, and for the non-resonant part
\begin{gather*}
\int_0^t\int_{\bm{\xi}\in\Xi_\xi(\mathcal{T})}e^{it'\tilde{\phi}_J}f(t',\bm{\xi})\,dt',\qquad
f(t,\bm{\xi}):=\prod_{j=1}^J\Psi_j\xi_{a^j_3}\cdot \prod_{j=1}^{J-1}\frac{\mathbf{1}_{|\tilde{\phi}_j|>M_j}}{\tilde{\phi}_j}\cdot \mathbf{1}_{|\tilde{\phi}_J|>M_J}\prod_{a\in \mathcal{T}^\infty}\varpi(t,\xi_a),
\end{gather*}
we justify the integration by parts operation in the following lemma:
\begin{lem}\label{lem:just2}
We have
\[ \int_0^t\int_{\bm{\xi}\in \Xi_\xi(\mathcal{T})}e^{it'\tilde{\phi}_J}f(t',\bm{\xi})\,dt'=\int_{\bm{\xi}\in \Xi_\xi(\mathcal{T})}\frac{e^{it'\tilde{\phi}_J}}{i\tilde{\phi}_J}f(t',\bm{\xi})\bigg|_0^t-\int_0^t\int_{\bm{\xi}\in \Xi_\xi(\mathcal{T})}\frac{e^{it'\tilde{\phi}_J}}{i\tilde{\phi}_J}g(t',\bm{\xi})\,dt' \]
for any $(t,\xi)\in [-T,T]\times \hat{\mathcal{M}}$, where
\[ g(t,\bm{\xi}):=\prod_{j=1}^J\Psi_j\xi_{a^j_3}\cdot \prod_{j=1}^{J-1}\frac{\mathbf{1}_{|\tilde{\phi}_j|>M_j}}{\tilde{\phi}_j}\cdot \mathbf{1}_{|\tilde{\phi}_J|>M_J}\sum _{a^*\in\mathcal{T}^\infty} (\partial_t\varpi)(t,\xi_{a^*})\prod_{a\in \mathcal{T}^\infty\setminus\{ a^*\}}\varpi(t,\xi_a). \]
\end{lem}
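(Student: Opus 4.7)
The plan is to mirror the proof of Lemma~\ref{lem:just} almost verbatim, replacing the trilinear integral there by a $(2J{+}1)$-linear one and using the differential equation \eqref{eq:varpi-d} in $L^2_T\hat{H}^{-\frac12}$ from Corollary~\ref{cor:eq-varpi} to interpret $\partial_t\varpi$. Fix $(t,\xi)\in[-T,T]\times\hat{\mathcal{M}}$; the bound \eqref{est:integrable} together with $|\tilde{\phi}_J|^{-1}\mathbf{1}_{|\tilde{\phi}_J|>M_J}\leq M_J^{-1}$ ensures absolute convergence of every $(t',\bm{\xi})$-integral I write. I would first derive the discrete identity
\begin{align*}
\int_0^t\!\int_{\bm{\xi}\in\Xi_\xi(\mathcal{T})}\frac{e^{it'\tilde{\phi}_J}}{i\tilde{\phi}_J}\frac{f(t'+h,\bm{\xi})-f(t',\bm{\xi})}{h}\,dt'&=\int_h^t\!\int_{\bm{\xi}}\frac{e^{i(t'-h)\tilde{\phi}_J}-e^{it'\tilde{\phi}_J}}{ih\tilde{\phi}_J}f(t',\bm{\xi})\,dt'\\
&\quad+\frac{1}{h}\int_t^{t+h}\!\int_{\bm{\xi}}\frac{e^{i(t'-h)\tilde{\phi}_J}}{i\tilde{\phi}_J}f(t',\bm{\xi})\,dt'\\
&\quad-\frac{1}{h}\int_0^h\!\int_{\bm{\xi}}\frac{e^{it'\tilde{\phi}_J}}{i\tilde{\phi}_J}f(t',\bm{\xi})\,dt',
\end{align*}
and send $h\to 0$; this reduces the identity to three separate convergences exactly analogous to \eqref{conv1}, \eqref{conv2} and \eqref{conv3} of Lemma~\ref{lem:just}.

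The first two convergences are routine adaptations of the $J=1$ case. The analogue of \eqref{conv1} is immediate from \eqref{est:integrable} and the dominated convergence theorem. For the analogue of \eqref{conv2}, it suffices to establish continuity in $t$ of the scalar-valued map $t\mapsto \int_{\bm{\xi}\in\Xi_\xi(\mathcal{T})}e^{it\tilde{\phi}_J}\tilde{\phi}_J^{-1}f(t,\bm{\xi})$. The phase-error part is handled exactly as in Lemma~\ref{lem:just}, while the amplitude error reduces, via Cauchy--Schwarz on $\bm{\xi}$ using the pointwise weight bound \eqref{est:weight-J}, to $\|\varpi(t')-\varpi(t)\|_{L^2}\to 0$ on the support of $\prod_j\Psi_j$, which is a consequence of $\varpi\in C_T\hat{H}^{\frac12-}$ from Corollary~\ref{cor:eq-varpi}.

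The hard part will be the analogue of \eqref{conv3}, which encodes the product rule across the $2J{+}1$ leaves of $\mathcal{T}^\infty$. I would fix a total order on $\mathcal{T}^\infty$ and telescope
\[
\prod_{a\in\mathcal{T}^\infty}\varpi(t'+h,\xi_a)-\prod_{a\in\mathcal{T}^\infty}\varpi(t',\xi_a)=\sum_{a^*\in\mathcal{T}^\infty}\Big(\prod_{a\prec a^*}\varpi(t'+h,\xi_a)\Big)\big(\varpi(t'+h,\xi_{a^*})-\varpi(t',\xi_{a^*})\big)\Big(\prod_{a\succ a^*}\varpi(t',\xi_a)\Big),
\]
divide by $h$, and subtract $g(t',\bm{\xi})$ leaf by leaf. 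Each leaf contribution splits into (a) a main term in which the factor at $a^*$ is $h^{-1}(\varpi(\cdot+h)-\varpi(\cdot))-\partial_t\varpi$ and the other $2J$ factors are $\varpi$ evaluated at $t'$ or $t'+h$, plus (b) a cross term with factor $\partial_t\varpi$ at $a^*$ multiplied by the continuity difference $\prod_{a\prec a^*}\varpi(t'+h)-\prod_{a\prec a^*}\varpi(t')$ at the other leaves. For (a), Corollary~\ref{cor:eq-varpi} provides $\varpi\in W^{1,2}_T\hat{H}^{-\frac12}$, so $h^{-1}(\varpi(\cdot+h)-\varpi(\cdot))\to\partial_t\varpi$ in $L^2_T\hat{H}^{-\frac12}$; to pass this to the $\bm{\xi}$-integral I would apply Cauchy--Schwarz against the weight $\langle\xi_{a^*}\rangle^{-2}\prod_{a\ne a^*}\langle\xi_a\rangle^2$, absorbing the factors $|\xi_{a^j_3}|^2\prod_j\Psi_j$ and $\prod_{j<J}|\tilde{\phi}_j|^{-2}\mathbf{1}_{|\tilde{\phi}_j|>M_j}$ exactly as in the proofs of Lemma~\ref{lem:NFR-NR} (via \eqref{est:weight-J}) and Lemma~\ref{lem:NFR-weak}, and close using H\"older in $t'$ with exponents $(2,\infty,\dots,\infty)$. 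The cross terms in (b) vanish in the limit by the same weighted Cauchy--Schwarz combined with the $C_T\hat{H}^{\frac12-}$ continuity already exploited in the analogue of \eqref{conv2}, using that $\partial_t\varpi\in L^2_T\hat{H}^{-\frac12}$ and that $\prod_{a\prec a^*}\varpi(t'+h,\xi_a)-\prod_{a\prec a^*}\varpi(t',\xi_a)\to 0$ in $L^\infty_T\hat{H}^{\frac12-}$.
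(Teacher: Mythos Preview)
Your approach is correct and follows the same template as the paper's proof: the discrete-difference identity reducing to the analogues of \eqref{conv1}--\eqref{conv3}, a telescoping product for \eqref{conv3}, and the regularity $\varpi\in C_T\hat{H}^{\frac12-}\cap W^{1,2}_T\hat{H}^{-\frac12}$ from Corollary~\ref{cor:eq-varpi} to close. The only minor difference is in how the $\bm{\xi}$-integrals are bounded for fixed $\xi$: the paper uses the crude Young-type estimate \eqref{est:integrable} (multiplying by $\langle\xi\rangle^{-J-}$, respectively $\langle\xi\rangle^{-J-1-}$, so as to absorb the $|\xi_{a^j_3}|$ factors and accommodate one $\hat{H}^{-\frac12}$ input directly), whereas you invoke Cauchy--Schwarz with the weight bound \eqref{est:weight-J}; since all leaf frequencies are comparable to $\langle\xi\rangle$ on the support of $\prod_j\Psi_j$, both routes work.
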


\begin{proof}
The proof is parallel to that of Lemma~\ref{lem:just}, so we omit the detailed estimates.
We shall prove the convergences corresponding to \eqref{conv1}--\eqref{conv3}.
For \eqref{conv1}, it suffices to show
\[ \int_{-T}^T\int_{\bm{\xi}\in \Xi_\xi(\mathcal{T})}|f(t',\bm{\xi})|\,dt' <\infty ,\]
which we have already seen in \eqref{est:integrable}.
For \eqref{conv2}, it suffices to show the continuity of the map
\[ [-T,T]\ni \ t\quad \mapsto \quad \langle \xi\rangle^{-J-} \int_{\bm{\xi}\in \Xi_\xi(\mathcal{T})}\frac{e^{it\tilde{\phi}_J}}{i\tilde{\phi}_J}f(t,\bm{\xi}) \ \in L^\infty_\xi .\]
This is also verified by estimating as \eqref{est:integrable}; we slightly modify it to have the $\hat{H}^{\frac12-}$ norm for $\varpi(t)$'s, so that we can use the continuity of $\varpi(t)$ in $\hat{H}^{\frac12-}$.
The convergence corresponding to \eqref{conv3} can be shown in a similar manner; this time we multiply the integral by $\langle \xi\rangle^{-J-1-}$ and argue as \eqref{est:integrable} to measure $\frac{1}{h}(\varpi(t'+h)-\varpi(t'))-(\partial_t\varpi)(t')$ and $(\partial_t\varpi)(t')$ in $L^2_T\hat{H}^{-\frac12}$ while $\varpi(t'+h)-\varpi(t')$ in $L^\infty_T\hat{H}^{\frac12-}$.
\end{proof}

Inserting \eqref{eq:varpi-d}, we have so far justified the equality
\begin{align*}
\int_0^t\mathcal{N}^{(J;\mathcal{T})}[\varpi(t')]\,dt'
&=\int_0^t\mathcal{N}_R^{(J;\mathcal{T})}[\varpi(t')]\,dt'+\mathcal{N}_0^{(J;\mathcal{T})}[\varpi(t')]\bigg|_0^t\\
&\quad +\sum_{a^*\in\mathcal{T}^\infty}\Big\{ \begin{aligned}[t]
&\int_0^t\mathcal{N}_1^{(J;\mathcal{T},a^*)}[\varpi(t'),\mathcal{W}_1(t')]\,dt'\\
&+\int_0^t\mathcal{N}_1^{(J;\mathcal{T},a^*)}[\varpi(t'),\mathcal{N}^{\texttt{A}}[\varpi(t')]]\,dt'\Big\} 
\end{aligned}
\end{align*}
pointwise for $(t,\xi)\in [-T,T]\times \hat{\mathcal{M}}$.
Each term on the right-hand side is well-defined for any $(t,\xi)$ as an absolutely convergent integral in $(t',\bm{\xi})$ or in $\bm{\xi}$.
The last term can be rewritten as $\int_0^t\mathcal{N}^{(J+1;\tilde{\mathcal{T}}(a^*))}[\varpi(t')]\,dt'$, which is an absolutely convergent integral over the index functions on the extended tree $\tilde{\mathcal{T}}(a^*)\in \mathfrak{T}(J+1)$, by the estimate \eqref{est:integrable} for $\tilde{\mathcal{T}}(a^*)$ and Fubini's theorem.
Repeating this procedure, we justify the equation \eqref{eq:omega-J} for any generation $J\geq 1$.

Finally, we assume that $M>1$ is large and $T\in (0,1]$ is small so that
\begin{equation}\label{ass:M}
M^{-\frac12+}\| \varpi\|_{L^\infty_T\hat{H}^{\frac12}}^2\ll 1,\qquad TM\leq 1.
\end{equation}
Then, by Lemmas~\ref{lem:NFR-NR}, \ref{lem:NFR-N1R}, \ref{lem:NFR-weak}, as $J\to \infty$ the second line of \eqref{eq:omega-J} converges in $L^\infty_T\hat{H}^{\frac12}$ and the third line vanishes in $L^\infty_T\hat{H}^{-\frac12}$.
This verifies that the limit equation \eqref{eq:limit} holds pointwise in $t\in [-T,T]$ as $\hat{H}^{\frac12}$-valued functions.


\subsection{Proof of the main theorem}
\label{subsec:proof}

We are now in a position to prove Theorem~\ref{thm:main}.
\begin{proof}[Proof of Theorem~\ref{thm:main}]
As we mentioned before, it suffices to prove uniqueness of the (distributional) solution $w$ to \eqref{eq:w2} in the class $L^\infty_TH^{\frac12}\cap L^4_TB^{0+}_{\infty,1}$.
Moreover, it is enough to show the uniqueness on the interval $[-T',T']$ for some $T'\in (0,T]$ (we write $T'$ as $T$ below).
Let $w_1,w_2$ be two such solutions with $w_1(0)=w_2(0)$, and define $\omega_m:=\mathcal{F}S(-t)w_m$ and $\varpi_m:=\omega_m-\mathcal{W}_0[\omega_m]$, $m=1,2$.
Then, each of $\varpi_m$ is a solution of \eqref{eq:varpi} as stated in Corollary~\ref{cor:eq-varpi}.
Choose $M\gg 1$ and $0<T\ll 1$ according to $\| w_m\|_{S_T}$, $m=1,2$ so that
\begin{equation}\label{ass:M2} 
TM\leq 1,\qquad \rho_0:=C\sum_{m=1,2}\big( M^{-\frac12+}\| w_m\|_{S_T}^2+T^{\frac12}(1+\| w_m\|_{S_T}^2)\| w_m\|_{S_T}^2\big) \ll 1.
\end{equation}
Then, from the estimate \eqref{est:W00} in Lemma~\ref{lem:W0} (and its difference version) we have
\begin{gather*}
\frac12\| w_m\|_{L^\infty_TH^{\frac12}}\leq \| \varpi_m\|_{L^\infty_T\hat{H}^{\frac12}}\leq 2\| w_m\|_{L^\infty_TH^{\frac12}},\\
\frac12\| w_1-w_2\|_{L^\infty_TH^{\frac12}}\leq \| \varpi_1-\varpi_2\|_{L^\infty_T\hat{H}^{\frac12}}\leq 2\| w_1-w_2\|_{L^\infty_TH^{\frac12}}.
\end{gather*}
In particular, the assumption \eqref{ass:M2} implies \eqref{ass:M} for $\varpi_m$, $m=1,2$, and thus $\varpi_m$ satisfies the limit equation \eqref{eq:limit}.
We now take the difference of the limit equation for $\varpi_1$, $\varpi_2$ and apply the difference versions of Lemmas~\ref{lem:W1}, \ref{lem:NFR-NR} and \ref{lem:NFR-N1R}, to see that
\[ \| \varpi_1-\varpi_2\|_{L^\infty_T\hat{H}^{\frac12}}\lesssim \rho _0\big( \| w_1-w_2\|_{S_T}+\| \varpi_1-\varpi_2\|_{L^\infty_T\hat{H}^{\frac12}}\big) ,\]
and hence, 
\[ \| w_1-w_2\|_{L^\infty_TH^{\frac12}}\leq 2\| \varpi_1-\varpi_2\|_{L^\infty_T\hat{H}^{\frac12}}\lesssim \rho _0\| w_1-w_2\|_{S_T}.\]
On the other hand, from Proposition~\ref{prop:RS-w} we have
\[ \| w_1-w_2\|_{L^4_TB^{0+}_{\infty,1}}\leq CT^{\frac14-}\big( 1+\| w_1\|_{S_T}^4+\| w_2\|_{S_T}^4\big) \| w_1-w_2\|_{S_T} \leq \frac14 \| w_1-w_2\|_{S_T} \]
if $T$ is modified appropriately.
From these estimates, we have $\| w_1- w_2\|_{S_T}\leq \frac12\| w_1-w_2\|_{S_T}$, which establishes the uniqueness for \eqref{eq:w2} in $L^\infty_TH^{\frac12}\cap L^4_TB^{0+}_{\infty,1}$.
\end{proof}


\section*{Acknowledgements}

The author is partially supported by JSPS KAKENHI Grant Numbers 16K17626, 20K03678.


\appendix
\section{Weak solutions}
\label{appendix:weak}

Let us consider the inhomogeneous linear Schr\"odinger equation
\begin{equation}\label{eq:app1}
\partial_tu(t,x)=i\partial_x^2u(t,x)+\partial_xF(t,x),\qquad (t,x)\in (-T,T)\times \mathcal{M},
\end{equation}
where $\mathcal{M}$ is either $\mathbb{R}$ or $\mathbb{T}$.

\begin{lem}\label{lem:weak-mild}
Let $s,\sigma\in \mathbb{R}$ satisfy $\sigma <s$.
Suppose that $u\in L^\infty_TH^s(\mathcal{M})\cap L^1_{\mathrm{loc}}((-T,T)\times \mathcal{M})$, $F\in L^1_{\mathrm{loc}}((-T,T)\times \mathcal{M})$, $\partial_xF\in L^\infty_TH^\sigma (\mathcal{M})$, and $u$, $F$ satisfy \eqref{eq:app1} in the sense of distributions; i.e., 
\begin{equation}\label{eq:app1weak}
-\int_{-T}^T\int_{\mathcal{M}}u(t,x)(\partial_t\psi)(t,x)\,dx\,dt =\int_{-T}^T\int_{\mathcal{M}}\Big\{ iu(t,x)(\partial_x^2\psi)(t,x)-F(t,x)(\partial_x\psi)(t,x)\Big\} \,dx\,dt 
\end{equation}
for any $\psi \in \mathcal{D}((-T,T)\times \mathcal{M})$.%
\footnote{%
Although we will use the same notation for $\mathcal{M}=\mathbb{R}$ and $\mathbb{T}$, when $\mathcal{M}=\mathbb{R}$ we actually consider the extended class of test functions $\mathcal{D}(-T,T;\mathcal{S}(\mathbb{R}))$, where $\mathcal{S}$ is the Schwartz space.
Then, the $L^1_{\mathrm{loc}}$ assumption on $u,F$ should be regarded as $L^1_{\mathrm{loc}}(-T,T;L^1_{\mathrm{poly}}(\mathbb{R}))$, where $L^1_{\mathrm{poly}}$ is the set of $L^1_{\mathrm{loc}}$ functions of at most polynomial growth.%
}
Then, there exists a (unique) function
\[ \tilde{u}\in C([-T,T];H^{s-}(\mathcal{M}))\cap C_w([-T,T];H^s(\mathcal{M}))\cap W^{1,\infty}(-T,T;H^{\sigma'}(\mathcal{M}))\]
(where $\sigma':=\min \{ s-2,\sigma \}$) such that
\begin{enumerate}
\item $\tilde{u}(t)=u(t)$ (in $H^s(\mathcal{M})$) for \textit{a.e.}~$t\in (-T,T)$.
\item $\tilde{u}$ and $F$ satisfy \eqref{eq:app1} in $H^{\sigma'}(\mathcal{M})$ for \textit{a.e.} $t\in (-T,T)$.
\item $\tilde{u}(t)=S(t-t_0)\tilde{u}(t_0)+\displaystyle\int_{t_0}^tS(t-t')\partial_xF(t')\,dt'$ holds in $H^{\sigma}(\mathcal{M})$ for any $t,t_0\in [-T,T]$.
\end{enumerate}
\end{lem}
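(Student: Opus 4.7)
The plan is to convert \eqref{eq:app1} into a time-ODE with no spatial derivative by passing to the interaction representation $v(t) := S(-t)u(t)$. Formally, since $\partial_t S(-t) = -iS(-t)\partial_x^2$, the equation for $v$ becomes $\partial_t v = S(-t)\partial_x F$, whose right-hand side lies in $L^\infty(-T,T; H^\sigma)$ thanks to the unitarity of $S(-t)$ on $H^\sigma$. Once this is rigorously justified in the distributional sense, the classical theory of Banach-space-valued $W^{1,\infty}$ functions furnishes a continuous representative of $v$ in $H^\sigma$ satisfying the Bochner form of Duhamel's formula; applying $S(t)$ to this representative yields $\tilde u$ and all three claimed assertions.

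Concretely, the first step is to verify the distributional identity $\partial_t v = S(-t)\partial_x F$. Measurability and the $L^\infty_t H^s$ bound for $v$ are immediate from $\hat v(t,\xi) = e^{it\xi^2}\hat u(t,\xi)$. For the identity, I would test against $\phi \in \mathcal{D}((-T,T)\times \mathcal{M})$ (or $\mathcal{D}(-T,T; \mathcal{S}(\mathbb{R}))$ in the non-periodic case): the function $\psi(t,x) := [S(-t)\phi(t,\cdot)](x)$ is again an admissible test function, and the chain rule gives $\partial_t\psi = S(-t)[\partial_t\phi - i\partial_x^2\phi]$. Combining this with the non-conjugate duality identity $\int (S(-t)f)\,g\,dx = \int f\,(S(-t)g)\,dx$ (immediate from the Fourier representation) and plugging $\psi$ into \eqref{eq:app1weak}, the two Schr\"odinger-type terms cancel and one is left with $\langle \partial_t v,\phi\rangle = \langle S(-t)\partial_x F,\phi\rangle$, as desired.

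Since $\partial_t v \in L^\infty(-T,T; H^\sigma)$, standard theory selects a representative $\tilde v \in W^{1,\infty}(-T,T; H^\sigma)$ (hence absolutely continuous in $H^\sigma$) satisfying
\[
\tilde v(t) = \tilde v(t_0) + \int_{t_0}^t S(-t')\partial_x F(t')\,dt' \quad \text{in } H^\sigma,
\]
for all $t, t_0 \in [-T,T]$. To upgrade to the $H^s$ level, observe that $\|\tilde v(\tau)\|_{H^s}\leq \|u\|_{L^\infty_T H^s}$ for a.e.~$\tau$, and the $H^s$-ball is weakly compact. Thus for any $t \in [-T,T]$, a sequence of good times $\tau_n\to t$ yields (along a subsequence) a weak $H^s$-limit $w$ with $\|w\|_{H^s}\leq \|u\|_{L^\infty_T H^s}$, while strong $H^\sigma$-convergence $\tilde v(\tau_n) \to \tilde v(t)$ forces $w = \tilde v(t)$. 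The same Urysohn-subsequence argument gives $\tilde v \in C_w([-T,T]; H^s)$, and interpolation between the uniform $H^s$ bound and strong $H^\sigma$ continuity yields $\tilde v \in C([-T,T]; H^{s-\varepsilon})$ for every $\varepsilon \in (0, s-\sigma)$. Setting $\tilde u(t) := S(t)\tilde v(t)$ transfers all these regularities to $\tilde u$ and gives assertion (3); differentiating in $t$ then yields (2) in $H^{\sigma'}$ with $\sigma' = \min\{s-2,\sigma\}$, since $\partial_x^2\tilde u \in L^\infty_t H^{s-2}$.

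The main obstacle is the $H^\sigma \to H^s$ regularity upgrade just described, where one must carefully combine weak compactness of the $H^s$-ball with strong continuity in the weaker norm to conclude information at \emph{every} time rather than merely almost every time; the remainder is routine manipulation of the unitary group $S(t)$, distributional calculus, and the standard fundamental theorem of calculus for $W^{1,\infty}$ Banach-space-valued maps. Uniqueness of $\tilde u$ is then immediate since any two candidates coincide a.e.~in $t$ and are both continuous in the weaker norm $H^{s-\varepsilon}$.
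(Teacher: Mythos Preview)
Your proposal is correct and follows essentially the same approach as the paper: both arguments reduce to the interaction representation $v(t)=S(-t)u(t)$ by testing \eqref{eq:app1weak} against Schr\"odinger-evolved test functions, invoke the fundamental theorem of calculus for $W^{1,\infty}$ Banach-valued maps, and upgrade from $H^\sigma$ to $H^s$ via weak compactness plus interpolation. The only difference is the order of steps---you pass to $v$ immediately and recover property (2) at the end, whereas the paper first establishes (2) directly (testing against separated $\eta(t)\phi(x)$), then upgrades regularity, and only afterwards introduces $v$ to obtain Duhamel (3)---but this is a cosmetic reordering rather than a different route.
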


\begin{proof}
First, we show the identity 
\[ -\int_{-T}^Tu(t)\eta'(t)\,dt =\int_{-T}^T\Big\{ i\partial_x^2u(t)+\partial_xF(t)\Big\} \eta(t)\,dt\]
for any $\eta \in \mathcal{D} (-T,T)$, where the integrals are considered as those of  $H^{\sigma'}(\mathcal{M})$-valued functions.
To see this, let $\phi \in \mathcal{D}(\mathcal{M})$, and observe that
\[ \Big\langle {-}\int_{-T}^Tu(t)\eta'(t)\,dt,\,\phi \Big\rangle =-\int_{-T}^T \big\langle u(t)\eta'(t),\,\phi \big\rangle \,dt=-\int_{-T}^T\int_{\mathcal{M}}u(t,x)(\partial_t\psi)(t,x)\,dx\,dt,\]
where $\psi(t,x):=\eta (t)\phi(x)\in \mathcal{D}((-T,T)\times \mathcal{M})$.
At the first equality, we have used the fact that $u(t)\eta'(t)\in L^1(-T,T;H^{\sigma'}(\mathcal{M}))$ and that the evaluation map $u\mapsto \langle u,\phi\rangle$ is a bounded linear functional on $H^{\sigma'}(\mathcal{M})$ and hence commutes with the $t$-integral. 
Similarly, we have
\begin{align*}
&\Big\langle \int_{-T}^T\big\{ i\partial_x^2u(t)+\partial_xF(t)\big\} \eta(t)\,dt,\,\phi \Big\rangle =\int_{-T}^T\big\langle i\partial_x^2u(t)+\partial_xF(t),\,\phi \big\rangle \eta(t)\,dt \\
&=\int_{-T}^T\Big\{ i\big\langle u(t),\phi''\big\rangle -\big\langle F(t),\,\phi'\big\rangle \Big\} \eta(t)\,dt\\
&=\int_{-T}^T\int_{\mathcal{M}}\Big\{ iu(t,x)(\partial_x^2\psi)(t,x)-F(t,x)(\partial_x\psi)(t,x)\Big\} \,dx\,dt,
\end{align*}
and hence, by \eqref{eq:app1weak} we have
\[ \Big\langle {-}\int_{-T}^Tu(t)\eta'(t)\,dt,\,\phi \Big\rangle =\Big\langle \int_{-T}^T\big\{ i\partial_x^2u(t)+\partial_xF(t)\big\} \eta(t)\,dt,\,\phi \Big\rangle \]
for any $\phi\in \mathcal{D}(\mathcal{M})$, showing the claim.
In particular, $u$ satisfies \eqref{eq:app1} as an $H^{\sigma'}(\mathcal{M})$-valued distribution on $(-T,T)$.
Since the right-hand side of \eqref{eq:app1} belongs to $L^\infty_TH^{\sigma'}(\mathcal{M})$, we see that $u\in W^{1,\infty}_TH^{\sigma'}(\mathcal{M})$ and \eqref{eq:app1} holds in $H^{\sigma'}(\mathcal{M})$ for \textit{a.e.} $t\in (-T,T)$.
Then, since $v(t):=u(t)-\int_0^t(\partial_tu)(t')\,dt'$ satisfies $\partial_tv=0$ in $L^\infty_TH^{\sigma'}(\mathcal{M})$, there exists $u_0\in H^{\sigma'}(\mathcal{M})$ such that $v(t)=u_0$ for \textit{a.e.} $t\in (-T,T)$.
Setting $\tilde{u}(t):=u_0+\int_0^t (\partial_tu)(t')\,dt'\in C_TH^{\sigma'}(\mathcal{M})$, we have $u(t)=\tilde{u}(t)$ for \textit{a.e.} $t\in (-T,T)$.

From now on, we write $\tilde{u}$ as $u$.
Recalling $u\in L^\infty_TH^s(\mathcal{M})$, let $J\subset (-T,T)$ denote the set of full measure in which $\| u(t)\|_{H^s}\leq \| u\|_{L^\infty_TH^s}$.
Given any $t\in [-T,T]$, we approximate it by $\{ t_n\}_n\subset J$.
Then, for any $\phi\in \mathcal{D}(\mathcal{M})$, $\{ (u(t_n),\phi)_{H^s}\}_n$ is a Cauchy sequence, by continuity of $u(t)$ in $H^{\sigma'}(\mathcal{M})$, and thus it converges to $(u(t),\phi)_{H^s}$.
This means that $|(u(t),\phi)_{H^s}|\leq \| u\|_{L^\infty_TH^s}\| \phi\|_{H^s}$ for any $\phi\in \mathcal{D}(\mathcal{M})$, and therefore $u(t)\in H^s(\mathcal{M})$, $\| u(t)\|_{H^s}\leq \| u\|_{L^\infty_TH^s}$ for all $t\in [-T,T]$ and $u\in C_{w,T}H^s(\mathcal{M})$.
By interpolation between $H^s$ and $H^{\sigma'}$, we also show that $u\in C_TH^{s-}(\mathcal{M})$.

Finally, set $w(t'):=S(-t')u(t')$ for $t'\in [-T,T]$.
For any $\eta \in \mathcal{D}(-T,T)$ and $\phi\in \mathcal{D}(\mathcal{M})$, we see that
\begin{align*}
&\Big\langle {-}\int _{-T}^Tw(t')\eta'(t')\,dt',\,\phi\Big\rangle =-\int_{-T}^T\big\langle u(t'),\,S(-t')\phi \big\rangle \eta'(t')\,dt' \\\
&=-\int_{-T}^T\int_{\mathcal{M}}u(t',x)\partial_{t'}\big[ \eta(t')S(-t')\phi\big] (x)\,dx\,dt +\int_{-T}^T \big\langle u(t'),\,-i\partial_x^2S(-t')\phi\big\rangle \eta(t')\,dt'\\
&=-\int_{-T}^T\int_{\mathcal{M}}F(t',x)\eta(t')\big[ \partial_xS(-t')\phi \big](x)\,dx\,dt =\Big\langle \int_{-T}^TS(-t')\partial_xF(t')\eta(t')\,dt',\,\phi\Big\rangle ,
\end{align*}
where we have used \eqref{eq:app1weak} with $\psi (t',x)=\eta(t')S(-t')\phi(x)$ at the third equality.
Since $S(-\cdot )\partial_xF(\cdot)\in L^\infty_TH^{\sigma}(\mathcal{M})$, this shows $w\in W^{1,\infty}_TH^\sigma (\mathcal{M})$ and $\partial_{t'}w(t')=S(-t')\partial_xF(t')$ in $L^\infty_TH^{\sigma}(\mathcal{M})$.
Similarly to the previous argument, for any $t_0\in [-T,T]$ we see that $z_{t_0}(t):=w(t)-\int_{t_0}^{t}(\partial_{t'}w)(t')\,dt'$ coincides with some $w_{t_0}\in H^\sigma(\mathcal{M})$ for \textit{a.e.} $t$. 
Since $w(t')=S(-t')u(t')\in C_TH^\sigma(\mathcal{M})$, we have $z_{t_0}(t)\equiv w_{t_0}=z_{t_0}(t_0)=w(t_0)$ (in $H^\sigma(\mathcal{M})$) on $[-T,T]$.
This can be rewritten as
\[ S(-t)u(t)-\int_{t_0}^{t}S(-t')\partial_xF(t')\,dt'=S(-t_0)u(t_0)\]
in $H^\sigma (\mathcal{M})$, for all $t,t_0\in [-T,T]$.
We obtain the desired integral equation for $u(t)$ by applying $S(t)$.
\end{proof}


\end{document}